\def\cal{\mathcal}
\def\Bbb{\mathbb}
\def\frak{\mathfrak}
\newenvironment{NB}{
\color{red}{\bf NB}. \footnotesize 
}{}
\newenvironment{NB2}{
\color{blue}{\bf NB}. \footnotesize
}{}
\newcommand{\bb} {\mathbb}
\newcommand{\bl} {\mathbf}
\newcommand{\frk}{\mathfrak}
\newcommand{\ep}{\epsilon}
\newcommand{\wt}{\widetilde}
\newcommand{\simto}{\xrightarrow{\ \sim\ }}
\newcommand{\ev}{\operatorname{ev}}
\newcommand{\Eq}  {\operatorname{Eq}}
\newcommand{\FM}  {\operatorname{FM}}
\newcommand{\GL}  {\operatorname{GL}}
\newcommand{\LieO}{\operatorname{O}}
\newcommand{\Ker} {\operatorname{Ker}}
\newcommand{\SL}  {\operatorname{SL}}
\newcommand{\mpr}    [1]{\langle #1\rangle}
\newcommand{\Piczero}[1]{\operatorname{Pic}^0(#1)}
\newcommand{\Hilb}   [2]{\operatorname{Hilb}^{#1}(#2)}
\newcommand{\ch}{\operatorname{ch}}
\newcommand{\Coh}{\operatorname{Coh}}
\newcommand{\Hom}{\operatorname{Hom}}
\newcommand{\codim}{\operatorname{codim}}
\newcommand{\rk}{\operatorname{rk}}
\newcommand{\NS}{\operatorname{NS}}
\newcommand{\td}{\operatorname{td}}
\newcommand{\Amp}{\operatorname{Amp}}
\newcommand{\tr}{\operatorname{tr}}
\newcommand{\alg}{\operatorname{alg}}
\newcommand{\Sym}{\operatorname{Sym}}
\font\b=cmr10 scaled \magstep5
\def\bigzerou{\smash{\lower1.7ex\hbox{\b 0}}}
\numberwithin{equation}{section}
\theoremstyle{plain}
 \newtheorem{thm}{Theorem}[section]
 \newtheorem{lem}[thm]{Lemma}
 \newtheorem{prop}[thm]{Proposition}
 \newtheorem{cor}[thm]{Corollary}
\theoremstyle{definition}
 \newtheorem{defn}[thm]{Definition}
\theoremstyle{remark}
 \newtheorem{rem}[thm]{Remark}
\begin{document}


\title{Bridgeland's stabilities on abelian surfaces}
\author{Shintarou Yanagida, K\={o}ta Yoshioka}
\address{Department of Mathematics, Faculty of Science,
Kobe University,
Kobe, 657, Japan
}
\email{yanagida@math.kobe-u.ac.jp,
yoshioka@math.kobe-u.ac.jp}

\thanks{The first author is supported by JSPS Fellowships 
for Young Scientists (No.\ 21-2241). 
The second author is supported by the Grant-in-aid for 
Scientific Research (No.\ 22340010), JSPS}

\subjclass[2010]{14D20}

\begin{abstract}
In this paper, we shall study the structure of walls
for Bridgeland's stability conditions
on abelian surfaces.
In particular, we shall study the structure of 
walls for the moduli spaces of rank 1 complexes
on an abelian surface with the Picard number 1.
\end{abstract}
\maketitle


\section{Introduction}

Let $X$ be an abelian surface over a field ${\frak k}$.
Denote by $\Coh(X)$ the category of coherent sheaves on $X$,
by $\bl{D}(X)$ the bounded derived category of $\Coh(X)$
and by $K(X)$ the Grothendieck group of $\bl{D}(X)$.

For $\beta \in \NS(X)_\bb{Q}$ and an ample divisor
$\omega \in \Amp(X)_{\Bbb Q}$,
Bridgeland \cite{Br:3} constructed a stability condition
$\sigma_{\beta,\omega}=(\frk{A}_{(\beta,\omega)},Z_{(\beta,\omega)})$ 
on $\bl{D}(X)$.
Here $\frk{A}_{(\beta,\omega)}$ is a tilting of $\Coh(X)$, 
and $Z_{(\beta,\omega)}:K(X) \to \bb{C}$ is a group homomorphism 
called the stability function.
In terms of the Mukai lattice 
$(H^*(X,\bb{Z})_{\alg}, \langle \cdot,\cdot \rangle)$, 
$Z_{(\beta,\omega)}$ is given by 
\begin{align*}
 Z_{(\beta,\omega)}(E)=\langle e^{\beta+\sqrt{-1}\omega},v(E) \rangle,
 \quad
 E \in K(X).
\end{align*}
Here $v(E):=\ch(E)$ is the Mukai vector of $E$.
Hereafter for an object $E \in \bl{D}(X)$, we abbreviately write 
$Z_{(\beta,\omega)}(E) := Z_{(\beta,\omega)}([E])$, 
where $[E]$ is the class of $E$ in $K(X)$.
For abelian surfaces, these kind of stability conditions forms
a connected component of the space of stability conditions
up to the action of the universal cover
$\widetilde{\GL}^+(2,{\Bbb R})$ of $\GL^+(2,{\Bbb R})$
as stated in \cite[sect. 15]{Br:3}.

Let 
$\phi_{(\beta,\omega)}
 :\frk{A}_{(\beta,\omega)} \setminus \{ 0\} \to (0,1]$ 
be the phase function, which is defined by 
$$
 Z_{(\beta,\omega)}(E)=|Z_{(\beta,\omega)}(E)|
 e^{ \sqrt{-1}\pi \phi_{(\beta,\omega)}(E)}
$$ 
for $0 \ne E \in {\frak A}_{(\beta,\omega)}$.
Let $M_{(\beta,\omega)}(v)$ be the moduli space of
$\sigma_{(\beta,\omega)}$-semi-stable objects $E$ with $v(E)=v$.
It is a projective scheme, if $(\beta,\omega)$ is general
(\cite[Thm. 0.0.2]{MYY:2011:2}).
The stability of objects depends only on a chamber of the parameter
space $\NS(X)_{\Bbb R} \times \Amp(X)_{\Bbb R}$.
For a special chamber, 
the stability coincides with Gieseker stability
(\cite{Br:3}, \cite{Toda}, \cite{Bayer}, \cite{MYY:2011:1}).
For the analysis of Gieseker stability, Fourier-Mukai transforms
are very useful tool. However 
the Fourier-Mukai transform
does not preserve Gieseker stability in general,
since the category of coherent sheaves is not preserved. 
On the other hand, the Fourier-Mukai transform induces an isomorphism
of the moduli spaces of Bridgeland stable
objects as a consequence of \cite[Prop. 10.3]{Br:3}.
So it is very natural to study the moduli of Bridgeland stable
objects and its dependence on the parameter
even for the study of Gieseker stability. 

For abelian surfaces, the studies on the dependence of the parameters,
i.e., those of wall and chamber structures, are started by
two groups Maciocia and Meachan \cite{MM},
and Minamide, Yanagida and Yoshioka 
\cite{MYY:2011:1}, \cite{MYY:2011:2}. 
In this paper, we continue our study on the wall and chamber structures.
In particular, we shall study 
the behavior of the structures under Fourier-Mukai
transforms.
We are mainly interested in the most simple case, that is,
the case where $\NS(X)={\Bbb Z}H$.
In this case, 
the parameter space is the upper half plane,
the action of Fourier-Mukai transforms
can be described by an arithmetic subgroup
of $\SL(2,{\Bbb R})$ and the action on the upper half plane
is the natural one.
So we can study the structure in detail.

Let us explain the contents of this paper.
We first define the wall and chamber by using the characterization of walls
in \cite[Prop. 4.2.2]{MYY:2011:2}.
Then we present a few basic properties of walls and chambers.
As we explained,
by the paper of Bridgeland (\cite[Prop. 10.3]{Br:3}), 
Fourier-Mukai transforms preserve the stability 
of objects, that is, it induces an isomorphism of the moduli spaces. 
We shall show that the taking dual functor 
${\cal D}_X$ also preserves 
the stability of objects (Theorem \ref{thm:B:3-10.3}). 
These are done in sections \ref{sect:walls} and \ref{sect:FM} 

In \cite{YY}, we introduced a useful notion 
\emph{semi-homogeneous presentations}.
It is a presentation of a coherent sheaf
as the kernel or the cokernel
of a homomorphism $V_{-1} \to V_0$
of semi-homogeneous sheaves $V_{-1}, V_0$ with
some numerical conditions.
Extracting the numerical conditions
from $V_{-1}$ and $V_0$, we also introduced the notion
\emph{numerical solutions}
and constructed moduli spaces
of simple complexes $V_{-1} \to V_0$ associated to numerical solutions. 
In \cite{MYY:2011:1}, we found a relation 
between these moduli spaces and the wall crossing behavior
for Bridgeland stability conditions.
In this paper, we give a supplement
of this relation.
Thus we shall relate a particular
wall called a {\it codimension 0 wall}
for every numerical solution.
If $(\beta,\omega)$ belongs to a codimension 0 wall, then
for any neighborhood $U$ of $(\beta,\omega)$,
there is no $\sigma_{(\beta,\omega)}$-semi-stable object
which is $\sigma_{(\beta',\omega')}$-semi-stable
for all $(\beta',\omega') \in U$.

In section \ref{sect:restricted-parameter},
we fix an ample divisor $H$ and 
study special stability conditions
$\sigma_{(\beta+sH,tH)}$
parametrized by the upper half plane
$\{ (s,t) \mid s,t\in {\Bbb R}, s>0  \}$.
In the $(s,t)$-plane,
the equations of walls are very simple.
They define circles or lines
forming a pencil of circles passing through imaginary
points (Remark \ref{rem:base-point}). 
Thus they do not intersect each other.
For a principally polarized abelian surface
with Picard number 1,
these kind of results are obtained by 
Maciocia and Meachan \cite{MM}.
Thus our results are generalization of theirs.

We also explain that there are one or two 
unbounded chambers which parametrize 
Gieseker semi-stable sheaves.
In \cite{Stability}, we showed that Gieseker's stability is 
preserved under the Fourier-Mukai transform, if the degree of the stable
sheaf is sufficiently large.
We shall explain the result as an application of this section
(Proposition \ref{prop:asymptotic}).

In section \ref{sect:rho=1},
we assume that
$\NS(X)={\Bbb Z}H$.
We are mainly interested in a Mukai vector $v$ 
which is a Fourier-Mukai transform of 
$1-\ell \varrho_X$.
So we may assume that $v=1-\ell \varrho_X$.

In \cite{YY}, we described the algebraic part of the
Mukai lattice $H^*(X,{\Bbb Z})_{\alg}$
as a lattice in the vector space of
quadratic forms of two variables.
Then we can describe the action
of Fourier-Mukai transforms as a natural $\GL(2,{\Bbb R})$-action. 
By using these results, we shall study the structure of walls.
In particular, we shall classify codimension 0 walls by using
our description of Mukai lattice.
We set $n:=(H^2)/2$.
If $\sqrt{\ell/ n} \in {\Bbb Q}$, then there is a unique wall of
codimension 0, since there is one numerical solution.
Assume that $\sqrt{\ell/ n} \not \in {\Bbb Q}$. In this case,
these are infinitely many numerical solutions, thus we have
infinitely many codimension 0 walls.
Let $G_{n,\ell} \subset \GL(2,{\Bbb R})$
be the subgroup
generated by the cohomological action of
(covariant or contravariant) Fourier-Mukai transforms
preserving $\pm v$.
$G_{n,\ell}$ acts on the set of walls.
We show that there are finitely many 
$G_{n,\ell}$-orbits of walls and
the set of codimension 0 walls forms an orbit of 
$G_{n,\ell}$. 
Each orbit has two
accumulation points  
$(\pm \sqrt{\ell/n},0)$ in the $(s,t)$-plane. 
An observant reader will see that
the main part of section of \ref{sect:rho=1} essentially 
appeared in 
\cite{YY:preprint} without using Bridgeland stability condition.

In section \ref{sect:example}, 
we shall study the structure of walls for
$v=1-\ell \varrho_X$, $\ell \leq 4$ 
on a principally polarized abelian surface $X$.
As an application, we shall classify $M_H(v)$
for a primitive Mukai vector with $\langle v^2 \rangle/2 \leq 4$.

In \cite{Mukai:1979} and \cite[Thm. 3]{Mukai:1980},
Mukai announced that
$M_H(v) \cong X \times \Hilb{\langle v^2 \rangle/2}{X}$
for a primitive Mukai vector with $\langle v^2 \rangle/2=1,2,3$.
Moreover he determined the Fourier-Mukai transform
which induces the isomorphism.
By using the structure of walls,
we give an explanation of Mukai's results for 
$\langle v^2 \rangle/2=2,3$.
It is quite surprising that Mukai discovered his results 30 years ago
without using 
Bridgeland's stability conditions.

In appendix, we continue to assume that 
$X$ is an abelian surface with $\NS(X)={\Bbb Z}H$ and
$(H^2)/2=n$.
We shall identify the period space
with the upper half plane. 
Then we show that the action of auto-equivalences 
is the action of the modular group $\Gamma_0(n)$.

Finally we would like to mention related works
which appeared during our preparation of this manuscript. 
We note that
the examples of Bridgeland stability conditions
in this paper are generalized to
an arbitrary projective surfaces by
Arcara and Bertram \cite{AB}. 
For these stability conditions,
Maciocia \cite{Ma} studied the structure of walls.
In particular, he  
proved similar results to
section \ref{sect:restricted-parameter}
in a much more general context. 
For the stability conditions on principally
polarized abelian surfaces,
Meachan \cite{Meachan} studied the structure of walls
in detail. In particular he 
independently found
examples of walls with accumulation points.

\section{Preliminaries on Bridgeland's stability condition}
\label{sect:walls}

As in the introduction,
let $X$ be an abelian surface over a field $\frk{k}$,
and fix an ample divisor $H$ on $X$.


\subsection{Notations for Mukai lattice}

We set $A^*_{\alg}(X)=\oplus_{i=0}^2 A^{i}_{\alg}(X)$ 
to be the quotient of the cycle group of $X$ 
by the algebraic equivalence.
Then we have $A^{0}_{\alg}(X)\cong \bb{Z}$,
$A^{1}_{\alg}(X)\cong \NS(X)$ and 
$A^{2}_{\alg}(X)\cong \bb{Z}$.
We denote the fundamental class of $A^{2}_{\alg}(X)$ by $\varrho_X$,
and express an element $x\in A^{*}_{\alg}(X)$ 
by $x=x_0+x_1+x_2 \varrho_X$ 
with $x_0 \in \bb{Z}$, $x_1 \in \NS(X)$ and $x_2 \in \bb{Z}$.
The lattice structure $\langle \cdot,\cdot\rangle$
of $A^*_{\alg}(X)$ is given 
by  
\begin{equation}\label{eq:mukai_pairing}
 \langle x,y \rangle:= (x_1,y_1)-(x_0 y_2+x_2 y_0),
\end{equation}
where $x=x_0+x_1+x_2 \varrho_X$ and $y=y_0+y_1+y_2 \varrho_X$. 
We shall call $(A^*_{\alg}(X), \langle \cdot,\cdot \rangle)$ 
the Mukai lattice for $X$.
In the case of $\frk{k}=\bb{C}$, this lattice is 
sometimes denoted by $H^{*}(X,\bb{Z})_{\text{alg}}$ in literature. 
In this paper, we shall use the symbol $H^{*}(X,\bb{Z})_{\text{alg}}$ 
even when $\frk{k}$ is arbitrary.

The Mukai vector $v(E)\in H^*(X,\bb{Z})_{\alg}$ 
for $E \in \Coh(X)$ is defined by 
\begin{equation*}
v(E):=\ch(E) \sqrt{\td_X}
=\ch(E)
=\rk E+c_1(E)+ \chi(E) \varrho_X.
\end{equation*}
We also use the vectorial notation 
$$
v(E)=(\rk E,c_1(E),\chi(E)).
$$
For an object $E$ of ${\bl{D}}(X)$, 
$v(E)$ is defined by $\sum_{k}(-1)^k v(E^k)$,
where $(E^k)=(\cdots \to E^{-1} \to E^0 \to E^1 \to \cdots)$
is the bounded complex representing the object $E$.

We take an ample ${\Bbb Q}$-divisor $H$.
For 
$v \in H^*(X,\bb{Z})_{\alg}$ and $\beta \in \NS(X)_\bb{Q}$,
we set
\begin{equation}\label{eq:v:rda}
r_\beta(v):=-\langle v,\varrho_X \rangle,\quad
a_\beta(v):=-\langle v,e^\beta \rangle,\quad
d_{\beta,H}(v):=\frac{\langle v,H+(H,\beta)\varrho_X \rangle}{(H^2)}.
\end{equation}
If the choice of $H$ is clear, then we 
write $d_\beta(v):=d_{\beta,H}(v)$ for simplicity.
By using \eqref{eq:v:rda}, we have 
\begin{equation}\label{eq:v}
v=r_\beta(v)e^\beta+a_\beta(v) \varrho_X +
(d_{\beta,H}(v) H+D_\beta(v))+(d_{\beta,H}(v) H+D_\beta(v),\beta)\varrho_X,\;
D_\beta(v) \in H^{\perp} \cap \NS(X)_\bb{Q}.
\end{equation}

A Mukai vector $v=(r,\xi,a) \ne 0$ is positive, if
(i) $r>0$ or (ii) $r=0$ and $\xi$ is effective, or
(iii) $r=\xi=0$ and $a>0$.
We denote a positive Mukai vector $v$ by
$v>0$.
A Mukai vector $v$ is called isotropic if
 $\langle v^2 \rangle =0$.

For $\beta \in \NS(X)_\bb{Q}$,
we define the $\beta$-twisted semi-stability
replacing the usual Hilbert polynomial
$\chi(E(nH))$ by $\chi(E(-\beta+nH))$.
Then $v$ is positive if and only if
$\chi(E(-\beta+nH)) >0$ for $E \in {\bf D}(X)$ with
$v(E)=v$ and 
$n \gg 0$.
  
For a positive Mukai vector $v$, 
${\cal{M}}_H^\beta(v)^{ss}$ denotes
the moduli stack of $\beta$-twisted semi-stable sheaves $E$ on $X$
with $v(E)=v$.
$\overline{M}_H^\beta(v)$ denotes
the moduli scheme of $S$-equivalence classes of
$\beta$-twisted semi-stable sheaves $E$ on $X$
with $v(E)=v$
and $M_H^\beta(v)$ denotes the open subscheme consisting of
$\beta$-twisted stable sheaves.
If $\beta=0$, then
we write $\overline{M}_H(v):=\overline{M}_H^\beta(v)$.

For a proper morphism $f:Z_1 \to Z_2$,
we denote the derived pull-back ${\bf L}f^*$ and
the derived direct image ${\bf R}f_*$ by $f^*$ and $f_*$
respectively.   

For ${\bf E} \in {\bf D}(X \times Y)$,
$\Phi_{X \to Y}^{{\bf E}}:{\bf D}(X) \to {\bf D}(Y)$
denotes the integral functor whose kernel is ${\bf E}$:
\begin{equation}
\Phi_{X \to Y}^{{\bf E}}(E)=p_{Y*}(p_X^*(E) \otimes {\bf E}),\;
E \in {\bf D}(X),
\end{equation}
where $p_X$ and $p_Y$ are projections from $X \times Y$ to
$X$ and $Y$ respectively. 
If $\Phi_{X \to Y}^{\bf E}$ is an equivalence, it is
called a \emph{Fourier-Mukai transform}.  
If a Fourier-Mukai transform $\Phi_{X \to Y}^{{\bf E}}$ exists and 
$X$ is an abelian surface, then $Y$ is also an abelian surface and
$\Phi_{X \to Y}^{\bf E}$ induces an isometry
of Mukai lattices $H^*(X,{\Bbb Z})_{\alg} 
\to H^*(Y,{\Bbb Z})_{\alg}$.
We also denote this isometry by $\Phi_{X \to Y}^{\bf E}$.
 
${\cal D}_X(*):={\bf R}{\cal H}om_{{\cal O}_X}(*,{\cal O}_X)$
denotes the taking dual functor.
It is a contravariant functor from ${\bf D}(X)$ to
${\bf D}(X)$.
A contravariant Fourier-Mukai transform is a
composite of a Fourier-Mukai functor and ${\cal D}_X$.
If $X$ is an abelian surface, then it is of the form
$\Phi_{X \to Y}^{{\bf E}[2]} \circ {\cal D}_X={\cal D}_Y \circ
\Phi_{X \to Y}^{{\bf E}^{\vee}}$
with 
${\bf E}^{\vee} := {\cal D}_{X \times Y}({\bf E})$.

\subsection{Stability conditions and wall/chamber structure}

Let us recall the stability conditions given in 
\cite{Br:3} and \cite[\S1]{MYY:2011:1}.
Let
$$
\Amp(X)_{\Bbb R}:=\{x \in \NS(X)_{\Bbb R} \mid (x^2)>0,(x,D)>0 \}.
$$
be the ample cone of $X$,
where $D$ is an effective divisor. 
We take $(\beta,\omega) \in \NS(X)_{\Bbb R} \times \Amp(X)_{\Bbb R}$
and $H \in {\Bbb R}_{>0}\omega$. 
For $E \in K(X)$ with $v=v(E)$ expressed as \eqref{eq:v}, we have
\begin{align*}
Z_{(\beta,\omega)}(E)=&
\langle e^{\beta+\sqrt{-1}\omega},v(E) \rangle\\
=& -a_\beta(v(E))+\frac{(\omega^2)}{2}r_\beta(v(E))
  +d_{\beta,H}(v(E))(H,\omega)\sqrt{-1}.
\end{align*}
Assume that $(\beta,\omega) \in 
\NS(X)_{\Bbb Q} \times \Amp(X)_{\Bbb Q}$.
Let $\frk{A}_{(\beta,\omega)}$ be the tilt of $\Coh(X)$
with respect to
the torsion pair $(\frk{T}_{(\beta,\omega)},\frk{F}_{(\beta,\omega)})$
defined by
\begin{enumerate}
\item
$\frk{T}_{(\beta,\omega)}$ is generated by
$\beta$-twisted stable sheaves with $Z_{(\beta,\omega)}(E) \in \bb{H} \cup
\bb{R}_{<0}$.
\item
$\frk{F}_{(\beta,\omega)}$ is generated by
$\beta$-twisted stable sheaves with $-Z_{(\beta,\omega)}(E) \in \bb{H} \cup
\bb{R}_{<0}$,
\end{enumerate}
where $\bb{H} :=\{z \in \bb{C} \mid \mathrm{Im} z>0 \}$ 
 is the upper half plane.
$\frk{A}_{(\beta,\omega)}$ is the abelian category
in \cite{Br:3}
and it depends only on $\beta$ and the ray ${\Bbb Q}_{>0}\omega$.

\begin{NB}
Since we are considering the abelian surface $X$,
the category $\frk{A}_{(\beta,\omega)}$ 
depends only on $(\beta,{\Bbb Q}_{>0}\omega)$,
as shown in \cite[sect. 1.4]{MYY:2011:1}.
More explicitly, the category is given by 
\begin{align*}
\frk{A}_{(\beta,\omega)}=
\{E \in {\bl{D}}(X) \mid H^i(E)=0 \,  (i \ne -1, 0),\ 
\mu_{\max}(H^{-1}(E)) \leq (\beta,H),\ \mu_{\min}(H^0(E))>(\beta,H) \}
\end{align*}
for any $(\eta,\omega) \in \frk{H}_\bb{R}$.
Here $H^*$ is the cohomology with respect to $\Coh(X)$. 
\end{NB}

Then the pair 
$\sigma_{(\beta,\omega)}=(\frk{A}_{(\beta,\omega)},Z_{(\beta,\omega)})$ 
satisfies the requirement of stability conditions on $\bl{D}(X)$
\cite{Br:3}.
In particular, the (semi-)stability of objects in 
$\frk{A}_{(\beta,\omega)}$ with respect to $Z_{(\beta,\omega)}$
is well-defined.

\begin{defn}\label{defn:phase(v)}
For a non-zero Mukai vector $v \in H^*(X,\bb{Z})_{\alg}$, 
we define $Z_{(\beta,\omega)}(v) \in \bb{C}$ and 
$\phi_{(\beta,\omega)}(v) \in (-1,1]$ by 
\begin{equation*}
Z_{(\beta,\omega)}(v)=
\langle e^{\beta+\sqrt{-1} \omega},v \rangle 
= |Z_{(\beta,\omega)}(v)|e^{\pi \sqrt{-1}\phi_{(\beta,\omega)}(v)}.
\end{equation*}
Then
$$
\phi_{(\beta,\omega)}(v(E))
=\phi_{(\beta,\omega)}(E) 
$$
for $0 \ne 
E \in \frk{A}_{(\beta,\omega)} \cup \frk{A}_{(\beta,\omega)}[-1]$. 
\end{defn}

\begin{defn}
$E \in \bl{D}(X)$ is called \emph{semi-stable} of phase $\phi$,
if there is an integer $n$ such that
$E[-n]$ is a semi-stable object of $\frk{A}_{(\beta,\omega)}$
with $\phi_{(\beta,\omega)}(E[-n])=\phi-n$. 
If we want to emphasize the dependence on the stability condition,
we say that $E$ is \emph{$\sigma_{(\beta,\omega)}$-semi-stable}.
\end{defn}

\begin{defn}\label{defn:moduli}
For a Mukai vector $v$,
$\cal{M}_{(\beta,\omega)}(v)$ denotes the moduli stack of 
$\sigma_{(\beta,\omega)}$-semi-stable objects $E$ of 
$\frk{A}_{(\beta,\omega)}$ with $v(E)=v$.
$M_{(\beta,\omega)}(v)$ denotes the moduli scheme of
the $S$-equivalence classes of  
$\sigma_{(\beta,\omega)}$-semi-stable objects $E$ of 
$\frk{A}_{(\beta,\omega)}$ with $v(E)=v$, if it exists.
\end{defn}

\begin{rem}
If $Z_{(\beta,\omega)}(-v) \in {\Bbb H} \cup {\Bbb R}_{<0}$, then we have 
\begin{equation}
M_{(\beta,\omega)}(v)=\{E[-1] \mid E \in M_{(\beta,\omega)}(-v) \},
\end{equation}
since $\phi_{(\beta,\omega)}(v) \in (-1,0]$.
\end{rem}

\begin{rem}\label{rem:phase}
If we take the phase of $v$ as
$\phi_{(\beta,\omega)}(v) \in (1,2]$, then
\begin{equation}
M_{(\beta,\omega)}(v)=\{E[1] \mid E \in M_{(\beta,\omega)}(-v) \}.
\end{equation}
More generally, if we take the phase 
of $v$ as $\phi_{(\beta,\omega)}(v) \in (n,n+1]$,
then we have 
\begin{equation}
M_{(\beta,\omega)}(v)=\{E[n] \mid E \in M_{(\beta,\omega)}((-1)^n v) \}.
\end{equation}
 
\end{rem}

\begin{NB}
We set $s_0:=d_\beta(v)/\rk v$.
Assume that $\rk v>0$.
Then $d_{\beta+sH}(v)$ is a decreasing function of $s$
with 
$Z_{(\beta+s_0 H,tH)}(v) \in {\Bbb R}_{>0}$.
Requiring the continuity of
$\phi_{(\beta+s_0 H,tH)}(v)$ as a function of $s$,
we should have $\phi_{(\beta+s_0 H,tH)}(v):=0$.
Then we have 
$\phi_{(\beta+s H,tH)}(v) \in (-1,0]$
for $s \geq s_0$. 

Assume that $\rk v<0$.
Then 
$Z_{(\beta+s_0 H,tH)}(v) \in {\Bbb R}_{<0}$.
Hence $E=F[1]$, $F \in M_{(\beta+sH,\omega)}(-v)$ for $s<s_0$.
Thus $\phi_{(\beta+sH,\omega)}(F) \in (0,1]$, which implies that
$\phi_{(\beta+sH,\omega)}(E) \in(1,2]$.

\end{NB}

\begin{NB}
\begin{rem}
Let $E$ be an object of ${\bf D}(X)$ with $v(E)=v$.
If $E[n] \in M_{(\beta,\omega)}(v)$ and 
$E[n'] \in M_{(\beta',\omega')}(v)$, 
$v(E[n])=v(E[n'])=v$. Thus $n$ and $n'$ are even integer.
We note that $\phi_{(\beta,\omega)}(E[n]) \in (-1,1]$.
We first assume that 
$\phi_{(\beta,\omega)}(E[n]) \in (0,1]$. Thus we have
$E[n] \in {\frak A}_{(\beta,\omega)}$, which implies that
$H^i(E[n])=0$ for $i \ne -1,0$.
Since $\phi_{(\beta',\omega')}(E[n']) \in (-1,1]$,
$\phi_{(\beta',\omega')}(E[n]) \in (-1+n-n',1+n-n']$.
Hence $n-n'=0$ or $n-n'=-2$.

If $\phi_{(\beta,\omega)}(E[n]) \in (-1,0]$, then $n-n'=0,2$. 
c\end{rem}
\end{NB}

\begin{NB}
\begin{lem}
Assume that $E=\oplus_i E_i$ such that
$E_i$ are $\sigma_{(\beta,\omega)}$-stable objects
of ${\frak A}_{(\beta,\omega)}$
with $\phi_{(\beta,\omega)}(E_i)=\phi_{(\beta,\omega)}(E)$.
For a Fourier-Mukai transform $\Phi:{\bf D}(X) \to {\bf D}(Y)$,
we set $F_i=\Phi(E_i)$.
Let $(\beta',\omega')$ be the image of $(\beta,\omega)$
by $\Phi$.
Then $\phi_{(\beta',\omega')}(F_i)=\phi_{(\beta',\omega')}(F)$.
\end{lem}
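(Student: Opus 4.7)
The plan is to reduce the statement to the elementary fact that a sum of nonzero complex numbers sharing a common argument also shares that argument, once we observe that a Fourier--Mukai transform acts on central charges by a single nonzero scalar.

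First, since $\Phi$ is an exact additive functor, one has $F=\Phi(E)=\bigoplus_i \Phi(E_i)=\bigoplus_i F_i$ in $\bl{D}(Y)$ and correspondingly $v(F)=\sum_i v(F_i)$ in $H^*(Y,\bb{Z})_{\alg}$. Next, by the construction of the image stability condition $(\beta',\omega')$ via \cite[Prop.\ 10.3]{Br:3}, the cohomological action $\Phi_*\colon H^*(X,\bb{Z})_{\alg}\to H^*(Y,\bb{Z})_{\alg}$ sends the complexified class $e^{\beta+\sqrt{-1}\omega}$ to a nonzero complex multiple of $e^{\beta'+\sqrt{-1}\omega'}$. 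Combined with the isometry property of $\Phi_*$, this yields the identity
$$
 Z_{(\beta',\omega')}(\Phi(G))
  \;=\; \lambda\, Z_{(\beta,\omega)}(G)
$$
for every $G\in\bl{D}(X)$, with a single scalar $\lambda\in\bb{C}^\times$ independent of $G$. Writing $\lambda=|\lambda|e^{\pi\sqrt{-1}\theta}$, this says that $\Phi$ shifts every phase by the constant $\theta$ modulo $2$.

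Applied to each $E_i$, the hypothesis $\phi_{(\beta,\omega)}(E_i)=\phi_{(\beta,\omega)}(E)$ forces all the $Z_{(\beta',\omega')}(F_i)$ to lie on a common ray emanating from the origin. Since $Z_{(\beta',\omega')}(F)=\sum_i Z_{(\beta',\omega')}(F_i)$ is a sum of nonzero vectors on this ray, it lies on the same ray, and therefore $\phi_{(\beta',\omega')}(F)=\phi_{(\beta',\omega')}(F_i)$ for every~$i$.

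The main technical point is ensuring the existence of the single scalar $\lambda$ controlling the phase shift, and that the normalization of $\phi$ into $(-1,1]$ is consistent for all summands. The former is exactly the compatibility of $\Phi$ with the $\widetilde{\GL}^+(2,\bb{R})$-action on Bridgeland stability conditions recalled in the introduction; the latter is automatic, since the common argument of finitely many collinear nonzero complex numbers coincides with the argument of their sum in the same branch.
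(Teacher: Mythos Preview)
Your argument is correct. The key input is exactly the commutative diagram established just before \eqref{eq:Phi(phase)}: there is a single nonzero scalar $\zeta$ with $Z_{(\beta',\omega')}\circ\Phi=\zeta^{-1}Z_{(\beta,\omega)}$, so all the $Z_{(\beta',\omega')}(F_i)$ lie on a common ray and their sum $Z_{(\beta',\omega')}(F)$ lies on the same ray. Since the $E_i$ are nonzero objects of the heart, $Z_{(\beta,\omega)}(E_i)\ne 0$, whence $Z_{(\beta',\omega')}(F_i)\ne 0$, and the phases in $(-1,1]$ agree as claimed.

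The paper's own proof (in the suppressed note) proceeds differently: it writes $\Phi=\Phi_{X\to Y}^{{\bf G}^\vee}$, compares $\phi_{(\beta,\omega)}({\bf G}_{|X\times\{y\}})$ with the common phase of the $E_i$, and uses the resulting $\Hom$-vanishing to bound the cohomological amplitude of each $F_i$; it then appeals to \cite[Prop.~4.2.2]{MYY:2011:2} to finish. That route yields extra information, namely control over which shifts of the new heart the $F_i$ live in (essentially the content of \eqref{eq:Phi(phase)}), which is what one needs if one wants the equality of phases as real numbers for semistable objects rather than just modulo $2$. Your approach is cleaner for the statement as written (equality of $\phi\in(-1,1]$), while the paper's approach buys the finer $t$-structure information at the cost of a case analysis and an external reference.
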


\begin{proof}
We express $\Phi$ as
$\Phi=\Phi_{X \to Y}^{{\bf G}^{\vee}}$.
We may assume that 
${\bf G}_{|X \times \{ y \}} \in {\frak A}_{(\beta,\omega)}$.
If $\phi_{(\beta',\omega')}({\bf G}_{|X \times \{ y \}})>
\phi_{(\beta,\omega)}(E_i)$, then
$\Hom({\bf G}_{|X \times \{ y \}},E_i)=0$ for all $y$.
Hence $H^k(F_i)=0$ except for $k \ne 1,2$.
On the other hand, our assumption
implies that $Z_{(\beta',\omega')}(F_i)=Z_{(\beta',\omega')}(F)$.
By \cite[Prop. 4.2.2]{MYY:2011:2}, we get the claim.

If $\phi_{(\beta',\omega')}({\bf G}_{|X \times \{ y \}})<
\phi_{(\beta,\omega)}(E_i)$, then
$\Hom({\bf G}_{|X \times \{ y \}},E_i[2])=0$ for all $y$.
Hence $H^k(F_i)=0$ except for $k \ne 0, 1$.
Then by the same argument, we get the claim.

If $\phi_{(\beta,\omega')}({\bf G}_{|X \times \{ y \}})=
\phi_{(\beta,\omega)}(E_i)$, then
we see that $F_i[1] \in \Coh(Y)$ or $F_i={\frak k}_y$.
By the same argument, we get the claim.
\end{proof}
\end{NB}

\begin{defn}\label{defn:wall}
Let $v_1 \not \in {\Bbb Q}v$ 
be a Mukai vector with
$\langle v_1^2 \rangle \geq 0$, $\langle (v-v_1)^2 \rangle \geq 0$
and $\langle v_1,v-v_1 \rangle >0$.
We define a \emph{wall for $v$} by
$$
W_{v_1}:=\{(\beta,\omega) \in \NS(X)_{\Bbb R} \times \Amp(X)_{\Bbb R}
\mid {\Bbb R}Z_{(\beta,\omega)}(v_1)=
{\Bbb R}Z_{(\beta,\omega)}(v) \}.
$$
A connected component of
$\NS(X)_{\Bbb R} \times \Amp(X)_{\Bbb R} \setminus \cup_{v_1} W_{v_1}$
is called a \emph{chamber for $v$}.
\begin{NB}
The equation of $W_{v_1}$ is
$$
\langle e^{\beta+\sqrt{-1}\omega},v \rangle
\langle e^{\beta-\sqrt{-1}\omega},v_1 \rangle
=\langle e^{\beta-\sqrt{-1}\omega},v \rangle
\langle e^{\beta+\sqrt{-1}\omega},v_1 \rangle.
$$
\end{NB}
\end{defn}

Replacing $v$ by $-v$ if necessary, we may assume that
$Z_{(\beta,\omega)}(v) \in {\Bbb H} \cup {\Bbb R}_{<0}$.
Assume that $(\beta,\omega) \in W_{v_1}$.
By \cite[Prop. 4.2.2]{MYY:2011:2},
there are $\sigma_{(\beta,\omega)}$-semi-stable objects
$E_1$ and $E_2$ with $v(E_1)=v_1$ and $v(E_2)=v-v_1$.
Then ${\Bbb R}_{>0}Z_{(\beta,\omega)}(E_1)=
{\Bbb R}_{>0}Z_{(\beta,\omega)}(E_2)$.
Thus there is a properly $\sigma_{(\beta,\omega)}$-semi-stable
object with the Mukai vector $v$.

\begin{prop}[{\cite[Prop. 9.3]{Br:3}}]\label{prop:open}
Let $v$ be a Mukai vector with $\langle v^2 \rangle>0$.
Let ${\cal C}$ be a chamber for $v$.
Then ${\cal M}_{(\beta,\omega)}(v)$ 
is independent of $(\beta,\omega) \in {\cal C}$. 
\end{prop}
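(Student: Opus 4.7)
The plan is to show, for each fixed object $E$ with $v(E)=v$, that the
set $U_E:=\{(\beta,\omega)\in \cal{C} \mid E \in \cal{M}_{(\beta,\omega)}(v)\}$
is both open and closed in $\cal{C}$. Since $\cal{C}$ is connected, this
forces $U_E=\cal{C}$ whenever it is non-empty, so running the argument
over all $E$ in $\cal{M}_{(\beta_0,\omega_0)}(v)$ for a fixed basepoint
$(\beta_0,\omega_0)\in \cal{C}$ yields the claimed constancy of the
moduli stack on $\cal{C}$.

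For openness I would suppose for contradiction that $E$ is
$\sigma_{(\beta_0,\omega_0)}$-semi-stable but fails to be semi-stable along
some sequence $(\beta_n,\omega_n)\to (\beta_0,\omega_0)$ in $\cal{C}$.
Choose destabilizing subobjects $F_n\subset E$ in $\frk{A}_{(\beta_n,\omega_n)}$
with $\phi_{(\beta_n,\omega_n)}(F_n)>\phi_{(\beta_n,\omega_n)}(E)$,
and replace $F_n$ by its maximal Harder--Narasimhan factor so that each
$F_n$ is itself $\sigma_{(\beta_n,\omega_n)}$-semi-stable. The support
property of Bridgeland's construction, together with $\langle v^2\rangle>0$,
pins the possible Mukai vectors $v(F_n)$ to a finite set, and after
passing to a subsequence one may assume $v(F_n)=v_1$ is constant.
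Continuity of $Z_{(\beta,\omega)}$ in the parameters then gives
$\phi_{(\beta_0,\omega_0)}(v_1)\geq \phi_{(\beta_0,\omega_0)}(v)$, and the
assumed semi-stability of $E$ at $(\beta_0,\omega_0)$ upgrades this to
equality. Equivalently, $\bb{R}Z_{(\beta_0,\omega_0)}(v_1)=\bb{R}Z_{(\beta_0,\omega_0)}(v)$,
so $(\beta_0,\omega_0)\in W_{v_1}$, contradicting the chamber hypothesis.
Closedness is symmetric: any sub-object $F$ that strictly destabilizes
$E$ at a limit point $(\beta_\infty,\omega_\infty)\in \cal{C}$ would also
destabilize $E$ at nearby $(\beta_n,\omega_n)$ by continuity of phases,
contradicting $(\beta_n,\omega_n)\in U_E$.

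The main obstacle is to verify that the limiting class $v_1$ satisfies
all numerical conditions in Definition~\ref{defn:wall} required for
$W_{v_1}$ to be an honest wall: $v_1\notin\bb{Q}v$, $\langle v_1^2\rangle\geq 0$,
$\langle(v-v_1)^2\rangle\geq 0$, and crucially $\langle v_1,v-v_1\rangle>0$.
Non-parallelism follows from $F_n$ being a proper destabilizer; the two
discriminant inequalities come from the Bogomolov-type bound for
Bridgeland-semi-stable objects applied to $F_n$ and to the cone of
$F_n\to E$, which is available in the abelian surface setting via
\cite[Prop.~4.2.2]{MYY:2011:2} and the results on which it depends. The
strict positivity $\langle v_1,v-v_1\rangle>0$ is the most delicate point,
since the identity
$\langle v^2\rangle=\langle v_1^2\rangle+2\langle v_1,v-v_1\rangle+\langle(v-v_1)^2\rangle$
combined with $\langle v^2\rangle>0$ only yields non-negativity a priori;
ruling out the boundary case requires a Hodge-index style argument in the
signature $(2,\rho)$ Mukai lattice or, alternatively, exploitation of the
fact that $v_1$ and $v$ are non-proportional yet lie on the same ray of
$Z_{(\beta_0,\omega_0)}$. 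Once these numerical constraints are in place,
Definition~\ref{defn:wall} registers $W_{v_1}$ as a wall and the
open/closed dichotomy closes the argument.
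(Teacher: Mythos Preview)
Your outline follows the classical Bridgeland route (essentially \cite[Prop.~9.3]{Br:3}): for a fixed $E$, the semi-stability locus inside a chamber is open and closed because any destabilizing class $v_1$ arising in a limit would have to satisfy the numerical constraints of Definition~\ref{defn:wall} and hence place the limit point on a wall. The paper's own proof takes a genuinely different path. It picks $(\beta_0,\omega_0)\in\cal{C}$, chooses an isotropic $e^\gamma$ with the same $Z$-phase as $v$, and applies the Fourier--Mukai transform $\Phi_{X\to X_1}^{{\bf E}^\vee[1]}$ associated to $X_1=M_H(r_1 e^\gamma)$. By Theorem~\ref{thm:MYY2} this identifies $\cal{M}_{(\beta,\omega)}(v)$ with a moduli stack $\cal{M}_{\widetilde{\omega}}^{\alpha_1}(u)^{ss}$ of $\mu$-semi-stable twisted sheaves on $X_1$ satisfying $(c_1(u e^{-\widetilde{\beta}}),\widetilde{\omega})=0$; constancy is then reduced to the statement that no subsheaf $F_1$ of a $\mu$-semi-stable $F$ can have $(c_1(F_1(-\widetilde{\beta})),\widetilde{\omega})$ change sign without its inverse image under $\Phi$ defining a wall for $v$. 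In short, your argument stays inside Bridgeland's space of stability conditions and appeals to the support property, whereas the paper exits via Fourier--Mukai to an ordinary Gieseker/slope problem where the wall structure is transparent.

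On your gaps: the strict inequality $\langle v_1,v-v_1\rangle>0$ is indeed the crux, and the Hodge-index sketch you offer is not quite enough as written. What actually closes it on an abelian surface is the observation that if $\langle v_1^2\rangle=\langle(v-v_1)^2\rangle=0$ and $\langle v_1,v-v_1\rangle=0$, then $v_1$ and $v-v_1$ span an isotropic plane, which in the algebraic Mukai lattice forces $v-v_1\in\Bbb Q v_1$ and hence $v\in\Bbb Q v_1$, contradicting properness of the destabilizer; if instead one of $\langle v_1^2\rangle$, $\langle(v-v_1)^2\rangle$ is positive, the identity $\langle v^2\rangle=\langle v_1^2\rangle+2\langle v_1,v-v_1\rangle+\langle(v-v_1)^2\rangle$ together with $\langle v_1,v-v_1\rangle\geq 0$ (which follows from the Bogomolov-type inequality for semi-stable objects of equal phase, cf.\ \cite[Prop.~4.2.2]{MYY:2011:2}) gives what you need. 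You also gloss over the variation of the heart $\frk{A}_{(\beta,\omega)}$: your closedness step asserts that a destabilizing $F\subset E$ at $(\beta_\infty,\omega_\infty)$ remains a subobject at nearby $(\beta_n,\omega_n)$, but subobject status is taken in a heart that moves with the parameter. This is handled in Bridgeland's framework by the continuity of the slicing, but it deserves a sentence. The paper's Fourier--Mukai proof sidesteps both issues entirely, at the cost of importing Theorem~\ref{thm:MYY2}.
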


\begin{prop}\label{prop:wall:isometry}
Let $\xi:H^*(X,{\Bbb Z})_{\alg} \to H^*(Y,{\Bbb Z})_{\alg}$
be an isometry of Mukai lattices.
Let $W_u$ be a wall for $v$ defined by $u$.
Then $\xi(u)$ defines a wall for $\xi(v)$.
\end{prop}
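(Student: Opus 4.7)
The plan is to observe that the conditions in Definition \ref{defn:wall} that qualify a Mukai vector $v_1$ as a wall-defining vector for $v$ are purely numerical: rational independence from $v$ together with three inequalities formulated via the Mukai pairing $\langle \cdot,\cdot\rangle$. Since $\xi$ is an isometry between the algebraic Mukai lattices of $X$ and $Y$, each of these conditions transports verbatim from $(u,v)$ to $(\xi(u),\xi(v))$.

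Concretely, I would verify the four hypotheses of Definition \ref{defn:wall} in order. First, $\xi(u)\notin {\Bbb Q}\xi(v)$: extending $\xi$ to an isomorphism of ${\Bbb Q}$-vector spaces $H^*(X,{\Bbb Q})_{\alg}\to H^*(Y,{\Bbb Q})_{\alg}$, a relation $\xi(u)=q\,\xi(v)$ would force $u=qv$, contradicting $u\notin {\Bbb Q}v$. Second, $\langle \xi(u)^2\rangle=\langle u^2\rangle\geq 0$ because $\xi$ preserves the Mukai form. Third, using linearity of $\xi$ we have $\xi(v)-\xi(u)=\xi(v-u)$, whence $\langle (\xi(v)-\xi(u))^2\rangle=\langle (v-u)^2\rangle\geq 0$. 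Fourth, by the same token $\langle \xi(u),\xi(v)-\xi(u)\rangle=\langle u,v-u\rangle>0$.

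Consequently $\xi(u)$ satisfies every requirement for defining a wall with respect to $\xi(v)$, so the subset $W_{\xi(u)}\subset \NS(Y)_{\Bbb R}\times \Amp(Y)_{\Bbb R}$ is a well-defined wall for $\xi(v)$ in the sense of Definition \ref{defn:wall}. There is no real obstacle here: the statement asserts only that the combinatorial datum of ``being a wall-defining class'' is preserved, and since $\xi$ is an isometry this is a direct bookkeeping verification, with no need to track how the wall sits inside the parameter space or to compare the two parameter spaces.
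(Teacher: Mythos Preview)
Your proof is correct and follows essentially the same approach as the paper's: both argue that the wall-defining conditions of Definition~\ref{defn:wall} are purely expressed in terms of the Mukai pairing and are therefore preserved by any isometry $\xi$. The paper's version is terser (it sets $w:=v-u$ and notes that the conditions $\langle u^2\rangle,\langle w^2\rangle\geq 0$, $\langle u,w\rangle>0$ are preserved), while you additionally make explicit the preservation of the rational-independence condition $u\notin{\Bbb Q}v$, which the paper leaves implicit.
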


\begin{proof}
We set $w:=v-u$.
Then $u$ defines a wall for $v$ if and only if
$\langle u^2 \rangle, \langle w^2 \rangle \geq 0$ and
$\langle u,w \rangle>0$.
This condition is preserved under $\xi$.
So the claim holds.
\end{proof}

\begin{NB}
If $\xi$ an orientation preserving isometry, then
the claim will follows from \cite{Br:3}. 
\end{NB}

\begin{NB}
Let $\overline{\NS(X)_{\Bbb R} \times {\Bbb R}_{>0}H}
=\NS(X)_{\Bbb R} \times {\Bbb R}_{>0}H \cup \{\infty\}$
be the one point compactification of
$\NS(X)_{\Bbb R} \times {\Bbb R}_{>0}H$. 
Then we may regard that 
$v({\frak k}_x)$ corresponds to
$(\beta,\omega)=\infty$.
Since 
$(r_1 d_0(v)-r d_0(v_1))-\frac{2}{(\omega^2)}
(a_\beta(v_1) d_0(v)-a_\beta(v) d_0(v_1)) \to
(r_1 d_0(v)-r d_0(v_1)),\; (\omega^2) \to \infty$,
$\sigma_{\infty}$-semi-stable objects are 
generated by $\mu$-semi-stable sheaves and 
$0$-dimensional sheaves.

 \end{NB}

Let $v$ be a primitive Mukai vector with $\rk v>0$
and assume that $M_H^\beta(v)$ consists of $\beta$-twisted stable
sheaves (in the sense of Gieseker).

\begin{lem}
Assume that $\langle v^2 \rangle>0$ and 
$r:=-\langle v,\varrho_X \rangle \ne 0$.
Let $v_1$ be a Mukai vector such that
$r c_1(v_1)-r_1 c_1(v)=0$,
where we set $r_1 := -\langle v_1,\varrho_X \rangle$.
Then ${\Bbb R}Z_{(\beta,\omega)}(v)={\Bbb R}Z_{(\beta,\omega)}(v_1)$
if and only if $d_\beta(v)=0$.
In particular, if $v_1$ defines a wall, then
the defining equation of the wall $W_{v_1}$ for $v$ 
is $d_\beta(v)=0$.
\end{lem}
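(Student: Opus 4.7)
The plan is a direct real/imaginary-part calculation. Writing $Z_{(\beta,\omega)}(v) = A + \sqrt{-1} B$ and $Z_{(\beta,\omega)}(v_1) = A_1 + \sqrt{-1} B_1$ via the decomposition
\[
 Z_{(\beta,\omega)}(u) = -a_\beta(u) + \tfrac{(\omega^2)}{2} r_\beta(u) + d_{\beta,H}(u) (H,\omega) \sqrt{-1}
\]
recorded in the introduction, the condition $\bb{R} Z_{(\beta,\omega)}(v) = \bb{R} Z_{(\beta,\omega)}(v_1)$ is equivalent to $A B_1 - A_1 B = 0$.

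The first step is to observe that the hypothesis $r c_1(v_1) = r_1 c_1(v)$ already forces $r B_1 = r_1 B$. Indeed, from \eqref{eq:v:rda} one has $d_{\beta,H}(u) = (c_1(u) - r(u)\beta, H)/(H^2)$, hence
\[
 r B_1 - r_1 B = (H,\omega)\cdot (r c_1(v_1) - r_1 c_1(v), H)/(H^2) = 0.
\]
Substituting $B_1 = (r_1/r) B$ into $A B_1 = A_1 B$ factors as $B\bigl(A_1 - (r_1/r) A\bigr) = 0$, leaving two possibilities: either $d_\beta(v) = 0$ (the desired conclusion), or $A_1 = (r_1/r) A$.

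The main step is to rule out this second alternative when $v_1 \notin \bb{Q} v$ (a condition built into Definition \ref{defn:wall}). Using $a_\beta(u) = \chi(u) + r(u)\beta^2/2 - (c_1(u),\beta)$, the $\beta^2$- and $(\omega^2)$-terms in $A_1 - (r_1/r) A$ cancel, the $\beta$-linear term drops by the $c_1$-hypothesis, and what remains is the scalar identity $r \chi(v_1) = r_1 \chi(v)$. Combined with $r c_1(v_1) = r_1 c_1(v)$ and the trivial $r r_1 = r_1 r$, this yields $r v_1 = r_1 v$ in $H^*(X,\bb{Z})_{\alg}$, hence $v_1 \in \bb{Q}v$, a contradiction. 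The ``in particular'' clause is then immediate from the iff.

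I do not anticipate any serious obstacle: the argument is bookkeeping with \eqref{eq:v:rda} together with the elementary criterion $\bb{R} z = \bb{R} z' \iff z \overline{z'} \in \bb{R}$. The only point worth being explicit about is that the condition ``$v_1$ defines a wall'' (equivalently $v_1 \notin \bb{Q} v$) is precisely what eliminates the degenerate branch $A_1 = (r_1/r) A$ in which both $Z$'s happen to be $\bb{R}$-proportional at every $(\beta,\omega)$.
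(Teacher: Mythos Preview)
Your proof is correct and is essentially the paper's argument unpacked into real and imaginary parts. The paper packages it more compactly by observing in one stroke that $r v_1 - r_1 v = b\varrho_X$ for some $b \in \bb{Z}\setminus\{0\}$ (the rank and $c_1$ components vanish by hypothesis, and $b\ne 0$ since $v_1\notin\bb{Q}v$), whence $Z_{(\beta,\omega)}(v_1)=\tfrac{r_1}{r}Z_{(\beta,\omega)}(v)-\tfrac{b}{r}$ and the equivalence with $d_\beta(v)=0$ is immediate.
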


\begin{proof}
We have $r v_1-r_1 v=b \varrho_X$, $0 \ne b \in {\Bbb Z}$.
Hence $rZ_{(\beta,\omega)}(v_1)-r_1 Z_{(\beta,\omega)}(v)=
-b \in {\Bbb R}$.
Then we have
 $Z_{(\beta,\omega)}(v_1)=\frac{r_1}{r}Z_{(\beta,\omega)}(v)-\frac{b}{r}$.
Hence the condition is $d_\beta(v)=0$.
\end{proof}

\begin{NB}
\begin{lem}
Assume that $\langle v^2 \rangle>0$ and $r \ne 0$.
Let $v_1$ be a Mukai vector such that
$(r c_1(v_1)-r_1 c_1(v),H)=0$.
If $v_1$ defines a wall, then
the equation is
$d_\beta(v)(r_1 a_\beta(v)-r a_\beta(v_1))=0$.
\end{lem}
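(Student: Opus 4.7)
The plan is to compute the wall equation ${\Bbb R}Z_{(\beta,\omega)}(v_1)={\Bbb R}Z_{(\beta,\omega)}(v)$ directly in coordinates, and use the weaker hypothesis $(rc_1(v_1)-r_1 c_1(v),H)=0$ to reduce the two-variable quadratic expression in $\omega$ to a product of two factors.

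First I would expand the stability functions. Writing $Z_{(\beta,\omega)}(v)=X+\sqrt{-1}Y$ and $Z_{(\beta,\omega)}(v_1)=X_1+\sqrt{-1}Y_1$, the formula from the introduction gives
\begin{align*}
X&=-a_\beta(v)+\tfrac{(\omega^2)}{2}r,\quad Y=d_{\beta,H}(v)(H,\omega),\\
X_1&=-a_\beta(v_1)+\tfrac{(\omega^2)}{2}r_1,\quad Y_1=d_{\beta,H}(v_1)(H,\omega).
\end{align*}
The condition that $Z_{(\beta,\omega)}(v_1)$ and $Z_{(\beta,\omega)}(v)$ are real proportional is $XY_1-X_1Y=0$. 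Expanding directly,
\begin{equation*}
XY_1-X_1Y=(H,\omega)\Bigl[\bigl(-a_\beta(v)+\tfrac{(\omega^2)}{2}r\bigr)d_{\beta,H}(v_1)-\bigl(-a_\beta(v_1)+\tfrac{(\omega^2)}{2}r_1\bigr)d_{\beta,H}(v)\Bigr].
\end{equation*}

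Next I would exploit the hypothesis. Using the explicit formula $d_{\beta,H}(v)=\bigl((c_1(v),H)-r(H,\beta)\bigr)/(H^2)$, one finds
\begin{equation*}
r\,d_{\beta,H}(v_1)-r_1\,d_{\beta,H}(v)=\frac{(rc_1(v_1)-r_1c_1(v),H)}{(H^2)}=0,
\end{equation*}
so $r\,d_{\beta,H}(v_1)=r_1\,d_{\beta,H}(v)$. Multiplying the bracket above by the nonzero scalar $r$ and substituting this identity, the two $(\omega^2)$-terms cancel (both become $\tfrac{(\omega^2)}{2}rr_1d_{\beta,H}(v)$), and what remains is
\begin{equation*}
-a_\beta(v)\cdot r_1 d_{\beta,H}(v)+a_\beta(v_1)\cdot r\,d_{\beta,H}(v)=-d_{\beta,H}(v)\bigl(r_1a_\beta(v)-r\,a_\beta(v_1)\bigr).
\end{equation*}

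Therefore the wall equation $XY_1-X_1Y=0$ is equivalent, up to the nonzero factor $(H,\omega)/r$ (note $(H,\omega)>0$ since $\omega\in\Amp(X)_{\Bbb R}$ and $r\ne 0$ by hypothesis), to $d_\beta(v)\bigl(r_1a_\beta(v)-r\,a_\beta(v_1)\bigr)=0$, which is what is claimed. The whole argument is algebraic manipulation; the only real step is recognizing that the $H$-orthogonality hypothesis is precisely what cancels the $(\omega^2)$-quadratic part and factors the remaining expression, so no step looks like it will be a serious obstacle.
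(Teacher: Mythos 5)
Your proof is correct and follows essentially the same route as the paper: the paper's (very terse) proof likewise observes that the hypothesis gives $d_\beta(v_1)=\tfrac{r_1}{r}d_\beta(v)$ and then asserts the factorization, which is exactly the cancellation of the $(\omega^2)$-terms that you carry out explicitly. You have simply supplied the algebra that the paper leaves implicit.
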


\begin{proof}
Since $(r c_1(v_1)-r_1 c_1(v),H)=0$, we have
$(c_1(v_1)-r_1 \beta,H)=\frac{r_1}{r}(c_1(v)-r \beta,H)$.
Thus $d_\beta(v_1)=\frac{r_1}{r}d_\beta(v)$.
Hence we get 
$d_\beta(v)(r_1 a_\beta(v)-r a_\beta(v_1))=0$.
\end{proof}
\end{NB}

\begin{NB}
We set $\beta:=\beta_0+\eta$. Then
$r_1 a_\beta(v)-r a_\beta(v_1)=(r_1 a_{\beta_0}(v)-r a_{\beta_0}(v_1))
-(r_1 c_1(v)-r c_1(v_1),\eta)$.
Since $d_\beta(v)=d_{\beta_0}(v)-r(\eta,H)/(H^2)$,
the set consists of two lines.
If $\eta \in {\Bbb R}H$, then 
$r_1 a_\beta(v)-r a_\beta(v_1)=r_1 a_{\beta_0}(v)-r a_{\beta_0}(v_1)$
is constant.
\end{NB}

\begin{NB}
\begin{lem}
$d_\beta(v)-r s=0$ is not a wall if and only if
every $\mu$-semi-stable sheaf $E$ with $v(E)=v$ is a
$\mu$-stable vector bundle.
\end{lem}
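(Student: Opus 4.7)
The key translation is $d_{\beta+sH}(v) = d_\beta(v) - rs$, so the equation $d_\beta(v) - rs = 0$ cuts out the vertical line $s = d_\beta(v)/r$ in the $(s,t)$-plane, which is exactly the locus $d_{\beta+sH}(v) = 0$. By the preceding lemma, this line is the wall $W_{v_1}$ associated with any Mukai vector $v_1$ satisfying $r\,c_1(v_1) = r_1\,c_1(v)$ (where $r_1 := -\langle v_1,\varrho_X\rangle$) that fulfills the conditions of Definition \ref{defn:wall}. Hence the locus fails to be a wall iff no such $v_1$ exists, and the task reduces to matching the existence of such a $v_1$ with the existence of a $\mu_H$-semistable sheaf with Mukai vector $v$ that is either not $\mu_H$-stable or not locally free.

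For the implication ``not a wall $\Rightarrow$ every $\mu_H$-semistable sheaf is a $\mu_H$-stable vector bundle,'' I argue contrapositively. Let $E$ be $\mu_H$-semistable with $v(E) = v$ and not a $\mu_H$-stable vector bundle. If $E$ is torsion-free but not $\mu_H$-stable, pick a saturated $\mu_H$-semistable subsheaf $F \subsetneq E$ of the same slope (a Jordan--H\"older subobject) and set $v_1 := v(F)$. Slope equality gives $r\,c_1(v_1) = r_1\,c_1(v)$; applying the Bogomolov inequality to $F$ and $G := E/F$ yields $\langle v_1^2\rangle, \langle (v-v_1)^2\rangle \ge 0$; primitivity of $v$ with $0 < r_1 < r$ excludes $v_1 \in \bb{Q}v$; and the strict inequality $\langle v_1, v - v_1\rangle > 0$ follows from $\langle v_1, v - v_1\rangle = -\chi(F,G)$, Serre duality on $X$, and the non-vanishing of $\Ext^1(F, G)$ forced by existence of $E$ together with the running hypothesis that $M_H^\beta(v)$ is stable. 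If $E$ is torsion-free but not locally free, pass to the reflexive hull $E^{\vee\vee}$: it is $\mu_H$-semistable and locally free, with $v(E^{\vee\vee}) = v + (0, 0, a_0)$ for some $a_0 > 0$. Setting $v_1 := v(E^{\vee\vee})$, the proportionality is immediate, Bogomolov on $E^{\vee\vee}$ gives $\langle v_1^2\rangle \ge 0$, $\langle (v-v_1)^2\rangle = \langle (0,0,-a_0)^2\rangle = 0$, $v_1 \notin \bb{Q}v$ by primitivity, and a direct computation yields $\langle v_1, v - v_1\rangle = r a_0 > 0$. In either subcase $v_1$ defines a wall on the line $d_\beta(v) - rs = 0$, contradicting the hypothesis.

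For the converse, suppose the line is a wall, witnessed by $v_1$. Since $\mu_H(v_1) = \mu_H(v)$ and $\langle v_1^2\rangle, \langle (v-v_1)^2\rangle \ge 0$, standard existence theorems produce $\mu_H$-semistable sheaves $F, G$ with $v(F) = v_1$, $v(G) = v - v_1$. If $0 < r_1 < r$, the direct sum $E := F \oplus G$ is $\mu_H$-semistable with $v(E) = v$ but is not $\mu_H$-stable (it is a nontrivial direct sum). If $r_1 = r$, then $c_1(v_1) = c_1(v)$ and $v - v_1 = (0, 0, -a_0)$ with $a_0 > 0$; choosing a $0$-dimensional subscheme $Z \subset F$ of length $a_0$, the sheaf $E := \ker(F \twoheadrightarrow \mathcal{O}_Z)$ is $\mu_H$-semistable torsion-free with $v(E) = v$ but is not locally free. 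Either way, the right-hand side fails.

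\textbf{Main obstacle.} The delicate step is the strict inequality $\langle v_1, v - v_1\rangle > 0$ in the torsion-free non-$\mu_H$-stable subcase: the Bogomolov inequalities and primitivity alone give only weak positivity, and the borderline case where $F, G$ are distinct $\mu_H$-stables of equal slope with orthogonal Mukai vectors (so $\Ext^1(F,G) = 0$ and $E = F \oplus G$) must be ruled out using both primitivity of $v$ and the standing assumption that $M_H^\beta(v)$ consists entirely of stable sheaves; if the naive destabilizing $F$ fails the strict inequality, one must iterate over Jordan--H\"older subobjects to locate a $v_1$ with $\langle v_1, v - v_1\rangle > 0$.
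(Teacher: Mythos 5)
Your strategy is genuinely different from the paper's. The paper argues categorically: on the line $s=d_\beta(v)/r$, every object of $\frak{A}_{(\beta+sH,\omega)}$ with $d_{\beta+sH}=0$ is generated by the skyscrapers ${\frak k}_x$ and by $F[1]$ with $F$ a $\mu$-stable vector bundle of the same slope, so the line being a wall is equivalent to a nontrivial decomposition $-v=n_1 v({\frak k}_x)+\sum_i v(F_i[1])$, which is read off directly (in both directions) as the existence of a non-locally-free or properly $\mu$-semi-stable sheaf with vector $v$. You instead work purely with the numerical conditions of Definition \ref{defn:wall}, Bogomolov's inequality and elementary modifications. That route is viable and more self-contained, but as written it has two gaps, and both are closed by the same identity: for $v_i=(r_i,\xi_i,a_i)$ with $r_1r_2\ne 0$ and equal $H$-slope,
\[
2\langle v_1,v_2\rangle=\frac{r_2}{r_1}\langle v_1^2\rangle+\frac{r_1}{r_2}\langle v_2^2\rangle-\frac{\bigl((r_2\xi_1-r_1\xi_2)^2\bigr)}{r_1r_2},
\qquad (r_2\xi_1-r_1\xi_2,H)=0.
\]

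The first gap is exactly the step you flag: your justification of $\langle v_1,v-v_1\rangle>0$ via $\langle v_1,v_2\rangle=-\chi(F,G)$ and $\Ext^1(F,G)\ne 0$ proves nothing, since $-\chi(F,G)=-\dim\Hom(F,G)+\dim\Ext^1(F,G)-\dim\Ext^2(F,G)$ and the outer terms may dominate; iterating over Jordan--H\"older factors does not by itself escape the orthogonal-isotropic-factors scenario either. What does work: in the displayed identity all three terms on the right are $\ge 0$ when $r_1,r_2>0$ (Bogomolov for the first two, Hodge index for the third), and equality throughout forces $v_1$, $v_2$, hence $v$, to be proportional to one isotropic vector, contradicting $\langle v^2\rangle>0$. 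So \emph{any} saturated same-slope subsheaf works, with no case analysis. The second gap is in the converse: you only treat $0<r_1\le r$, but Definition \ref{defn:wall} permits $r_1<0$ or $r_1>r$, where neither $v_1$ nor $v-v_1$ is the vector of a sheaf and your constructions $F\oplus G$ and $\ker(F\twoheadrightarrow {\cal O}_Z)$ are unavailable. The same identity rescues you: if $r_1r_2<0$ all three terms are $\le 0$, so $\langle v_1,v_2\rangle\le 0$ and such a $v_1$ never defines a wall; hence after swapping $v_1$ and $v-v_1$ you may assume $0\le r_1\le r$ and your two cases are exhaustive. A smaller slip: slope equality gives only $(rc_1(v_1)-r_1c_1(v),H)=0$, not $rc_1(v_1)=r_1c_1(v)$, unless $\NS(X)={\Bbb Z}H$; that weaker condition is nonetheless what makes the wall equal to the line $rs=d_\beta(v)$ in the $(s,t)$-plane, by the variant of the preceding lemma.
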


\begin{proof}
We note that 
every $G \in {\frak A}_{(\beta+sH,\omega)}$
with $d_{\beta+sH}(G)=0$ is generated by
${\frak k}_x$ ($x \in X$) and $F[1]$,
where $F$ is a $\mu$-stable vector bundle
with $d_{\beta+sH}(F)=0$.
If $d_{\beta+sH}(v)=d_\beta(v)-rs=0$ 
is a wall, then
we have a non-trivial decomposition
$-v=n_1 v({\frak k}_x)+\sum_{i=1}^{n_2} v(F_i[1])$ ($n_1, n_2 \geq 0$),
where $F_i$ are $\mu$-stable locally free sheaves with
$d_{\beta+sH}(F_i)=0$.
Then we see that
there is a non-locally free sheaf $F$ with 
$v(F)=v$
or there is a properly $\mu$-semi-stable sheaf $F$ with $v(F)=v$.
 
Conversely if 
there is a non-locally free sheaf $F$ with 
$v(F)=v$
or there is a properly $\mu$-semi-stable sheaf $F$ with $v(F)=v$, then
there is a non-trivial decomposition
$v(E)=n_1 v({\frak k}_x)+\sum_{i=1}^{n_2} v(F_i[1])$ ($n_1, n_2 \geq 0$).
Then $d_\beta-rs$ defines a wall.
\end{proof}
\end{NB}

\section{Fourier-Mukai transforms.}\label{sect:FM}

\subsection{Stability conditions and Fourier-Mukai transforms.}

We shall recall the (twisted) Fourier-Mukai transform
of $\sigma_{(\beta,\omega)}$ and its relation to 
Bridgeland's stability conditions 
explained in \cite{MYY:2011:2}.
Let $\Phi:{\bf D}(X) \to {\bf D}^{\alpha_1}(X_1)$ be a
twisted Fourier-Mukai transform such that
$\Phi(r_1 e^\gamma)=-\varrho_{X_1}$ and
$\Phi(\varrho_X)=-r_1 e^{\gamma'}$,
where $\alpha_1$ is a representative of a suitable
Brauer class.
Then we can describe the cohomological Fourier-Mukai transform
as 
\begin{equation*}
\Phi(r e^\gamma+a \varrho_X+\xi+(\xi,\gamma)\varrho_X)
=-\frac{r}{r_1} \varrho_{X_1}-r_1 a e^{\gamma'}+
\frac{r_1}{|r_1|}
( \widehat{\xi}+(\widehat{\xi},\gamma')\varrho_{X_1}),
\end{equation*}
where $\xi \in \NS(X)_{\Bbb Q}$ and $
\widehat{\xi}:=
\frac{r_1}{|r_1|} 
c_1(\Phi(\xi+(\xi,\gamma)\varrho_X)) \in \NS(X_1)_{\Bbb Q}$.
\begin{rem}
By taking a locally free $\alpha_1$-twisted stable
sheaf $G$ with $\chi(G,G)=0$,
we have a notion of Mukai vector, thus, we have
a map (\cite[Rem. 1.2.10]{MYY:2011:1}):
$$
v_G:{\bf D}^{\alpha_1}(X_1) \to H^*(X_1,{\Bbb Q})_{\alg}.
$$
\end{rem}

We set 
\begin{equation}\label{eq:tilde(beta)}
\begin{split}
\widetilde{\omega}:= & -\frac{1}{|r_1|}
\frac{\frac{((\beta-\gamma)^2)-(\omega^2)}{2}\widehat{\omega}-
(\beta-\gamma,\omega)(\widehat{\beta}-\widehat{\gamma})}
{\left(\frac{((\beta-\gamma)^2)-(\omega^2)}{2} \right)^2
+(\beta-\gamma,\omega)^2},\\
\widetilde{\beta}:= & \gamma'-\frac{1}{|r_1|}
\frac{\frac{((\beta-\gamma)^2)-(\omega^2)}{2}(\widehat{\beta}-\widehat{\gamma})
-(\beta-\gamma,\omega) \widehat{\omega}}
{\left(\frac{((\beta-\gamma)^2)-(\omega^2)}{2} \right)^2
+(\beta-\gamma,\omega)^2}.
\end{split}
\end{equation}

\begin{NB}
We set
\begin{equation}
\begin{split}
\xi:=& 
\frac{1}{|r_1|}
\frac{2}{((\lambda-s)^2+t^2) (H^2)}(\lambda-s) H,\\
\eta:=& 
\frac{1}{|r_1|}
\frac{2}{((\lambda-s)^2+t^2) (H^2)}t H.\\
\end{split}
\end{equation}
\end{NB}

By \cite[sect. 5.1]{MYY:2011:2},
we get the following commutative diagram:
\begin{equation}
\xymatrix{
   {\bf D}(X) \ar[r] \ar[d]_{Z_{(\beta,\omega)}}
 & {\bf D}^{\alpha_1}(X_1) \ar[d]^{Z_{(\wt{\beta},\wt{\omega})}} \\
   {\Bbb C}  \ar[r]_{\zeta^{-1}} 
 & {\Bbb C}
}
\end{equation}
where 
$$
\zeta=-r_1 \left(
\frac{((\gamma-\beta)^2)-(\omega^2)}{2}
+\sqrt{-1}(\beta-\gamma,\omega) \right).
$$

\begin{NB}
If we fix a base point $\beta'$
and write $\beta'=\gamma'-\lambda' \widehat{H}$,
then $\beta'+s' \widehat{H}=\gamma'+(s'-\lambda')\widehat{H}$
and the relation of
$Z_{(\beta+sH,tH)}$ and $Z_{(\beta'+s' \widehat{H},t' \widehat{H})}$
is 
$$
s'-\lambda'=\frac{1}{|r_1|}
\frac{2(\lambda-s)}{((\lambda-s)^2+t^2) (H^2)},\;
t'=\frac{1}{|r_1|}
\frac{2t}{((\lambda-s)^2+t^2) (H^2)}.
$$

\end{NB}

Let ${\bf E}$ be a complex such that
$\Phi=\Phi_{X \to X_1}^{{\bf E}^{\vee}[1]}$.
Then
\begin{equation}\label{eq:Phi(phase)}
\phi_{(\widetilde{\beta},\widetilde{\omega})}(\Phi(E))=
\phi_{(\beta,\omega)}(E)-
\phi_{(\beta,\omega)}({\bf E}_{|X \times \{ x_1 \}}).
\end{equation}

\begin{NB}
\begin{proof}
We note that
$$
\phi_{(\widetilde{\beta},\widetilde{\omega})}(\Phi(E)) \equiv 
\phi_{(\beta,\omega)}(E)+m \mod 2.
$$
For a $\sigma_{(\beta,\omega)}$-semi-stable
object $E \in {\bf D}(X)$ with 
$\phi_{(\beta,\omega)}({\bf E}_{|X \times \{ x_1 \}})+1>
\phi_{(\beta,\omega)}(E)>\phi_{(\beta,\omega)}({\bf E}_{|X \times \{ x_1 \}})$,
we see that $H^i(\Phi(E))=0$ for $i \ne -1,0$.
Hence $\Phi(E) \in {\frak A}_{(\widetilde{\beta},\widetilde{\omega})}[1]
\cup {\frak A}_{(\widetilde{\beta},\widetilde{\omega})} \cup 
{\frak A}_{(\widetilde{\beta},\widetilde{\omega})}[-1]$.
In particular, 
$\phi_{(\widetilde{\beta},\widetilde{\omega})}(\Phi(E)) \in (-1,2)$.
Since $\Phi({\bf E}_{|X \times \{ x_1 \}})={\frak k}_{x_1}[-1]$,
$\phi_{(\widetilde{\beta},\widetilde{\omega})}
(\Phi({\bf E}_{|X \times \{ x_1 \}}))=0$.
Then 
\begin{equation}
\phi_{(\widetilde{\beta},\widetilde{\omega})}(\Phi(E))=
\phi_{(\widetilde{\beta},\widetilde{\omega})}(\Phi(E))
-\phi_{(\widetilde{\beta},\widetilde{\omega})}
(\Phi({\bf E}_{|X \times \{ x_1 \}}))
\equiv \phi_{(\beta,\omega)}(E)-
\phi_{(\beta,\omega)}({\bf E}_{|X \times \{ x_1 \}}) \mod 2{\Bbb Z}.
\end{equation}
Hence $\phi_{(\widetilde{\beta},\widetilde{\omega})}(\Phi(E)) \in (2k,2k+1)$.
Therefore $k=0$ and 
\begin{equation}
\phi_{(\widetilde{\beta},\widetilde{\omega})}(\Phi(E))=
\phi_{(\beta,\omega)}(E)-
\phi_{(\beta,\omega)}({\bf E}_{|X \times \{ x_1 \}}).
\end{equation}
\end{proof}
\end{NB}

\begin{NB}
Assume that 
$\langle v(E),v({\bf E}_{|X \times \{ x_1 \}})
\rangle \ne 0$. 
There is $(\beta,\omega)$
such that ${\Bbb R}Z_{(\beta,\omega)}(v(E))=
{\Bbb R}Z_{(\beta,\omega)}({\bf E}_{|X \times \{ x_1 \}})$.
Assume that $E \in {\frak A}_{(\beta,\omega)}$.
If ${\bf E}_{|X \times \{ x_1 \}}[n] \in {\frak A}_{(\beta,\omega)}$, then
$\rk \Phi(E[1-n])>0$.
In the region 
$\phi_{(\beta,\omega)}(E)>
\phi_{(\beta,\omega)}({\bf E}_{|X \times \{ x_1 \}})$,
$1>\phi_{(\widetilde{\beta},\widetilde{\omega})}(\Phi(E[1-n]))>0$.
In particular, $d_{\widetilde{\beta}}(\Phi(E[1-n]))>0$.
\begin{NB2}
$d_{\widetilde{\beta}}(\Phi(E[1-n]))>0$ and $H^i(\Phi(E[1-n]))=0$
for $i \ne -1,0$ is enough to show
$1>\phi_{(\widetilde{\beta},\widetilde{\omega})}(\Phi(E[1-n]))>0$.
Indeed $d_{\widetilde{\beta}}(\Phi(E[1-n]))>0$ 
implies that 
$\phi_{(\widetilde{\beta},\widetilde{\omega})}(\Phi(E[1-n])) \in
(2k,2k+1)$.
Then $H^i(\Phi(E[1-n]))=0$ for $i \ne -2k-1,-2k$.
Therefore $k=0$, which implies the claim.
\end{NB2}

Assume that 
$E[1] \in {\frak A}_{(\beta,\omega)}$.
If ${\bf E}_{|X \times \{ x_1 \}}[n] \in {\frak A}_{(\beta,\omega)}$, then
$\rk \Phi(E[2-n])>0$.
In the region 
$\phi_{(\beta,\omega)}(E[1])>
\phi_{(\beta,\omega)}({\bf E}_{|X \times \{ x_1 \}})$,
$1>\phi_{(\widetilde{\beta},\widetilde{\omega})}(\Phi(E[2-n]))>0$.

If $\langle v(E),v({\bf E}_{|X \times \{ x_1 \}})
\rangle= 0$. Then there is $(\beta,\omega)$ with
 ${\Bbb R}Z_{(\beta,\omega)}(v(E))=
{\Bbb R}Z_{(\beta,\omega)}({\bf E}_{|X \times \{ x_1 \}})$
if and only if $v(E) \in {\Bbb Q} v({\bf E}_{|X \times \{ x_1 \}})$.

For ${\bf E}_{|X \times \{ x_1 \}}$,
$\Phi({\bf E}_{|X \times \{ x_1 \}})={\frak k}_{x_1}[-2]$.
Hence $\phi_{(\widetilde{\beta},\widetilde{\omega})}
(\Phi({\bf E}_{|X \times \{ x_1 \}}))=-1$.

If $\phi_{(\beta,\omega)}(E)>
\phi_{(\beta,\omega)}({\bf E}_{|X \times \{ x_1 \}})$,
then $H^i(\Phi(E))=0$ for $i \geq 2$ implies that
$\phi_{(\widetilde{\beta},\widetilde{\omega})}(\Phi(E)) > -2$.
If 
$\phi_{(\beta,\omega)}({\bf E}_{|X \times \{ x_1 \}})+1
> \phi_{(\beta,\omega)}(E)$,
then we have $H^i(\Phi(E))=0$ for $i < 0$,
which implies that
$\phi_{(\widetilde{\beta},\widetilde{\omega})}(\Phi(E)) \leq 1$.

\end{NB}

\begin{thm}(cf. \cite[Thm. 2.2.1]{MYY:2011:2})\label{thm:MYY2}
Assume that ${\Bbb R}Z_{(\beta,\omega)}(v)=
{\Bbb R}Z_{(\beta,\omega)}(e^{\gamma})$.
If $r_1 e^\gamma$ does not define a wall, then 
$\Phi$ induces an isomorphism
$$
{\cal M}_{(\beta,\omega)}(v) \to {\cal M}_{\widetilde{\omega}}(u)^{ss},
$$
where $u:=\Phi(v)$.
\end{thm}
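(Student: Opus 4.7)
The plan is to apply the intertwining $Z_{(\widetilde\beta,\widetilde\omega)} \circ \Phi = \zeta^{-1} Z_{(\beta,\omega)}$ together with the phase identity \eqref{eq:Phi(phase)} to convert $\sigma_{(\beta,\omega)}$-semistability with $v(E) = v$ into $\widetilde\omega$-(twisted) Gieseker semistability of a sheaf with Mukai vector $u = \Phi(v)$, closely following the strategy of \cite[Thm.~2.2.1]{MYY:2011:2}. The no-wall hypothesis on $r_1 e^\gamma$ is what ensures that $\Phi(E)$ sits in a single cohomological degree.

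First, the hypothesis ${\Bbb R}Z_{(\beta,\omega)}(v) = {\Bbb R}Z_{(\beta,\omega)}(r_1 e^\gamma)$ transfers under the intertwining to ${\Bbb R}Z_{(\widetilde\beta,\widetilde\omega)}(u) = {\Bbb R}Z_{(\widetilde\beta,\widetilde\omega)}(-\varrho_{X_1}) = {\Bbb R}_{<0}$, equivalently $d_{\widetilde\beta}(u) = 0$. This places $u$ at the boundary phase at which $\sigma_{(\widetilde\beta,\widetilde\omega)}$-semistability is known to reduce to $\widetilde\omega$-twisted Gieseker semistability of an honest sheaf. Furthermore, Proposition~\ref{prop:wall:isometry} shows that $-\varrho_{X_1}$ does not define a wall for $u$.

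The core step is to prove, for any $E \in \cal{M}_{(\beta,\omega)}(v)$, that $\Phi(E)$ is concentrated in a single degree. Writing $L_{x_1} := {\bf E}_{|X \times \{ x_1 \}}$, this object has Mukai vector $\pm r_1 e^\gamma$ and is $\sigma_{(\beta,\omega)}$-stable of the same phase as $v$. The no-wall hypothesis guarantees that no Jordan--Hölder factor of $E$ has Mukai vector in ${\Bbb Q}\, r_1 e^\gamma$; combined with stability of $L_{x_1}$ and Schur's lemma, this yields $\Hom(L_{x_1}, E) = \Hom(E, L_{x_1}) = 0$. Serre duality on the abelian surface ($K_X = 0$) then gives $\Ext^2(L_{x_1}, E) = 0$, and the vanishing $\Ext^i(L_{x_1}, E) = 0$ for $i < 0$ is automatic. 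Hence $R\Hom(L_{x_1}, E)$ is concentrated in degree $1$, so $\Phi(E)_{x_1}$ is concentrated in a single degree; base change and cohomology and base change then promote this to $\Phi(E)$ itself being a sheaf $F$ on $X_1$ up to the shift fixed by \eqref{eq:Phi(phase)}.

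Finally, since $\zeta^{-1}$ is a nonzero scalar, the $Z$-intertwining transports semistability, and at the boundary phase this $\sigma_{(\widetilde\beta,\widetilde\omega)}$-semistability of $F$ coincides with $\widetilde\omega$-twisted Gieseker semistability with Mukai vector $u$. The assignment $E \mapsto F$ thus defines a morphism $\cal{M}_{(\beta,\omega)}(v) \to \cal{M}_{\widetilde\omega}(u)^{ss}$, and running the same argument with the inverse Fourier--Mukai transform provides its inverse. The main obstacle is the cohomological concentration step: extracting the vanishings $\Hom(L_{x_1}, E) = \Hom(E, L_{x_1}) = 0$ from the no-wall hypothesis requires careful tracking of Jordan--Hölder filtrations at the critical phase, where distinct stable factors of the same phase have no homomorphisms between them only because the no-wall condition forbids any of them to be isomorphic to $L_{x_1}$.
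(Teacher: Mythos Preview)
The paper does not supply its own proof of this theorem: it is quoted from \cite[Thm.~2.2.1]{MYY:2011:2} and used immediately as input to the next argument. Your sketch is the expected strategy of that reference, and the architecture is right: the $Z$-intertwining plus \eqref{eq:Phi(phase)} reduces the claim to showing that $\Phi(E)$ is a sheaf in a single degree, and the no-wall hypothesis is exactly what forces the vanishing of $\Hom$ and $\Ext^2$ between $L_{x_1}$ and $E$.

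Two places deserve tightening. First, your claim ``no Jordan--H\"older factor of $E$ has Mukai vector in ${\Bbb Q}\,r_1 e^\gamma$'' is not literally a consequence of Definition~\ref{defn:wall} alone: you must check that if $L_{x_1}$ were such a factor, then the \emph{numerical} conditions $\langle (v-r_1 e^\gamma)^2\rangle \ge 0$ and $\langle r_1 e^\gamma, v-r_1 e^\gamma\rangle>0$ actually hold. The first follows because on an abelian surface every nonzero semistable object has nonnegative square (no spherical objects). For the second, note that $\langle r_1 e^\gamma, v\rangle = \rk u$; so the argument as written needs $\rk u \ne 0$, which is implicit in the target being ${\cal M}_{\widetilde\omega}(u)^{ss}$ but should be said. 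Second, the passage ``at the boundary phase $\sigma_{(\widetilde\beta,\widetilde\omega)}$-semistability coincides with $\widetilde\omega$-twisted Gieseker semistability'' is true but not automatic: at $d_{\widetilde\beta}(u)=0$ the phase-$1$ objects are generated by shifts of $\mu$-semistable sheaves and skyscrapers, and one must argue (as in \cite[sect.~2]{MYY:2011:2} or via the large-volume description) that Bridgeland semistability there is exactly twisted Gieseker semistability, and that the no-wall condition on $\varrho_{X_1}$ rules out skyscraper summands. With those two points made precise, your outline is a correct proof along the lines of the cited reference.
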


As an application of Theorem \ref{thm:MYY2},
we give a different proof of Proposition \ref{prop:open}.

\begin{proof}
\begin{NB}
We take $(\beta_1,\omega_1) \in {\cal C}$.
We set $\beta_1:=b_1 H+\eta$, $\eta \in H^{\perp}$.
We can take a curve $(\beta_1+sH,t(s)H)$, $-\epsilon \leq s \leq \epsilon$
such that $t(0)H$ is sufficiently close to $\omega_1$ and
${\Bbb R}Z_{(\beta_1+sH,t(s)H)}(v)=
{\Bbb Z}_{(\beta_1+sH,t(s)H)}(e^{\beta+\lambda H})$ for some
$\lambda \in {\Bbb Q}$.
For a neighborhood $(-\epsilon,\epsilon) \times U$ 
of $(\beta_1,\omega_1)$ such that
$(-\epsilon,\epsilon) \times U \subset {\cal C}$,
\cite[Cor. 4.2.5]{MYY:2011:2} implies that
$M_{(\beta,\omega)}(v)$ is independent of $(\beta,\omega) \in 
U \subset {\cal C}$.
\end{NB}
For $H \in \Amp(X)_{\Bbb Q}$,
we take $\delta \in \NS(X)_{\Bbb Q}$ such that
$(\delta,H)=1$.
We set $H(\delta):=\{L \in \Amp(X)_{\Bbb R} \mid (\delta,L)=1 \}$. Then  
$\{tL \in \Amp(X)_{\Bbb R} \mid L \in H(\delta),\; t \in {\Bbb R}_{>0} \}$
is an open neighborhood of $H$.
Assume that $(\beta_0,\omega_0) \in {\cal C}$ and
$\omega_0 \in {\Bbb R}_{>0} H$.
We shall show that $\sigma_{(\beta,\omega)}$-semi-stability
is independent of $(\beta,\omega)$ in a neighborhood of
$(\beta_0,\omega_0)$.

We can take $\gamma \in {\Bbb R}H+\beta_0 \subset \NS(X)_{\Bbb R}$   
such that
${\Bbb R} Z_{(\beta_0,\omega_0)}(v)=
{\Bbb R} Z_{(\beta_0,\omega_0)}(e^\gamma)$,
$(\gamma-\beta_0,H)>0$
and $(r \gamma-c_1(v),H) \ne 0$
(\cite[sect. 4.1]{MYY:2011:2}).
Replacing $\omega_0$ if necessary, we may assume that
$\gamma \in \NS(X)_{\Bbb Q}$.
We take a neighborhood $U$ of $H(\delta)$ and $I \subset {\Bbb R}_{>0}$
such that 
$\omega_0 \in
\{tL \mid  L \in U,\; t \in I \} \subset {\cal C}$.
For a fixed $L \in H(\delta)_{\Bbb Q}$, 
the semi-stability is independent of $(\beta,L) \in {\cal C}$.

Replacing $U$ if necessary, we may assume that the following equation for
$t>0$ has a solution for each $L \in U$ and a neighborhood $V$ of
$\beta_0$: 
$$
t^2 \frac{(L^2)}{2}=
\frac{(\langle e^\gamma,e^\beta \rangle(c_1(v)-r \beta)-
\langle v,e^\beta \rangle(\gamma-\beta),L)}{
(r \gamma- c_1(v),L)}.
$$
We set $\omega:=tL$. $\omega$ is a function on
$V \times U$ and 
we have ${\Bbb R}Z_{(\beta,\omega)}(v)
={\Bbb R}Z_{(\beta,\omega)}(e^\gamma)$.
\begin{NB}
If $(\gamma-\beta,L)=0$, then
$$
\frac{(\langle e^\gamma,e^\beta \rangle(c_1(v)-r \beta)-
\langle v,e^\beta \rangle(\gamma-\beta),L)}{
(r \gamma- c_1(v),L)}=
\frac{((\beta-\gamma)^2)}{2} \leq 0. 
$$
Moreover if the equality holds, then
$\beta=\gamma$.
\end{NB}
For the proof of our claim, it is sufficient to
show the independence of $\sigma_{(\beta,\omega)}$-semi-stability, 
where $(\beta,L) \in V \times U$.

We set $X_1:=M_{H}(r_1 e^\gamma)$.
For a universal family ${\bf E}$ on $X \times X_1$
as a twisted object,
we consider the Fourier-Mukai transform
$\Phi_{X \to X_1}^{{\bf E}^{\vee}[1]}$.
Then we have an isomorphism
${\cal M}_{(\beta,\omega)}(v) \to {\cal M}_{\widetilde{\omega}}(u)^{ss}$,
where $u:=\Phi_{X \to X_1}^{{\bf E}^{\vee}[1]}(v)$.
\begin{NB}
$$
\widetilde{L}:=-\frac{1}{r_1}
\frac{\frac{((\beta-\gamma)^2)-t^2(L^2)}{2}t \widehat{L}-
(\beta-\gamma,tL)(\widehat{\beta}-\widehat{\gamma})}
{\left(\frac{((\beta-\gamma)^2)-t^2(L^2)}{2} \right)^2
+t^2(\beta-\gamma,L)^2}.
$$
We set
$$
\widetilde{\beta}:=\gamma'-\frac{1}{r_1}
\frac{\frac{((\beta-\gamma)^2)-t^2(L^2)}{2}(\widehat{\beta}-\widehat{\gamma})
-(\beta-\gamma,tL)t \widehat{L}}
{\left(\frac{((\beta-\gamma)^2)-t^2(L^2)}{2} \right)^2
+t^2(\beta-\gamma,L)^2}.
$$
\end{NB}
Since $(c_1(u e^{-\widetilde{\beta}}),\widetilde{L})=0$
for all $(\beta,L) \in V \times U$,
if $F \in {\cal M}_{\widetilde{L}}(u)^{ss}$ contains
a subsheaf $F_1$ with 
$(c_1(F_1(-\widetilde{\beta})),\widetilde{\omega})=0$
and $v(F_1) \not \in {\Bbb Q}u$,
then $\Phi_{X_1 \to X}^{{\bf E}[1]}(F_1)$ defines a wall
for $v$.
\begin{NB}
${\Bbb R}Z_{(\beta,tL)}(\Phi_{X_1 \to X}^{{\bf E}[1]}(F_1))=
{\Bbb R}Z_{(\beta,tL)}(e^\gamma)={\Bbb R}Z_{(\beta,tL)}(v)$.
We set $u_1:=v(F_1)$ and $u_2:=u-u_1$.
Then $\langle u_1,u_2 \rangle \geq 
(\langle u_1^2 \rangle+\langle u_2^2 \rangle)/2 \geq 0$.
If the equality holds, then 
$\langle u_1 \rangle =\langle u_2^2 \rangle=0$ and
$\frac{c_1(u_1)}{\rk u_1}=\frac{c_1(u_2)}{\rk u_2}$.
Then $u \in {\Bbb Q}u_1$ and $u$ is an isotropic Mukai vector.
\end{NB}
Therefore   
for any subsheaf $F_1$ of 
$F \in {\cal M}_{\widetilde{\omega}}(u)^{ss}$,
$(c_1(F_1(-\widetilde{\beta})),\widetilde{\omega}) \ne 0$
for any $(\beta,L) \in V \times U$ or 
we have $v(F_1) \in {\Bbb Q}u$.
For $F \in {\cal M}_{\widetilde{\omega_1}}(u)^{ss} \setminus
{\cal M}_{\widetilde{\omega_2}}(u)^{ss}$,
there is a subsheaf $F_1$ such that
$(c_1(F_1(-\widetilde{\beta})),\widetilde{\omega_2}) \geq 0$
and
$(c_1(F_1(-\widetilde{\beta})),\widetilde{\omega_1}) \leq 0$.
Since $(c_1(F_1(-\widetilde{\beta})),\widetilde{\omega})$
is a continuous function on $V \times U$, it is a contradiction.
Therefore ${\cal M}_{\widetilde{\omega}}(u)^{ss}$
is independent of $(\beta,L)$.
Then we see that ${\cal M}_{(\beta,\omega)}(v)$ is independent of
$(\beta,\omega)$. 
\end{proof}

\begin{defn}
Let $\sigma_{(\beta,\omega)}$ be a stability condition.
For the contravariant 
Fourier-Mukai transform
$\Phi \circ {\cal D}_X$, we set $(\beta',\omega'):=(-\beta,\omega)$
and attach the stability condition 
$\sigma_{(\widetilde{\beta'},\widetilde{\omega'})}$ associated to
$Z_{(\widetilde{\beta'},\widetilde{\omega'})}$.
We say $\sigma_{(\widetilde{\beta'},\widetilde{\omega'})}$
the stability condition induced by $\Phi \circ {\cal D}_X$.
\end{defn}

\begin{lem}\label{lem:dual-phase}
$\phi_{(-\beta,\omega)}(E^{\vee}[1])=-\phi_{(\beta,\omega)}(E)+1$.
\end{lem}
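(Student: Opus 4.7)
The plan is to prove this as a direct identity between the two central charges, and then read off the phase relation. The key point on an abelian surface is that $\mathrm{td}_X=1$, so $v(E)=\mathrm{ch}(E)$, and dualizing flips the sign of $c_1$: if $v(E)=r+\xi+a\varrho_X$, then $v(E^{\vee})=r-\xi+a\varrho_X$, whence $v(E^{\vee}[1])=-r+\xi-a\varrho_X$.

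Next, I would compute $Z_{(-\beta,\omega)}(E^{\vee}[1])$ using the Mukai pairing formula \eqref{eq:mukai_pairing} applied to $v(E^{\vee}[1])$ and $e^{-\beta+\sqrt{-1}\omega}=1+(-\beta+\sqrt{-1}\omega)+\tfrac12\bigl((\beta^2)-(\omega^2)-2\sqrt{-1}(\beta,\omega)\bigr)\varrho_X$. A direct expansion gives
\[
Z_{(-\beta,\omega)}(E^{\vee}[1])=-(\beta,\xi)+\sqrt{-1}(\omega,\xi)+a+\tfrac{r}{2}\bigl((\beta^2)-(\omega^2)\bigr)-\sqrt{-1}\,r(\beta,\omega).
\]
On the other hand, the analogous expansion of $Z_{(\beta,\omega)}(E)=\langle e^{\beta+\sqrt{-1}\omega},v(E)\rangle$ yields
\[
Z_{(\beta,\omega)}(E)=(\beta,\xi)+\sqrt{-1}(\omega,\xi)-a-\tfrac{r}{2}\bigl((\beta^2)-(\omega^2)\bigr)-\sqrt{-1}\,r(\beta,\omega).
\]
Comparing real and imaginary parts, the two expressions are related by $Z_{(-\beta,\omega)}(E^{\vee}[1])=-\overline{Z_{(\beta,\omega)}(E)}$, which is the essential identity. (Conceptually, this is because the Mukai pairing is invariant under simultaneously dualizing both arguments, and the involution $\beta+\sqrt{-1}\omega\mapsto -\beta+\sqrt{-1}\omega$ acts as $z\mapsto -\bar z$ on $e^{\beta+\sqrt{-1}\omega}$ in the relevant degrees.)

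Writing $Z_{(\beta,\omega)}(E)=|Z_{(\beta,\omega)}(E)|e^{\sqrt{-1}\pi\phi_{(\beta,\omega)}(E)}$, the identity gives
\[
Z_{(-\beta,\omega)}(E^{\vee}[1])=|Z_{(\beta,\omega)}(E)|\,e^{\sqrt{-1}\pi(1-\phi_{(\beta,\omega)}(E))},
\]
so $\phi_{(-\beta,\omega)}(E^{\vee}[1])\equiv 1-\phi_{(\beta,\omega)}(E)\pmod{2}$. The main (minor) obstacle is choosing the correct branch to get the stated equality on the nose. Since by Definition~\ref{defn:phase(v)} we have $\phi_{(\beta,\omega)}(E)\in(-1,1]$, the value $1-\phi_{(\beta,\omega)}(E)$ lies in $[0,2)$; one then verifies it already lies in $(-1,1]$ (adjusting by $2$ in the boundary case $\phi_{(\beta,\omega)}(E)=-1$, which does not occur under the convention $\phi_{(\beta,\omega)}(v)\in(-1,1]$). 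This pins down the branch and yields the desired identity $\phi_{(-\beta,\omega)}(E^{\vee}[1])=-\phi_{(\beta,\omega)}(E)+1$.
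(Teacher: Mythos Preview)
Your computation of the central charge identity $Z_{(-\beta,\omega)}(E^{\vee}[1])=-\overline{Z_{(\beta,\omega)}(E)}$ is correct, and this gives the desired equality $\bmod\ 2\bb{Z}$; the paper does exactly this in one line. The substantive content of the lemma, however, is the exact equality of phases for an \emph{object} $E$, not merely for its Mukai vector, and this is where your argument has a gap.

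Your branch-fixing step appeals to Definition~\ref{defn:phase(v)}, but that definition assigns $\phi_{(\beta,\omega)}(v)\in(-1,1]$ to a Mukai vector and only agrees with the phase of an object $E$ when $E\in\frk{A}_{(\beta,\omega)}\cup\frk{A}_{(\beta,\omega)}[-1]$. There is no a priori reason why $E^{\vee}[1]$ lies in $\frk{A}_{(-\beta,\omega)}\cup\frk{A}_{(-\beta,\omega)}[-1]$; the derived dual could in principle produce an object with cohomology in several degrees, shifting the phase by an even integer. Concretely, if one takes $E\in\frk{A}_{(\beta,\omega)}$ so that $\phi_{(\beta,\omega)}(E)\in(0,1]$, one must show that $E^{\vee}[1]$ actually has phase in $[0,1)$, not in $[2,3)$ or $[-2,-1)$.

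The paper supplies precisely this missing step: it reduces to generators of the heart $\frk{A}_{(\beta,\omega)}$ (zero-dimensional sheaves, shifts of locally free $\mu$-semistable sheaves with $d_\beta\le 0$, $\mu$-semistable sheaves with $d_\beta>0$, and purely $1$-dimensional sheaves) and checks case by case that $E^{\vee}[1]$ lands in $\frk{A}_{(-\beta,\omega)}$ or $\frk{A}_{(-\beta,\omega)}[-1]$, using the reflexive hull to handle the torsion-free non-locally-free case. That cohomological verification is the real content of the lemma; your argument should include it.
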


\begin{proof}
For a non-zero object $E \in {\bf D}(X)$, we have
\begin{equation}
Z_{(-\beta,\omega)}(E^{\vee}[1])=
-\langle e^{-\beta+\omega \sqrt{-1}},v(E^{\vee}) \rangle
=-\overline{\langle e^{\beta+\omega \sqrt{-1}},v(E) \rangle}
=|Z_{(\beta,\omega)}(v)|e^{\pi \sqrt{-1} (1-\phi_{(\beta,\omega)}(E))}.
\end{equation}
Hence $\phi_{(-\beta,\omega)}(E^{\vee}[1])=-\phi_{(\beta,\omega)}(E)+1
\mod 2{\Bbb Z}$.
Since $\phi_{(-\beta,\omega)}((E[n])^{\vee}[1])=
\phi_{(-\beta,\omega)}(E^{\vee}[1])-n$, we shall show that
$\phi_{(-\beta,\omega)}(E^{\vee}[1]) \in [0,1)$
for $E \in {\frak A}_{(\beta,\omega)}$.

We note that
${\frak A}_{(\beta,\omega)}$ is generated by
(i) 0-dimensional object $T$, (ii)
$F[1]$ where $F$ is a locally free $\mu$-semi-stable sheaf
with $d_\beta(F) \leq 0$, (iii)
$\mu$-semi-stable sheaf $E$ with $d_\beta(E)>0$ and
(iv)
purely 1-dimensional sheaf $E$.
 
(i) For a 0-dimensional sheaf $T$, 
$T^{\vee}[1] \in {\frak A}_{(-\beta,\omega)}[-1]$.
Thus $\phi_{(-\beta,\omega)}(T^{\vee}[1])=0$.
(ii) For a locally free $\mu$-semi-stable 
sheaf $F$ with $d_\beta(F) \leq 0$,
$(F[1])^{\vee}[1]=F^{\vee}$ is a $\mu$-semi-stable sheaf with
$d_{-\beta}((F[1])^{\vee}[1]) \geq 0$.
Hence $\phi_{(-\beta,\omega)}((F[1])^{\vee}[1]) \in [0,1)$.
(iii) Let  $E$ be a $\mu$-semi-stable sheaf of
$\rk E>0$ and $d_\beta(E)>0$.
Let $E^{**}$ be the reflexive hull of $E$ and set $T:=E^{**}/E$.
Then we have an exact triangle
$$
T^{\vee}[1] \to (E^{**})^{\vee}[1] \to E^{\vee}[1] \to 
T^{\vee}[2].
$$
Since $(E^{**})^{\vee}$ is a locally free $\mu$-semi-stable sheaf
with $d_{-\beta}((E^{**})^{\vee})=-d_\beta(E)<0$ and
$T^{\vee}[2]$ is a 0-dimensional sheaf,
$E^{\vee}[1] \in {\frak A}_{(-\beta,\omega)}$.
(iv) If $E$ is a purely 1-dimensional sheaf, then
$E^{\vee}[1]$ is a purely 1-dimensional sheaf, which implies
$E^{\vee}[1] \in {\frak A}_{(-\beta,\omega)}$. 
Therefore the claim holds.
\end{proof}

\subsection{Relations of moduli spaces under Fourier-Mukai
transforms.}

We shall say that a pair 
$(\beta,\omega)$ is \emph{general with respect to $v$}
if $(\beta,\omega)$ is not on any wall $W_{v_1}$ for $v$.

\begin{thm}\label{thm:B:3-10.3}
Let $v$ be a Mukai vector with
$\langle v^2 \rangle>0$.
Assume that $(\beta,\omega)$ is general with respect to
$v$.
\begin{enumerate}
\item[(1)]
Any Fourier-Mukai transform
$\Phi_{X \to X'}^{{\bf E}}$
preserves Bridgeland's stability condition.
\item[(2)]
Any contravariant Fourier-Mukai transform
$\Phi_{X \to X'}^{{\bf E}} \circ {\cal D}_X$
preserves Bridgeland's stability condition.
\end{enumerate}
\end{thm}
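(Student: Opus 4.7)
The plan is to handle (1) by reducing to Theorem \ref{thm:MYY2} via a chamber-deformation argument, and then to deduce (2) by first establishing that the dualizing functor ${\cal D}_X$ itself preserves Bridgeland stability, and composing with (1).

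For (1), I would fix $\Phi=\Phi_{X \to X'}^{\bf E}$ and read off the normalization data $\gamma \in \NS(X)_{\Bbb Q}$, $r_1 \in {\Bbb Z}$ from $\Phi(r_1 e^\gamma)=-\varrho_{X'}$. The real-analytic locus $L=\{(\beta',\omega') \mid {\Bbb R}Z_{(\beta',\omega')}(v)={\Bbb R}Z_{(\beta',\omega')}(e^\gamma)\}$ is a hypersurface in the parameter space. Since $(\beta,\omega)$ is general and lies in an open chamber ${\cal C}$ for $v$, and $L$ generically meets the interior of ${\cal C}$ when $r_1 e^\gamma$ is not itself a wall for $v$, I pick $(\beta_0,\omega_0) \in L \cap {\cal C}$ close to $(\beta,\omega)$. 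By Proposition \ref{prop:open}, ${\cal M}_{(\beta,\omega)}(v)={\cal M}_{(\beta_0,\omega_0)}(v)$, and Theorem \ref{thm:MYY2} then produces an isomorphism to the Gieseker-type moduli ${\cal M}_{\widetilde{\omega_0}}(u)^{ss}$ with $u=\Phi(v)$. Running the same deformation argument on the $X'$ side, and using Proposition \ref{prop:wall:isometry} to see that $\Phi$ carries chambers for $v$ to chambers for $u$, identifies this Gieseker-type moduli with ${\cal M}_{(\widetilde{\beta_0},\widetilde{\omega_0})}(u)={\cal M}_{(\widetilde{\beta},\widetilde{\omega})}(u)$. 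The degenerate case where $r_1 e^\gamma$ does define a wall for $v$ (so $L$ is a wall) is handled separately by perturbing $\gamma$ to a nearby rational parameter, using that walls are locally finite.

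For (2), I first show that ${\cal D}_X[1]$ sends $\sigma_{(\beta,\omega)}$-semi-stable objects to $\sigma_{(-\beta,\omega)}$-semi-stable objects, with phases related by $\phi \mapsto 1-\phi$. The phase relation is Lemma \ref{lem:dual-phase}, whose proof also shows that ${\cal D}_X[1]$ maps the heart ${\frak A}_{(\beta,\omega)}$ into ${\frak A}_{(-\beta,\omega)}$, by a case analysis on the generators (torsion sheaves, locally free $\mu$-semi-stable sheaves with $d_\beta \leq 0$, $\mu$-semi-stable sheaves with $d_\beta>0$, and purely $1$-dimensional sheaves). Since ${\cal D}_X[1]$ is an anti-equivalence between these hearts, it swaps subobjects and quotients, and combined with the phase reversal this promotes preservation of semi-stability: a destabilizing subobject of $E^\vee[1]$ corresponds to a destabilizing quotient of $E$, contradicting the semi-stability of $E$. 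A contravariant Fourier-Mukai transform $\Phi \circ {\cal D}_X$ then equals the composition of two stability-preserving functors, so (2) follows from (1) applied to $\Phi$ (noting that generality is preserved under Proposition \ref{prop:wall:isometry}, applied to the isometry induced by ${\cal D}_X$).

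The main obstacle will be the precise identification ${\cal M}_{\widetilde{\omega_0}}(u)^{ss}={\cal M}_{(\widetilde{\beta_0},\widetilde{\omega_0})}(u)$ in (1), which requires the parallel deformation argument on the image side, and the complete case analysis verifying that ${\cal D}_X[1]$ preserves the heart in (2); in particular, the exact triangle $T^\vee[1] \to (E^{**})^\vee[1] \to E^\vee[1] \to T^\vee[2]$ from the proof of Lemma \ref{lem:dual-phase} must be handled so that Harder--Narasimhan filtrations of $E^\vee[1]$ are controlled by those of $E$ across the reflexive-hull step.
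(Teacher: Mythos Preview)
For (1), your deformation-to-$L$ argument via Theorem \ref{thm:MYY2} is essentially the alternative proof the paper sketches in the Remark following the Proposition (the paper also simply cites Bridgeland \cite[Prop.~10.3]{Br:3}). That part is fine.

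For (2) there is a genuine gap. Your key step is the claim that ${\cal D}_X[1]$ maps the heart ${\frak A}_{(\beta,\omega)}$ into ${\frak A}_{(-\beta,\omega)}$, so that it is an anti-equivalence of abelian categories and the subobject/quotient swap preserves semi-stability. This claim is false. The proof of Lemma \ref{lem:dual-phase} shows only that for $E \in {\frak A}_{(\beta,\omega)}$ the phase of $E^\vee[1]$ lies in $[0,1)$, not $(0,1]$: case (i) explicitly says that for a $0$-dimensional sheaf $T$ one has $T^\vee[1] \in {\frak A}_{(-\beta,\omega)}[-1]$ with phase $0$ (indeed ${\frak k}_x^\vee[1]={\frak k}_x[-1]$), and in case (ii) the boundary $d_\beta(F)=0$ again lands outside ${\frak A}_{(-\beta,\omega)}$. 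So ${\cal D}_X[1]$ is not an exact anti-equivalence between these two hearts, and a destabilizing subobject of $E^\vee[1]$ in ${\frak A}_{(-\beta,\omega)}$ need not dualize to an object of ${\frak A}_{(\beta,\omega)}$; the swap argument breaks at precisely the torsion and slope-boundary pieces you flagged as ``the main obstacle''.

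The paper avoids this entirely by never arguing at the level of the tilted heart. It first moves, via Theorem \ref{thm:MYY2}, to a situation where dualizing is elementary: choose $\gamma$ with ${\Bbb R}Z_{(\beta,\omega)}(e^\gamma)={\Bbb R}Z_{(\beta,\omega)}(v)$ and apply $\Phi_{X\to X_1}^{{\bf E}^\vee[1]}$ to obtain an isomorphism $M_{(\beta,\omega)}(v)\cong M_{\widetilde{\omega}}^{\alpha_1}(w)$, where the target consists of $\mu$-stable \emph{locally free} twisted sheaves (here $(c_1(we^{-\widetilde\beta}),\widetilde\omega)=0$). For such sheaves the dual is again $\mu$-stable locally free, giving $M_{\widetilde{\omega}}^{\alpha_1}(w)\cong M_{\widetilde{\omega}}^{-\alpha_1}(w^\vee)$ for free. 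Setting ${\bf F}:={\bf E}^\vee[1]$, one checks from \eqref{eq:tilde(beta)} that the induced parameters for $\Phi_{X\to X_1}^{{\bf F}^\vee[1]}$ satisfy $(\widetilde{-\beta},\widetilde{\omega})=(-\widetilde{\beta},\widetilde{\omega})$, and Grothendieck--Serre duality gives
\[
\Phi_{X\to X_1}^{{\bf F}^\vee[1]}(E^\vee[1])={\cal D}_{X_1}\bigl(\Phi_{X\to X_1}^{{\bf E}^\vee[1]}(E)\bigr),
\]
closing the square and producing $M_{(\beta,\omega)}(v)\cong M_{(-\beta,\omega)}(-v^\vee)$ via $E\mapsto E^\vee[1]$. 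The point is that all the delicate boundary behaviour of ${\cal D}_X$ on the tilted heart is absorbed into the locally-free Gieseker picture, where it disappears.
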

\begin{NB}
The same claims in this section also hold for 
the Bridgeland stability of twisted sheaves. 
\end{NB}
The first claim is due to Bridgeland (\cite[Prop. 10.3]{Br:3}).
For the proof of (2),
it is sufficient to prove the following claim.

\begin{prop}
Let $v$ be a Mukai vector with
$\langle v^2 \rangle>0$.
Assume that $(\beta,\omega)$ 
is general with respect to $v$.
If $Z_{(\beta,\omega)}(v) \in {\Bbb H} \cup {\Bbb R}_{<0}$, then
we have an isomorphism
\begin{equation*}
\begin{matrix}
M_{(\beta,\omega)}(v) & \to & M_{(-\beta,\omega)}(-v^{\vee})\\
E & \mapsto & E^{\vee}[1].
\end{matrix}
\end{equation*}
\begin{NB}
If $Z_{(\beta,\omega)}(v) \in {\Bbb R}_{<0}$,
then 
we have an isomorphism
\begin{equation}
\begin{matrix}
M_{(\beta,\omega)}(v) & \to & M_{(-\beta,\omega)}(v^{\vee})\\
E & \mapsto & E^{\vee}[2].
\end{matrix}
\end{equation}
Since $Z_{(-\beta,\omega)}(-v^{\vee}) \in {\Bbb R}_{>0}$,
$\phi_{(-\beta,\omega)}(-v^{\vee})=0$.
By Definition \ref{defn:moduli}, $M_{(-\beta,\omega)}(-v^{\vee})=
\{F[-1]|F:=E^{\vee}[2] \in M_{(-\beta,\omega)}(v^{\vee}) \}$.
\end{NB}
\end{prop}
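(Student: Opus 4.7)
The strategy is to show that $D := \mathcal{D}_X[1]$ restricts to an exact contravariant anti-equivalence $\mathfrak{A}_{(\beta,\omega)} \to \mathfrak{A}_{(-\beta,\omega)}$ that reverses phases by $\phi \mapsto 1-\phi$, and therefore preserves semistability. The first routine check is on Mukai vectors: if $v(E) = (r,\xi,a)$ then $v(E^\vee[1]) = (-r, \xi, -a) = -v^\vee$. Next, Lemma \ref{lem:dual-phase} has already established both that $E \in \mathfrak{A}_{(\beta,\omega)}$ implies $E^\vee[1] \in \mathfrak{A}_{(-\beta,\omega)}$ (via its case-by-case analysis over the generators of the tilted heart) and that $\phi_{(-\beta,\omega)}(E^\vee[1]) = 1 - \phi_{(\beta,\omega)}(E)$. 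Under the hypothesis $Z_{(\beta,\omega)}(v) \in \mathbb{H} \cup \mathbb{R}_{<0}$ we have $\phi_{(\beta,\omega)}(E) \in (0,1]$; in the boundary case $\phi_{(\beta,\omega)}(E) = 1$ the object $E^\vee[1]$ sits at phase $0$, and this is precisely why the target of the isomorphism is phrased as $M_{(-\beta,\omega)}(-v^\vee)$, the shifted identification being handled by Remark \ref{rem:phase}.

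I would next verify that $D$ is an exact anti-equivalence of abelian categories. Given a short exact sequence $0 \to A \to E \to B \to 0$ in $\mathfrak{A}_{(\beta,\omega)}$, applying $\mathcal{D}_X$ to the underlying distinguished triangle and shifting yields
\[
D(B) \longrightarrow D(E) \longrightarrow D(A) \longrightarrow D(B)[1]
\]
whose first and third terms lie in $\mathfrak{A}_{(-\beta,\omega)}$; the associated long exact sequence for this heart then forces $D(E) \in \mathfrak{A}_{(-\beta,\omega)}$, so the triangle is short exact there. Since $D \circ D \simeq \id$ on perfect complexes, $D$ is an anti-equivalence, and subobjects $F \hookrightarrow E^\vee[1]$ in $\mathfrak{A}_{(-\beta,\omega)}$ correspond bijectively to quotients $E \twoheadrightarrow D(F)$ in $\mathfrak{A}_{(\beta,\omega)}$. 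The phase reversal then translates the semistability inequality $\phi_{(\beta,\omega)}(D(F)) \geq \phi_{(\beta,\omega)}(E)$ into $\phi_{(-\beta,\omega)}(F) \leq \phi_{(-\beta,\omega)}(E^\vee[1])$, which is exactly the semistability of $E^\vee[1]$. Applying $D$ to a Jordan-H\"older filtration of $E$ produces a Jordan-H\"older filtration of $E^\vee[1]$ with stable factors $D(\gr_i E)$, so $S$-equivalence classes correspond.

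Finally, the construction is functorial in families, so it defines a morphism of moduli functors and hence of the coarse moduli schemes; the inverse is the same recipe applied at $(-\beta,\omega)$, using $D^2 \simeq \id$. The main obstacle I expect is the careful bookkeeping of the boundary cases $\phi_{(\beta,\omega)}(E) = 1$ and $\phi_{(-\beta,\omega)}(E^\vee[1]) = 0$, which must be matched with Definition \ref{defn:moduli} and Remark \ref{rem:phase} so that the Mukai vector of the image comes out as $-v^\vee$ rather than $v^\vee$ or some other shift.
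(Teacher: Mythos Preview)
Your central claim --- that $D = \mathcal{D}_X[1]$ restricts to an anti-equivalence $\mathfrak{A}_{(\beta,\omega)} \to \mathfrak{A}_{(-\beta,\omega)}$ --- is false, and Lemma~\ref{lem:dual-phase} does not establish it. Look again at case~(i) of that lemma's proof: for a $0$-dimensional sheaf $T$ (which lies in $\mathfrak{A}_{(\beta,\omega)}$ with phase $1$), one has $T^\vee[1] \in \mathfrak{A}_{(-\beta,\omega)}[-1]$, not in $\mathfrak{A}_{(-\beta,\omega)}$. The same happens in case~(ii) when $d_\beta(F)=0$. What the lemma actually shows is that the \emph{semistable generators} of $\mathfrak{A}_{(\beta,\omega)}=\mathcal{P}_{(\beta,\omega)}(0,1]$ are sent by $D$ into the \emph{shifted} heart $\mathcal{P}_{(-\beta,\omega)}[0,1)$. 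So your inductive step ``$D(A),D(B)\in\mathfrak{A}_{(-\beta,\omega)}$, hence $D(E)\in\mathfrak{A}_{(-\beta,\omega)}$'' has a false hypothesis, and the subobject/quotient correspondence you invoke between $\mathfrak{A}_{(\beta,\omega)}$ and $\mathfrak{A}_{(-\beta,\omega)}$ does not exist.

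Your approach can be salvaged: since $\mathcal{P}_{(-\beta,\omega)}[0,1)$ is itself the heart of a bounded $t$-structure, it is extension-closed, and the case analysis in Lemma~\ref{lem:dual-phase} together with $D^2\simeq\id$ does yield an exact anti-equivalence $\mathfrak{A}_{(\beta,\omega)} \to \mathcal{P}_{(-\beta,\omega)}[0,1)$. Semistability of $E^\vee[1]$ for $\sigma_{(-\beta,\omega)}$ can then be tested in this heart, and your phase-reversal argument goes through. This is genuinely different from the paper's proof, which never attempts a direct heart comparison: instead it chooses $\gamma$ with $\mathbb{R}Z_{(\beta,\omega)}(e^\gamma)=\mathbb{R}Z_{(\beta,\omega)}(v)$, applies the Fourier--Mukai transform $\Phi_{X\to X_1}^{\mathbf{E}^\vee[1]}$ with $X_1=M_{(\beta,\omega)}(r_1 e^\gamma)$ to identify $M_{(\beta,\omega)}(v)$ with a moduli space $M_{\widetilde{\omega}}^{\alpha_1}(w)$ of $\mu$-stable locally free twisted sheaves (via Theorem~\ref{thm:MYY2}), observes that ordinary sheaf duality is an isomorphism there, and then uses Grothendieck--Serre duality to match this with the analogous transform from $M_{(-\beta,\omega)}(-v^\vee)$. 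The paper's route avoids the heart subtlety entirely by reducing to classical $\mu$-stability; your corrected route is more direct but requires carefully distinguishing $\mathcal{P}(0,1]$ from $\mathcal{P}[0,1)$.
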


\begin{NB}
\begin{equation}
\deg_{\beta}(E)=-\deg_{-\beta}(E^{\vee})
=\deg_{-\beta}(E^{\vee}[1]).
\end{equation}
$$
Z_{(\beta,\omega)}(-v^{\vee})
=a_{-\beta}(v)-\frac{(\omega^2)}{2}r(v)+d_{-\beta}(v)(H,\omega)\sqrt{-1}.
$$
Hence 
$Z_{(\beta,\omega)}(E^{\vee}[1]) \in {\Bbb H} \cup {\Bbb R}_{\geq 0}$
for $E \in {\frak A}_{(-\beta,\omega)}$.
\end{NB}

\begin{NB}
If $(\omega^2) \gg 0$ and $\rk v<0$, then
$M_{(\beta,\omega)}(v)=\{E^{\vee}[1]| E \in M_\omega^{-\beta}(-v^{\vee})\}$
and
$M_{(-\beta,\omega)}(-v^{\vee})=M_\omega^{-\beta}(-v^{\vee})$.
Hence the claim is trivial.
\end{NB}

\begin{proof}
\begin{NB}
We don't need the case $d_\beta(v)=0$, since we can move
$\beta$. However the following is important, since
the general case can be reduced to this special case.
 
We fitst assume that $d_\beta(v)=0$.
In this case, $\sigma_{(\beta,\omega)}$-semi-stable objects are generated by
$E[1]$ and ${\frak k}_x$, where $E$ is a locally free $\mu$-sermi-stable 
sheaves.
Since $E^{\vee}$ and ${\frak k}_x^{\vee}[2]$ are $\mu$-semi-stable
sheaf and a point sheaf respectively,
$(E[1])^{\vee}[2]=E^{\vee}[1]$ and ${\frak k}_x^{\vee}[2]$
are $\sigma_{(\beta,\omega)}$-semi-stable objects of 
${\frak A}_{(-\beta,\omega)}$.
Thus the claim holds.
\begin{NB2}
If $(\beta,\omega)$ is general, then
every $\sigma_{(\beta,\omega)}$-semi-stable objects $E$ with $v(E)=v$
are $\mu$-stable. 
\end{NB2}
\end{NB}
Let ${\cal C}$ be a chamber containing $(\beta,\omega)$.
We may assume that $d_{\beta,\omega}(v)>0$.
Replacing $(\beta,\omega) \in {\cal C}$ if necessary,
we can take $\gamma \in \NS(X)_{\Bbb Q}$ such that
${\Bbb R}Z_{(\beta,\omega)}(e^\gamma)={\Bbb R}Z_{(\beta,\omega)}(v)$.
We take a primitive vector $r_1 e^\gamma$ such that
$d_{\beta,\omega}(r_1 e^\gamma)>0$.
We set $X_1:=M_{(\beta,\omega)}(r_1 e^{\gamma})$.
Let
${\bf E}$ be the universal object on $X \times X_1$
as a complex of twisted sheaves.
We set $w:=\Phi_{X \to X_1}^{{\bf E}^{\vee}[1]}(v)$.
By Theorem \ref{thm:MYY2}, 
$M_{\widetilde{\omega}}^{\alpha_1}(w)$ consists of $\mu$-stable 
locally free sheaves and
we have an isomorphism
$$
\Phi_{X \to X_1}^{{\bf E}^{\vee}[1]}:
M_{(\beta,\omega)}(v) \to M_{\widetilde{\omega}}^{\alpha_1}(w),
$$
where $w$ satisfies
$(c_1(w e^{-\widetilde{\beta}}),\widetilde{\omega})=0$.
By taking the dual, we have an isomorphism
$$
M_{\widetilde{\omega}}^{\alpha_1}(w) \to 
M_{\widetilde{\omega}}^{-\alpha_1}(w^{\vee}).
$$
We note that ${\bf F}:={\bf E}^{\vee}[1]$ is a family of stable objects
with $v({\bf F}_{|X \times \{x_1 \}})=-r_1 e^{-\gamma}$.
For $\Phi_{X \to X_1}^{{\bf F}^{\vee}[1]}$
we define $(\widetilde{-\beta},\widetilde{\omega})$
by \eqref{eq:tilde(beta)}.
Thus we substitute $(-\gamma,-\gamma',-\beta)$
in \eqref{eq:tilde(beta)} 
instead of $(\gamma,\gamma',\beta)$
for the definition of 
$(\widetilde{-\beta},\widetilde{\omega})$.
Then we have $
(\widetilde{-\beta},\widetilde{\omega})=
(-\widetilde{\beta},\widetilde{\omega})$.
Since ${\Bbb R}Z_{(-\beta,\omega)}(-r_1 e^{-\gamma})=
{\Bbb R}Z_{(-\beta,\omega)}(-v^{\vee})$,
we have an isomorphism
\begin{equation}
\Phi_{X \to X_1}^{{\bf F}^{\vee}[1]}:
M_{(-\beta,\omega)}(-v^{\vee}) \to 
M_{\widetilde{\omega}}^{-\alpha_1}(w^{\vee}).
\end{equation}
By the Grothendieck-Serre duality,
we have
$$
\Phi_{X \to X_1}^{{\bf F}^{\vee}[1]}(E^{\vee}[1])=
\Phi_{X \to X_1}^{{\bf E}}(E^{\vee}[1])=
D_{X_1} \circ \Phi_{X \to X_1}^{{\bf E}^{\vee}[1]}(E).
$$
Hence the claim holds.
\end{proof}

\begin{NB}
\begin{proof}
We first recall the main result in \cite{MYY:2011:2}.
If $\lambda r \ne d_\beta$,
then Lemma \ref{lem:C_v}
implies that
the condition
${\Bbb R}Z_{(\beta+sH,\omega)}(v)=
{\Bbb R}Z_{(\beta+sH,\omega)}(e^{\beta+\lambda H})$
defines a circle
\begin{equation}\label{eq:circle-lambda}
C_{v,\lambda}:\;
t^2=
(\lambda-s)
\left( \frac{a_\beta-d_\beta \lambda \frac{(H^2)}{2}}{\lambda r-d_\beta}
\frac{2}{(H^2)}+s \right).
\end{equation}
Assume that $C_{v,\lambda}$ is not contained in any wall
for $v$.
Then $C_{v,\lambda}$ is contained in a chamber ${\cal C}$.
We set $\gamma:=\beta+\lambda H$.
\begin{equation}
\frac{a_\beta-d_\beta \lambda \frac{(H^2)}{2}}{\lambda r-d_\beta}
\frac{2}{(H^2)}+\lambda
=\frac{(\lambda r-d_\beta)^2 (H^2)-
(\langle v^2 \rangle-(D_\beta^2))}
{r(H^2)(\lambda r-d_\beta)}
=\frac{2a_\gamma}{-d_\gamma (H^2)}=
\frac{2\langle e^{\beta+\lambda H},v \rangle}
{d_\gamma (H^2)}
\end{equation}
Then the intersection of $C_{v,\lambda}$ with the
circle
$(s-\lambda)^2+t^2=\frac{2}{|r_1|(H^2)}$
lies on the line
\begin{equation}\label{eq:line}
s=\lambda+\frac{d_\gamma}{a_\gamma |r_1|}.
\end{equation}
We set $X_1:=M_{(\beta+sH,tH)}(r_1 e^{\gamma})$.
Let
${\bf E}$ be the universal object on $X \times X_1$
as a complex of twisted sheaves.
By the Fourier-Mukai transform
$\Phi_{X \to Y}^{{\bf E}^{\vee}[1]}:
{\bf D}(X) \to {\bf D}^{\alpha_1}(X_1)$,
the circle is transformed to
the line \eqref{eq:line}.
\begin{NB2}
$$
s'=\Phi_{X \to Y}^{{\bf E}^{\vee}[1]}(s)=
\lambda+\frac{2}{(H^2)|r_1|}
\frac{s-\lambda}{(s-\lambda)^2+t^2}.
$$
\end{NB2}
Hence the chamber ${\cal C}$ containing 
the circle $C_{v,\lambda}$ is mapped to a unbounded domain.
We set $w:=\Phi_{X \to X_1}^{{\bf E}^{\vee}[1]}(v)$.
By \cite[Thm. 2.2.1]{MYY:2011:2},
we have a moduli of $\mu$-stable vector bundles
$M_{\widehat{H}}^{\alpha_1}(w)$, 
we have an isomorphism
$\Phi_{X \to X_1}^{{\bf E}[1]}:
M_{(\beta+sH,tH)}(v) \to M_{\widehat{H}}^{\alpha'}(w)$
for $(s,t) \in C_{v,\lambda}$.
For $\Phi_{X_1 \to X}^{{\bf E}^{\vee}[2]}$,
we also have an isomorphism
$M_{\widehat{H}}^{-\alpha_1}(w^{\vee}) \to M_{(-\beta+sH,tH)}(-v^{\vee})$,
where $(s,t) \in C_{-v^{\vee},-\lambda}$.

By the Grothendieck-Serre duality,
we have
$$
\Phi_{X \to X_1}^{{\bf E}}(E^{\vee}[1])=
D_{X_1} \circ \Phi_{X \to X_1}^{{\bf E}^{\vee}[1]}(E).
$$
\begin{NB2}
We have an isomorphism
$M_{(\beta+sH,tH)}(r_1 e^{\beta+\lambda H})
\to M_{(-\beta-sH,tH)}(-r_1 e^{-\beta-\lambda H})$
by sending $E$ to $E^{\vee}[1]$.
Thus $M_{(-\beta-sH,tH)}(-r_1 e^{-\beta-\lambda H}) \cong Y$
and
${\bf E}^{\vee}[1]$ is the universal family
on $X \times Y$.
Then $\Phi_{X \to Y}^{({\bf E}^{\vee}[1])^{\vee}[1]}=
\Phi_{X \to Y}^{{\bf E}}$.
\end{NB2}

\begin{NB2}
Assume that $r>0$.
Then 
\begin{equation}
\begin{cases}
d_{\beta+sH}(v)=d_\beta(v)-rs>0,& s<\frac{d_\beta}{r},\\
d_{\beta+sH}(-v)=d_\beta(-v)-(-r)s>0,& s>\frac{d_\beta}{r}.
\end{cases}
\end{equation}
$s=\frac{d_\beta}{r}$ is a wall for $v$ if and only if
there is a properly $\mu$-semi-stable sheaf or
there is a non-locally free $\mu$-semi-stable sheaf.



\end{NB2}

If $\lambda r =d_\beta$,
then the condition
${\Bbb R}Z_{(\beta+sH,\omega)}(v)=
{\Bbb R}Z_{(\beta+sH,\omega)}(e^{\beta+\lambda H})$
defines a line $L_{v,\lambda}$
$$
s=\frac{d_\beta}{r}=\lambda.
$$
\begin{NB2}
If $\frac{d_\beta}{r}=\lambda$, then
$a_\beta-\frac{(H^2)}{2}r \lambda^2 \ne 0$.
Indeed if $a_\beta-\frac{(H^2)}{2}r \lambda^2=0$, then
$v=r e^{\beta+\lambda H}+D_\beta+(D_\beta,\beta+\lambda H)\varrho_X$.
Hence $\langle v^2 \rangle =(D_\beta^2) \leq 0$,
which is a contradiction. 
\end{NB2}
If $L_{v,\lambda}$ is not contained in any wall for $v$,
then the claim follows from
the fact that $M_{(\beta+\lambda H,tH)}(v)$ and
$M_{(-\beta-\lambda H,tH)}(v)$
consists of $\mu$-stable
vector bundles up to shift.   
\end{proof}
\end{NB}

\begin{rem}
By a similar argument, we can prove Theorem
\ref{thm:B:3-10.3} (1):
Let ${\cal C}$ be a chamber containing $(\beta,\omega)$.
We may assume that $d_{\beta,\omega}(v)>0$.
Replacing $(\beta,\omega) \in {\cal C}$ if necessary,
we can take $\gamma \in \NS(X)_{\Bbb Q}$ such that
${\Bbb R}Z_{(\beta,\omega)}(e^\gamma)={\Bbb R}Z_{(\beta,\omega)}(v)$
and $(\beta,\omega) \in {\cal C}$.
We take a primitive vector $r_1 e^\gamma$ such that
$d_{\beta,\omega}(r_1 e^\gamma)>0$.
For any Fourier-Mukai transform $\Phi:{\bf D}(X) \to {\bf D}(Y)$,
let $v_0 \in {\Bbb Q}\Phi(e^{\gamma})$
be a primitive and positive Mukai vector. 
If $r_0:=\rk v_0 \ne 0$, then
$v_0=r_0 e^{\widetilde{\gamma}}$,
$\widetilde{\gamma} \in \NS(Y)$.
We first assume that $r_0 \ne 0$.
We set
\begin{equation}
\Phi(e^{\beta+\sqrt{-1} \omega})=
-\langle e^{\beta+\sqrt{-1} \omega}, \Phi^{-1}(\varrho_Y) \rangle 
e^{\beta'+\sqrt{-1} \omega'}.
\end{equation} 
Then $X_1 \cong M_{\omega'}(v_0)$ and 
the universal object ${\bf G}$ on $X_1 \times Y$
satisfies $\Phi_{X_1 \to Y}^{{\bf G}[n]}=
\Phi \circ \Phi_{X_1 \to X}^{{\bf E}}$, $n \in {\Bbb Z}$.
Since 
${\Bbb R}Z_{(\beta,\omega)}(e^\gamma)
={\Bbb R}Z_{(\beta,\omega)}(v)$, 
we have 
${\Bbb R}Z_{(\beta',\omega')}(v_0)
={\Bbb R}Z_{(\beta',\omega')}(\Phi(v))$.
Then 
$\Phi_{X_1 \to Y}^{{\bf G}[n]}$
induces an isomorphism
$$
M_{H_1}^{\alpha_1}(w) \to 
M_{(\beta',\omega')}(\Phi(v)).
$$
Hence we have an isomorphism
$$
\Phi:M_{(\beta,\omega)}(v) \to 
M_{(\beta',\omega')}(\Phi(v)).
$$
If $r_0=0$, then $\Phi$ is induced by isomorphisms 
on underlying abelian surfaces and
the action of $\Piczero{X}$.
Hence we also have an isomorphism
$M_{(\beta,\omega)}(v) \to 
M_{(\Phi(\beta),\Phi(\omega))}(\Phi(v))$.

If $(\beta,\omega)$ belongs to a wall, then 
$\Phi$ also preserves semi-stability. Indeed
assume that 
$E$ is $S$-equivalent to
$\oplus_i E_i$, where $E_i$ are $\sigma_{(\beta,\omega)}$-stable
objects with $\phi_{(\beta,\omega)}(E_i)=\phi_{(\beta,\omega)}(E)$.
Then $\Phi(E_i)$ are $\sigma_{(\beta',\omega')}$-stable
objects. By \eqref{eq:Phi(phase)},
$\phi_{(\beta',\omega')}(E_i)$ are the same.
\begin{NB}
Then $\Phi(E_i)$ are $\sigma_{(\widetilde{\beta},\widetilde{\omega})}$-stable
objects. By \eqref{eq:Phi(phase)},
$\phi_{(\widetilde{\beta},\widetilde{\omega})}(E_i)$ are the same.
\end{NB}
Therefore the claim holds.

\begin{NB}
Old argument:
For any Fourier-Mukai fransform $\Phi:{\bf D}(X) \to {\bf D}(Y)$,
let $v_0 \in {\Bbb Q}\Phi(e^{\beta+\lambda H})$
be a primitive and positive Mukai vector. 
If $r_0:=\rk v_0 \ne 0$, then
$v_0=r_0 e^{\widetilde{\gamma}}$,
$\widetilde{\gamma} \in \NS(Y)$.
We first assume that $r_0>0$.
We set
\begin{equation}
\Phi(e^{\beta+sH+\sqrt{-1} tH})=
-\langle e^{\beta+sH+\sqrt{-1} tH}, \Phi^{-1}(\varrho_Y) \rangle 
e^{\widetilde{\beta}(s)+\sqrt{-1} \widetilde{\omega}(s)}.
\end{equation} 
Then $X_1 \cong M_{\widetilde{\omega}(s)}(v_0)$ and 
the universal object ${\bf G}$ on $X_1 \times Y$
satisfies $\Phi_{X_1 \to Y}^{{\bf G}[n]}=
\Phi \circ \Phi_{X_1 \to X}^{{\bf E}}$, $n \in {\Bbb Z}$.
For $(s,t) \in C_{v,\lambda}$,
${\Bbb R}Z_{\widetilde{\beta}(s),\widetilde{\omega}(s)}(v_0)
={\Bbb R}Z_{\widetilde{\beta}(s),\widetilde{\omega}(s)}(\Phi(v))$.
Then 
$\Phi_{X_1 \to Y}^{{\bf G}[n]}$
induces an isomorphism
$$
M_{H_1}^{\alpha_1}(w) \to 
M_{(\widetilde{\beta}(s),\widetilde{\omega}(s))}(\Phi(v)).
$$
Hence we have an isomorphism
$$
\Phi:M_{(\beta+s H,t H)}(v) \to 
M_{(\widetilde{\beta}(s),\widetilde{\omega}(s))}(\Phi(v)).
$$
If $r_0=0$, then $\Phi$ is induced by isomorphisms and
the action of $\Piczero{X}$.
Hence we also have an isomorphism
$M_{(\beta+sH,tH)}(v) \to 
M_{(\widetilde{\beta}(s),\widetilde{\omega}(s))}(\Phi(v))$.
For each chamber ${\cal C}$, we can take 
$C_{v,\lambda} \subset {\cal C}$.
Hence $\Phi$ preserves the stability condition. 
\end{NB}
\end{rem}

\section{Numerical solutions and the walls.}

\subsection{Semi-homogeneous presentations and numerical solutions.}
\label{subsec:intro:sh}

Let us recall the notion of \emph{semi-homogeneous presentations}
introduced in \cite{YY}.

\begin{defn}\label{defn:semi-homog}
A \emph{semi-homogeneous presentation} of $E\in\Coh(X)$ is an exact sequence
\begin{align*}
\begin{matrix}
0\to  E   \to  E_1 \to  E_2 \to  0 \quad \mbox{or}\quad
0\to  E_1 \to  E_2 \to  E \to  0,
\end{matrix}
\end{align*}
where $E_i$ ($i=1,2$) are semi-homogeneous sheaves 
satisfying the following condition:
if we write $v(E_i)=\ell_i v_i$ 
with $\ell_i$ positive integers 
and $v_i$ primitive Mukai vectors, 
then 
\begin{align*}
(\ell_1-1)(\ell_2-1)=0,\quad
\langle v_1^2 \rangle=\langle v_2^2 \rangle=0,\quad
\langle v_1,v_2 \rangle=-1.
\end{align*}
We call the first sequence the \emph{kernel presentation} 
and the second one the \emph{cokernel presentation}.
\end{defn}

Since semi-homogeneous sheaves on abelian varieties are well-known objects,
semi-homogeneous presentations give useful informations on $E$.   
Moreover 
the property of having a semi-homogeneous presentation is an open condition,
and describes the birational structure of the moduli spaces $M_H(v)$.
The numerical data appeared in Definition \ref{defn:semi-homog}
are useful to find a semi-homogeneous presentation.
So we introduce the following definition.

\begin{defn}
For a Mukai vector $v$, the equation
\begin{align*}
\begin{split}
&v=\pm(\ell_1 v_1-\ell_2 v_2),
\\
&\ell_1, \ell_2 \in {\Bbb{Z}}_{>0},\quad
 v_1, v_2\colon \mbox{positive primitive Mukai vectors,}
\end{split}
\end{align*}
is called \emph{the numerical equation of $v$}.

A solution $(v_1,v_2,\ell_1,\ell_2)$ of this equation satisfying 
\begin{align*}
(\ell_1-1)(\ell_2-1)=0,\quad
\langle v_1^2 \rangle =\langle v_2^2 \rangle=0,\quad
\langle v_1,v_2 \rangle=-1,
\end{align*}
is called a \emph{numerical solution} of $v$.
\end{defn}

\begin{thm}[{\cite[Thm. 3.9]{YY}}]\label{fct:presentation}
Suppose $\NS(X)=\Bbb{Z} H$ and let $v$ be a Mukai vector with 
$\langle v^2 \rangle>0$. 
\begin{enumerate}
\item
If $v$ has at least two numerical solutions, 
then a general member of $M_H(v)$ has 
both kernel presentation and cokernel presentation. 
Each presentation is unique.
\item
If $v$ has only one numerical solution, 
then a general member of $M_H(v)$ has 
either kernel presentation or cokernel presentation. 
Such a presentation is unique.
\end{enumerate}
\end{thm}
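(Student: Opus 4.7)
The plan is to prove both parts uniformly by constructing, for each numerical solution $(v_1, v_2, \ell_1, \ell_2)$, an explicit family of semi-homogeneous presentations parametrized by a variety of dimension $\langle v^2\rangle + 2$, and showing that this family dominates an open dense subset of $M_H(v)$. I would first reduce to a standard form via a Fourier--Mukai equivalence: since $\langle v_1^2\rangle = 0$ and $v_1$ is primitive, the moduli space $Y=M_H(v_1)$ is an abelian surface and the associated universal family induces an equivalence $\bl{D}(Y)\simto\bl{D}(X)$; transporting by its inverse, I may assume $v_1$ is of the standard form $\varrho_X$ or $e^0$. The contravariant functor $\mathcal{D}_X$ of Theorem \ref{thm:B:3-10.3} (which interchanges kernel and cokernel presentations) together with Proposition \ref{prop:wall:isometry} guarantees that numerical solutions transform bijectively, so this reduction is harmless.

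Next I would set up the family of triples $(E_1, E_2, \phi\colon E_1\to E_2)$ with $v(E_i)=\ell_i v_i$, over the (2-dimensional) moduli spaces of semi-homogeneous sheaves with Mukai vectors $\ell_i v_i$ given by the standard theory on abelian varieties. The key computation is
\begin{equation*}
\dim\Hom(E_1,E_2)=\chi(E_1,E_2)=-\ell_1\ell_2\langle v_1,v_2\rangle=\ell_1\ell_2,
\end{equation*}
using the Mukai pairing formula together with vanishing of higher $\Ext$'s, which holds generically for semi-homogeneous pairs when $\langle v_1, v_2\rangle=-1$. Taking kernels or cokernels of a generic $\phi$ produces the candidate family of sheaves with Mukai vector $v$, and matching the parameter dimension against $\langle v^2\rangle+2=2\ell_1\ell_2+2=\dim M_H(v)$ shows the family dominates the moduli space. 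Combined with the openness of the property ``admits a semi-homogeneous presentation'', a generic $E\in M_H(v)$ admits the presentation.

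For uniqueness of the presentation attached to a fixed numerical solution, I would argue that $E_1$ is recovered from $E$ as the unique subobject (resp.\ quotient) with Mukai vector $\ell_1 v_1$ and the appropriate stability property, using that the hypothesis $(\ell_1-1)(\ell_2-1)=0$ forces one factor to be simple and thus rules out nontrivial rescaling or decomposition of the presentation. When $v$ has two numerical solutions, the two corresponding constructed loci are both open dense in $M_H(v)$, so their intersection is dense and a general member carries both presentations, giving part (i); when $v$ has only one numerical solution, part (ii) follows at once. The main obstacle will be the stability verification in the dimension count, namely, showing that the generic kernel (resp.\ cokernel) of $\phi\colon E_1\to E_2$ is $H$-stable rather than merely $H$-semi-stable or unstable. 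This relies crucially on the assumption $\NS(X)=\bb{Z}H$, which severely restricts the possible destabilizing subsheaves, together with the rigidity enforced by the equations $\langle v_i^2\rangle=0$ and $\langle v_1,v_2\rangle=-1$.
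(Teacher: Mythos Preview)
The paper does not prove this theorem; it is quoted verbatim from \cite[Thm.~3.9]{YY} as background material. What the present paper does record about the method of \cite{YY} is that the proof goes through the moduli spaces $\frk{M}^{\pm}(v_1,v_2,\ell_1,\ell_2)$ of simple two-term complexes (Theorem~\ref{fct:moduli}), and that these ``played an important role'' in establishing the presentation result. So there is no in-paper proof to compare against; the honest comparison is with \cite{YY}.

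Your sketch has the right overall shape---build a parameter space of triples $(E_1,E_2,\phi)$, take kernels/cokernels, and compare dimensions---but two steps need more care than you indicate. First, your Fourier--Mukai reduction is dangerous for the statement as written: $M_H(v)$ is a \emph{Gieseker} moduli space, and Fourier--Mukai transforms do not preserve Gieseker stability in general (this is precisely the point stressed in the introduction of the paper). Theorem~\ref{thm:B:3-10.3} concerns Bridgeland stability, so invoking it to justify the reduction conflates two different stabilities. Second, your dimension count is off. You assert that the moduli of semi-homogeneous sheaves with Mukai vector $\ell_i v_i$ is $2$-dimensional, but when $\ell_i>1$ the semi-stable locus ${\cal M}_H(\ell_i v_i)^{ss}$ is larger (its coarse space is $\Sym^{\ell_i} M_H(v_i)$), and you have not accounted for the automorphisms of $E_1,E_2$ acting on $\Hom(E_1,E_2)$. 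With one of $\ell_1,\ell_2$ equal to $\langle v^2\rangle/2$, naively summing $2+2+\ell_1\ell_2$ and projectivizing does not give $\langle v^2\rangle+2$; the correct bookkeeping requires working with the moduli \emph{stacks} (or, as \cite{YY} does, with the fine moduli $\frk{M}^{\pm}$ of simple complexes), where the automorphism excess is handled intrinsically.
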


For each numerical solution 
$(v_1,v_2,\ell_1,\ell_2)$ of $v$,
we constructed moduli spaces of simple two-term complexes.
These moduli spaces plays an important role to prove
\cite[Thm. 3.9]{YY}. 
We fix an ample divisor $H$ on $X$.

\begin{thm}[{\cite[Thm. 4.9]{YY}}]
\label{fct:moduli}
Let $v$ be a positive Mukai vector with $\mpr{v^2}>0$ and 
$(v_1,v_2,\ell_1,\ell_2)$ be a numerical solution of $v$.
\begin{enumerate}
\item
We have the fine moduli space $\frk{M}^{-}(v_1,v_2,\ell_1,\ell_2)$
of simple complexes $V^{\bullet}$ such that
$H^i(V^{\bullet})=0$ ($i \neq -1,0$),
$H^{-1}(V^{\bullet}) \in {\cal M}_H(\ell_1 v_1)^{ss}$ and 
$H^0(V^{\bullet}) \in {\cal M}_H(\ell_2 v_2)^{ss}$.
\item
We have the fine moduli space $\frk{M}^+(v_1,v_2,\ell_1,\ell_2)$
of simple complexes $V^{\bullet}$ such that
$V^{\bullet} \cong [W^{-1} \to W^0]$,  
$W^{-1} \in {\cal M}_H(\ell_1 v_1)^{ss}$ and 
$W^0 \in {\cal M}_H(\ell_2 v_2)^{ss}$.
\end{enumerate}
\end{thm}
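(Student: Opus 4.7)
The plan is to construct each moduli space as a locally closed subscheme of the moduli of simple complexes on $X$, and then identify this subscheme with a classical moduli problem (Quot scheme or $\Ext$-bundle) via a Fourier-Mukai transform tailored to the numerical solution.

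First, by general theory of simple objects in the derived category, there is a separated algebraic space $\Spl(v)$ parametrising simple two-term complexes $V^\bullet \in \bl{D}(X)$ with Mukai vector $v=\pm(\ell_1 v_1-\ell_2 v_2)$. Simplicity is an open condition, as is the condition that $H^i(V^\bullet)$ vanish for $i\neq -1,0$ together with the $H$-semistability of $H^{-1}(V^\bullet)$ and $H^0(V^\bullet)$ with the prescribed Mukai vectors; this gives $\frk{M}^-$ as an open subspace of $\Spl(v)$. For $\frk{M}^+$, the locus of complexes quasi-isomorphic to a two-term complex of $H$-semistable sheaves with Mukai vectors $\ell_1 v_1$ and $\ell_2 v_2$ is likewise open. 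The nontrivial content is to show these loci are schemes carrying universal families.

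Second, use the numerical solution to linearise. By $(\ell_1-1)(\ell_2-1)=0$ we may assume $\ell_1=1$; then $\langle v_1^2\rangle=0$ and primitivity of $v_1$ make ${\cal M}_H(v_1)^{ss}\cong X_1$ a Fourier-Mukai partner of $X$, with every member locally free and simple. Let ${\bf E}$ be a (possibly twisted) universal family on $X\times X_1$ and set $\Phi := \Phi_{X\to X_1}^{{\bf E}^\vee[1]}$. The relation $\langle v_1,v_2\rangle=-1$ forces $\Phi(v_2) = \pm e^{\gamma}$ for some $\gamma\in\NS(X_1)_{\Bbb Q}$, so after twisting by a line bundle $\Phi$ sends $H^{-1}(V^\bullet)$ to a skyscraper and $H^0(V^\bullet)$ to a semistable sheaf of rank $\ell_2$. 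Consequently, for $V^\bullet\in\frk{M}^-$, the triangle $H^{-1}(V^\bullet)[1]\to V^\bullet\to H^0(V^\bullet)$ transforms under $\Phi$ to an extension of a semistable sheaf on $X_1$ by a zero-dimensional sheaf, and the datum of such a complex is represented by a (relative) projectivised $\Ext^2$-bundle over $\overline{M}_H(v_1)\times \overline{M}_H(\ell_2 v_2)$. Transporting the resulting universal family back by $\Phi^{-1}$ produces a universal complex on $X\times\frk{M}^-$. The construction of $\frk{M}^+$ is parallel, with a Quot-scheme over $\overline{M}_H(v_1)\times\overline{M}_H(\ell_2 v_2)$ (parametrising morphisms $W^{-1}\to W^0$) replacing the $\Ext^2$-bundle.

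The main obstacle is the existence of a genuine universal family, not merely a family twisted by a $\Bbb G_m$-gerbe: the automorphism group of a simple $V^\bullet$ is $\Bbb G_m$, so $\frk{M}^\pm$ is a priori only a gerbe over its coarse moduli. The key point is that the coprimality $\langle v_1,v_2\rangle=-1$ produces, via a determinant construction on the relative $\Ext$-complex coming from the extension (resp.\ from the universal morphism), a line bundle of weight one with respect to the central $\Bbb G_m$, trivialising the gerbe. Checking this rigidification, and verifying that the scheme structure so obtained agrees with that inherited from $\Spl(v)$ as well as with the behaviour under the Fourier-Mukai transform $\Phi$, is the technical heart of the argument. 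Once this is done, the fact that $\Phi$ is an equivalence identifies $\frk{M}^\pm$ with the classical schemes constructed downstairs, and the universal object descends through $\Phi^{-1}$ to give the desired fine moduli space.
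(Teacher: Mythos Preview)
The paper does not prove this statement: Theorem~\ref{fct:moduli} is quoted verbatim from \cite[Thm.~4.9]{YY} and is used here as input, with no argument supplied in the present text. There is therefore no proof in this paper against which to compare your proposal.

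As a side remark, your sketch is in the right spirit for how the result is established in \cite{YY}: one realises $\frk{M}^\pm$ inside the Inaba moduli space of simple complexes, and the Fourier-Mukai transform attached to the isotropic vector $v_i$ with $\ell_i=1$ linearises the problem. One point to watch is your identification of the image under $\Phi$: with $\langle v_1,v_2\rangle=-1$ and $\Phi(v_1)=\varrho_{X_1}$, the vector $\Phi(v_2)$ has rank~$1$ (not $\ell_2$), so after $\Phi$ the complexes become rank-one objects and the moduli problem reduces to a Hilbert/Quot-type construction of length $\langle v^2\rangle/2$ on $X_1$, rather than an $\Ext$-bundle over a product of two moduli spaces. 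For the details you should consult \cite{YY} directly.
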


\begin{rem}
Since
${\cal M}_H(\ell_i v_i)^{ss}$ ($i=1,2$) are independent of the choice of $H$,
$\frk{M}^\pm(v_1,v_2,\ell_1,\ell_2)$ are independent of the choice of $H$.
\end{rem}

The relation of $\frk{M}^\pm(v_1,v_2,\ell_1,\ell_2)$
is described as follows:

\begin{prop}[{\cite[Prop. 4.11]{YY}}]
\label{prop:dual}
For $i=1,2$, we denote $Y_i := M_H(v_i)$, 
and let $\bl{E}_i$ be a universal family
such that $v(\Phi_{X \to Y_i}^{\bl{E}_i^{\vee}}(v_j))=(1,0,0)$
for $j \ne i$. 
For $V^{\bullet} \in \frk{M}^{+}(v_1,v_2,\ell_1,\ell_2)$,
we set
\begin{align*}
\Psi(V^{\bullet}):=
\begin{cases}
\Phi_{Y_1 \to X}^{{\bl{E}}_1[2]}\,\cal{D}_{Y_1}\,
\Phi_{X \to Y_1}^{{\bl{E}}_1^{\vee}}(V^{\bullet})
& \ell_1=\langle v^2 \rangle/2, 
\\
\Phi_{Y_2 \to X}^{{\bl{E}}_2[1]}\,\cal{D}_{Y_2}\,
\Phi_{X \to Y_2}^{{\bl{E}}_2^{\vee}[1]}(V^{\bullet})
& \ell_2=\langle v^2 \rangle/2. 
\end{cases}
\end{align*}
Then 
$\Psi$ induces an isomorphism
\begin{align*}
\frk{M}^{+}(v_1,v_2,\ell_1,\ell_2) \simto
\frk{M}^{-}(v_1,v_2,\ell_1,\ell_2). 
\end{align*}
\end{prop}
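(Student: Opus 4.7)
The plan is to verify that $\Psi$, being built from two Fourier--Mukai equivalences and a contravariant dualizing involution, is itself an (anti-)equivalence $\bl{D}(X)\to\bl{D}(X)$, and then to check that its restriction to $\frk{M}^+(v_1,v_2,\ell_1,\ell_2)$ lands in $\frk{M}^-(v_1,v_2,\ell_1,\ell_2)$. Since the analogous composition in the opposite direction (using $\cal{D}_X$ instead of $\cal{D}_{Y_i}$) yields an inverse of the same shape, proving the landing assertion for both sides produces the claimed isomorphism.

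Consider the case $\ell_1=\mpr{v^2}/2$; the other case is parallel with $(Y_1,\bl{E}_1)$ replaced by $(Y_2,\bl{E}_2)$. The constraint $(\ell_1-1)(\ell_2-1)=0$ together with $\mpr{v^2}>0$ forces $\ell_2=1$ (the degenerate sub-case $\ell_1=\ell_2=1$ is analogous). Let $V^\bullet\cong [W^{-1}\to W^0]\in\frk{M}^+(v_1,v_2,\ell_1,\ell_2)$ with $W^{-1}\in\cal{M}_H(\ell_1 v_1)^{ss}$ and $W^0\in\cal{M}_H(v_2)^{ss}$. I first record the Mukai vectors under the normalization $v(\Phi_{X\to Y_1}^{\bl{E}_1^\vee}(v_2))=(1,0,0)$: combined with $\mpr{v_i^2}=0$ and $\mpr{v_1,v_2}=-1$ this forces $v(\Phi_{X\to Y_1}^{\bl{E}_1^\vee}(v_1))=-\varrho_{Y_1}$ up to sign. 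Hence $\Phi_{X\to Y_1}^{\bl{E}_1^\vee}$ identifies $\cal{M}_H(v_2)^{ss}$ on $X$ with line bundles on $Y_1$, and $\cal{M}_H(\ell_1 v_1)^{ss}$ with $0$-dimensional sheaves of length $\ell_1$ on $Y_1$.

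The main work is to propagate the cohomological positions. By the WIT analysis for semi-homogeneous sheaves developed in \cite{YY}, $W^0$ is WIT and $\Phi_{X\to Y_1}^{\bl{E}_1^\vee}(W^0)$ is a line bundle $L$ placed in a fixed degree, while $W^{-1}$ is WIT with transform a $0$-dimensional sheaf $T$ of length $\ell_1$, placed in the shifted degree consistent with the map of Mukai vectors. Thus $\Phi_{X\to Y_1}^{\bl{E}_1^\vee}(V^\bullet)$ is a two-term complex on $Y_1$ of cokernel-presentation type. Applying $\cal{D}_{Y_1}$ to such a complex interchanges the positions of the line bundle (going to $L^{-1}$ in degree $0$) and the $0$-dimensional sheaf (going to $\cal{E}xt^2(T,\cal{O}_{Y_1})$ in degree $2$); after the shift $[2]$ built into $\Phi_{Y_1\to X}^{\bl{E}_1[2]}$, the resulting object lies in $\bl{D}(X)$ with $H^i=0$ for $i\neq -1,0$ and with $H^{-1}\in\cal{M}_H(\ell_1 v_1)^{ss}$, $H^0\in\cal{M}_H(v_2)^{ss}$, i.e.\ in $\frk{M}^-(v_1,v_2,\ell_1,\ell_2)$. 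Simplicity is preserved because each constituent of $\Psi$ is an (anti-)equivalence.

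The main obstacle is the degree bookkeeping: one must show that the hyper-cohomology spectral sequence for $\Psi(V^\bullet)$ degenerates so that only $H^{-1}$ and $H^0$ survive, which is what converts the cokernel-type complex on $Y_1$ into a kernel-type complex on $X$. This degeneration rests on the tight WIT properties of semi-homogeneous sheaves with respect to $\bl{E}_1$, specifically on the vanishing $\Phi_{X\to Y_1}^{\bl{E}_1^\vee,i}(W^0)=\Phi_{X\to Y_1}^{\bl{E}_1^\vee,j}(W^{-1})=0$ outside the expected degrees, which in turn follows from the semi-homogeneity and the choice of $\bl{E}_1$. Once this is established, the inverse to $\Psi$ is constructed by the same recipe with $X$ and $Y_1$ interchanged, completing the proof.
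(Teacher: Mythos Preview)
The paper does not prove this proposition; it is quoted verbatim from \cite[Prop.~4.11]{YY} and used as input. So there is no ``paper's own proof'' to compare your argument against here.

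That said, your sketch is the natural line of argument and is essentially what the proof in \cite{YY} does: push $V^\bullet$ to $Y_1$ via $\Phi_{X\to Y_1}^{\bl{E}_1^\vee}$, where the two semi-homogeneous pieces become a $0$-dimensional sheaf of length $\ell_1$ and a line bundle; dualize; and push back. Two small remarks. First, your claim that the pairing conditions alone force $\Phi_{X\to Y_1}^{\bl{E}_1^\vee}(v_1)=\pm\varrho_{Y_1}$ is not quite right---the isometry conditions $\langle w^2\rangle=0$, $\langle w,(1,0,0)\rangle=-1$ admit other solutions such as $(1,H,1)$ on a principally polarized surface. What pins it down is that $\bl{E}_1$ is the universal family on $X\times M_H(v_1)$, so $\Phi_{X\to Y_1}^{\bl{E}_1^\vee}(E_y)\cong\frak{k}_y[-2]$; you invoke this implicitly later but should state it up front. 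Second, the inverse you allude to is simpler than you suggest: one checks directly that $\Psi^{-1}=\Psi$ (the present paper notes this just after recalling the proposition), so once you have shown $\Psi(\frk{M}^+)\subset\frk{M}^-$ and $\Psi(\frk{M}^-)\subset\frk{M}^+$ you are done. As you correctly identify, the substantive content is the WIT/degree bookkeeping for semi-homogeneous sheaves under $\Phi_{X\to Y_1}^{\bl{E}_1^\vee}$; that is carried out in \cite{YY}.
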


\subsection{Relation with the walls.}

 The operation $\Psi$ is first introduced in \cite{Yoshioka:2009:ann}
to construct birational map of moduli spaces,
and it is reformulated as an isomorphism of 
$\frk{M}^\pm(v_1,v_2,\ell_1,\ell_2)$
in \cite{YY}.
$\Psi$ plays a fundamental role in these papers.
In \cite[sect. 4]{MYY:2011:1}, 
we explained its relation with Bridgeland's stability
condition. In particular, we showed that 
$\frk{M}^\pm(v_1,v_2,\ell_1,\ell_2)$ are moduli of stable objects 
if $v_1$ defines a wall. We shall slightly generalize this fact.
Thus we show the following.

\begin{lem}\label{lem:num.sol:stability}
There is a stability condition 
$\sigma_{(\beta,\omega)}=({\frak A}_{(\beta,\omega)},Z_{(\beta,\omega)})$
such that 
${\frak M}^\pm(v_1,v_2,\ell_1,\ell_2)$
are the moduli schemes of stable objects
$M_{(\beta,\omega^\pm)}(\pm v)$,
where $\omega^\pm=t^\pm \omega$,
$t^-<1<t^+$, $t^+-t^- \ll 1$.
\end{lem}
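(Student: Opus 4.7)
The plan is to place $(\beta, \omega)$ on the wall $W_{v_1}$, identify the strictly $\sigma_{(\beta,\omega)}$-semistable objects of Mukai vector $\pm v$ as extensions of shifted semi-homogeneous sheaves, then track which direction of extension survives the perturbation on each side; matching these two directions to the two descriptions of $\frk{M}^\pm$ from Theorem \ref{fct:moduli} completes the identification.

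First I would choose $(\beta, \omega) \in W_{v_1}$; note that $W_{v_1} = W_{v_2}$ as a consequence of the relation $v = \pm(\ell_1 v_1 - \ell_2 v_2)$, so $Z_{(\beta,\omega)}(v_1)$, $Z_{(\beta,\omega)}(v_2)$ and $Z_{(\beta,\omega)}(v)$ all lie on a common real line. Since $\langle v_i^2 \rangle = 0$, there exist semi-homogeneous sheaves $E_i$ of class $\ell_i v_i$, and after placing each into $\frk{A}_{(\beta,\omega)}$ via the appropriate shift they become $\sigma_{(\beta,\omega)}$-semistable of common phase $\phi_{(\beta,\omega)}(\pm v)$. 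By \cite[Prop.~4.2.2]{MYY:2011:2}, any $\sigma_{(\beta,\omega)}$-semistable $E$ with $v(E) = \pm v$ fits into a short exact sequence in the heart
\begin{equation*}
0 \to A_1 \to E \to A_2 \to 0,
\end{equation*}
with $\{v(A_1), v(A_2)\} = \{\pm \ell_1 v_1,\, \pm \ell_2 v_2\}$, the signs fixed by the requirement that both $A_i$ lie in $\frk{A}_{(\beta,\omega)}$. The two possible orderings of (sub, quotient) correspond precisely to the two descriptions in Theorem \ref{fct:moduli}: in $\frk{M}^-$, the cohomology-sheaf conditions $H^{-1}(V^\bullet) \in \cal{M}_H(\ell_1 v_1)^{ss}$, $H^0(V^\bullet) \in \cal{M}_H(\ell_2 v_2)^{ss}$ force the canonical-truncation ordering $0 \to H^{-1}(V^\bullet)[1] \to V^\bullet \to H^0(V^\bullet) \to 0$ in the heart, while in $\frk{M}^+$ the two-term-complex presentation $V^\bullet \cong [W^{-1} \to W^0]$ with semistable terms yields, via the stupid-truncation triangle, the opposite ordering $0 \to W^0 \to V^\bullet \to W^{-1}[1] \to 0$.

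To finish, I would perturb $\omega$ to $\omega^\pm = t^\pm \omega$ and examine the phase difference $\phi_{(\beta, t\omega)}(\ell_1 v_1) - \phi_{(\beta, t\omega)}(\ell_2 v_2)$ as a function of $t$ near $t = 1$: its derivative at $t = 1$ is nonzero because $v_1, v_2$ are non-proportional, so the difference changes sign as $t$ crosses $1$. Consequently exactly one ordering yields $\sigma_{(\beta,\omega^+)}$-stable objects and the opposite ordering yields $\sigma_{(\beta,\omega^-)}$-stable objects, with the correct sign of $\pm v$ on each side fixed by the requirement that representatives lie in $\frk{A}_{(\beta,\omega^\pm)}$ with phase in $(0,1]$. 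The main obstacle will be the careful sign- and shift-bookkeeping needed to pair $\frk{M}^+$ with $\omega^+$ (rather than $\omega^-$) and with the correct sign of $\pm v$, particularly under the case split $(\ell_1 - 1)(\ell_2 - 1) = 0$ where one of the $E_i$ is forced to be a single simple semi-homogeneous sheaf; as a secondary point, one must upgrade the simplicity condition of Theorem \ref{fct:moduli} to $\sigma_{(\beta,\omega^\pm)}$-stability, which follows from the genuine separation of phases of sub and quotient at $\omega^\pm$ precluding nontrivial endomorphisms.
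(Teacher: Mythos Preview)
Your proposal skips the step that carries essentially all of the work in the paper's argument: the existence of a point $(\beta,\omega)$ on the wall $W_{v_1}$ with the correct sign conditions so that the relevant (shifted) semi-homogeneous sheaves actually lie in the heart ${\frak A}_{(\beta,\omega)}$. You begin with ``choose $(\beta,\omega)\in W_{v_1}$'' and then proceed to the wall-crossing analysis, but nothing you have written guarantees that such a point exists, nor that at this point one has $d_{\beta}(\pm v_1)>0$ and $d_{\beta}(\pm v_2)>0$ (these positivity conditions are exactly what is needed for your ``placing each into ${\frak A}_{(\beta,\omega)}$ via the appropriate shift'' to make sense with a common phase). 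The paper's proof is almost entirely devoted to this construction: writing $v_i=r_i e^{\gamma+\xi_i}$, choosing $H$ with $(\xi_1-\xi_2,H)\ne 0$, setting $\beta(x)=\gamma+x\xi_1+(1-x)\xi_2$ for $0<x<1$, and then, after a case split on the sign of $r_1\ell_1-r_2\ell_2$, checking the $d_{\beta(x)}$-signs and computing explicitly that
\[
\frac{d_{\beta(x)}(\mp v_1)a_{\beta(x)}(\pm v_2)-d_{\beta(x)}(\pm v_2)a_{\beta(x)}(\mp v_1)}
{d_{\beta(x)}(\mp v_1)r_{\beta(x)}(\pm v_2)-d_{\beta(x)}(\pm v_2)r_{\beta(x)}(\mp v_1)}
=(1-x)x\,\frac{((\xi_1-\xi_2)^2)}{2}>0,
\]
which yields a $t>0$ with $Z_{(\beta(x),tH)}(\mp v_1)\in{\Bbb R}_{>0}Z_{(\beta(x),tH)}(\pm v_2)$.

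Conversely, the wall-crossing identification you spend most of your proposal on is, in the paper, dispatched in a single sentence by citing \cite[Thm.~4.9]{YY} and \cite[Prop.~4.1.5]{MYY:2011:1}; your extension/ordering discussion is essentially a sketch of what those references already prove. So the emphasis is inverted: you have outlined the cited part and omitted the part that needs to be done here. To repair the proposal you must supply the construction of $(\beta,\omega)$ (including the positivity of $((\xi_1-\xi_2)^2)$, which uses $\langle v_1,v_2\rangle=-1$), and you may then either cite those references or keep your wall-crossing analysis as a self-contained alternative.
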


\begin{proof}
Replacing $v$ by $-v$, we assume that
$v=\ell_2 v_2-\ell_1 v_1$.
We set
$v=r e^{\gamma}-\frac{\ell}{r}\varrho_X$
and
$v_i=r_i e^{\gamma+\xi_i}$ ($i=1,2$).
Then $r=\pm(r_1 \ell_1-r_2 \ell_2)$,
$r_1 \ell_1 \xi_1=r_2 \ell_2 \xi_2$ and
$\frac{\ell}{r}=
\pm(r_1 \ell_1 \frac{(\xi_1^2)}{2}-r_2 \ell_2 \frac{(\xi_2^2)}{2})$.
Since $0>\langle e^{\gamma+\xi_1},e^{\gamma+\xi_2} \rangle
=-\frac{((\xi_2-\xi_1)^2)}{2}$,
there is an ample divisor $H$ with
$(\xi_1-\xi_2,H) \ne 0$.
We may assume that $(\xi_1,H)<(\xi_2,H)$.
We take $\beta(x):=\gamma+x \xi_1+(1-x) \xi_2$ with
$0<x<1$.
Then 
\begin{equation*}
\begin{split}
v_1= & r_1 e^{\beta(x)+(1-x)(\xi_1-\xi_2)}\\
=&
r_1 \left(e^{\beta(x)}+(1-x)(\xi_1-\xi_2+(\xi_1-\xi_2,\beta)\varrho_X)
+\frac{(1-x)^2((\xi_1-\xi_2)^2)}{2}\varrho_X \right),\\
v_2=& r_2 e^{\beta(x)+x(\xi_2-\xi_1)}\\
=&
r_2 \left(e^{\beta(x)}+x(\xi_2-\xi_1+(\xi_2-\xi_1,\beta)\varrho_X)
+\frac{x^2((\xi_1-\xi_2)^2)}{2}\varrho_X \right).
\end{split}
\end{equation*}
If $r_1 \ell_1>r_2 \ell_2$, then
$(\xi_2,H)>(\xi_1,H)$ implies that
$(\xi_2,H)>(\xi_1,H)>0$.
Hence $d_{\beta(x)}(-v_1)>0$ and $d_{\beta(x)}(v_2)>0$.
We have
\begin{equation*}
\frac{d_{\beta(x)}(-v_1)a_{\beta(x)}(v_2)
-d_{\beta(x)}(v_2)a_{\beta(x)}(-v_1)}
{d_{\beta(x)}(-v_1)r_{\beta(x)}(v_2)-d_{\beta(x)}(v_2)r_{\beta(x)}(-v_1)}
=(1-x)x \frac{((\xi_1-\xi_2)^2)}{2}>0.
\end{equation*}
Hence there is a positive number $t$ such that 
$Z_{(\beta(x),tH)}(-v_1) \in 
{\Bbb R}_{>0}Z_{(\beta(x),tH)}(v_2)$.
Then $(\beta,\omega):=(\beta(x),tH)$ belongs to the wall defined by
$-v_1$.
By \cite[Thm. 4.9]{YY} and \cite[Prop. 4.1.5]{MYY:2011:1},
${\frak M}^\pm(v_1,v_2,\ell_1,\ell_2)=M_{(\beta,\omega^\pm)}(v)$.

If $r_1 \ell_1<r_2 \ell_2$, then
we have $(\xi_1,H)<(\xi_2,H)<0$.
Hence $d_\beta(v_1)>0$ and $d_\beta(-v_2)>0$.
In this case, we also have
\begin{equation}
\frac{d_{\beta(x)}(v_1)a_{\beta(x)}(-v_2)-
d_{\beta(x)}(-v_2)a_{\beta(x)}(v_1)}
{d_{\beta(x)}(v_1)r_{\beta(x)}(-v_2)-d_{\beta(x)}(-v_2)r_{\beta(x)}(v_1)}
=(1-x)x \frac{((\xi_1-\xi_2)^2)}{2}>0.
\end{equation}
Hence there is a positive number $t$ such that 
$Z_{(\beta(x),tH)}(v_1) \in 
{\Bbb R}_{>0}Z_{(\beta(x),tH)}(-v_2)$.
Then $(\beta,\omega):=(\beta(x),tH)$ belongs to the wall defined by
$v_1$.
By \cite[Thm. 4.9]{YY} and \cite[Prop. 4.1.5]{MYY:2011:1},
${\frak M}^\pm(v_1,v_2,\ell_1,\ell_2)=M_{(\beta,\omega^\pm)}(-v)$.
\end{proof}

\begin{NB}
 
$\mu_i:=(\xi_i,H)$.

(i)
Assume that $\mu_1,\mu_2>0$. 
Since $r_1 \ell_1 \mu_1=r_2 \ell_2 \mu_2$,
$r_1 \ell_1> r_2 \ell_2 $ if and only if $\mu_2>\mu_1$.

(ii)
Assume that $\mu_1,\mu_2<0$. 
Since $r_1 \ell_1 \mu_1=r_2 \ell_2 \mu_2$,
$r_1 \ell> r_2 \ell_2$ if and only if $\mu_2<\mu_1$.

(i) Assume that $r>0$.
Then $\mu_1<\mu_2$. 
$d_{\beta+sH}(v[1])>0$
for $s-\frac{d_\beta}{r} \in (\mu_1,\mu_2)$.
Assume that $r<0$. 
Then $\mu_2 <\mu_1$.
$d_{\beta+sH}(v)>0$ for
$s-\frac{d_\beta}{r} \in (\mu_2,\mu_1)$.

(ii)
Assume that $r>0$.
Then $\mu_2<\mu_1$.
$d_{\beta+sH}(v)>0$
for $s-\frac{d_\beta}{r} \in (\mu_2,\mu_1)$.
Assume that $r<0$.
Then $\mu_2>\mu_1$.
$d_{\beta+sH}(v[1])>0$
for $s-\frac{d_\beta}{r} \in (\mu_1,\mu_2)$.
\end{NB}

\section{Stability conditions on a restricted parameter space.}
\label{sect:restricted-parameter}

\subsection{The structure of walls.}

In this section, we shall partially generalize the structure of walls
in \cite{MM}.
We note that
\begin{equation*}
\{\sigma_{(\xi,\omega)} \mid 
\xi \in \NS(X)_{\Bbb R},\ \omega \in \Amp(X)_{\Bbb R} \}=
\bigcup_{(\beta,H) \in \NS(X)_{\Bbb R} \times \Amp(X)_{\Bbb R}} 
\{ \sigma_{(\beta+sH ,tH)} \mid  s \in {\Bbb R},\ t \in {\Bbb R}_{>0} \}.
\end{equation*}
In this section,
we fix $\beta \in \NS(X)_{\Bbb R}$ and an ample divisor
$H$ on $X$ and 
study the wall for the space 
of special stability conditions:
$$
\{ \sigma_{(\beta+sH,tH)}\mid s \in {\Bbb R},\ t \in {\Bbb R}_{>0} \}.
$$

\begin{defn}\label{defn:wall2}
Let $v_1$ 
be a Mukai vector with
$\langle v_1^2 \rangle \geq 0$, $\langle (v-v_1)^2 \rangle \geq 0$
and $\langle v_1,v-v_1 \rangle >0$.
If $(\rk v_1,d_\beta(v_1),a_\beta(v_1)) \not \in
{\Bbb Q}(\rk v,d_\beta(v),a_\beta(v))$, 
we define a {\it wall} for $v$ by
$$
W_{v_1}^{\beta,H}:=
\{(s,t) \in {\Bbb R}^2 \mid {\Bbb R}Z_{(\beta+sH,tH)}(v_1)=
{\Bbb R}Z_{(\beta+sH,tH)}(v) \}.
$$
A connected component of
${\Bbb R} \times {\Bbb R}_{>0} \setminus \cup_{v_1} 
W_{v_1}^{\beta,H}$
is called a {\it chamber} for $v$.
\end{defn}

\begin{NB}
\begin{equation}
\begin{matrix}
W_{v_1}^{\beta,H} & \to & 
W_{v_1} \cap 
\{(\beta+sH,tH) 
| s \in {\Bbb R}, t \in {\Bbb R}_{>0} \}\\
(s,t) & \mapsto & (\beta+sH,tH)
\end{matrix}
\end{equation}

\end{NB}
For the study of walls, we collect elementary facts
on a family of circles.

\begin{lem}\label{lem:circle}
We take $p \in {\Bbb R}$ and $q \in {\Bbb R}_{>0}$.
For $a \in {\Bbb R}$,
let
$C_a:(x+a)^2+y^2=(a+p)^2-q$ be the circle in $(x,y)$-plane.
We also set
$C_\infty:x=p$.
Thus $C_\infty$ is a line in $(x,y)$-plane. 
\begin{enumerate}
\item[(1)]
$C_a \cap C_{a'}=\emptyset$, if $a \ne a'$.
\item[(2)]
$C_a \cap C_\infty=\emptyset$ for $a \in {\Bbb R}$.
\end{enumerate}
\end{lem}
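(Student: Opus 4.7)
The plan is to observe that the family $\{C_a\}_{a \in \mathbb{R}} \cup \{C_\infty\}$ is a pencil of circles in disguise, and the claimed disjointness is just the statement that the base points of this pencil are imaginary. Concretely, I would first rewrite the defining equation of $C_a$ by expanding the left-hand side and cancelling the $a^2$ terms:
\begin{equation*}
(x+a)^2 + y^2 = (a+p)^2 - q
\quad\Longleftrightarrow\quad
(x^2 + y^2 - p^2 + q) + 2a(x-p) = 0.
\end{equation*}
In this form the dependence on the parameter $a$ is affine-linear, so $C_a$ is cut out by a linear combination of the two curves $\{x = p\}$ and $\{x^2 + y^2 - p^2 + q = 0\}$; the latter has no real points because $q > 0$.

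For (1), if $(x,y) \in C_a \cap C_{a'}$ with $a \neq a'$, subtracting the two equations in the rewritten form yields $2(a-a')(x-p) = 0$, hence $x = p$. Plugging back gives $y^2 = -q < 0$, a contradiction.

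For (2), if $(x,y) \in C_a \cap C_\infty$ then $x = p$, and the equation of $C_a$ again forces $y^2 = -q < 0$, again a contradiction.

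There is no real obstacle here; the content of the lemma is the elementary identification of the family as a pencil of circles through the conjugate imaginary points $(p, \pm\sqrt{-q})$, and the two statements are immediate consequences of the absence of real base points. I expect the only thing worth being careful about is the algebraic rewriting above, which makes both claims fall out uniformly, rather than treating (1) and (2) by separate ad hoc computations.
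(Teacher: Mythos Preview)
Your proof is correct and takes essentially the same approach as the paper: subtract the two equations to force $x=p$, then substitute back to obtain $y^2=-q<0$. Your observation about the pencil of circles through the imaginary points $(p,\pm\sqrt{-q})$ is exactly the content of the remark immediately following the lemma in the paper.
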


\begin{proof}
(1)
If $C_a \cap C_{a'} \not =\emptyset$, then the intersection
satisfies $x=p$. Then $(p+a)^2+y^2=(a+p)^2-q$, which implies that
$y^2=-q<0$.
Therefore the claim holds.
The proof of (2) is similar.
\end{proof}

\begin{rem}\label{rem:base-point}
By the proof, we see that
$C_a$ $(a \in {\Bbb R})$ forms a pencil
of conics passing through the imaginary points
$\{(p,\pm\sqrt{-q}) \}$.
\end{rem}

\begin{NB}
Applonius circles:

Let $A(a+p-\sqrt{q},0)$ and $B(a+p+\sqrt{q},0)$ be two
points on $(s,t)$-plane.
Then $C_a$ is the locus of $(s,t)$
such that $AP:BP=\sqrt{a+p-\sqrt{q}}:\sqrt{ a+p+\sqrt{q}}$.
Indeed
\begin{equation}
\begin{split}
0=
& (a+p+\sqrt{q})((s-p+\sqrt{q})^2+t^2)-
(a+p-\sqrt{q})((s-p-\sqrt{q})^2+t^2)\\
=&
2\sqrt{q}((s-p)^2+q+t^2)+2(a+p)(s-p)\sqrt{q}.
\end{split}
\end{equation}
Hence the claim holds.
\end{NB}

\begin{lem}\label{lem:circle-center}
Assume that
$C_a:(x+a)^2+y^2=(a+p)^2-q$ with $q>0$
is non-empty.
If $a+p>0$, then
$(p-\sqrt{q},0)$ is contained in $C_a$.
If $a+p<0$, then
$(p+\sqrt{q},0)$ is contained in $C_a$.
\end{lem}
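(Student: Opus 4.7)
The plan is to reduce both cases to a direct algebraic verification. First I would fix the reading of the statement: since the pencil $\{C_a\}$ has the two imaginary conjugate base points $(p,\pm\sqrt{-q})$ (Remark~\ref{rem:base-point}), the real points $(p\pm\sqrt{q},0)$ generically do not lie on any individual circle $C_a$. Therefore ``$(p-\sqrt{q},0)$ is contained in $C_a$'' is to be read as containment in the closed disk bounded by $C_a$, i.e.\ satisfaction of the inequality $(x+a)^2+y^2\le (a+p)^2-q$. This is also the geometrically meaningful statement for the subsequent analysis of walls, which will be nested around one of these two real points.

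Next I would unpack the non-emptiness hypothesis. Since $q>0$, the circle $C_a$ is non-empty if and only if $(a+p)^2\ge q$, equivalently $a+p\ge\sqrt{q}$ in the first case and $a+p\le-\sqrt{q}$ in the second case. With this in hand, for the first assertion I plug $(p-\sqrt{q},0)$ into the defining quadratic:
\begin{equation*}
(p-\sqrt{q}+a)^2+0^2 \;=\; (a+p)^2-2\sqrt{q}\,(a+p)+q.
\end{equation*}
This is at most $(a+p)^2-q$ precisely when $\sqrt{q}\,(a+p)\ge q$, which in turn is exactly the inequality $a+p\ge\sqrt{q}$ guaranteed by $a+p>0$ together with non-emptiness. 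Hence $(p-\sqrt{q},0)$ lies in the closed disk bounded by $C_a$.

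The second assertion is entirely symmetric: under the assumption $a+p<0$ combined with non-emptiness, one has $a+p\le-\sqrt{q}$, and plugging $(p+\sqrt{q},0)$ gives
\begin{equation*}
(p+\sqrt{q}+a)^2+0^2 \;=\; (a+p)^2+2\sqrt{q}\,(a+p)+q,
\end{equation*}
which is at most $(a+p)^2-q$ exactly when $a+p\le-\sqrt{q}$.

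There is essentially no obstacle: the proof is a three-line substitution. The only conceptual content is the identification of $C_a$ with its enclosed disk, which is natural from the pencil-of-circles description and is precisely the geometric fact that will be used when applying this lemma to the wall/chamber structure in the $(s,t)$-plane.
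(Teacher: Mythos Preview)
The proposal is correct and takes essentially the same approach as the paper: both argue that non-emptiness forces $|a+p|\ge\sqrt{q}$ and then verify the containment by the identical algebraic expansion $(a+p\mp\sqrt{q})^2=(a+p)^2\mp 2\sqrt{q}(a+p)+q$. Your explicit remark that ``contained in $C_a$'' means the closed disk is a useful clarification; the paper in fact proves the strict inequality (so the point lies in the open disk) by tacitly excluding the degenerate case $(a+p)^2=q$, where $C_a$ reduces to the single point $(p\mp\sqrt{q},0)$ itself.
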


\begin{proof}
Assume that $a+p>0$. Then
$a+p > \sqrt{q}$.
Then
$(a+p)^2-q-(p-\sqrt{q}+a)^2=2\sqrt{q}(a+p-\sqrt{q})>0$.
Hence the claim holds.
If $a+p<0$, then
$a+p < -\sqrt{q}$.
Hence 
$(a+p)^2-q-(p+\sqrt{q}+a)^2=-2\sqrt{q}(a+p+\sqrt{q})>0$.
Therefore the claim holds.
\end{proof}
We shall study the structure of walls.
We first assume that $r \ne 0$ and set
\begin{equation*}
\begin{split}
v:= & r+dH+D+b\varrho_X \\
=& r e^{\beta}+(d_\beta H+D_\beta)+(d_\beta H+D_\beta,\beta)\varrho_X
+a_\beta \varrho_X \\
=& r e^{\beta+sH}+(d_\beta-rs)(H+(H,\beta+sH) \varrho_X)
+D_\beta+(D_\beta,\beta)\varrho_X
+\widetilde{a}_\beta \varrho_X\\
\end{split}
\end{equation*}
Then
\begin{equation*}
a_\beta=\frac{d_\beta^2 (H^2)-(\langle v^2 \rangle-(D^2_\beta))}{2r},\;
\widetilde{a}_\beta=
\frac{(d_\beta-rs)^2 (H^2)-(\langle v^2 \rangle-(D_\beta^2))}{2r}. 
\end{equation*}
We also set
\begin{equation*}
\begin{split}
v_2:=&
r_2 e^{\beta}+(d_2 H+D_2)+(d_2 H+D_2,\beta)\varrho_X
+a_2 \varrho_X\\
=& r_2 e^{\beta+sH}+(d_2-r_2 s)(H+(H,\beta+sH) \varrho_X)
+D_2+(D_2,\beta)\varrho_X
+\widetilde{a}_2 \varrho_X.
\end{split}
\end{equation*}

\begin{prop}\label{prop:wall}
Assume that $r  \ne 0$ and
$\langle v^2 \rangle>0$.
\begin{enumerate}
\item[(1)]
Assume that $rd_2-r_2 d_\beta \ne 0$.
\begin{NB}
$rd_2-r_2 d_\beta$ does not depend on the choice of $\beta$.
\end{NB}
Then
${\Bbb R}Z_{(\beta+sH,tH)}(v)={\Bbb R}Z_{(\beta+sH,tH)}(v_2)$
holds for $(s,t) \in {\Bbb R}^2$ if and only if 
\begin{equation}\label{eq:circle}
t^2+\left(s-\frac{a_2 r-a_\beta r_2}{(H^2)(rd_2-r_2 d_\beta)} \right)^2
=\left(\frac{d_\beta}{r}-\frac{a_2 r-a_\beta r_2}{(H^2)(rd_2-r_2 d_\beta)} 
\right)^2
-\frac{\langle v^2 \rangle-(D_\beta^2)}{(H^2) r^2}.
\end{equation}
\item[(2)]
Assume that $rd_2-r_2 d_\beta=0$ and $a_2 r-a_\beta r_2 \ne 0$.
Then
${\Bbb R}Z_{(\beta+sH,tH)}(v)={\Bbb R}Z_{(\beta+sH,tH)}(v_2)$
holds for $(s,t) \in {\Bbb R}^2$ if and only if 
$$
r s-d_\beta=0.
$$
\begin{NB}
If $r=0$, then $d_\beta=0$.
Since $\langle v^2 \rangle=(D_\beta^2) \leq 0$,
$v=a_\beta \varrho_X$.
\end{NB}
\end{enumerate}
\begin{NB}
Assume that $\langle v^2 \rangle>0$.
Then any pair of different two walls are disjoint
by Lemma \ref{lem:circle} or direct computation.  
\end{NB}
\end{prop}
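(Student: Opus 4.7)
My plan is to work everything out from the defining identity
\[
\mathrm{Im}\bigl(Z_{(\beta+sH,tH)}(v)\,\overline{Z_{(\beta+sH,tH)}(v_2)}\bigr)=0,
\]
which is equivalent to ${\Bbb R}Z_{(\beta+sH,tH)}(v)={\Bbb R}Z_{(\beta+sH,tH)}(v_2)$. Using the rewriting of $v$ and $v_2$ relative to $e^{\beta+sH}$ given just before the statement, we have
\[
 Z_{(\beta+sH,tH)}(v)=-\widetilde{a}_\beta+\tfrac{t^2(H^2)}{2}r+(d_\beta-rs)(H^2)t\sqrt{-1},
\]
and the analogous expression for $v_2$ with $(r_2,d_2,\widetilde{a}_2)$. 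After dividing by $t(H^2)$, the condition simplifies to the clean identity
\begin{equation}\label{eq:plan:master}
\tfrac{t^2(H^2)}{2}\bigl(d_\beta r_2-d_2 r\bigr)
= (d_\beta-rs)\widetilde{a}_2-(d_2-r_2 s)\widetilde{a}_\beta.
\end{equation}
Everything then reduces to analysing the right-hand side as a polynomial in $s$, using
$\widetilde{a}_\beta=\frac{(d_\beta-rs)^2(H^2)-(\langle v^2\rangle-(D_\beta^2))}{2r}$ and the analogous formula for $\widetilde{a}_2$ (with the convention that when $r_2=0$ one treats $v_2=D_2+a_2\varrho_X$ by hand as a degenerate case).

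For part (1), the hypothesis $rd_2-r_2 d_\beta\ne 0$ lets me solve \eqref{eq:plan:master} for $t^2$ as a quadratic polynomial in $s$. Expanding the right-hand side using the formulas for $\widetilde{a}_\beta,\widetilde{a}_2$ and the identities $d_\beta r_2-d_2 r=-(rd_2-r_2 d_\beta)$ and $r^2 d_2^2-r_2^2 d_\beta^2=(rd_2-r_2 d_\beta)(rd_2+r_2 d_\beta)$, the cubic terms cancel and the coefficient of $s^2$ reduces to $-1$. Completing the square, the centre of the resulting circle is determined by the coefficient of $s$; a direct computation using
$r_2^2 C-r^2 C_2=H^2(rd_2-r_2 d_\beta)(rd_2+r_2 d_\beta)+2rr_2(a_2 r-a_\beta r_2)$
(which follows by substituting $C=d_\beta^2 H^2-2ra_\beta$ and similarly for $C_2$) identifies this centre with $\frac{a_2 r-a_\beta r_2}{(H^2)(rd_2-r_2 d_\beta)}$. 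Plugging back into the constant term and evaluating at $s=d_\beta/r$ (a convenient point since there $d_\beta-rs=0$ forces the right-hand side of \eqref{eq:plan:master} to equal $d_2 a_\beta$) gives the radius squared $\bigl(\frac{d_\beta}{r}-A\bigr)^2-\frac{\langle v^2\rangle-(D_\beta^2)}{(H^2)r^2}$ asserted in the statement.

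For part (2), the coefficient $d_\beta r_2-d_2 r$ on the left of \eqref{eq:plan:master} vanishes, so the equation degenerates to
$(d_\beta-rs)\widetilde{a}_2=(d_2-r_2 s)\widetilde{a}_\beta$, which is independent of $t$. When $r_2\ne 0$, writing $d_\beta=r\alpha$ and $d_2=r_2\alpha$ for the common ratio, a direct expansion reduces the right-hand side of \eqref{eq:plan:master} to $\frac{(\alpha-s)(r_2^2 C-r^2 C_2)}{2rr_2}$; by the same identity used in part (1) this equals $(\alpha-s)(a_2 r-a_\beta r_2)$. The non-degeneracy assumption $a_2 r-a_\beta r_2\ne 0$ then forces $s=\alpha$, i.e.\ $rs-d_\beta=0$. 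The edge case $r_2=0$ is handled separately: then $d_2=0$, $Z_{(\beta+sH,tH)}(v_2)=-a_2$ is real, and the condition becomes $\mathrm{Im}\,Z_{(\beta+sH,tH)}(v)=0$, which for $t>0$ is exactly $rs-d_\beta=0$.

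The argument is essentially a calculation, so no single step is a serious obstacle; the only place one must be careful is keeping track of which $r_i$ or $d_i$ may vanish in the denominators, so I would handle the $r_2=0$ sub-case of (2) separately to avoid illegal divisions by $r_2$ in $\widetilde{a}_2$.
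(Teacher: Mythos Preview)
Your approach is correct and follows essentially the same computation as the paper: both reduce the alignment condition to the vanishing of
\[
(d_2-r_2 s)\widetilde a_\beta-(d_\beta-rs)\widetilde a_2+\tfrac{t^2(H^2)}{2}(rd_2-r_2 d_\beta)
\]
and then analyse this as a quadratic in $s$. The only real difference is in how the expansion is organised. The paper uses the relation $\widetilde a_\beta=a_\beta-s d_\beta(H^2)+\tfrac{s^2 r(H^2)}{2}$ (and the analogue for $v_2$), which holds for any rank, and obtains directly
\[
(d_2-r_2 s)\widetilde a_\beta-(d_\beta-rs)\widetilde a_2
=-\tfrac{(H^2)}{2}(rd_2-r_2 d_\beta)s^2+(a_2 r-a_\beta r_2)s+(a_\beta d_2-a_2 d_\beta),
\]
from which the circle equation and the degenerate line follow by completing the square using the identity $a_\beta d_2-a_2 d_\beta=\tfrac{a_\beta}{r}(rd_2-r_2 d_\beta)-\tfrac{d_\beta}{r}(a_2 r-a_\beta r_2)$. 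Your route via $\widetilde a_\beta=\tfrac{(d_\beta-rs)^2(H^2)-C}{2r}$ reaches the same place but introduces $r_2$ in a denominator, forcing you to split off the $r_2=0$ case; the paper's expansion avoids this entirely.

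One small slip: at $s=d_\beta/r$ the right-hand side of your master equation is not $d_2 a_\beta$ but $\tfrac{(rd_2-r_2 d_\beta)(\langle v^2\rangle-(D_\beta^2))}{2r^2}$, since $d_\beta-rs=0$ kills the $\widetilde a_2$ term and leaves $-(d_2-r_2 d_\beta/r)\widetilde a_\beta$ with $\widetilde a_\beta=-\tfrac{\langle v^2\rangle-(D_\beta^2)}{2r}$ at that point. This still yields the asserted radius, so the strategy is sound; only the intermediate value you quoted needs correcting.
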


\begin{proof}
(1)
We first note that
$$
a_\beta d_2-a_2 d_\beta=\frac{a_\beta}{r}(rd_2-d_\beta r_2)
-\frac{d_\beta}{r}(a_2 r-a_\beta r_2).
$$
Then 
\begin{equation*}
\begin{split}
& (d_2-r_2 s)\widetilde{a}_\beta-(d_\beta-rs)\widetilde{a}_2\\
=& -s^2 \frac{(H^2)}{2}(rd_2-r_2 d_\beta)+s (a_2 r-a_\beta r_2)+
(a_\beta d_2-a_2 d_\beta)\\
=& \frac{(H^2)}{2}\left\{-s^2 (rd_2-r_2 d_\beta)+
s \frac{2}{(H^2)}(a_2 r-a_\beta r_2)+
\frac{2}{(H^2)}
\left(\frac{a_\beta}{r}(rd_2-d_\beta r_2)-
\frac{d_\beta}{r}(a_2 r-a_\beta r_2) \right)
 \right\}\\
=& \frac{(H^2)}{2}\left\{-(rd_2-r_2 d_\beta)\left(s-\frac{(a_2 r-a_\beta r_2)}
{(H^2)(rd_2-r_2 d_\beta)} \right)^2+
(rd_2-r_2 d_\beta)
\left(\frac{(a_2 r-a_\beta r_2)}{(H^2)(rd_2-r_2 d_\beta)} \right)^2 \right.\\
& \left. 
+\frac{2}{(H^2)}\frac{a_\beta}{r}(rd_2-d_\beta r_2)
-2(rd_2-r_2 d_\beta)\frac{d_\beta}{r}
\frac{(a_2 r-a_\beta r_2)}{(H^2)(rd_2-r_2 d_\beta)} 
 \right\}\\
=& \frac{(H^2)}{2}\left\{-(rd_2-r_2 d_\beta)
\left(s-\frac{(a_2 r-a_\beta r_2)}
{(H^2)(rd_2-r_2 d_\beta)} \right)^2+
(rd_2-r_2 d_\beta) \left(\frac{d_\beta}{r}-\frac{(a_2 r-a_\beta r_2)}
{(H^2)(rd_2-r_2 d_\beta)} \right)^2 \right.\\
& \left. +(rd_2-r_2 d_\beta)\left(\frac{2}{(H^2)}\frac{a_\beta}{r}-
\left(\frac{d_\beta}{r}\right)^2
\right) \right\}\\
=& \frac{(rd_2-r_2 d_\beta)(H^2)}{2}\left\{-\left(s-\frac{(a_2 r-a_\beta r_2)}
{(H^2)(rd_2-r_2 d_\beta)} \right)^2+
\left(\frac{d_\beta}{r}-\frac{(a_2 r-a_\beta r_2)}
{(H^2)(rd_2-r_2 d_\beta)} \right)^2 -
\frac{\langle v^2 \rangle-(D^2)}{r^2 (H^2)}\right\}.
\end{split}
\end{equation*}
Hence the claim holds.

(2)
If $rd_2-r_2 d_\beta=0$, then the claim follows from
$$
(d_2-r_2 s)\widetilde{a}_\beta-(d_\beta-rs)\widetilde{a}_2=
-s^2 \frac{(H^2)}{2}(rd_2-r_2 d_\beta)+s (a_2 r-a_\beta r_2)+
(a_\beta d_2-a_2 d_\beta).
$$
\end{proof}

By Lemma \ref{lem:circle-center}, we get the following corollary.
\begin{cor}\label{cor:square}
\begin{enumerate}
\item[(1)]
If $
\frac{d_\beta}{r}-\frac{a_2 r-a_\beta r_2}{(H^2)(rd_2-r_2 d_\beta)}>0$,
then
$(s,t)=\left(\frac{d_\beta}{r}-
\sqrt{\frac{\langle v^2 \rangle-(D_\beta^2)}{(H^2) r^2}},0 \right)$
is contained in the circle \eqref{eq:circle}.
\item[(2)]
If $\frac{d_\beta}{r}-\frac{a_2 r-a_\beta r_2}{(H^2)(rd_2-r_2 d_\beta)}<0$,
then $(s,t)=
\left(\frac{d_\beta}{r}+
\sqrt{\tfrac{\langle v^2 \rangle-(D_\beta^2)}{(H^2) r^2}},0 \right)$
is contained in the circle \eqref{eq:circle}.
\end{enumerate}
\end{cor}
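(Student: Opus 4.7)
The plan is to read off the corollary as a direct translation of Lemma \ref{lem:circle-center} under the natural identification of parameters. I would first set $x = s$, $y = t$, and match the circle of Proposition \ref{prop:wall}(1) with the family $C_a : (x+a)^2 + y^2 = (a+p)^2 - q$ by declaring
\begin{equation*}
 a := -\frac{a_2 r - a_\beta r_2}{(H^2)(rd_2 - r_2 d_\beta)}, \qquad
 p := \frac{d_\beta}{r}, \qquad
 q := \frac{\langle v^2 \rangle - (D_\beta^2)}{(H^2) r^2}.
\end{equation*}
Under this substitution, a direct comparison shows that $(a+p)^2 - q$ is exactly the right-hand side of \eqref{eq:circle}, so the wall circle is precisely $C_a$ in the sense of Lemma \ref{lem:circle-center}.

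Next I would check the positivity hypothesis $q > 0$ needed to invoke that lemma. Since $\langle v^2 \rangle > 0$ by assumption, it suffices to note that $(D_\beta^2) \leq 0$. This follows from the Hodge index theorem: $D_\beta \in H^{\perp} \cap \NS(X)_\bb{Q}$ and $H$ is ample, so the intersection form is negative semi-definite on $H^{\perp}$. Thus $q > 0$.

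With these identifications in place, Lemma \ref{lem:circle-center} tells us that when $a + p > 0$ the point $(p - \sqrt{q}, 0)$ lies on $C_a$, while when $a + p < 0$ the point $(p + \sqrt{q}, 0)$ does. Translating back, $a + p = \frac{d_\beta}{r} - \frac{a_2 r - a_\beta r_2}{(H^2)(rd_2 - r_2 d_\beta)}$, and $p \pm \sqrt{q} = \frac{d_\beta}{r} \pm \sqrt{(\langle v^2 \rangle - (D_\beta^2))/((H^2) r^2)}$, which yields both cases (1) and (2) of the corollary.

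There is essentially no obstacle: the corollary is a formal consequence of Proposition \ref{prop:wall}(1) together with the elementary Lemma \ref{lem:circle-center}, once the dictionary between the two parametrizations is set up and the sign of $(D_\beta^2)$ is controlled via the Hodge index theorem. The only substantive remark worth flagging is that the proof uses $\langle v^2 \rangle > 0$ in an essential way (to guarantee $q > 0$), which is exactly the standing hypothesis of Proposition \ref{prop:wall}.
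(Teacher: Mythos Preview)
Your proposal is correct and follows exactly the paper's approach: the paper's proof consists of the single sentence ``By Lemma \ref{lem:circle-center}, we get the following corollary,'' and you have simply made explicit the dictionary $a = -\frac{a_2 r - a_\beta r_2}{(H^2)(rd_2 - r_2 d_\beta)}$, $p = \frac{d_\beta}{r}$, $q = \frac{\langle v^2\rangle - (D_\beta^2)}{(H^2)r^2}$ and verified the positivity $q>0$ via the Hodge index theorem. One minor terminological remark: in Lemma \ref{lem:circle-center} (and hence here) ``contained in $C_a$'' means inside the disk bounded by the circle, not on the circle itself, so your phrase ``lies on $C_a$'' should be read accordingly.
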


\begin{rem}\label{rem:circle-phase}
\begin{enumerate}
\item[(1)]
If $rd_2-r_2 d_\beta<0$, then
$(s,t)$ is surrounded by the circle in Proposition 
\ref{prop:wall} if and only if 
$\phi_{(\beta+sH,tH)}(v_2) \mod 2{\Bbb Z}$ satisfies
$\phi_{(\beta+sH,tH)}(v)+1>\phi_{(\beta+sH,tH)}(v_2)
>\phi_{(\beta+sH,tH)}(v)$.
\item[(2)]
If $rd_2-r_2 d_\beta>0$, then
$(s,t)$ is surrounded by the circle in Proposition 
\ref{prop:wall} if and only if 
$\phi_{(\beta+sH,tH)}(v_2) \mod 2{\Bbb Z}$ satisfies
$\phi_{(\beta+sH,tH)}(v)>\phi_{(\beta+sH,tH)}(v_2)
>\phi_{(\beta+sH,tH)}(v)-1$.
\end{enumerate}
\end{rem}

We next treat the case where $r=0$.
\begin{prop}\label{prop:wall:r=0}
Assume that $r=0$ and $\langle v^2 \rangle>0$.
If $rd_2-r_2 d_\beta \ne 0$, then 
${\Bbb R}Z_{(\beta+sH,tH)}(v)={\Bbb R}Z_{(\beta+sH,tH)}(v_2)$
holds for $(s,t) \in {\Bbb R}^2$
if and only if
\begin{equation}\label{eq:circle:r=0}
t^2+\left( s-\frac{ a_\beta }{d_\beta (H^2)} \right)^2
=\left(\frac{a_\beta}{d_\beta (H^2)}-\frac{d_2}{r_2} \right)^2
-\frac{\langle v_2^2 \rangle- (D_2^2)}{r_2^2(H^2)}.
\end{equation}
\end{prop}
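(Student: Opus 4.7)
The plan is to adapt the algebraic calculation in the proof of Proposition \ref{prop:wall}(1) to the degenerate case $r = 0$. The closed-form expression $\tilde a_\beta = \frac{(d_\beta - rs)^2(H^2) - (\langle v^2\rangle - (D_\beta^2))}{2r}$ becomes meaningless when $r=0$, so I would instead compute $\tilde a_\beta = -\langle v, e^{\beta+sH}\rangle$ directly; when $r=0$ this collapses to $\tilde a_\beta = a_\beta - d_\beta s(H^2)$. The expression $\tilde a_2 = a_2 - d_2 s(H^2) + r_2 s^2(H^2)/2$ for $v_2$ is valid for any $r_2$ (and agrees with the formula from the proof of Proposition \ref{prop:wall} when $r_2 \ne 0$).

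With these substitutions, write $Z_{(\beta+sH,tH)}(v) = -\tilde a_\beta + d_\beta t(H^2)\sqrt{-1}$ and $Z_{(\beta+sH,tH)}(v_2) = -\tilde a_2 + \tfrac{t^2(H^2)}{2}r_2 + (d_2-r_2 s)t(H^2)\sqrt{-1}$, and impose the proportionality condition $\mathrm{Im}\,Z(v)\cdot\mathrm{Re}\,Z(v_2) = \mathrm{Re}\,Z(v)\cdot\mathrm{Im}\,Z(v_2)$. Cancelling the common factor $t(H^2)$ and expanding, the $s^2$-contributions coming from $d_\beta \tilde a_2$ and $\tilde a_\beta r_2 s$ combine with the $t^2$ term to produce $\tfrac{d_\beta r_2 (H^2)}{2}(t^2 + s^2)$. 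The hypothesis $rd_2 - r_2 d_\beta = -r_2 d_\beta \ne 0$ permits dividing by $d_\beta r_2(H^2)/2$, and completing the square in $s$ gives
\[ t^2 + \left(s - \frac{a_\beta}{d_\beta(H^2)}\right)^2 = \left(\frac{a_\beta}{d_\beta(H^2)}\right)^2 + \frac{2(d_\beta a_2 - a_\beta d_2)}{d_\beta r_2 (H^2)}. \]

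The last step is to identify the right-hand side with that of \eqref{eq:circle:r=0}. For this I would use the Mukai pairing identity $\langle v_2^2\rangle = d_2^2(H^2) + (D_2^2) - 2 r_2 a_2$, which follows from $\langle v_2^2\rangle = (c_1(v_2)^2) - 2 r_2 \chi(v_2)$ together with the expansion $v_2 = r_2 e^\beta + (d_2 H + D_2) + (d_2 H + D_2, \beta)\varrho_X + a_2 \varrho_X$. The calculation is purely algebraic, so the main obstacle is bookkeeping (signs and the $\beta$-dependence of each invariant); there is no new conceptual ingredient. As a cross-check, observe that the proof of Proposition \ref{prop:wall}(1) never actually uses the hypothesis $\langle v^2\rangle > 0$ and only uses $r \ne 0$ to divide by it, so one may alternatively swap the roles of $v$ and $v_2$ in that proposition (the hypothesis $rd_2 - r_2 d_\beta \ne 0$ becomes $r_2 d_\beta \ne 0$ after the swap, which is exactly our assumption) and read off \eqref{eq:circle:r=0} as a corollary.
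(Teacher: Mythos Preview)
Your proposal is correct and follows essentially the same route as the paper: both reduce the proportionality condition to the quadratic $(d_2-r_2 s)\widetilde{a}_\beta-(d_\beta)\widetilde{a}_2 = -\tfrac{t^2(H^2)}{2}r_2 d_\beta$, divide by $\tfrac{r_2 d_\beta(H^2)}{2}$, complete the square in $s$, and then rewrite the constant term using $\langle v_2^2\rangle - (D_2^2) = d_2^2(H^2) - 2r_2 a_2$. Your final cross-check (swapping $v\leftrightarrow v_2$ in Proposition~\ref{prop:wall}(1) and then setting $r=0$) is a clean shortcut that the paper does not state explicitly, but it yields the same formula by the same algebra.
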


\begin{proof}
We note that $d_\beta \ne 0$.
Hence $r_2 \ne 0$.
Then the claim follows from the following computation:
\begin{equation*}
\begin{split}
& (d_2-r_2 s)\widetilde{a}_\beta-(d_\beta-rs)\widetilde{a}_2\\
=& -s^2 \frac{(H^2)}{2}(rd_2-r_2 d_\beta)+s (a_2 r-a_\beta r_2)+
(a_\beta d_2-a_2 d_\beta)\\
=& \frac{r_2 d_\beta (H^2)}{2}
\left(s^2-
s \frac{2 a_\beta }{d_\beta (H^2)}+
\frac{2(a_\beta d_2-a_2 d_\beta)}{(H^2) r_2 d_\beta}\right)\\
=& \frac{r_2 d_\beta (H^2)}{2}
\left\{ \left( s-\frac{ a_\beta }{d_\beta (H^2)} \right)^2
-\left(\frac{a_\beta}{d_\beta (H^2)}-\frac{d_2}{r_2} \right)^2
+\frac{\langle v_2^2 \rangle- (D_2^2)}{r_2^2(H^2)}
\right\}.
\end{split}
\end{equation*}
\end{proof}

\begin{cor}\label{cor:finite}
If 
$\sqrt{\frac{\langle v^2 \rangle-(D_\beta^2)}{(H^2)}} \in {\Bbb Q}$,
then there are finitely many walls for $v$.
\end{cor}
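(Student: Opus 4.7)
Plan. The strategy rests on the pencil-of-circles structure from Propositions \ref{prop:wall} and \ref{prop:wall:r=0}. Assume first $r \ne 0$; the case $r = 0$ is automatic since then $\langle v^2\rangle - (D_\beta^2) = d_\beta^2 (H^2)$, making $\sqrt{(\langle v^2\rangle - (D_\beta^2))/(H^2)} = |d_\beta| \in \bb{Q}$, and Proposition \ref{prop:wall:r=0} presents the walls as concentric circles centered at $(a_\beta/(d_\beta(H^2)), 0)$ with radii indexed by discrete integer data. For $r \ne 0$, by Proposition \ref{prop:wall}(1) and Lemma \ref{lem:circle}, each wall $W_{v_1}^{\beta, H}$ is a real circle in the pencil with $p := d_\beta/r$ and $q := (\langle v^2\rangle - (D_\beta^2))/((H^2) r^2)$; the two real degenerate members are the points $(p \pm \sqrt{q}, 0)$, which are the roots of the quadratic $\widetilde a_\beta(s) = \frac{r(H^2)}{2}(s - (p - \sqrt{q}))(s - (p + \sqrt{q}))$ controlling $\operatorname{Re} Z_{(\beta + sH, 0)}(v)$. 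Distinct walls are pairwise disjoint by Lemma \ref{lem:circle}(1), so each wall is labelled uniquely by its center $c \in (-\infty, p - \sqrt{q}) \cup (p + \sqrt{q}, \infty)$.

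Next I would observe that on any compact $K \subset \bb{R} \setminus (p - \sqrt{q}, p + \sqrt{q})$ the radius $R(c) = \sqrt{(p - c)^2 - q}$ is bounded below, so pairwise disjointness allows only finitely many centers in $K$. An infinite family of walls must therefore have $c$-values accumulating at $\pm\infty$ or at $p \pm \sqrt{q}$. Using the explicit formula $c(v_1) = (a_1 r - a_\beta r_1)/((H^2)(r d_1 - r_1 d_\beta))$, accumulation at $\pm\infty$ forces $r d_1 - r_1 d_\beta \to 0$ for integer $v_1$ still satisfying $\langle v_1^2\rangle \geq 0$ and $\langle v_1, v - v_1\rangle > 0$, a system bounding $v_1$ in finitely many solutions using the uniform bound $\langle v_1^2\rangle \leq \langle v^2\rangle$ coming from $\langle v_1^2\rangle + 2\langle v_1, v - v_1\rangle + \langle (v - v_1)^2\rangle = \langle v^2\rangle$ with all terms $\geq 0$.

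The heart of the proof is ruling out accumulation at $(p \pm \sqrt{q}, 0)$ under the hypothesis $\sqrt{q} \in \bb{Q}$. A direct computation shows that $c(v_1) = p - \sqrt{q}$ is equivalent to $\langle v_1, u_-\rangle = 0$, where $u_- := e^{\beta + (p - \sqrt{q})H}$; so a sequence of walls with $c(v_1^{(n)}) \to p - \sqrt{q}$ translates into $\langle v_1^{(n)}, u_-\rangle \to 0$ (modulo control of the denominator $r d_1 - r_1 d_\beta$). When $\sqrt{q} \in \bb{Q}$, the $H$-component of $\beta + (p - \sqrt{q})H$ equals a rational shift of $d/r$ by $\sqrt{q} \in \bb{Q}$, making the ``$H$-rational part'' of $\langle v_1, u_-\rangle$ lie in a discrete subgroup of $\bb{Q}$; together with the uniform bound $\langle v_1, v - v_1\rangle \leq \langle v^2\rangle/2$, I expect this to confine the $v_1^{(n)}$ to finitely many possibilities up to the equivalence of defining the same wall, contradicting accumulation. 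The analogous argument at $p + \sqrt{q}$ completes the proof. The main obstacle is handling the case where $\beta$ has an irrational component in $H^\perp$: then $\langle v_1, u_-\rangle$ acquires an $\eta$-dependent irrational part that is not a priori discrete, and one must invoke the integrality of $D_1 = D_\beta(v_1) \in H^\perp \cap \NS(X)$ together with the Hodge-index inequality $(D_1^2) \leq 0$ applied to $\langle v_1^2\rangle \geq 0$ to extract the needed finiteness.
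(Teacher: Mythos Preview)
The paper's proof is much shorter and proceeds along a different line. It takes as given the fact that for any fixed vertical line $s=s_0$, only finitely many walls cross it; then Corollary~\ref{cor:square} shows that every wall circle contains one of the points $(d_\beta/r\pm\sqrt{q},0)$ in its interior and hence meets one of the two lines $s=d_\beta/r\pm\sqrt{q}$ (for $r=0$ the circles are concentric and all meet $s=a_\beta/(d_\beta(H^2))$). Finiteness follows at once. The hypothesis $\sqrt{q}\in\bb{Q}$ is what makes these particular $s$-values admissible for the first fact; without it the two lines pass through the accumulation points of the pencil and the argument breaks down (as the examples of Section~\ref{sect:rho=1} show).

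Your approach of directly analysing accumulation of the centres $c$ is more hands-on but has real gaps. For $|c|\to\infty$ your claim that this ``forces $rd_1-r_1 d_\beta\to 0$'' is unjustified: the numerator $a_1 r-a_\beta r_1$ can equally well diverge, and the bound $\langle v_1^2\rangle\le\langle v^2\rangle$ does not by itself confine $v_1$ to finitely many integral classes. The clean way to rule out $|c|\to\infty$ is to note that such a wall has radius $R\sim|c-p|$ and hence reaches height $t\sim R$, contradicting the existence of the unbounded (Gieseker) chamber of Proposition~\ref{prop:unbdd}; no rationality is needed there.

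For $c\to p-\sqrt{q}$ your identity is correct: a short computation gives
\[
c(v_1)-(p-\sqrt{q})=\frac{-r\,\langle v_1,u_-\rangle}{(H^2)(rd_1-r_1 d_\beta)},
\]
and under the hypothesis $\langle v_1,u_-\rangle$ lies in a fixed lattice $\tfrac{1}{N}\bb{Z}$. But you have not shown that the denominator $rd_1-r_1 d_\beta$ remains bounded over the family of wall-defining classes, and without that the discreteness of the numerator does not prevent $c(v_1)\to p-\sqrt{q}$. This is precisely the crux: when $\sqrt{q}\notin\bb{Q}$ the accumulation really occurs, with $|rd_1-r_1 d_\beta|\to\infty$, so a bound on the denominator is the essential missing ingredient. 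One would have to extract it from the simultaneous constraints $\langle v_1^2\rangle\ge 0$, $\langle(v-v_1)^2\rangle\ge 0$ together with the companion pairing $\langle v_1,u_+\rangle$, and your sketch does not do this. As written, the proposal does not close this gap.
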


\begin{proof}
For a fixed $s$,
there are finitely many walls. 
If $r=0$, then
$(s,t)=(\frac{a_\beta}{d_\beta (H^2)},0)$ is the center of 
\eqref{eq:circle:r=0}. Hence every wall intersects with 
$s=\frac{a_\beta}{d_\beta (H^2)}$.
\begin{NB}
If $r=0$, then the assumption always holds. 
\end{NB}
If $r \ne 0$, 
then Corollary \ref{cor:square}
implies that every wall intersects with 
$s=\frac{d_\beta}{r}-
\sqrt{\frac{\langle v^2 \rangle-(D_\beta^2)}{(H^2) r^2}}$
or 
$s=\frac{d_\beta}{r}+
\sqrt{\frac{\langle v^2 \rangle-(D_\beta^2)}{(H^2) r^2}}$.
Hence there are finitely many walls for $v$.
\end{proof}

\begin{lem}\label{lem:C_v}
For $v_2=e^{\beta+\lambda H}$,
the condition 
${\Bbb R}Z_{(\beta+sH,tH)}(v)={\Bbb R}Z_{(\beta+sH,tH)}(v_2)$
for $(s,t) \in {\Bbb R}^2$
is equivalent to the equation of the circle
$$
C_{v,\lambda}:
t^2+(s-\lambda)
\left(s-
\frac{1}{(r\lambda-d_\beta)}
\left(\lambda d_\beta-\frac{2}{(H^2)}a_\beta \right)
\right)=0
$$
for $r \lambda-d_\beta \ne 0$
and
$$
rs-d_\beta=0
$$
for $r \lambda-d_\beta= 0$.
In particular, the circle $C_{v,\lambda}$
passes the points $(\lambda,0)$
and 
$\left(\frac{1}{(r\lambda-d_\beta)}
(\lambda d_\beta-\frac{2}{(H^2)}a_\beta),0 \right)$.
\end{lem}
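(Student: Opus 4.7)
The plan is to derive Lemma~\ref{lem:C_v} as the specialization of Proposition~\ref{prop:wall} (and Proposition~\ref{prop:wall:r=0} when $r=0$) to the case $v_2:=e^{\beta+\lambda H}$. The first step is to identify the invariants of $v_2$ entering those propositions. Writing $e^{\beta+\lambda H}=1+(\beta+\lambda H)+\tfrac{1}{2}((\beta+\lambda H)^2)\varrho_X$ and applying \eqref{eq:v:rda} together with the identity $\langle e^x,e^y\rangle=-\tfrac{1}{2}((x-y)^2)$ for the Mukai pairing of exponentials, one reads off
\begin{equation*}
r_{v_2}=1,\quad d_\beta(v_2)=\lambda,\quad D_\beta(v_2)=0,\quad a_\beta(v_2)=\tfrac{1}{2}\lambda^2(H^2),
\end{equation*}
and in particular $rd_2-r_2 d_\beta=r\lambda-d_\beta$.

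The argument then splits according to the vanishing of $r\lambda-d_\beta$. If $r\lambda-d_\beta=0$, Proposition~\ref{prop:wall}(2) (for $r\neq 0$) delivers the wall equation $rs-d_\beta=0$ directly; the degenerate subcase $r=d_\beta=0$ is ruled out by the standing hypothesis $\langle v^2\rangle>0$, which combined with \eqref{eq:v} would otherwise force $\langle v^2\rangle=(D_\beta^2)\leq 0$ by the Hodge index theorem. If $r\lambda-d_\beta\neq 0$, Proposition~\ref{prop:wall}(1) (or Proposition~\ref{prop:wall:r=0} when $r=0$) produces the circle
\begin{equation*}
t^2+(s-A)^2=\left(\frac{d_\beta}{r}-A\right)^{2}-\frac{\langle v^2\rangle-(D_\beta^2)}{(H^2)r^2},\qquad
A:=\frac{\tfrac{1}{2}\lambda^2(H^2)r-a_\beta}{(H^2)(r\lambda-d_\beta)}.
\end{equation*}

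To match this with the factored form $C_{v,\lambda}$, I set $\mu:=(\lambda d_\beta-2a_\beta/(H^2))/(r\lambda-d_\beta)$ and compare the monic quadratics in $s$. The coefficient of $s$ demands $\lambda+\mu=2A$, which is a straightforward substitution using the values above. The constant term demands $\lambda\mu=A^2-(d_\beta/r-A)^{2}+(\langle v^2\rangle-(D_\beta^2))/((H^2)r^2)$; here I would eliminate $\langle v^2\rangle-(D_\beta^2)$ using the relation $a_\beta=(d_\beta^2(H^2)-(\langle v^2\rangle-(D_\beta^2)))/(2r)$ displayed just before Proposition~\ref{prop:wall}, after which both sides collapse to $\lambda(\lambda d_\beta-2a_\beta/(H^2))/(r\lambda-d_\beta)$. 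Once the factored form is obtained, the final assertion that $C_{v,\lambda}$ passes through $(\lambda,0)$ and through $((\lambda d_\beta-2a_\beta/(H^2))/(r\lambda-d_\beta),0)$ is immediate from reading off the two factors. The only real obstacle is this constant-term identity, which is routine but bookkeeping-heavy; everything else is pure substitution.
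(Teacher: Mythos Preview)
Your argument is correct, but it takes a different route from the paper's. The paper computes directly: writing $v_2=e^{\beta+sH+(\lambda-s)H}$ gives $r_2=1$, $d_2-r_2 s=\lambda-s$, and $\widetilde a_2=\tfrac{(H^2)}{2}(\lambda-s)^2$, so the determinant expression $(d_2-r_2 s)\widetilde a_\beta-(d_\beta-rs)\widetilde a_2$ already carries a visible factor of $(\lambda-s)$, and the remaining factor is linear in $s$. Combined with the $t^2$ term this yields the factored form $t^2+(s-\lambda)(s-\mu)=0$ immediately, with no coefficient matching required. Your approach instead invokes Propositions~\ref{prop:wall} and~\ref{prop:wall:r=0} to obtain the circle in completed-square form and then reverse-engineers the factorization by comparing the linear and constant coefficients; this is logically sound but trades a short direct computation for a reliance on earlier results plus a heavier identity check on the constant term.

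One small omission: when $r\lambda-d_\beta=0$ with $r\neq 0$, Proposition~\ref{prop:wall}(2) requires $a_2 r-a_\beta r_2\neq 0$, which you do not verify. It holds because $a_2 r-a_\beta=\tfrac{(H^2)}{2}\lambda^2 r-a_\beta=\tfrac{1}{2r}\bigl(\langle v^2\rangle-(D_\beta^2)\bigr)$ after substituting $r\lambda=d_\beta$ and the displayed formula for $a_\beta$, and this is nonzero by $\langle v^2\rangle>0$ and the Hodge index theorem. The paper makes this same check explicitly.
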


\begin{proof}
We note that
\begin{equation*}
\begin{split}
v_2= & e^{\beta+\lambda H}=e^\beta+\lambda(H+(H,\beta)\varrho_X)+
\frac{(H^2)}{2}\lambda^2 \varrho_X \\
=& e^{\beta+sH+(\lambda-s) H}=
e^{\beta+sH}+(\lambda-s)(H+(H,\beta)\varrho_X)+
\frac{(H^2)}{2}(\lambda-s)^2 \varrho_X.
\end{split}
\end{equation*}
Then we get
\begin{equation}\label{eq:lambda}
\begin{split}
(d_2-r_2 s)\widetilde{a}_\beta-(d_\beta-rs)\widetilde{a}_2
=& (\lambda-s)\widetilde{a}_\beta-(d_\beta-rs)\frac{(H^2)}{2}(\lambda-s)^2 \\
=& (\lambda-s)(\widetilde{a}_\beta-(\lambda-s)(d_\beta-rs)\frac{(H^2)}{2})\\
=& (\lambda-s) \left((r\lambda-d_\beta)\frac{(H^2)}{2}s-
\left(\lambda d_\beta\frac{(H^2)}{2}-a_\beta \right) \right).
\end{split}
\end{equation}
Assume that $r \lambda-d_\beta \ne 0$.
Then the condition is given by the circle
$$
t^2+(s-\lambda)
\left(s-
\frac{1}{(r\lambda-d_\beta)}\left(\lambda d_\beta-\frac{2}{(H^2)}a_\beta
\right) \right)=0.
$$
In particular, the circle passes the points $(\lambda,0)$
and $\left(\frac{1}{(r\lambda-d_\beta)}
(\lambda d_\beta-\frac{2}{(H^2)}a_\beta),0 \right)$.

Assume that $r \lambda-d_\beta=0$.
Then by \eqref{eq:lambda},
we get 
$$
0=(\lambda-s)\left(\lambda d_\beta \frac{(H^2)}{2}-a_\beta \right).
$$
If $a_\beta=\lambda d_\beta \frac{(H^2)}{2}=r\lambda^2 \frac{(H^2)}{2}$,
then we see that
$v=r e^{\beta+\lambda H}+(D_\beta+(D_\beta,\beta+\lambda H))\varrho_X$.
Hence $\langle v^2 \rangle=(D_\beta^2) \leq 0$, which is a contradiction.
Therefore we get 
$$
s=\lambda=\frac{d_\beta}{r}.
$$
\end{proof}

\begin{rem}
Assume that $r \ne 0$.
Then 
$$
\lambda \ne \frac{1}{r\lambda-d_\beta}
\left(\lambda d_\beta-\frac{2}{(H^2)}a_\beta \right)
\Longleftrightarrow
\lambda \ne
\frac{d_\beta}{r}\pm\sqrt{\frac{\langle v^2 \rangle-(D_\beta^2)}{(H^2)r^2}}.
$$
In particular, if 
$\sqrt{\frac{\langle v^2 \rangle-(D_\beta^2)}{(H^2)r^2}} \not \in {\Bbb Q}$,
then $C_{v,\lambda}$ is a circle.
\end{rem}

\begin{NB}
Assume that $v=\ell v_1-v_2$.  Then $\mu(v_1)>\mu(v_2)$
 or
$\mu(v_1)<\mu(v_2)$.
Assume that $(s,t)$ belongs to the circle
defined by $v_1$,
i.e.,
$s-\frac{d_\beta}{r}$
belongs to the interval whose boundaries
are $\lambda-\frac{d_\beta}{r}$ and
$\frac{\ell}{r n(r\lambda-d_\beta)}$.

If $\mu(v_1)>\mu(v_2)$, then
 $Z_{(sH,tH)}(v) \in {\Bbb H}$ for
$d_{\beta+sH}(v_1)>0$ and 
if $\mu(v_1)<\mu(v_2)$, then
 $Z_{(sH,tH)}(-v) \in {\Bbb H}$ for
$d_{\beta+sH}(v_1)<0$.  
\end{NB}

\begin{cor}\label{cor:C_v}
Assume that $v=r e^\beta+a_\beta \varrho_X+d_\beta (H+(H,\beta)\varrho_X)$.
For a numerical solution
$$
(r_1 e^{\beta+\lambda_1 H},r_2 e^{\beta+\lambda_2 H},\ell_1,\ell_2),
$$
we have $C_{v,\lambda_1}=C_{v,\lambda_2}$ and the equation is given by
\begin{equation}
t^2+(s-\lambda_1)(s-\lambda_2)=0.
\end{equation}
\end{cor}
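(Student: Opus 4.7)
The plan is to reduce everything to a direct substitution into the formula of Lemma \ref{lem:C_v}. Lemma \ref{lem:C_v} already tells us that if $r\lambda_1-d_\beta\ne 0$, then the circle $C_{v,\lambda_1}$ passes through the two points
\[
(\lambda_1,0),\qquad
\Bigl(\tfrac{1}{r\lambda_1-d_\beta}\bigl(\lambda_1 d_\beta-\tfrac{2}{(H^2)}a_\beta\bigr),0\Bigr),
\]
and is given by the equation
\[
t^2+(s-\lambda_1)\Bigl(s-\tfrac{1}{r\lambda_1-d_\beta}\bigl(\lambda_1 d_\beta-\tfrac{2}{(H^2)}a_\beta\bigr)\Bigr)=0.
\]
So the whole task is to identify the second intersection point with $(\lambda_2,0)$.

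To do this, I would write out the numerical equation $v=\epsilon(\ell_1 v_1-\ell_2 v_2)$ with $\epsilon=\pm 1$, and expand $v_i=r_i e^{\beta+\lambda_i H}=r_i e^\beta+r_i\lambda_i(H+(H,\beta)\varrho_X)+r_i\tfrac{(H^2)}{2}\lambda_i^2\varrho_X$. Reading off coefficients in the decomposition $v=re^\beta+d_\beta(H+(H,\beta)\varrho_X)+a_\beta\varrho_X$ gives
\[
r=\epsilon(\ell_1 r_1-\ell_2 r_2),\quad
d_\beta=\epsilon(\ell_1 r_1\lambda_1-\ell_2 r_2\lambda_2),\quad
a_\beta=\epsilon\tfrac{(H^2)}{2}(\ell_1 r_1\lambda_1^2-\ell_2 r_2\lambda_2^2).
\]
A short calculation then yields
\[
r\lambda_1-d_\beta=\epsilon\,\ell_2 r_2(\lambda_2-\lambda_1),\qquad
\lambda_1 d_\beta-\tfrac{2}{(H^2)}a_\beta=\epsilon\,\ell_2 r_2\lambda_2(\lambda_2-\lambda_1).
\]
Since $v_1,v_2$ are primitive with $\langle v_1,v_2\rangle=-1$, we have $\lambda_1\ne\lambda_2$ (otherwise $v_1$ and $v_2$ would be proportional), and $r_2\ne 0$ by the shape $v_2=r_2 e^{\beta+\lambda_2 H}$. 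Hence the ratio of the two quantities above equals $\lambda_2$, so the second marked point is $(\lambda_2,0)$. Plugging this into the formula of Lemma \ref{lem:C_v} gives
\[
C_{v,\lambda_1}:\;t^2+(s-\lambda_1)(s-\lambda_2)=0,
\]
and the same symmetric computation with the roles of $\lambda_1,\lambda_2$ interchanged yields the identical equation for $C_{v,\lambda_2}$, proving $C_{v,\lambda_1}=C_{v,\lambda_2}$.

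There is essentially no serious obstacle; the only point requiring minor care is to justify that the nondegenerate branch of Lemma \ref{lem:C_v} applies, i.e.\ that $r\lambda_i-d_\beta\ne 0$. This follows from the formula $r\lambda_1-d_\beta=\epsilon\,\ell_2 r_2(\lambda_2-\lambda_1)$ combined with $\ell_2\ge 1$, $r_2\ne 0$ and $\lambda_1\ne\lambda_2$; the symmetric statement for $\lambda_2$ is analogous. The sign $\epsilon$ cancels in all the relevant ratios, so it never needs to be pinned down.
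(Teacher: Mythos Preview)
Your proof is correct, but the paper takes a slightly shorter and more conceptual route. Rather than computing the second intersection point of $C_{v,\lambda_1}$ with the $s$-axis by explicitly evaluating $r$, $d_\beta$, $a_\beta$ from the numerical equation, the paper first observes that since $v=\pm(\ell_1 r_1 e^{\beta+\lambda_1 H}-\ell_2 r_2 e^{\beta+\lambda_2 H})$, the three vectors $v$, $e^{\beta+\lambda_1 H}$, $e^{\beta+\lambda_2 H}$ are linearly dependent over $\Bbb R$; hence the conditions $\Bbb R Z_{(\beta+sH,tH)}(v)=\Bbb R Z_{(\beta+sH,tH)}(e^{\beta+\lambda_1 H})$ and $\Bbb R Z_{(\beta+sH,tH)}(v)=\Bbb R Z_{(\beta+sH,tH)}(e^{\beta+\lambda_2 H})$ are equivalent, giving $C_{v,\lambda_1}=C_{v,\lambda_2}$ immediately. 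Then Lemma \ref{lem:C_v} says $(\lambda_i,0)\in C_{v,\lambda_i}$ for $i=1,2$, so the common circle contains both $(\lambda_1,0)$ and $(\lambda_2,0)$; since $\lambda_1\ne\lambda_2$ and the circle has the form $t^2+(s-\lambda_1)(s-\mu)=0$, this forces $\mu=\lambda_2$.

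Your direct computation has the virtue of also verifying explicitly that the nondegenerate branch of Lemma \ref{lem:C_v} applies (i.e.\ $r\lambda_i-d_\beta\ne 0$), which the paper leaves implicit. The paper's argument buys brevity and avoids any algebra; yours buys transparency about exactly which quantities are nonzero and why.
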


\begin{proof}
Since $v=\pm(\ell_1 r_1 e^{\beta+\lambda_1}-\ell_2 r_2 e^{\beta+\lambda_2})$,
we have $C_{v,\lambda_1}=C_{v,\lambda_2}$.
Since $(\lambda_i,0) \in C_{v,\lambda_i}$ for $i=1,2$
and $\lambda_1 \ne \lambda_2$,
we get the claim.
\end{proof}

\subsection{Relation of stability conditions.}

All walls except $rs=d_\beta$ are disjoint to the line
$rs=d_\beta$. 
By Corollary \ref{cor:square},
there are at most two unbounded chambers.

\begin{prop}\label{prop:unbdd}
Let $v$ be a positive and primitive Mukai vector such that
$\langle v^2 \rangle>0$.
Assume that $(s,t) \in {\Bbb R}^2$ belongs to an unbounded chamber.
Then
$$
M_{(\beta+sH,tH)}(v) \cong 
\begin{cases}
M_H^\beta(v) & d_{\beta+sH}(v) >0,\\
M_H^{-\beta}(v^{\vee}) & d_{\beta+sH}(v)  \leq 0.
\end{cases}
$$
\end{prop}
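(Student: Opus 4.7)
The plan is to exploit the explicit description of walls in Section \ref{sect:restricted-parameter} to pin down the unbounded chambers, then combine the large volume limit with Theorem \ref{thm:B:3-10.3}(2) (duality) to identify the two asymptotic moduli. The core input --- that $\sigma_{(\beta+sH,tH)}$-stability becomes Gieseker stability as $t\to \infty$ --- is already available in the literature cited in the introduction.

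First, I would verify that the unbounded chambers admit a uniform sign for $d_{\beta+sH}(v)=d_\beta(v)-rs$. By Corollary \ref{cor:square}, each wall $W_{v_1}^{\beta,H}$ which is a circle has its closed disk contained in one of the half-planes $\{s<d_\beta/r\}$ or $\{s>d_\beta/r\}$, while the only wall crossing the line $s=d_\beta/r$ is that line itself. (The case $r=0$ is handled analogously via Proposition \ref{prop:wall:r=0}.) By Corollary \ref{cor:finite} there are only finitely many walls, so every unbounded chamber is of the form $\{(s,t)\mid t\gg 0,\; \operatorname{sgn}(d_{\beta+sH}(v)) \text{ fixed}\}$, possibly merged into a single chamber when $s=d_\beta/r$ is not a wall.

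Second, for the case $d_{\beta+sH}(v)>0$, I take $t\to\infty$ inside the chamber. The large volume limit results of Bridgeland \cite{Br:3}, Toda \cite{Toda}, Bayer \cite{Bayer} and \cite{MYY:2011:1} identify $\sigma_{(\beta+sH,tH)}$-semistable objects of class $v$ with $(\beta+sH)$-twisted Gieseker semistable sheaves in this regime. Since twisted Gieseker stability depends on $\beta$ only modulo $\mathbb{R}H$, we have $M_H^{\beta+sH}(v)=M_H^{\beta}(v)$. Proposition \ref{prop:open} now guarantees the moduli stays constant across the whole unbounded chamber, yielding $M_{(\beta+sH,tH)}(v)\cong M_H^\beta(v)$.

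Third, for the case $d_{\beta+sH}(v)\leq 0$, I apply Theorem \ref{thm:B:3-10.3}(2): the functor $E\mapsto E^\vee[1]$ gives an isomorphism $M_{(\beta+sH,tH)}(v)\cong M_{(-\beta-sH,tH)}(-v^\vee)$. A direct computation using \eqref{eq:v:rda} and $c_1(v^\vee)=-c_1(v)$, $\rk(v^\vee)=r$ shows
\begin{equation*}
d_{-\beta-sH}(v^\vee)=-d_{\beta+sH}(v)\geq 0.
\end{equation*}
Hence $Z_{(-\beta-sH,tH)}(v^\vee)\in\mathbb{H}\cup\mathbb{R}_{>0}$, and via the remark following Definition \ref{defn:moduli} the moduli $M_{(-\beta-sH,tH)}(-v^\vee)$ identifies (up to shift by $[-1]$) with $M_{(-\beta-sH,tH)}(v^\vee)$. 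Since duality sends the unbounded chamber for $v$ to an unbounded chamber for $v^\vee$, the second step applied to $v^\vee$ at $(-\beta-sH,tH)$ gives $M_H^{-\beta-sH}(v^\vee)=M_H^{-\beta}(v^\vee)$, completing the identification.

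The main obstacle I anticipate is the boundary/overlap case $d_{\beta+sH}(v)=0$: when $s=d_\beta/r$ is \emph{not} a wall, the unique unbounded chamber straddles both signs, so the two formulas must agree. I would resolve this by noting that $s=d_\beta/r$ failing to be a wall forces every $(\beta+sH)$-Gieseker semistable sheaf of class $v$ to be a $\mu$-stable locally free sheaf (the \texttt{NB} discussion after Lemma \ref{lem:C_v} makes this explicit), whence ordinary sheaf-theoretic dualization directly induces $M_H^\beta(v)\cong M_H^{-\beta}(v^\vee)$, making the two cases consistent.
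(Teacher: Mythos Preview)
Your strategy is sound, but two points deserve correction.

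First, the appeal to Corollary~\ref{cor:finite} is unjustified: it requires $\sqrt{(\langle v^2\rangle-(D_\beta^2))/(H^2)}\in\mathbb{Q}$, which can fail --- indeed Section~\ref{sect:rho=1} is devoted to the irrational case, where there are infinitely many walls. Fortunately you do not need finiteness. Lemma~\ref{lem:circle} and Corollary~\ref{cor:square} already show that the circle walls in each half-plane $\{rs<d_\beta\}$, $\{rs>d_\beta\}$ are nested around a fixed point on the $s$-axis, so the unbounded chambers are as you describe regardless of their number; this is precisely how the paper sets up the statement just before the Proposition.

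Second, in the case $d_{\beta+sH}(v)\le 0$ your route through Theorem~\ref{thm:B:3-10.3}(2) differs from the paper's. The paper simply invokes \cite[Cor.~2.2.9]{MYY:2011:1} a second time: that external result already covers the dual large-volume regime directly, producing $M_H^{-\beta}(v^\vee)\to M_{(\beta+sH,tH)}(v)$ via $F\mapsto F^\vee$ once one checks $\rk v>0$ (which follows from positivity of $v$ together with $d_{\beta+sH}(v)\le 0$ and $\langle v^2\rangle>0$). Your argument via duality also works and has the merit of staying within the paper, but be careful that the explicit Proposition following Theorem~\ref{thm:B:3-10.3} assumes $Z_{(\beta,\omega)}(v)\in\mathbb{H}\cup\mathbb{R}_{<0}$; when $d_{\beta+sH}(v)<0$ you must first pass to $-v$ via the remark after Definition~\ref{defn:moduli}, then apply duality and track the shift. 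With that bookkeeping in place, both approaches yield the same conclusion.
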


\begin{proof}
If $d_{\beta+sH}(v)>0$, then 
$\rk v \geq 0$. 
By \cite[Cor. 2.2.9]{MYY:2011:1}, we get 
$M_{(\beta+sH,tH)}(v)=M_H^\beta(v)$.
If $d_\beta(v) \leq 0$,
then $d_{\beta+sH}(-v) \geq 0$ and $\rk (-v) \leq 0$.
If $\rk (-v)=0$, then $d_\beta(-v)=0$ and
$\langle v^2 \rangle  \leq 0$,
which is a contradiction. 
Thus we have $\rk (-v)< 0$.
Then \cite[Cor. 2.2.9]{MYY:2011:1} implies that we have an isomorphism
$M_H^{-\beta}(v^{\vee}) \to M_{(\beta+sH,tH)}(v)$
via $F \mapsto F^{\vee}$.
\end{proof}

\begin{lem}\label{lem:2-stability}
Let ${\cal C}_0$ and ${\cal C}_1$ be two chambers such that
${\cal C}_0$ is surrounded by ${\cal C}_1$.
We take $(s,t_0) \in {\cal C}_0$ and $(s,t_1) \in {\cal C}_1$.
Let $t_x:=(1-x) t_0+ x t_1$
($0 \leq x \leq 1$) be a segment connecting 
$t_0$ and $t_1$.
If $E$ is $\sigma_{(\beta+s H,t_i H)}$-semi-stable
for $i=0,1$, then
$E$ is $\sigma_{(\beta+sH,t_x H)}$-semi-stable
for all $x$.
\end{lem}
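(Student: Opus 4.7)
The plan is an argument by contradiction, using the continuity of phase functions in $t$ together with the disjoint-circle structure of walls. Two preliminary observations underlie everything: first, the tilted heart $\mathfrak{A}:=\mathfrak{A}_{(\beta+sH,tH)}$ depends only on $\beta+sH$ and the ray $\mathbb{R}_{>0}H$, and so is the same abelian category for all $t>0$; second, by Proposition \ref{prop:wall} together with Lemma \ref{lem:circle}, the walls $W_{v_1}^{\beta,H}$ in the $(s,t)$-plane are mutually disjoint circles (forming an Apollonius-type pencil as in Remark \ref{rem:base-point}).

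Suppose $E$ were not $\sigma_{(\beta+sH,\,t_{x_0}H)}$-semistable for some $x_0\in(0,1)$. Then the top Harder--Narasimhan factor provides a subobject $F\hookrightarrow E$ in $\mathfrak{A}$ with
\begin{equation*}
\phi_{(\beta+sH,\,t_{x_0}H)}(F)\;>\;\phi_{(\beta+sH,\,t_{x_0}H)}(E).
\end{equation*}
Since this inclusion is in the $t$-independent category $\mathfrak{A}$, the function
\begin{equation*}
f(t)\;:=\;\phi_{(\beta+sH,\,tH)}(F)-\phi_{(\beta+sH,\,tH)}(E)
\end{equation*}
is continuous on $(0,\infty)$, satisfies $f(t_0),f(t_1)\le 0$ (by semistability of $E$ at the endpoints) and $f(t_{x_0})>0$. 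Assuming $t_0<t_1$, the intermediate value theorem produces $t'\in(t_0,t_{x_0})$ and $t''\in(t_{x_0},t_1)$ at which $f$ vanishes, so that both $(s,t')$ and $(s,t'')$ lie on $W_{v(F)}$.

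Since $f\not\equiv 0$, we have $v(F)\notin\mathbb{Q}\,v$, and Proposition \ref{prop:wall} then identifies $W_{v(F)}$ with an honest circle in the $(s,t)$-plane. A vertical line meets any circle in at most two points, so these are exactly $\{t',t''\}$; the interval $(t',t'')$ lies inside the bounded disk while $(s,t_0)$ and $(s,t_1)$ lie outside. This is the desired contradiction: the hypothesis that $\mathcal{C}_0$ is surrounded by $\mathcal{C}_1$ means that, in the nested pencil of walls, $\mathcal{C}_0$ is enclosed by exactly those walls which $\mathcal{C}_1$ lies outside of, so the vertical segment from $(s,t_0)$ to $(s,t_1)$ crosses each wall at most once as it moves monotonically outward through the nesting. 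A single wall crossed twice by this segment is therefore impossible.

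The main obstacle is to make the geometric meaning of "surrounded" precise enough to license the exclusion of two crossings, and simultaneously to verify that $W_{v(F)}$ is genuinely a wall in the sense of Definition \ref{defn:wall2}. The latter point uses the Bogomolov-type nonnegativity for Bridgeland semistable objects (applied to $F$ and $E/F$, both semistable at $(s,t')$) to give $\langle v(F)^2\rangle,\langle(v-v(F))^2\rangle\ge 0$, combined with a Hodge-index argument on the Mukai lattice using $\langle v^2\rangle>0$ and $v(F)\notin\mathbb{Q}\,v$ to obtain $\langle v(F),v-v(F)\rangle>0$.
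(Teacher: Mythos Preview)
Your overall strategy matches the paper's: assume a destabilising subobject exists at some intermediate parameter, use continuity of phases and the intermediate value theorem to produce two distinct positive values $t',t''$ at which $\phi(F)=\phi(E)$, and derive a contradiction. But you miss the one-line observation that finishes the argument immediately, and instead take a detour through the ``surrounded'' hypothesis that is both unnecessary and not fully justified.

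The point you overlook is that the phase-equality locus is not just \emph{some} circle: by Proposition~\ref{prop:wall} its equation has the form
\[
t^2 \;=\; g(s)
\]
for an explicit function $g$, i.e.\ it is a circle \emph{centred on the $s$-axis}. Hence for each fixed $s$ there is at most one $t>0$ with $\mathbb{R}Z_{(\beta+sH,tH)}(v(F))=\mathbb{R}Z_{(\beta+sH,tH)}(v)$. The existence of two distinct positive values $t',t''$ is therefore already the contradiction. This is exactly what the paper means by the terse sentence ``$t_{x_i}$ is uniquely determined by $v(E_2)$''; no appeal to the nesting of chambers or to the ``surrounded'' hypothesis is needed at all. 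Your sentence ``a vertical line meets any circle in at most two points, so these are exactly $\{t',t''\}$'' is precisely where this goes astray: for our circles the two intersection points with $x=s$ are $(s,t^*)$ and $(s,-t^*)$, so two distinct positive $t$-values cannot both lie on the circle.

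Consequently your final paragraph---verifying via Bogomolov and Hodge-index that $v(F)$ defines an honest wall in the sense of Definition~\ref{defn:wall2}---is superfluous. The computation in Proposition~\ref{prop:wall} describes the locus $\mathbb{R}Z(v_1)=\mathbb{R}Z(v)$ for \emph{any} $v_1$ with $(r_{\beta}(v_1),d_{\beta}(v_1),a_{\beta}(v_1))\notin\mathbb{Q}(r_{\beta}(v),d_{\beta}(v),a_{\beta}(v))$, independently of the inequalities $\langle v_1^2\rangle\ge 0$, $\langle (v-v_1)^2\rangle\ge 0$, $\langle v_1,v-v_1\rangle>0$; so the uniqueness of $t>0$ holds without checking those.
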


\begin{proof}
Assume that $E$ is not $\sigma_{(\beta+sH,t_x H)}$-semi-stable
for some $x \in (0,1)$.
Then there is a subobject $E_1$ of $E$ in 
${\frak A}_{(\beta+sH,t_x H)}$
such that $\phi_{(\beta+sH,t_x H)}(E_1)>
\phi_{(\beta+sH,t_x H)}(E)$.
Since $E$ is $\sigma_{(\beta+s H,t_i H)}$-semi-stable
for $i=0,1$,
$\phi_{(\beta+sH,t_i H)}(E_1) \leq 
\phi_{(\beta+sH,t_i H)}(E)$.
Then there are two numbers $x_1, x_2$ such that
$0<x_1,x_2<1$ and $\phi_{(\beta+sH,t_{x_i} H)}(E_1)= 
\phi_{(\beta+sH,t_{x_i} H)}(E)$. 
Since $t_{x_i}$ is uniquely determined by $v(E_2)$,
this does not occur.
Therefore $E$ is
$\sigma_{(\beta+sH,t_x H)}$-semi-stable
for all $x$. 
\end{proof}

\begin{defn}
Let $W$ be a wall for $v$ in $(s,t)$-plane. 
Let $(\beta,\omega)$ be a point of $W$ and
$(\beta',\omega')$ be a point in an adjacent chamber. 
Then we define \emph{the codimension of the wall} $W$ by 
\begin{equation}
\codim W:=\min_{v=\sum_i v_i } \left \{ \sum_{i<j}\langle v_i,v_j \rangle
-\left(\sum_i (\dim {\cal M}_H^{\beta'}(v_i)^{ss}-
\langle v_i^2 \rangle) \right)
+1 \right\},
\end{equation}
where $v=\sum_i v_i$ are decompositions of $v$
such that $\phi_{(\beta,\omega)}(v)=\phi_{(\beta,\omega)}(v_i)$
and $\phi_{(\beta',\omega')}(v_i)>\phi_{(\beta',\omega')}(v_j)$,
$i<j$.
\end{defn}

By using \cite[Lem. 4.2.4, Rem. 4.2.3]{MYY:2011:1},
we get the following result.
\begin{lem}\label{lem:class-codim0wall}
If $W$ is a codimension 0 wall, then
$W$ is defined by $v_1$ such that
\begin{equation}\label{eq:codim=0}
v=n v_1+v_2,\; \langle v_1,v_2 \rangle=1,\;
\langle v_1^2 \rangle=\langle v_2^2 \rangle=0.
\end{equation} 
\end{lem}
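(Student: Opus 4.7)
The strategy is to unpack the definition directly and classify decompositions realizing $\codim W=0$. Fix such a decomposition $v=\sum_{i=1}^{N}v_i$ into Mukai vectors of Jordan-H\"older factors at $(\beta,\omega)$, achieving
\begin{equation*}
\sum_{i<j}\langle v_i,v_j\rangle+1=\sum_i\bigl(\dim\cal{M}_H^{\beta'}(v_i)^{ss}-\langle v_i^2\rangle\bigr).
\end{equation*}
Group the terms by isomorphism class of stable factor: write $v=\sum_{j=1}^{m}n_j u_j$ with $u_1,\dots,u_m$ pairwise distinct primitive Mukai vectors and $n_j\ge1$. Since $W$ is defined by some $v_1\notin\bb{Q}v$, at least two distinct directions must appear, so $m\ge 2$.

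The first step is to reduce to an integer equation using the inputs cited in the statement. By \cite[Lem. 4.2.4]{MYY:2011:1}, each primitive factor satisfies $\dim\cal{M}_H^{\beta'}(u_j)^{ss}-\langle u_j^2\rangle=1$ (for an abelian surface the stack dimension of the stable locus is $\langle u_j^2\rangle+1$), so the right-hand side equals $\sum_j n_j$. By \cite[Rem. 4.2.3]{MYY:2011:1}, all $\langle u_j,u_k\rangle$ ($j\ne k$) and $\langle u_j^2\rangle$ are non-negative integers. Expanding
$\sum_{i<j}\langle v_i,v_j\rangle=\sum_{j<k}n_jn_k\langle u_j,u_k\rangle+\sum_j\binom{n_j}{2}\langle u_j^2\rangle$,
the codim-$0$ condition becomes the Diophantine equation
\begin{equation*}
\sum_{j<k}n_jn_k\langle u_j,u_k\rangle+\sum_j\binom{n_j}{2}\langle u_j^2\rangle=\sum_j n_j-1.
\end{equation*}

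The second step is an elementary case analysis under these constraints. One shows (i) every $\langle u_j^2\rangle$ vanishes: otherwise the corresponding $\binom{n_j}{2}\langle u_j^2\rangle$ term, together with the unavoidable cross terms (there are at least $m-1\ge 1$ of them with value $\ge n_j n_k\ge 1$), exceeds the linear bound $\sum n_j-1$; (ii) $m=2$: for $m\ge 3$ the $\binom{m}{2}\ge 3$ cross terms, each at least $n_jn_k$, already overshoot $\sum n_j-1$; and finally (iii) with $m=2$ and both $u_j$ isotropic, the equation reads $n_1n_2\langle u_1,u_2\rangle=n_1+n_2-1$, and $\langle u_1,u_2\rangle\ge 1$ forces $\langle u_1,u_2\rangle=1$ together with $(n_1-1)(n_2-1)=0$. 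Relabelling yields $v=nv_1+v_2$ with $\langle v_1^2\rangle=\langle v_2^2\rangle=0$ and $\langle v_1,v_2\rangle=1$, as claimed. The main obstacle is the combinatorial bookkeeping in this last paragraph: carefully ruling out $m\ge 3$ and the configurations where some $n_j\ge 2$ while $\langle u_j^2\rangle\ge 1$, which is where the non-negativity of the pairings is used in an essential way.
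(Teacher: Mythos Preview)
Your overall strategy---reduce to a Diophantine equation via the dimension formula and the non-negativity of pairings from \cite[Lem.~4.2.4, Rem.~4.2.3]{MYY:2011:1}---is exactly what the paper has in mind. But step~(i) of your case analysis has a real gap: you argue that $\langle u_j^2\rangle>0$ forces the term $\binom{n_j}{2}\langle u_j^2\rangle$ to overshoot, yet this term is identically zero whenever $n_j=1$, so no constraint on $\langle u_j^2\rangle$ follows in that case. Concretely, take $m=2$, $n_1=n_2=1$, $\langle u_1,u_2\rangle=1$, $\langle u_1^2\rangle=2$, $\langle u_2^2\rangle=0$: your equation reads $1=1$ and is satisfied, but $u_1$ is not isotropic. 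Hence the sentence ``with $m=2$ and both $u_j$ isotropic'' in step~(iii) is not justified, and the relabelling does not produce the pair $(v_1,v_2)$ required by \eqref{eq:codim=0}.

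The lemma is nevertheless true in that example, but for a reason your outline does not supply: the \emph{same} wall is also defined by $v_1:=u_2$, and writing $v=2u_2+(u_1-u_2)$ one checks $\langle u_2^2\rangle=\langle(u_1-u_2)^2\rangle=0$ and $\langle u_2,u_1-u_2\rangle=1$. The point is that the particular codim-$0$-achieving decomposition you analyse need not itself have the shape \eqref{eq:codim=0}; what must be shown is that \emph{some} $v_1$ in the rank-two sublattice spanned by the factors does. That extra step---either a short lattice argument locating the two isotropic lines in $\mathbb{Z}u_1+\mathbb{Z}u_2$, or a direct appeal to the cited results for the full classification rather than only the two numerical inputs you extract---is what your plan is missing.
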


If $v_1,v_2$ in \eqref{eq:codim=0} satisfy
$v_1>0,v_2<0$ or $v_1<0$ and $v_2>0$, then $(v_1,-v_2,n,1)$
or $(-v_1,v_2,n,1)$ gives a numerical solution of $v$.
Conversely for a numerical solution $(v_1,v_2,\ell_1,\ell_2)$,
Lemma \ref{lem:num.sol:stability} implies that
for a suitable $\beta$ and $\omega=tH$,
$v_1$ defines a codimension 0 wall.

\begin{prop}
Assume that $\NS(X)={\Bbb Z}H$.
We fix $\beta$.
Then there is a bijective correspondence
between a codimension 0 wall and a numerical solution. 
\end{prop}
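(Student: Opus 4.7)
The plan is to establish the bijection by formalising the correspondence outlined in the paragraph preceding the proposition.

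In the forward direction, given a numerical solution $(v_1,v_2,\ell_1,\ell_2)$ with $v=\pm(\ell_1 v_1-\ell_2 v_2)$ and, say, $\ell_2=1$, I rewrite the equation as $v=\ell_1(\pm v_1)+(\mp v_2)$ and set $u:=\pm v_1$, $w:=\mp v_2$. The numerical-solution conditions become $\langle u^2 \rangle=\langle w^2 \rangle=0$ and $\langle u,w \rangle=1$, and the codimension formula applied to the decomposition $v=\ell_1 u+w$ then yields $\codim W_{v_1}=0$. Lemma \ref{lem:num.sol:stability} produces a point $(\beta,\omega)$ with $\omega\in\mathbb{R}_{>0}H$ actually lying on $W_{v_1}$, so this assigns a codimension-$0$ wall to the numerical solution.

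The backward direction is the main part. A codimension-$0$ wall $W$ is presented by Lemma \ref{lem:class-codim0wall} as $v=nv_1+v_2$ with $\langle v_1,v_2 \rangle=1$ and $\langle v_1^2 \rangle=\langle v_2^2 \rangle=0$. Primitivity of $v_1$ and $v_2$ is immediate: if $v_1=kv_1'$ with $k\geq 1$ an integer, then $k$ divides $\langle v_1,v_2 \rangle=1$, forcing $k=1$. The main obstacle — and the only place where the hypothesis $\NS(X)=\mathbb{Z}H$ enters — is to show that $v_1$ and $v_2$ necessarily have opposite signs, so that the assignment $W\mapsto (v_1,-v_2,n,1)$ (or $(-v_1,v_2,n,1)$) yields positive primitive Mukai vectors as required in the definition of numerical solution. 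To prove this, I write $v_i=(r_i,d_i H,a_i)$ with $(H^2)=2N$ and examine cases. If $r_1,r_2>0$, isotropy gives $a_i=Nd_i^2/r_i$, and a direct computation yields
\begin{equation*}
\langle v_1,v_2 \rangle=2Nd_1 d_2-r_1 a_2-r_2 a_1=-\frac{N(r_1 d_2-r_2 d_1)^2}{r_1 r_2}\leq 0,
\end{equation*}
contradicting $\langle v_1,v_2 \rangle=1$. If $r_i=0$, isotropy combined with positivity forces $v_i=a_i\varrho_X$ with $a_i>0$, so $\langle v_1,v_2 \rangle=-a_i r_j\leq 0$ again fails (the subcase $r_j=0,\,d_j>0$ being ruled out by $Nd_j^2=r_j a_j=0$). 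Symmetry under the involution $v_i\mapsto -v_i$, which preserves $\langle v_1,v_2 \rangle=1$, excludes the possibility that both $v_i$ are negative. Hence exactly one of $v_1,v_2$ is positive and the other negative, and the desired numerical solution is constructed.

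Bijectivity follows by direct verification. Running the backward map on the wall $W_{v_1}$ produced from $(v_1,v_2,\ell_1,1)$ via the canonical decomposition $v=\ell_1 v_1+(-v_2)$ recovers the original data. Conversely, running the forward map on the numerical solution extracted from $W$ returns $W=W_{v_1}$ by construction. For well-definedness, two decompositions $v=nv_1+v_2=n'v_1'+v_2'$ yielding the same wall $W_{v_1}=W_{v_1'}$ must satisfy $v_1'\in\mathbb{Q}v_1+\mathbb{Q}v$; intersecting this condition with primitivity and isotropy of $v_1'$ forces $v_1'\in\{\pm v_1\}$, or in the symmetric case $n=1$ also $v_1'\in\{\pm v_2\}$, which corresponds precisely to the natural symmetry $(v_1,v_2,\ell_1,\ell_2)\leftrightarrow (v_2,v_1,\ell_2,\ell_1)$ inherent in the $\pm$ of the numerical equation.
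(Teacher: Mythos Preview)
Your proof is essentially correct and, in the backward direction, more complete than the paper's own argument. The paper's proof is extremely terse: for the forward direction it simply observes that since $\beta,c_1(v),c_1(v_1),c_1(v_2)\in\mathbb{Q}H$, Corollary~\ref{cor:C_v} applies and the circle $t^2+(s-\lambda_1)(s-\lambda_2)=0$ is a wall in the fixed $(s,t)$-plane; for the backward direction it just cites Lemma~\ref{lem:class-codim0wall}. The paper never verifies that the $v_1,v_2$ coming out of Lemma~\ref{lem:class-codim0wall} have opposite signs, which is precisely the point where $\NS(X)=\mathbb{Z}H$ is needed --- your explicit computation $\langle v_1,v_2\rangle=-N(r_1d_2-r_2d_1)^2/(r_1r_2)\le 0$ for two positive-rank isotropic vectors, together with the rank-zero case analysis, is the real content of the proposition and is absent from the paper.

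One small point deserves comment in your forward direction. You invoke Lemma~\ref{lem:num.sol:stability}, but that lemma produces its own $\beta(x)=\gamma+x\xi_1+(1-x)\xi_2$, not the $\beta$ fixed in the proposition. What saves you is that when $\NS(X)=\mathbb{Z}H$ the restricted parameter space $\{(\beta+sH,tH)\}$ already coincides with the full space $\NS(X)_{\mathbb{R}}\times\Amp(X)_{\mathbb{R}}$, so the lemma's point automatically lies in your $(s,t)$-plane after a shift in $s$. You should make this explicit, as it is a second place the Picard-rank-one hypothesis enters (contrary to your claim that the sign argument is ``the only place''). The paper avoids this issue entirely by using Corollary~\ref{cor:C_v}, which works directly in the fixed $(s,t)$-plane and gives a nonempty circle because $\lambda_1\ne\lambda_2$. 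Your injectivity argument at the end is correct: in the rank-three lattice the isotropic vectors in $\mathbb{Q}v_1+\mathbb{Q}v_2$ are exactly $\mathbb{Q}v_1\cup\mathbb{Q}v_2$ since $\langle(av_1+bv_2)^2\rangle=2ab$, so primitivity forces $v_1'\in\{\pm v_1,\pm v_2\}$.
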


\begin{proof}
Assume that $\NS(X)={\Bbb Z}H$.
For a numerical solution $(v_1,v_2,\ell_1,\ell_2)$,
$\beta, c_1(v),c_1(v_1),c_1(v_2) \in {\Bbb Q}H$ implies that
$C_{v,\lambda_1}$ in Corollary \ref{cor:C_v}
gives a wall in the $(s,t)$-plane.
Combining Lemma \ref{lem:class-codim0wall},
we get the claim.
\end{proof}

\begin{prop}[{cf. \cite[Prop. 4.2.5]{MYY:2011:1}}]
If $(\beta_1,\omega_1)$ and $(\beta_2,\omega_2)$ are not separated by
any codimension 0 wall, then
$M_{(\beta_1,\omega_1)}(v) \cap M_{(\beta_2,\omega_2)}(v) \ne \emptyset$.
In particular,
$M_{(\beta_1,\omega_1)}(v)$ and $M_{(\beta_2,\omega_2)}(v)$
are birationally equivalent.
\end{prop}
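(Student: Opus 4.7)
The plan is to exhibit a dense open subscheme common to both moduli, consisting of objects that are $\sigma_{(\beta,\omega)}$-stable for every $(\beta,\omega)$ along a generic path joining the two parameters, and to control the crossing of each intervening wall by its codimension.

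First, I would choose a path $\gamma:[0,1] \to \NS(X)_{\Bbb R} \times \Amp(X)_{\Bbb R}$ from $(\beta_1,\omega_1)$ to $(\beta_2,\omega_2)$ avoiding every codimension $0$ wall (possible by hypothesis, since the two endpoints lie in one connected component of the complement of such walls) and meeting every other wall transversally at a single interior-generic point. Only finitely many walls meet $\gamma$: the codimension~$0$ walls are excluded, and the higher-codimension walls can accumulate only at the boundary points of the kind described in Section~\ref{sect:rho=1}, which a generic compact $\gamma$ avoids. By Proposition~\ref{prop:open} the stack $\cal{M}_{(\beta,\omega)}(v)$ is constant on the interior of each chamber traversed by $\gamma$, so it suffices to analyze a single crossing of a wall $W$ of codimension $\geq 1$.

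Second, let $W$ separate adjacent chambers $\cal{C}_+$ and $\cal{C}_-$, fix a generic $(\beta,\omega) \in W$, and choose $(\beta_\pm,\omega_\pm) \in \cal{C}_\pm$ near it. Let $B_\pm \subset M_{(\beta_\pm,\omega_\pm)}(v)$ be the closed locus of objects that become properly $\sigma_{(\beta,\omega)}$-semistable, stratified by the ordered tuple $(v_1,\dots,v_n)$ of Mukai vectors of the Jordan--H\"older factors on $W$ (with $v=\sum_i v_i$ and $\phi_{(\beta_+,\omega_+)}(v_i)>\phi_{(\beta_+,\omega_+)}(v_j)$ for $i<j$). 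Since the Jordan--H\"older factors $E_i$ are $\sigma_{(\beta,\omega)}$-stable of the same phase, $\Hom(E_i,E_j)=0$ for non-isomorphic factors and, by Serre duality on the abelian surface $X$, $\Ext^2(E_j,E_i) \cong \Hom(E_i,E_j)^{\vee}=0$ as well; so $\dim\Ext^1(E_j,E_i)=\langle v_i,v_j \rangle$ for $i<j$. A standard dimension count of iterated extensions modulo the $\bb{G}_m$-scaling shows that each stratum has codimension in $M_{(\beta_\pm,\omega_\pm)}(v)$ exactly equal to
\begin{equation*}
\sum_{i<j}\langle v_i,v_j \rangle
-\sum_i\bigl(\dim\cal{M}_H^{\beta}(v_i)^{ss}-\langle v_i^2\rangle\bigr)+1,
\end{equation*}
which is $\geq \codim W \geq 1$ by definition. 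Hence $U_\pm := M_{(\beta_\pm,\omega_\pm)}(v)\setminus B_\pm$ is a dense open subscheme.

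Third, for $E \in U_+$, the object $E$ remains $\sigma_{(\beta,\omega)}$-stable by construction, and by openness of stability under small perturbation of $(\beta,\omega)$ it is also $\sigma_{(\beta_-,\omega_-)}$-stable, whence $E \in U_-$; the symmetric inclusion is analogous, so $U_+ = U_-$. In particular $M_{(\beta_+,\omega_+)}(v) \cap M_{(\beta_-,\omega_-)}(v) \supset U_+ \neq \emptyset$. Iterating this across the finitely many wall crossings along $\gamma$ produces a common dense open subscheme of $M_{(\beta_1,\omega_1)}(v)$ and $M_{(\beta_2,\omega_2)}(v)$, yielding simultaneously the non-emptiness of the intersection and the birational equivalence. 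The principal technical obstacle is the dimension count in the second step: one must verify that the Jordan--H\"older strata have no unexpectedly large components, i.e.\ no jumps in the dimensions of the relevant $\Ext$-groups away from the generic point of each stratum, so that $B_\pm$ really has codimension $\geq \codim W$. This is exactly the wall-crossing analysis carried out in \cite[Sect.~4]{MYY:2011:1}, to which the statement explicitly alludes; the finiteness of walls met by $\gamma$ also relies on the accumulation-point structure described in Section~\ref{sect:rho=1}.
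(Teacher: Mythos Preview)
The paper does not supply its own proof of this proposition: it is stated with the citation ``cf.\ \cite[Prop.~4.2.5]{MYY:2011:1}'' and the text moves immediately to the next subsection. So there is nothing in the present paper to compare your argument against; what you have written is a sketch of the argument that the cited reference carries out, and you correctly identify the key input as the dimension estimate for the Harder--Narasimhan strata from \cite[Sect.~4]{MYY:2011:1}.

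Your outline is essentially the standard one and is sound, but a few points deserve tightening. First, your appeal to Section~\ref{sect:rho=1} for the finiteness of walls along $\gamma$ is misplaced: that section assumes $\NS(X)=\bb{Z}H$, whereas the proposition sits in Section~\ref{sect:restricted-parameter}. In the restricted $(s,t)$-plane the walls are nested circles by Propositions~\ref{prop:wall}, \ref{prop:wall:r=0} and Lemma~\ref{lem:circle}, so any compact path meets only finitely many; no accumulation-point analysis is needed. Second, in the dimension count the decomposition $v=\sum_i v_i$ appearing in the definition of $\codim W$ records the Harder--Narasimhan filtration for the \emph{adjacent} chamber, whose factors $E_i$ are only $\sigma_{(\beta,\omega)}$-semistable on the wall, not stable; the vanishing $\Hom(E_i,E_j)=0$ for $i<j$ comes from the strict phase inequality $\phi_{(\beta',\omega')}(E_i)>\phi_{(\beta',\omega')}(E_j)$, not from stability of the $E_i$, and one still needs the stack-dimension correction term $\dim\cal{M}_H^{\beta'}(v_i)^{ss}-\langle v_i^2\rangle$ to handle non-primitive or non-simple factors. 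Third, your iteration across several walls implicitly uses that each $M_{(\beta,\omega)}(v)$ is irreducible (so that finitely many dense opens intersect non-trivially); this holds on an abelian surface but should be stated. With these adjustments your sketch matches what one expects the cited proof to do.
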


\subsection{Semi-homogeneous presentation and the stability.}
\label{subsect:semi-hom-stab}

For a semi-homogeneous presentation,
Lemma \ref{lem:num.sol:stability} implies that
we can relate a $\sigma_{(\beta,\omega)}$-semi-stability.
We shall study Gieseker semi-stability
of coherent sheaves with two semi-homogeneous presentations.

\begin{lem}\label{lem:s.h-ss}
Let $v$ be a primitive Mukai vector with $r:=\rk v>0$.
Assume that
${\cal M}_H^\beta(v)^{ss}$ consists of $\beta$-stable sheaves. 
If a simple sheaf $E$ with $v(E)=v$ 
has two semi-homogeneous presentations
and $d_\beta(v)-r s=0$ is not a codimension 0 wall,
then $E$ is a $\mu$-stable vector bundle.
\end{lem}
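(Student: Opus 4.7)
My strategy is to translate the hypothesis about semi-homogeneous presentations into Bridgeland $\sigma$-stability near a codimension-$0$ wall, then propagate that stability to an unbounded chamber of the $(s,t)$-plane, and finally read off $\mu$-stability and local freeness from the asymptotic identification with Gieseker moduli.

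First, a simple sheaf $E$ with a semi-homogeneous presentation determines a numerical solution $(v_1,v_2,\ell_1,\ell_2)$ of $v$ and gives a class in $\frk{M}^\pm(v_1,v_2,\ell_1,\ell_2)$ by Theorems~\ref{fct:presentation} and~\ref{fct:moduli}. By Lemma~\ref{lem:num.sol:stability} combined with Proposition~\ref{prop:open}, $E$ is $\sigma_{(\beta+sH,tH)}$-stable in the chambers immediately adjacent to the codimension-$0$ wall $W$ (a circle in the $(s,t)$-plane by Corollary~\ref{cor:C_v}) associated to this numerical solution. The hypothesis that $L\colon rs = d_\beta(v)$ is not a codimension-$0$ wall, combined with the bijection between codimension-$0$ walls and numerical solutions established just after Lemma~\ref{lem:class-codim0wall}, means that all codimension-$0$ walls for $v$ are pairwise disjoint circles (by Lemma~\ref{lem:circle}), none coinciding with $L$. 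I can therefore choose a path in the $(s,t)$-plane from the chamber where $E$ is stable to a point in the unbounded chamber on a chosen side of $L$, crossing only walls of codimension $\geq 1$; applying Lemma~\ref{lem:2-stability} piecewise along this path shows that $E$ remains $\sigma$-semi-stable throughout.

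In the unbounded chamber, Proposition~\ref{prop:unbdd} identifies $M_{(\beta+sH,tH)}(v)$ with $M_H^\beta(v)$ or $M_H^{-\beta}(v^\vee)$. Hence $E$ (or $E^\vee$) is $\beta$-twisted Gieseker-semi-stable and in particular $\mu$-semi-stable; the primitivity of $v$ together with the hypothesis that $\mathcal{M}_H^\beta(v)^{ss}$ consists of $\beta$-stable sheaves upgrades this to $\mu$-stability, since a properly $\mu$-semi-stable sheaf would admit a Jordan--H\"older filtration producing a properly $\beta$-twisted semi-stable sheaf with Mukai vector $v$, contradicting the hypothesis. Local freeness then follows by combining the two semi-homogeneous presentations: the cokernel presentation $0\to E_3\to E_4\to E\to 0$ realises $E$ as a quotient of the locally free semi-homogeneous sheaf $E_4$, whereas the kernel presentation $0\to E\to E_1\to E_2\to 0$ exhibits $E$ as a subsheaf of the locally free $E_1$, hence torsion-free. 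Applying $\mathcal{E}xt^\bullet(-,\mathcal{O}_X)$ to the cokernel presentation, one checks that $\mathcal{E}xt^1(E,\mathcal{O}_X)=0$, so $E$ is reflexive and therefore locally free on the surface $X$.

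The main obstacle is the propagation step: I must verify that the path can be chosen to avoid every codimension-$0$ wall of $v$ while reaching the unbounded chamber. The crucial input is precisely the assumption that $L$ is not a codimension-$0$ wall, together with the disjointness of the codimension-$0$ circles in Lemma~\ref{lem:circle}; the remaining walls along the path are of codimension $\geq 1$, and Lemma~\ref{lem:2-stability} combined with Proposition~\ref{prop:open} ensures $\sigma$-semi-stability of the specific object $E$ is preserved across each such crossing.
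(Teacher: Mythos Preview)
Your propagation step contains a genuine gap. Lemma~\ref{lem:2-stability} does \emph{not} say that semi-stability is preserved when crossing a single wall of codimension $\geq 1$; it says that if $E$ is already known to be semi-stable in two chambers ${\cal C}_0\subset{\cal C}_1$ (one surrounded by the other), then $E$ is semi-stable on the segment between them. Your attempt to ``apply Lemma~\ref{lem:2-stability} piecewise along this path'' therefore fails: at each intermediate wall you only know stability on one side, and a codimension $\geq 1$ wall can certainly destabilize the specific object $E$ (that is precisely what a wall does --- the codimension only measures how generically this happens). Starting from stability in a chamber adjacent to one codimension-$0$ circle, you have no mechanism to reach the unbounded chamber.

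The paper exploits both presentations simultaneously in a way you do not. Since the line $L:rs=d_\beta(v)$ is not a codimension-$0$ wall, the two circles $C_1,C_2$ coming from the two numerical solutions lie on \emph{opposite} sides of $L$. After a suitable Fourier--Mukai transform $\Phi$, the two chambers in which (the transform of) $E$ is stable become concentric, with the unbounded chamber sandwiched between them; a \emph{single} application of Lemma~\ref{lem:2-stability} then gives semi-stability in the unbounded chamber on both sides of $L$. This two-sidedness is also what drives the rest of the argument: from semi-stability just to the left of $L$ one gets $E\in{\cal M}_H^\beta(v)^{ss}$, and from semi-stability just to the right one gets $E^\vee\in{\cal M}_H^{-\beta}(v^\vee)^{ss}$, which forces $E$ to be locally free. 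The $\mu$-stability is then deduced by comparing the $\beta$-twisted Hilbert polynomials of $E$ and $E^\vee$; your one-sided argument (``properly $\mu$-semi-stable implies properly $\beta$-twisted semi-stable'') is false in general, since Gieseker stability refines $\mu$-stability. Finally, your $\mathcal{E}xt$ computation for local freeness does not go through as stated: from the cokernel presentation $0\to E_3\to E_4\to E\to 0$ one gets $\mathcal{E}xt^1(E,{\cal O}_X)\hookrightarrow\mathcal{E}xt^1(E_4,{\cal O}_X)=0$ only after knowing the map $\mathcal{H}om(E_4,{\cal O}_X)\to\mathcal{H}om(E_3,{\cal O}_X)$ is surjective, which you have not justified.
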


\begin{proof}
Since $rs-d_\beta(v)=0$ is not a codimension 0 wall,
two semi-homogeneous presentations
define two circles $C_1$ and $C_2$
which are separated by the line
$rs-d_\beta(v)=0$.

There is a Fourier-Mukai transform $\Phi:=\Phi_{Y \to X}^{{\bf E}}$
and a complex $F$ such that $E=\Phi(F)$ and
$F$ is semi-stable
with respect to two chambers ${\cal C}_0$, ${\cal C}_1$ such that
${\cal C}_0$ is surrounded by ${\cal C}_1$
and ${\cal C}_i$ are adjacent to $C_i$.
Let ${\cal C}$ be an unbounded chamber between $\Phi({\cal C}_1)$ and
$\Phi({\cal C}_2)$. Then 
$F$ is semi-stable with respect to
$\Phi^{-1}({\cal C})$ by Lemma \ref{lem:2-stability}.
Hence $E$ is semi-stable with respect to all unbounded 
chambers.

We take an element $\omega \in \Amp(X)_{\Bbb Q}$. 
Then $E$ is $\sigma_{(\beta+sH,\omega)}$-semi-stable
if $1 \gg d_\beta(v)-rs>0$.
Hence $E$ is $\beta$-twisted semi-stable.
Since $E$ is also $\sigma_{(\beta+sH,\omega)}$-semi-stable
if $1 \gg -(d_\beta(v)-rs)>0$,
$E^{\vee}$ is $(-\beta)$-twisted semi-stable.
Hence $E$ is locally free.
Let $E_1$ be a locally free subsheaf of $E$.
Then there is a generically surjective homomorphism
$E^{\vee} \to E_1^{\vee}$.
Since 
$$
\frac{\chi(E_1(-\beta))}{\rk E_1}=
\frac{\chi(E_1^{\vee}(\beta))}{\rk E_1^{\vee}},
$$
$\frac{d_\beta(E_1)}{\rk E_1}=
\frac{d_\beta(E)}{\rk E}$ implies 
$\frac{\chi(E_1(-\beta))}{\rk E_1}=
\frac{\chi(E(-\beta))}{\rk E}$.
Thus $E$ is properly $\beta$-twisted semi-stable,
which is a contradiction.
Therefore $E$ is $\mu$-stable.
\end{proof}

\begin{lem}\label{lem:codim0-line}
Let $v$ be a primitive Mukai vector with $r:=\rk v>0$.
Assume that $d_\beta(v)-r s=0$ defines a wall.
\begin{enumerate}
\item[(1)]
$d_\beta(v)-r s=0$ is a codimension 0 wall if and only if
(a) 
$v=r e^{\xi}-a \varrho_X$, $\xi \in \NS(X), a \in {\Bbb Z}$,
$(r-1)(a-1)=0$
or (b) 
$v=v_1+\ell v_2$,
$v_i=r_i e^{\frac{\xi_i}{r_i}}$,
$r_1,r_2>0$,
$((r_2 \xi_1-r_1 \xi_2)^2)=-r_1 r_2$,
$(r_2 \xi_1-r_1 \xi_2,H)=0$.
\item[(2)]
Assume that $v$ satisfies (a).
We take $(s,t)$ such that $1 \gg d_\beta(v)-rs>0$
and let $E$ be a $\sigma_{(\beta+sH,tH)}$-semi-stable object with
$v(E)=v$. Then
$r=1$ and $E=I_Z(\xi)$ or
$a=1$ and $E=\ker(\oplus_{i=1}^r {\cal O}_X(\xi_i) \to {\frak k}_x)$.  
\item[(3)]
If $\NS(X)={\Bbb Z}H$, then (b) does not occur.
\end{enumerate}
\end{lem}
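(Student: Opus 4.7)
My plan is to prove (1), (2), (3) in order. For (1), I combine Lemma \ref{lem:class-codim0wall}, which characterizes codimension~0 walls via decompositions $v = nv_1 + v_2$ with $v_1, v_2$ isotropic and $\langle v_1, v_2 \rangle = 1$, with Proposition \ref{prop:wall}(2), which says that the line $rs = d_\beta(v)$ is the wall if and only if $r \, d_\beta(v_1) = r_1 d_\beta(v)$, where $r_i := \rk v_i$. Substituting $v = n v_1 + v_2$ reduces this to the symmetric equation $(r_2 c_1(v_1) - r_1 c_1(v_2), H) = 0$. I split into cases on the vanishing of $r_1, r_2$. The key geometric input is that $H^{\perp} \cap \NS(X)_{\bb{R}}$ is negative definite, so any isotropic vector of rank $0$ orthogonal to $H$ must have vanishing $c_1$, hence is a multiple of $\varrho_X$. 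Combined with $\langle v_1, v_2 \rangle = 1$, the case $r_1 = 0$ then forces $\rk v = 1$, giving case (a) with $r = 1$; the case $r_2 = 0$ symmetrically forces $\rk v_1 = 1$ and gives case (a) with $a = 1$. If both $r_i \ne 0$, isotropy yields $v_i = r_i e^{\xi_i/r_i}$, and $\langle v_1, v_2 \rangle = 1$ forces $((r_2 \xi_1 - r_1 \xi_2)^2) < 0$, hence $r_1 r_2 > 0$; positivity of $v$ then pins down the signs, producing case (b).

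For (2), I observe that the chamber $\{(s,t) \mid 1 \gg d_\beta(v) - rs > 0\}$ is unbounded since, by the opening discussion of the subsection, every other wall is a circle disjoint from the line $rs = d_\beta(v)$. Hence Proposition \ref{prop:unbdd} identifies $M_{(\beta+sH,tH)}(v) \cong M_H^\beta(v)$. In the $r = 1$ subcase of (a), $\beta$-twisted semistable rank one sheaves with Mukai vector $e^\xi - a \varrho_X$ are exactly the twisted ideal sheaves $I_Z(\xi)$ with $\ell(Z) = a$. In the $a = 1$ subcase, $v = re^\xi - \varrho_X$ has a single numerical solution $(v_1, v_2, \ell_1, \ell_2) = (e^\xi, \varrho_X, r, 1)$ (the cokernel side is excluded because $\rk v_2 = 0 < r$), so Theorem \ref{fct:presentation} forces a kernel semi-homogeneous presentation $0 \to E \to E_1 \to \mathfrak{k}_x \to 0$ with $E_1$ semi-homogeneous of Mukai vector $re^\xi$; Mukai's classification of semi-homogeneous sheaves attached to an isotropic primitive Mukai vector identifies $E_1 \cong \oplus_{i=1}^r \mathcal{O}_X(\xi_i)$ with $\xi_i - \xi \in \Piczero{X}$.

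Finally, (3) is an immediate contradiction: under $\NS(X) = \bb{Z}H$, the class $r_2 \xi_1 - r_1 \xi_2$ lies in $\bb{Z}H$, and $(r_2 \xi_1 - r_1 \xi_2, H) = 0$ forces $r_2 \xi_1 - r_1 \xi_2 = 0$ since $(H^2) > 0$; but then $((r_2 \xi_1 - r_1 \xi_2)^2) = 0$ contradicts the requirement $-r_1 r_2 < 0$ of (b). The main obstacle I anticipate is in the $a = 1$ subcase of (2): one must guarantee that \emph{every} $\sigma_{(\beta+sH,tH)}$-semi-stable $E$, and not only a general member of $M_H^\beta(v)$, admits the asserted kernel presentation. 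This rests on the uniqueness of the numerical solution for this particular $v$ combined with Mukai's structure theory, and is the delicate point preventing the argument from being purely formal.
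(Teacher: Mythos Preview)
Your arguments for (1) and (3) are correct and essentially identical to the paper's: the same case split on whether $r_1 r_2 = 0$, the same use of the Hodge index theorem to force $r_1 r_2 > 0$ in case (b), and the same immediate contradiction for (3).

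For (2), however, there is a genuine gap in the $a = 1$ subcase. You invoke Theorem~\ref{fct:presentation}, but that theorem is stated only under the hypothesis $\NS(X) = \mathbb{Z}H$, which part (2) of the lemma does not assume. Even granting that hypothesis, Theorem~\ref{fct:presentation} speaks only of a \emph{general} member of $M_H(v)$, whereas the claim concerns every $\sigma$-semistable $E$; and your assertion that $v = r e^\xi - \varrho_X$ has a \emph{single} numerical solution is not justified---on a surface with larger Picard group there may well be further isotropic decompositions of $v$. The parenthetical about ``the cokernel side'' only rules out the cokernel presentation for the one numerical solution you exhibit, not the existence of others. The paper bypasses all three issues: since $(s,t)$ lies in the chamber adjacent to the codimension~0 wall, the moduli space there is identified with $\mathfrak{M}^{\pm}(v_1,v_2,\ell_1,\ell_2)$ via \cite[Thm.~4.9]{YY} and \cite[Prop.~4.1.5]{MYY:2011:1} (as in the proof of Lemma~\ref{lem:num.sol:stability}). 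That identification holds for every object, without any Picard rank restriction, and directly yields the kernel presentation $0 \to E \to E_0 \to E_1 \to 0$ with $(v(E_0),v(E_1)) = (e^\xi,\ell\varrho_X)$ or $(\ell e^\xi,\varrho_X)$.
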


\begin{proof}
(1), (3) 
By Lemma \ref{lem:class-codim0wall},
we have $v=v_1+\ell v_2$,
$\langle v_i^2 \rangle=0$ and $\langle v_1,v_2 \rangle=1$,
where $\ell=\langle v^2 \rangle/2$.
Assume that $\rk v_1 \rk v_2 \ne 0$.
Then we can set
$v_i:=r_i e^{\frac{\xi_i}{r_i}}$
$((r_2 \xi_1-r_1 \xi_2)^2)=-r_1 r_2$.
Since the wall is $d_\beta-rs=0$,
we have $(r_2 \xi_1-r_1 \xi_2,H)=0$.
By the Hodge index theorem,
$r_1 r_2>0$.
Thus $r_1, r_2>0$.
If $\rk v_1=0$ or $\rk v_2=0$,
then $r_2 d_\beta(v_1)-r_1 d_\beta(v_2)=0$ and 
$\langle v_1,v_2\rangle=1$ implies that 
$\{v_1, v_2 \}=\{(0,0,1),(-1,0,0) \}$.
Therefore (1) holds.
If $\NS(X)={\Bbb Z}H$ and $(r_2 \xi_1-r_1 \xi_2,H)=0$,
then we have $r_2 \xi_1-r_1 \xi_2=0$.
Hence $r_1 r_2=0$. Thus (b) does not occur.

(2) 
By the choice of $(s,t)$,
$\sigma_{(\beta+sH,tH)}$-semi-stability
implies $\beta$-twisted semi-stability.
Thus for $E \in M_{(\beta+sH,tH)}(v)$,
we have 
a semi-homogeneous presentation 
$$
0 \to E \to E_0 \to E_1 \to 0,
$$
where $(v(E_0),v(E_1))=(e^\xi,\ell \varrho_X)$
or $(v(E_0),v(E_1))=(\ell e^\xi, \varrho_X)$
with $\langle v^2 \rangle=2\ell$.
Hence the claim holds.
\end{proof}

\begin{prop}
Let $X$ be an abelian surface with $\NS(X)={\Bbb Z}H$.
Let $E$ be a simple sheaf on $X$.
If $E$ has two semi-homogeneous presentations,
then $E$ is Gieseker semi-stable.
\end{prop}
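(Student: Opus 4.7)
The plan is to apply Lemma~\ref{lem:s.h-ss} wherever it is available and to handle the one sub-case it excludes using Lemma~\ref{lem:codim0-line}. Let $v = v(E)$ and $r = \rk v \ge 0$; first treat $r > 0$ by picking $\beta \in \NS(X)_{\mathbb{Q}}$ in the ray $\mathbb{Q}_{>0}H$ generic enough that $M_H^\beta(v)^{ss}$ consists only of $\beta$-twisted stable sheaves. If the line $d_\beta(v) - rs = 0$ is not a codimension-$0$ wall for $v$, Lemma~\ref{lem:s.h-ss} applies verbatim and yields the stronger conclusion that $E$ is $\mu$-stable, hence Gieseker stable. The residual case $r = 0$ will be handled by a parallel analysis, substituting Proposition~\ref{prop:wall:r=0} for Proposition~\ref{prop:wall} throughout.

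Otherwise, Lemma~\ref{lem:codim0-line}(1) together with part~(3) — where the hypothesis $\NS(X) = \mathbb{Z}H$ rules out case~(b) of~(1) — forces $v = r e^\xi - a \varrho_X$ with $(r-1)(a-1) = 0$, and Lemma~\ref{lem:codim0-line}(2) identifies $E$ explicitly. If $r = 1$, then $E \cong I_Z(\xi)$ is a rank-one torsion-free sheaf and is automatically Gieseker stable. If $a = 1$, then $E = \ker\bigl(\bigoplus_{i=1}^{r} \mathcal{O}_X(\xi_i) \to \mathfrak{k}_x\bigr)$, and I verify Gieseker semi-stability by direct Hilbert-polynomial comparison. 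The ambient direct sum is Gieseker semi-stable (all summands share the slope $(\xi,H)/(H^2)$ and the same Hilbert polynomial), so any subsheaf $F \subset E$ of rank $r'$ and slope $\mu(F) = \mu(E)$ inherits $\chi(F)/r' \le (\xi^2)/2$. The simplicity of $E$ will preclude $F$ from being a sub-direct-sum of the $\mathcal{O}_X(\xi_i)$: otherwise $E$ would decompose as $\mathcal{O}_X(\xi_j) \oplus \ker\bigl(\bigoplus_{i \ne j} \mathcal{O}_X(\xi_i) \to \mathfrak{k}_x\bigr)$, violating $\Hom(E,E) = \mathfrak{k}$. This forces strict inequality, and integrality of $(\xi^2)/2 = \chi(\mathcal{O}_X(\xi)) \in \mathbb{Z}$ on the abelian surface upgrades it to $\chi(F) \le r'(\xi^2)/2 - 1$, whence $\chi(F)/r' \le (\xi^2)/2 - 1/r' \le (\xi^2)/2 - 1/r = \chi(E)/r$, establishing the semi-stability inequality $p_F(n) \le p_E(n)$ for $n \gg 0$.

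The trickiest point is the use of simplicity in the $a = 1$, $r \ge 2$ sub-case: without it, the decomposable sheaf $E \cong \mathcal{O}_X \oplus I_x$ (with the correct Mukai vector) would have to be excluded by other means, since it contains $\mathcal{O}_X$ as a sub-object with strictly larger reduced Hilbert polynomial and hence is not Gieseker semi-stable. Once the simplicity-driven exclusion of split configurations is secured, the Hilbert-polynomial arithmetic and the parallel $r = 0$ analysis via Proposition~\ref{prop:wall:r=0} are routine.
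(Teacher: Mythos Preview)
Your case split mirrors the paper's, and the invocation of Lemma~\ref{lem:s.h-ss} in the non-codimension-$0$ case is correct. The problem is in the other branch: you appeal to Lemma~\ref{lem:codim0-line}(2) to \emph{identify} $E$ as $I_Z(\xi)$ or $\ker(\bigoplus \mathcal{O}_X(\xi_i)\to\mathfrak{k}_x)$, but the hypothesis of part~(2) is that $E$ is $\sigma_{(\beta+sH,tH)}$-semi-stable for $(s,t)$ in the unbounded chamber --- and in that chamber $\sigma$-semi-stability is exactly the Gieseker semi-stability you are trying to establish. So the identification step is circular. Parts~(1) and~(3) only pin down the Mukai vector $v$; they say nothing about which numerical solutions the two presentations of your particular $E$ realise, and there is no reason a priori that one of them must be the solution attached to the line wall.

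Even if one grants the identification, your verification in the $a=1$ case has a gap. You argue that simplicity excludes $F$ from being a coordinate sub-direct-sum of the $\mathcal{O}_X(\xi_i)$ and then claim ``this forces strict inequality'' $\chi(F)/r'<(\xi^2)/2$. But you have not shown that coordinate sub-sums are the \emph{only} subsheaves achieving equality; that requires an additional argument (e.g.\ that a saturated subsheaf of $\bigoplus\mathcal{O}_X(\xi_i)$ with the same slope is, in the abelian category of $\mu$-semistable sheaves of that slope, a subobject of a semisimple object with pairwise non-isomorphic summands, hence itself a coordinate sub-sum). Finally, the $r=0$ case you propose to handle separately is vacuous: a kernel presentation embeds $E$ in a positive-rank semi-homogeneous sheaf, which is locally free, so $E$ is torsion-free and $r>0$.
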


\begin{proof}
If $d_\beta(v)-rs=0$ is a codimension 0 wall,
then Lemma \ref{lem:codim0-line} implies the claim.
If $d_\beta(v)-rs=0$ is not a codimension 0 wall, 
then the claim follows from Lemma \ref{lem:s.h-ss}.
\end{proof}

\subsection{Relation with the Fourier-Mukai transforms.}

We shall study the Fourier-Mukai transform
on our space of stability conditions.
Let $r_1 e^\gamma$ be a primitive and isotropic Mukai vector.
We set $X_1:=M_{(\beta+sH,tH)}(r_1 e^{\gamma})$.
Let
${\bf E}$ be the universal object on $X \times X_1$
as a complex of twisted sheaves.
Assume that $\gamma=\beta+\lambda H$, $\lambda \in {\Bbb Q}$.
We consider 
the Fourier-Mukai transform
$\Phi:=\Phi_{X \to X_1}^{{\bf E}^{\vee}[1]}:
{\bf D}(X) \to {\bf D}^{\alpha_1}(X_1)$.
We set $(\widetilde{\beta+sH},\widetilde{tH})=
(\gamma'+s' \widehat{H},t' \widehat{H})$. 
Then
\begin{equation}\label{eq:s'}
\begin{split}
s'=& 
\frac{1}{|r_1|}
\frac{2(\lambda-s)}{((\lambda-s)^2+t^2) (H^2)},\\
t'=& 
\frac{1}{|r_1|}
\frac{2 t}{((\lambda-s)^2+t^2) (H^2)}.\\
\end{split}
\end{equation}
Since $((s-\lambda)^2+t^2)({s'}^2+{t'}^2)=
\left(\frac{2}{|r_1|(H^2)} \right)^2$,
the image of $(s-\lambda)^2+t^2=\frac{2}{|r_1|(H^2)}$
is ${s'}^2+{t'}^2=\frac{2}{|r_1|(H^2)}$.

\begin{NB}
If we fix a base point $\beta'$
and write $\beta'=\gamma'-\lambda' \widehat{H}$,
then $\beta'+s' \widehat{H}=\gamma'+(s'-\lambda')\widehat{H}$
and the relation of
$Z_{(\beta+sH,tH)}$ and $Z_{(\beta'+s' \widehat{H},t' \widehat{H})}$
is 
$$
s'-\lambda'=\frac{1}{|r_1|}
\frac{2(\lambda-s)}{((\lambda-s)^2+t^2) (H^2)},\;
t'=\frac{1}{|r_1|}
\frac{2t}{((\lambda-s)^2+t^2) (H^2)}.
$$

\end{NB}

If $\lambda r \ne d_\beta$,
then Lemma \ref{lem:C_v}
implies that
the condition
${\Bbb R}Z_{(\beta+sH,tH)}(v)=
{\Bbb R}Z_{(\beta+sH,tH)}(e^{\beta+\lambda H})$
defines a circle
\begin{equation}\label{eq:circle-lambda}
C_{v,\lambda}:\;
t^2=
(\lambda-s)
\left( \frac{a_\beta-d_\beta \lambda \frac{(H^2)}{2}}{\lambda r-d_\beta}
\frac{2}{(H^2)}+s \right).
\end{equation}
%
We have
\begin{equation}\label{eq:a_gamma}
\frac{a_\beta-d_\beta \lambda \frac{(H^2)}{2}}{\lambda r-d_\beta}
\frac{2}{(H^2)}+\lambda
=\frac{(\lambda r-d_\beta)^2 (H^2)-
(\langle v^2 \rangle-(D_\beta^2))}
{r(H^2)(\lambda r-d_\beta)}
=\frac{2a_\gamma}{-d_\gamma (H^2)}=
\frac{2\langle e^{\beta+\lambda H},v \rangle}
{d_\gamma (H^2)}.
\end{equation}
Thus 
$C_{v,\lambda}$is
$$
\left(s-\left(\lambda+\frac{a_\gamma}{d_\gamma (H^2)} \right)\right)^2+t^2
=\left(\frac{a_\gamma}{d_\gamma (H^2)} \right)^2.
$$
\begin{lem}\label{lem:circle-lambda-interior}
The image of
\begin{equation}\label{eq:circle-lambda-interior}
t^2 \leq 
(\lambda-s)
\left( \frac{a_\beta-d_\beta \lambda \frac{(H^2)}{2}}{\lambda r-d_\beta}
\frac{2}{(H^2)}+s \right)
\end{equation}
by $\Phi$ is 
$$
\left \{(s',t') \left| -\tfrac{|r_1| a_\gamma}{d_\gamma}s' \geq 1
\right. \right\}.
$$
\end{lem}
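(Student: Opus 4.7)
The plan is to observe that in the coordinates $u := \lambda - s$, $v := t$ the Fourier-Mukai formulas \eqref{eq:s'} read
\[
(s', t') \;=\; \frac{2}{|r_1|(H^2)} \cdot \frac{(u, v)}{u^2 + v^2},
\]
so $\Phi$ acts on the upper half-plane as an inversion centered at $(\lambda, 0)$, composed with a positive rescaling. Under such an inversion, a circle through $(\lambda, 0)$ is sent to a line, and the closed disk it bounds is sent to one of the two closed half-planes determined by that line. The boundary circle $C_{v,\lambda}$ passes through $(\lambda, 0)$ by Lemma \ref{lem:C_v}, so the image of the closed disk \eqref{eq:circle-lambda-interior} is automatically a closed half-plane; the task is to identify which one.

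Next I would substitute $s = \lambda - u$ into \eqref{eq:circle-lambda-interior} and simplify using the identity \eqref{eq:a_gamma}: writing $A := \dfrac{a_\beta - d_\beta\lambda(H^2)/2}{\lambda r - d_\beta} \cdot \dfrac{2}{(H^2)}$, one has $A + \lambda = -\dfrac{2 a_\gamma}{d_\gamma (H^2)}$, so the inequality \eqref{eq:circle-lambda-interior} rewrites as
\[
v^2 \;\leq\; u \left(-\frac{2 a_\gamma}{d_\gamma(H^2)} - u \right),
\qquad \text{i.e.,} \qquad
u^2 + v^2 \;\leq\; -\frac{2 a_\gamma}{d_\gamma (H^2)}\, u.
\]
This form makes it visible that $(u,v) = (0,0)$ lies on the boundary circle, consistent with $(\lambda, 0) \in C_{v,\lambda}$.

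Since we work with $t > 0$, we have $u^2 + v^2 > 0$, so we may divide the previous inequality by $u^2 + v^2$ without altering its sense. Reading $\dfrac{u}{u^2+v^2} = \dfrac{|r_1|(H^2)}{2}\, s'$ from \eqref{eq:s'}, the inequality becomes
\[
1 \;\leq\; -\frac{2 a_\gamma}{d_\gamma (H^2)} \cdot \frac{|r_1|(H^2)}{2}\, s' \;=\; -\frac{|r_1|\, a_\gamma}{d_\gamma}\, s',
\]
which is exactly the claimed description of the image.

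I do not expect a serious obstacle: the argument is a concrete instance of the fact that a M\"obius inversion sends a closed disk through its center to a closed half-plane. The only point requiring attention is that dividing by $u^2+v^2$ is legitimate precisely because we are restricted to the open upper half-plane $t>0$; in particular the omitted boundary point $(s,t)=(\lambda,0)$ corresponds to the ``point at infinity'' on the bounding line $-\tfrac{|r_1|a_\gamma}{d_\gamma}s' = 1$, so nothing is lost.
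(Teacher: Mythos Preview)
Your proof is correct and follows essentially the same route as the paper's: the paper also uses \eqref{eq:a_gamma} to rewrite the inequality as $0 \ge (s-\lambda)^2+t^2-(s-\lambda)\tfrac{2a_\gamma}{d_\gamma(H^2)}$ and then factors out $(s-\lambda)^2+t^2$ via \eqref{eq:s'}, which is exactly your division by $u^2+v^2$ in the coordinates $u=\lambda-s$, $v=t$. Your additional geometric commentary about inversions is a nice gloss but not needed for the argument.
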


\begin{proof}
By \eqref{eq:a_gamma} and \eqref{eq:s'},
$(s,t)$ satisfies \eqref{eq:circle-lambda-interior}
if and only if 
\begin{equation}
\begin{split}
0 \geq & (s-\lambda)^2+t^2-(s-\lambda)\frac{2a_\gamma}{d_\gamma (H^2)} \\
=& \left((s-\lambda)^2+t^2 \right)\left(
1+s' |r_1| \frac{a_\gamma}{d_\gamma} \right).
\end{split}
\end{equation}
Hence the claim holds.
\end{proof}

By Lemma \ref{lem:circle-lambda-interior},
we have the following.
\begin{prop}\label{prop:Phi(C^pm)}
Let ${\cal C}^\pm$ be the adjacent chamber of
$C_{v,\lambda}$ such that ${\cal C}^-$ is surrounded by
${\cal C}^+$.
\begin{enumerate}
\item[(1)]
If $\frac{a_\gamma}{d_\gamma}<0$, 
then $\Phi({\cal C}^+)$ (resp. $\Phi({\cal C}^-)$)
is the unbounded 
chamber satisfying
$s' < -\frac{d_\gamma}{|r_1| a_\gamma}$
(resp. $s' > -\frac{d_\gamma}{|r_1| a_\gamma}$).
\item[(2)]
If $\frac{a_\gamma}{d_\gamma}>0$, 
then $\Phi({\cal C}^-)$ (resp.
$\Phi({\cal C}^+)$)
is the unbounded 
chamber satisfying
$s' < -\frac{d_\gamma}{|r_1| a_\gamma}$
(resp. $s' > -\frac{d_\gamma}{|r_1| a_\gamma}$).
\end{enumerate}
\end{prop}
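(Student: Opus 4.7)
The plan is to deduce Proposition~\ref{prop:Phi(C^pm)} directly from Lemma~\ref{lem:circle-lambda-interior} by unraveling the inequality $-\tfrac{|r_1|a_\gamma}{d_\gamma}s'\geq 1$ into one of the two half-planes $s'\gtrless -\tfrac{d_\gamma}{|r_1|a_\gamma}$, with the direction determined by the sign of $a_\gamma/d_\gamma$. The map $\Phi$ acts on the parameter $(s,t)$ by the explicit formula \eqref{eq:s'}, which shows that it is a real-analytic diffeomorphism from the upper half-plane minus the point $(\lambda,0)$ onto the upper half-plane minus the origin (this is essentially an inversion centered at $(\lambda,0)$). Hence $\Phi$ sends connected open subsets to connected open subsets, and it will suffice to locate the images of the two adjacent chambers.

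First I would record that $\mathcal{C}^-$, being surrounded by $\mathcal{C}^+$, is contained in the open disk bounded by the circle $C_{v,\lambda}$, i.e.\ in the strict interior of the region \eqref{eq:circle-lambda-interior}; correspondingly $\mathcal{C}^+$ sits in the complement (intersected with the upper half-plane). Applying Lemma~\ref{lem:circle-lambda-interior} then gives
\begin{equation*}
\Phi(\mathcal{C}^-)\subset \Bigl\{(s',t')\;\Big|\; -\tfrac{|r_1|a_\gamma}{d_\gamma}\,s' > 1\Bigr\},\qquad
\Phi(\mathcal{C}^+)\subset \Bigl\{(s',t')\;\Big|\; -\tfrac{|r_1|a_\gamma}{d_\gamma}\,s' < 1\Bigr\}.
\end{equation*}
Each of these two half-planes is connected (in the upper half plane) and is obviously unbounded, and by the general properties of $\Phi$ (bijection onto its image), each image is a single chamber.

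In case (1), $a_\gamma/d_\gamma<0$ gives $-|r_1|a_\gamma/d_\gamma>0$, so dividing preserves the inequality and the interior condition becomes $s'>-\tfrac{d_\gamma}{|r_1|a_\gamma}$, while the exterior condition becomes $s'<-\tfrac{d_\gamma}{|r_1|a_\gamma}$; substituting back gives the stated identification $\Phi(\mathcal{C}^-)=\{s'>-\tfrac{d_\gamma}{|r_1|a_\gamma}\}$ and $\Phi(\mathcal{C}^+)=\{s'<-\tfrac{d_\gamma}{|r_1|a_\gamma}\}$. In case (2), $a_\gamma/d_\gamma>0$ gives $-|r_1|a_\gamma/d_\gamma<0$, dividing flips the inequality, and the interior becomes $s'<-\tfrac{d_\gamma}{|r_1|a_\gamma}$, which swaps the roles exactly as claimed.

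The only real subtlety, and the only place where a mistake could creep in, is the sign bookkeeping when the constant $-|r_1|a_\gamma/d_\gamma$ changes sign; this is what forces the two-case split in the statement. Everything else is formal: connectedness of $\Phi$-images of chambers, and the observation that since $C_{v,\lambda}$ is the unique wall one crosses when passing between $\mathcal{C}^-$ and $\mathcal{C}^+$, its image — namely the line $s'=-\tfrac{d_\gamma}{|r_1|a_\gamma}$, which is the wall $W^{\gamma',\widehat H}_{v(\mathfrak{k}_x)}$ on the transformed side — must be the wall separating $\Phi(\mathcal{C}^-)$ from $\Phi(\mathcal{C}^+)$.
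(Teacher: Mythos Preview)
Your proposal is correct and follows essentially the same approach as the paper: the paper's proof consists of the single sentence ``By Lemma~\ref{lem:circle-lambda-interior}, we have the following,'' and you have simply made explicit the sign bookkeeping (the case split on $\operatorname{sgn}(a_\gamma/d_\gamma)$ when solving $-\tfrac{|r_1|a_\gamma}{d_\gamma}s'\gtrless 1$) that the paper leaves to the reader.
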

\begin{NB}
$\frac{d_\gamma}{|r_1| a_\gamma}=
\frac{d_{\gamma'} (\Phi(v))}{-\rk \Phi(v)}$.
\end{NB}
\begin{NB}
We note that $rs=d_\beta$ does not meet $C_{v,\lambda}$.
Assume that
$d_\gamma=d_\beta-r \lambda>0$.
Then $d_{\beta+sH}(e^\gamma)=\lambda-s \geq 0$ for 
$(s,t) \in C_{v,\lambda}$ and $a_\gamma<0$.
We also have $r_1>0$, since we require 
$d_{\beta+sH}(r_1 e^\gamma)>0$.
Since $\rk \Phi(v)=-r_1 a_\gamma>0$,
For $E \in M_{(\beta+sH,tH)}(v)$, $(s,t) \in {\cal C}^+$,
$\Phi(E)$ is a twisted stable sheaf.
\end{NB}

\begin{NB}
Assume that
$d_\gamma=d_\beta-r \lambda>0$.
Then $d_{\beta+sH}(e^\gamma)=\lambda-s \leq 0$ for 
$(s,t) \in C_{v,\lambda}$ and $a_\gamma>0$.
We also have $r_1<0$, since we require 
$d_{\beta+sH}(r_1 e^\gamma)>0$.
Since $\rk \Phi(v)=-r_1 a_\gamma>0$,
For $E \in M_{(\beta+sH,tH)}(v)$, $(s,t) \in {\cal C}^-$,
$\Phi(E)$ is a twisted stable sheaf.
\end{NB}

\begin{NB}
Wrong arguments (Feb. 18, 2012):
Then the intersection of $C_{v,\lambda}$ with the
circle
$(s-\lambda)^2+t^2=\frac{2}{|r_1|(H^2)}$
lies on the line
\begin{equation}\label{eq:line}
s=\lambda+\frac{d_\gamma}{a_\gamma |r_1|}.
\end{equation}
By $\Phi$,
the circle is transformed to
the line 
\begin{equation}
s'=\lambda+\frac{d_\gamma}{a_\gamma |r_1|}.
\end{equation}
\end{NB}

\begin{NB}
$$
s'=\Phi_{X \to Y}^{{\bf E}^{\vee}[1]}(s)=
\lambda+\frac{2}{(H^2)|r_1|}
\frac{s-\lambda}{(s-\lambda)^2+t^2}.
$$
\end{NB}
We set $w:=\Phi_{X \to X_1}^{{\bf E}^{\vee}[1]}(v)$.
Proposition \ref{prop:unbdd} implies 
$M_{(\gamma'+s'\widehat{H},t' \widehat{H}) }^{\alpha_1}(w)$ 
is isomorphic to the moduli space of
semi-stable sheaves. 
Then Theorem \ref{thm:B:3-10.3} implies 
a generalization of \cite[Thm. 3.3.3]{MYY:2011:2}
for abelian surfaces.

For the preservation of Gieseker's semi-stability, we also
have the following, which is a generalization of 
\cite{Stability}.

\begin{prop}\label{prop:asymptotic}
Let $v$ be a positive Mukai vector and
assume that there are walls for $v$.
Let $W^{\max}$ be the wall in the region $rs<d_\beta$ such that
$W^{\max}$ surround all walls in $rs<d_\beta$, that is,
the boundaries of the unbounded chamber is $rs=d_\beta$ and
$W^{\max}$.
We set 
$$
W^{\max} \cap \{(s,0) \mid s \in {\Bbb R} \}
=\{(\lambda_1,0), (\lambda_2,0) \},\quad
\lambda_1<\lambda_2.
$$
Let $\Phi:{\bf D}(X) \to {\bf D}(X_1)$ be the Fourier-Mukai
transform as above.
If $\lambda \leq \lambda_1$ or 
$\lambda_2 \leq 
\lambda<\frac{d_\beta}{r}$, then 
$\Phi$ or $\Phi \circ {\cal D}_X$
preserves the Gieseker's semi-stability.
\end{prop}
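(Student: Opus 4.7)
The plan is to combine three ingredients: (i) Gieseker semi-stability is equivalent to Bridgeland semi-stability in an unbounded chamber (Proposition \ref{prop:unbdd}); (ii) Bridgeland semi-stability is preserved by the covariant and contravariant Fourier--Mukai transforms (Theorem \ref{thm:B:3-10.3}); and (iii) the transform $\Phi$ sends each of the two chambers adjacent to the circle $C_{v,\lambda}$ to one of the two unbounded chambers in the $(s',t')$-plane (Proposition \ref{prop:Phi(C^pm)}). The strategy is to show the chamber ${\cal C}^+$ outside $C_{v,\lambda}$ coincides with the unbounded chamber ${\cal C}_\infty \subset \{rs < d_\beta\}$ bounded by $W^{\max}$, and then transport stability through $\Phi$.

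First I would verify that the hypothesis on $\lambda$ forces $C_{v,\lambda}$ to lie entirely outside $W^{\max}$. By Lemma \ref{lem:C_v}, $C_{v,\lambda}$ passes through $(\lambda,0)$, and both $C_{v,\lambda}$ and $W^{\max}$ belong to the pencil of circles described in Proposition \ref{prop:wall}, whose members are pairwise disjoint by Lemma \ref{lem:circle} (cf.\ Remark \ref{rem:base-point}). Since $W^{\max} \cap \{t=0\} = \{(\lambda_1,0),(\lambda_2,0)\}$, the real point $(\lambda,0)$ lies outside the closed disk bounded by $W^{\max}$ exactly when $\lambda \notin (\lambda_1,\lambda_2)$; by the disjointness and the fact that $W^{\max}$ surrounds all walls in $\{rs<d_\beta\}$, the whole circle $C_{v,\lambda}$ then lies in the closure of ${\cal C}_\infty$. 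Hence ${\cal C}^+ = {\cal C}_\infty$.

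Next, let $E$ be a Gieseker semi-stable sheaf with $v(E)=v$. By Proposition \ref{prop:unbdd}, $E \in M_{(\beta+sH,tH)}(v)$ for every $(s,t) \in {\cal C}_\infty$. By Proposition \ref{prop:Phi(C^pm)}, $\Phi({\cal C}^+) = \Phi({\cal C}_\infty)$ is one of the two unbounded chambers in the $(s',t')$-plane, and by Theorem \ref{thm:B:3-10.3} we have $\Phi(E) \in M_{(\gamma'+s'\widehat{H},\,t'\widehat{H})}(\Phi(v))$ for $(s',t')$ in that unbounded chamber. Applying Proposition \ref{prop:unbdd} on $X_1$, this moduli space is isomorphic either to $M_{\widehat{H}}^{\gamma'}(\Phi(v))$ (when $d_{\gamma'+s'\widehat H}(\Phi(v))>0$, so $\Phi$ itself preserves Gieseker semi-stability) or to $M_{\widehat{H}}^{-\gamma'}(\Phi(v)^\vee)$ via $F \mapsto F^\vee$. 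In the second case, by Grothendieck--Serre duality (as used in the proof of Theorem \ref{thm:B:3-10.3}) we have ${\cal D}_{X_1}\circ \Phi \cong \Phi_{X \to X_1}^{\bf E} \circ {\cal D}_X$ up to shift, so $\Phi \circ {\cal D}_X$ provides the desired Gieseker semi-stable output.

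The main delicate point is the dichotomy ``$\Phi$ or $\Phi \circ {\cal D}_X$''. It is controlled by the sign of $a_\gamma/d_\gamma$ and the side of $r' s' = d_{\gamma'}(\Phi(v))$ on which $\Phi({\cal C}_\infty)$ lands, as determined by Proposition \ref{prop:Phi(C^pm)}; one must carefully read off which of the two unbounded chambers of the target is the image of ${\cal C}_\infty$ in the two cases $\lambda \leq \lambda_1$ and $\lambda_2 \leq \lambda < d_\beta/r$. The rest of the argument is then purely a reassembly of results already established in the excerpt.
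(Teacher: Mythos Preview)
Your proposal is correct and follows exactly the approach the paper intends. The paper itself does not spell out a proof of Proposition~\ref{prop:asymptotic}; it is stated immediately after Proposition~\ref{prop:Phi(C^pm)} as a consequence of the preceding results, and your assembly of Proposition~\ref{prop:unbdd}, Theorem~\ref{thm:B:3-10.3}, and Proposition~\ref{prop:Phi(C^pm)} is precisely the intended argument. One minor point of phrasing: rather than asserting ${\cal C}^+={\cal C}_\infty$, it is cleaner to say that $C_{v,\lambda}$ lies in $\overline{{\cal C}_\infty}$, so a point $(s,t)\in{\cal C}_\infty$ just outside $C_{v,\lambda}$ is mapped by $\Phi$ into one of the two unbounded chambers on the target side (since $\Phi(C_{v,\lambda})$ is exactly the line $r's'=d_{\gamma'}(\Phi(v))$ by the NB following Proposition~\ref{prop:Phi(C^pm)}); this suffices for the transport of stability.
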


In particular, if $\lambda$ is sufficiently small, we can apply
this proposition, which is nothing but the main result of
\cite{Stability}.

\begin{rem}
Assume that $r:=\rk v>0$.
We set
$$
D:=\min \{((\rk w) c_1(v)-(\rk v) c_1(w),H)>0 \mid
w \in H^*(X,{\Bbb Z})_{\alg} \}
$$
and assume that
$$
D=\min \{(C,H)>0  \mid C \in \NS(X) \}.
$$
We take 
$w_0 \in H^*(X,{\Bbb Z})_{\alg}$ such that
\begin{equation*}
((\rk w_0) d_\beta(v)-(\rk v) d_\beta(w_0))(H^2)=
((\rk w_0) c_1(v)-(\rk v) c_1(w_0),H)=D.
\end{equation*}
Replacing $w_0$ by $w_0+k v$ $(k \in {\Bbb Z})$,
we may assume that $\rk v \geq \rk w_0>0$.
Then there is no wall for 
$d_\beta(w_0)/\rk w_0 \leq s< d_\beta(v)/\rk v$.
\end{rem}

\section{
The chamber structure for an abelian surface $X$ with
$\NS(X)={\Bbb Z}H$.}
\label{sect:rho=1}

From now on, we assume that $\NS(X)={\Bbb Z}H$.
Let $v$ be a primitive Mukai vector with a numerical solution.
We shall study the walls and chambers for $v$.
By our assumption,
there is an isometry of Mukai lattice sending $v$ to $1-\ell \varrho_X$. 
So we may assume that $v=1-\ell \varrho_X$.
Since a generic classification of stable objects 
(it induces the birational classification)
is most fundamental,
we are mainly interested in codimension 0 walls.

\subsection{Cohomological Fourier-Mukai transforms}
Let $H_X$ be the ample generator of $\NS(X)$.
We shall describe the action of Fourier-Mukai transforms 
on the cohomology lattices in \cite{YY}.

Two smooth projective varieties $Y_1$ and $Y_2$ are said to be 
Fourier-Mukai partners if there is an equivalence 
$\bl{D}(Y_1)\simeq\bl{D}(Y_2)$.
We denote by $\FM(X)$ the set of Fourier-Mukai partners of $X$. 
The set of equivalences between $\bl{D}(X)$ and $\bl{D}(Y)$ 
is denoted by $\Eq(\bl{D}(X),\bl{D}(Y))$. 
For $Y,Z\in\FM(X)$, we set 
\begin{align*}
&\Eq_0(\bl{D}(Y),\bl{D}(Z))
:=
 \{\Phi_{Y \to Z}^{\bl{E}[2k]} 
   \in \Eq(\bl{D}(Y),\bl{D}(Z))
   \mid \bl{E} \in \Coh(Y \times Z),\, k \in {\bb{Z}} 
 \},
\\
&\cal{E}(Z)
:=
 \bigcup_{Y\in\FM(Z)}\Eq_0(\bl{D}(Y),\bl{D}(Z)),
\qquad
\cal{E}
:=
 \bigcup_{Z\in\FM(X)}\cal{E}(Z)
=\bigcup_{Y,Z\in\FM(X)}\Eq_0(\bl{D}(Y),\bl{D}(Z)).
\end{align*}
Note that $\cal{E}$ is a groupoid with respect to the composition 
of the equivalences.
For $Y \in \FM(X)$, we have $(H_Y^2)=(H_X^2)$.
We set $n:= (H^2_X)/2$.

In \cite[sect. 6.4]{YY}, we constructed an isomorphism of lattices
\begin{align*}
\iota_X: 
(H^*(X,\bb{Z})_{\alg},\langle\cdot,\cdot\rangle) 
\simto
(\Sym_2(\bb{Z}, n),B),
\quad
(r,dH_X,a) 
\mapsto 
\begin{pmatrix}
 r & d\sqrt{n} \\ d\sqrt{n} & a
\end{pmatrix},
\end{align*}
where $\Sym_2(\bb{Z}, n)$ is given by 
\begin{align*}
 \Sym_2(\bb{Z},n):=
 \left\{\begin{pmatrix} x &y \sqrt{n} \\ y\sqrt{n}&z\end{pmatrix}\, 
 \Bigg|\, x,y,z\in \bb{Z}\right\},
\end{align*}
and the bilinear form $B$ on $\Sym_2(\bb{Z},n)$ is given by 
\begin{align*}
B(X_1,X_2) := 2ny_1 y_2-(x_1 z_2+z_1 x_2)
\end{align*}
for
$X_i =\begin{pmatrix}x_i & y_i \sqrt{n} \\ y_i \sqrt{n} &z_i \end{pmatrix}
 \in\Sym_2(\bb{Z},n)$
($i=1,2$).

Each $\Phi_{X \to Y}$ gives an isometry 
\begin{align}
 \iota_{Y} \circ \Phi^H_{X \to Y} \circ \iota_{X}^{-1}
\in \LieO(\Sym_2(\bb{Z}, n)),
\end{align}
where $\LieO(\Sym_2(\bb{Z}, n))$ is the isometry group
of the lattice $(\Sym_2(\bb{Z}, n),B)$. 
Thus we have a map
$$
\eta:
{\cal E} \to \LieO(\Sym_2(\bb{Z}, n))
$$
which preserves the structures of 
multiplications. 

\begin{defn}\label{defn:G}
We set
\begin{align*}
&\widehat{G} := 
\left\{
 \begin{pmatrix} a \sqrt{r} & b \sqrt{s}\\
c \sqrt{s} & d \sqrt{r}
\end{pmatrix}
 \Bigg|\, 
\begin{aligned}
 a,b,c,d,r,s \in \bb{Z},\, r,s>0\\ 
rs=n, \, adr-bcs = \pm1
\end{aligned}
\right\},
\\
&G := \widehat{G}\cap \SL(2,\bb{R}).
\end{align*}
\end{defn}

We have a right action $\cdot$ of $\widehat{G}$ on the lattice
$(\Sym_2(\bb{Z}, n),B)$:
 
\begin{align}
\label{eq:action_cdot}
\begin{pmatrix}
r &d  \sqrt{n}\\d  \sqrt{n}&a \end{pmatrix}
\cdot g := 
 {}^t g
 \begin{pmatrix}r &d  \sqrt{n}\\d  \sqrt{n}&a \end{pmatrix} 
 g,\;
g \in \widehat{G}.
\end{align}
Thus we have an anti-homomorphism:
$$
\alpha:\widehat{G}/\{\pm 1\} \to \LieO(\Sym_2(n,{\Bbb Z})).
$$

\begin{thm}[{\cite[Thm. 6.16, Prop. 6.19]{YY}}]
Let
$\Phi \in \Eq_0(\bl{D}(Y),\bl{D}(X))$ be an equivalence.
\begin{enumerate}
\item[(1)]
$v_1:=v(\Phi({\cal O}_Y))$ and $v_2:=\Phi(\varrho_Y)$
are positive isotropic Mukai vectors with 
$\langle v_1,v_2 \rangle=-1$ and we can write 
\begin{equation}
\begin{split}
& v_1=(p_1^2 r_1,p_1 q_1 H_Y, q_1^2 r_2),\quad
v_2=(p_2^2 r_2,p_2 q_2 H_Y, q_2^2 r_1),\\
& p_1,q_1,p_2,q_2, r_1, r_2 \in {\Bbb Z},\;\;
p_1,r_1,r_2 >0,\\
& r_1 r_2=n,\;\; p_1 q_2 r_1-p_2 q_1 r_2=1.
\end{split}
\end{equation}
\item[(2)]
We set
\begin{equation}
\theta(\Phi):=\pm
\begin{pmatrix}
p_1 \sqrt{r_1} & q_1 \sqrt{r_2}\\
p_2 \sqrt{r_2} & q_2 \sqrt{r_1}
\end{pmatrix}
\in G/\{\pm 1\}.
\end{equation}
Then $\theta(\Phi)$ is uniquely determined by 
$\Phi$ and  
we have a map
\begin{equation}
\theta:{\cal E} \to G/\{\pm 1\}.
\end{equation}
\item[(3)]
The action of $\theta(\Phi)$ on $\Sym_2(n,{\Bbb Z})$
is the action of $\Phi$ on $\Sym_2(n,{\Bbb Z})$: 
\begin{equation}
\iota_X \circ \Phi(v)
=\iota_Y(v)\cdot 
\theta(\Phi).
\end{equation}
\begin{NB}
\begin{equation}
\iota_X \Phi\iota_Y^{-1}
\begin{pmatrix}r &d  \sqrt{n}\\d  \sqrt{n}&a \end{pmatrix} 
=
{}^t (\theta(\Phi))
\begin{pmatrix}r &d  \sqrt{n}\\d  \sqrt{n}&a \end{pmatrix} 
\theta(\Phi)
\end{equation}
\end{NB}
Thus we have the following commutative diagram:
\begin{align}
\label{diag:groups}
\xymatrix{    
{\cal E}  \ar[d]_-{\theta} \ar[dr]_{\eta} 
&
\\
\widehat{G}/\{\pm 1\} \ar[r]_-{\alpha}
&
\LieO(\Sym_2(n,{\Bbb Z}))   
}
\end{align}

\end{enumerate}
\end{thm}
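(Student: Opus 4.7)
The plan is to establish the three parts in sequence, using that any $\Phi \in \Eq_0(\bl{D}(Y), \bl{D}(X))$ acts as an isometry of Mukai lattices. For part (1), I would start from the observation that $v({\cal O}_Y) = (1, 0, 0)$ and $v(\varrho_Y) = (0, 0, 1)$ are both isotropic with $\langle v({\cal O}_Y), v(\varrho_Y)\rangle = -1$; applying the isometry $\Phi$ gives immediately that $v_1$ and $v_2$ inherit these properties. For positivity, I would use the kernel representation $\Phi = \Phi_{Y \to X}^{\bl{E}[2k]}$ with $\bl{E} \in \Coh(Y \times X)$: the computation $\Phi(\varrho_Y) = \bl{E}|_{\{y\} \times X}[2k]$ shows that $v_2$ is the Mukai vector of a coherent sheaf (the even shift preserves Mukai vectors), and a parallel analysis of $p_{X*}(\bl{E})$ handles $v_1$.

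To obtain the explicit form of $v_1, v_2$, I would encode the isotropy via $\iota_X$: a positive isotropic Mukai vector $v_1 = (r_1', d_1 H_X, a_1')$ gives a rank-one symmetric matrix $\iota_X(v_1) = \vec{u}_1 \vec{u}_1^\top$ with $\vec{u}_1 = (\sqrt{r_1'}, \mathrm{sgn}(d_1)\sqrt{a_1'})^\top$. A prime-by-prime analysis of $p$-adic valuations in $r_1' a_1' = d_1^2 n$, combined with the pairing condition $\langle v_1, v_2\rangle = 2 n d_1 d_2 - r_1' a_2' - r_2' a_1' = -1$, allows one to write $r_1' = p_1^2 r_1$, $a_1' = q_1^2 r_2$, $d_1 = p_1 q_1$ with $r_1 r_2 = n$, and analogously for $v_2$ with the roles of $r_1, r_2$ interchanged. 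A direct computation then reduces $\langle v_1, v_2\rangle = -1$ to $(p_1 q_2 r_1 - p_2 q_1 r_2)^2 = 1$, i.e., $\det \theta(\Phi) = \pm 1$.

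For part (2), the sign freedom in the square-root choices for $\vec{u}_i$ is normalized by requiring $p_1, r_1, r_2 > 0$; the residual ambiguity $\theta \leftrightarrow -\theta$ is absorbed in the quotient $G/\{\pm 1\}$. The refinement $\theta(\Phi) \in G$ (that is, $\det = +1$ rather than $-1$) follows from the covariance of $\Phi$: any element of $\cal{E}$ preserves the natural orientation of the positive 2-plane in the Mukai lattice. For part (3), the identity $\iota_X \circ \Phi(v) = \iota_Y(v) \cdot \theta(\Phi)$ need only be checked on a generating set of $H^*(Y, \bb{Z})_{\alg}$. Writing $\theta = \theta(\Phi)$, a direct calculation gives $\theta^\top E_{11} \theta = \iota_X(v_1)$ and $\theta^\top E_{22} \theta = \iota_X(v_2)$, so the identity holds on $v({\cal O}_Y)$ and $v(\varrho_Y)$. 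The third basis vector $(0, H_Y, 0)$ spans the orthogonal complement of $\langle v({\cal O}_Y), v(\varrho_Y)\rangle$, and since both sides of the identity are isometries, its image is determined up to sign; the sign is then fixed by the orientation argument above, so the identity extends to all of $H^*(Y, \bb{Z})_{\alg}$.

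The main obstacle in executing this plan is the factorization step in part (1): one must verify by a careful $p$-adic analysis that the combined constraints of isotropy for both $v_1$ and $v_2$ together with the integer pairing $\langle v_1, v_2 \rangle = -1$ force $\sqrt{r_1'}$ and $\sqrt{a_1'}$ to admit the required decomposition $p_1 \sqrt{r_1}$, $|q_1| \sqrt{r_2}$ with $r_1 r_2 = n$ and $p_1 q_1 = d_1$. Without using both equations simultaneously, the decomposition may fail or be non-unique, so the heart of the argument is managing this delicate interaction between the two Mukai vectors.
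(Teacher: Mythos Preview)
The paper does not contain a proof of this theorem: it is quoted verbatim from \cite[Thm.~6.16, Prop.~6.19]{YY} and stated without argument, so there is nothing in the present paper to compare your proposal against.

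That said, your outline is broadly reasonable but has a few soft spots worth flagging. The positivity of $v_2$ is fine, since $\Phi({\frak k}_y)={\bf E}_{|\{y\}\times X}[2k]$ and even shifts preserve Mukai vectors; but your ``parallel analysis of $p_{X*}({\bf E})$'' for $v_1$ is not parallel at all, because ${\bf R}p_{X*}({\bf E})$ need not be a sheaf, and you would need an independent argument (e.g.\ via the inverse transform, whose kernel is again a sheaf up to even shift, applied to ${\frak k}_x$) to see that $v(\Phi({\cal O}_Y))$ is positive. Your orientation argument for $\det\theta(\Phi)=+1$ in part~(2) is asserted rather than proved; one has to check that elements of ${\cal E}$ actually preserve the orientation of the positive cone, which is where the restriction to ${\cal E}$ (kernels that are sheaves up to \emph{even} shift) is used. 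Finally, the factorization step you identify as the ``main obstacle'' is genuinely the crux, and your sketch (``prime-by-prime analysis of $p$-adic valuations'') does not yet explain why the same pair $(r_1,r_2)$ with $r_1 r_2=n$ works simultaneously for $v_1$ and $v_2$ with the roles swapped; this is exactly where $\langle v_1,v_2\rangle=-1$ must be invoked, and the interaction is more delicate than your description suggests.
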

From now on, we identify the Mukai lattice
$H^*(X,{\Bbb Z})_{\alg}$ with $\Sym_2(n,{\Bbb Z})$
via $\iota_X$. 
Then for $g \in \widehat{G}$ and $v \in H^*(X,{\Bbb Z})_{\alg}$,
$v \cdot g$ means
$\iota_X(v \cdot g)=\iota_X(v) \cdot g$.    

\begin{NB}
Then the action of the cohomological FMT 
$\Phi^H : H^{\ev}(Y,\bb{Z})_{\alg}\to H^{\ev}(X,\bb{Z})_{\alg}$ 
can be written as follows (\cite[sect. 6.4]{YY}). 
Let $\widehat{H}$ be the ample generator of $\NS(Y)$. 
For a Mukai vector $v=(r,d\widehat{H},a)\in H^{\ev}(Y,\bb{Z})_{\alg}$, 
the image $\Phi^H(v)$ is given by
\begin{align*}
\Phi^H(v)=v\cdot g,\quad 
g := \begin{pmatrix}
p_1\sqrt{r_1}&q_1\sqrt{r_2}\\p_2\sqrt{r_2}&q_2\sqrt{r_1}
\end{pmatrix}\in G.
\end{align*}
Here the action $\cdot$ is given by
\begin{align}
\label{eq:action_cdot}
 (r,d H,a)\cdot g := (r',d' H,a'),\quad
 {}^t g
 \begin{pmatrix}r &d  \sqrt{n}\\d  \sqrt{n}&a \end{pmatrix} 
 g
=\begin{pmatrix}r'&d' \sqrt{n}\\d' \sqrt{n}&a'\end{pmatrix}.
\end{align}
\end{NB}

We also need to treat the composition of a Fourier-Mukai transform 
and the dualizing functor ${\cal D}_X$.
For a
Fourier-Mukai transform
 $\Phi_{X \to Y}^{\bl{E}^\bullet} \in \Eq(\bl{D}(X),\bl{D}(Y))$,
we set
\begin{align*}
\theta(\Phi_{X \to Y}^{\bl{E}}\cal{D}_X)
:=
\begin{pmatrix}
1 & 0\\
0 & -1 
\end{pmatrix}
\theta(\Phi_{X \to Y}^{\bl{E}})
\in \widehat{G}/\{\pm 1\}.
\end{align*}
Then the action of 
$\theta(\Phi_{X \to Y}^{\bl{E}}\cal{D}_X)$ on
$\Sym_2({\Bbb Z},n)$ is the same as the action of
$\Phi_{X \to Y}^{\bl{E}}\cal{D}_X$.

\begin{lem}[{\cite[Lemma 6.18]{YY}}]
\label{fct:matrix}
%
If $\theta(\Phi_{X \to Y}^{\bl{E}})=
\begin{pmatrix} a & b\\ c & d \end{pmatrix}$,
then 
\begin{align*}
\theta(\Phi_{Y \to X}^{\bl{E}})=
\pm\begin{pmatrix}d & b\\c & a \end{pmatrix},\quad
\theta(\Phi_{Y \to X}^{\bl{E}^{\vee}[2]})=
\pm\begin{pmatrix}d & -b\\-c & a \end{pmatrix},\quad
\theta(\Phi_{X \to Y}^{\bl{E}^{\vee}[2]})=
\pm\begin{pmatrix}a & -b\\-c & d \end{pmatrix}.
\end{align*}
\end{lem}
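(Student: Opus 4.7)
The plan is to prove the first identity $\theta(\Phi_{Y \to X}^{\bl{E}}) = \pm\begin{pmatrix}d & b \\ c & a\end{pmatrix}$ by direct computation of Mukai vectors using the K\"unneth decomposition of $v(\bl{E})$, and then to derive the two identities involving $\bl{E}^{\vee}[2]$ formally, invoking the standard fact that on an abelian surface (where $\omega_X \simeq \mathcal{O}_X$) the quasi-inverse of $\Phi_{X \to Y}^{\bl{E}}$ is $\Phi_{Y \to X}^{\bl{E}^{\vee}[2]}$, combined with the anti-homomorphism property of $\theta$ implicit in the composition rule $\theta(\Phi_2 \circ \Phi_1) = \theta(\Phi_1) \theta(\Phi_2)$ behind diagram \eqref{diag:groups}.

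For the first identity, I would decompose $v(\bl{E}) = \sum_{i,j=0}^{2} v_{i,j}$ with $v_{i,j} \in A^i_{\alg}(X) \otimes A^j_{\alg}(Y)$. By Grothendieck--Riemann--Roch (using $\td_{X \times Y} = 1$) and flat base change one obtains $v(\Phi_{X \to Y}^{\bl{E}}(\mathcal{O}_X)) = \pi_{Y*}(v(\bl{E}))$ (the top $X$-degree K\"unneth components) and $v(\Phi_{X \to Y}^{\bl{E}}(\varrho_X)) = v(\bl{E}_{|\{x\} \times Y})$ (the bottom $X$-degree components), and symmetric formulas for $\Phi_{Y \to X}^{\bl{E}}$ with the roles of $X$ and $Y$ interchanged. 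Writing $a = p_1 \sqrt{r_1}$, $b = q_1 \sqrt{r_2}$, $c = p_2 \sqrt{r_2}$, $d = q_2 \sqrt{r_1}$ in accordance with the theorem, the hypothesis $\theta(\Phi_{X \to Y}^{\bl{E}}) = \begin{pmatrix}a & b \\ c & d\end{pmatrix}$ fixes the four ``corner'' K\"unneth components $v_{0,0}, v_{2,0}, v_{0,2}, v_{2,2}$ of $v(\bl{E})$. The analogous identification for $\Phi_{Y \to X}^{\bl{E}}$ then yields $\rk(\Phi_{Y \to X}^{\bl{E}}(\mathcal{O}_Y)) = q_2^2 r_1 = d^2$ and (Euler part of) $\Phi_{Y \to X}^{\bl{E}}(\varrho_Y) = p_1^2 r_1 = a^2$, forcing $(p_1', q_2', q_1', p_2', r_1', r_2') = (q_2, p_1, q_1, p_2, r_1, r_2)$ up to a global sign; hence $\theta(\Phi_{Y \to X}^{\bl{E}}) = \pm\begin{pmatrix}d & b \\ c & a\end{pmatrix}$.

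For the remaining two identities, Grothendieck--Verdier duality ensures that $\Phi_{Y \to X}^{\bl{E}^{\vee}[2]}$ is a quasi-inverse of $\Phi_{X \to Y}^{\bl{E}}$, and symmetrically that $\Phi_{X \to Y}^{\bl{E}^{\vee}[2]}$ is a quasi-inverse of $\Phi_{Y \to X}^{\bl{E}}$. The anti-homomorphism property then gives $\theta(\Phi_{Y \to X}^{\bl{E}^{\vee}[2]}) = \theta(\Phi_{X \to Y}^{\bl{E}})^{-1}$ in $G/\{\pm 1\}$. Since $\det\begin{pmatrix}a & b \\ c & d\end{pmatrix} = ad - bc = 1$ (elements of $G$ lie in $\SL(2, \mathbb{R})$), this inverse equals $\pm\begin{pmatrix}d & -b \\ -c & a\end{pmatrix}$, yielding the second identity. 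Combining the first identity with the same inversion principle applied to $\Phi_{Y \to X}^{\bl{E}}$ gives $\theta(\Phi_{X \to Y}^{\bl{E}^{\vee}[2]}) = \pm\begin{pmatrix}d & b \\ c & a\end{pmatrix}^{-1} = \pm\begin{pmatrix}a & -b \\ -c & d\end{pmatrix}$, which is the third identity.

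The main obstacle will be the bookkeeping in Step 1: verifying that the radicand parameters $r_1', r_2'$ for $\theta(\Phi_{Y \to X}^{\bl{E}})$ coincide with the original $r_1, r_2$ (rather than being swapped) and that the constraint $p_1' q_2' r_1' - p_2' q_1' r_2' = \pm 1$ from Definition \ref{defn:G} is preserved. Both follow once the rank and Euler-part identifications above are read off, using the relation $p_1 q_2 r_1 - p_2 q_1 r_2 = 1$ supplied by the theorem.
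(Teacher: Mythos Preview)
The paper does not actually prove this lemma; it is merely quoted from \cite[Lemma 6.18]{YY} with no argument supplied here. So there is no ``paper's own proof'' to compare against. That said, your strategy is sound and is essentially the natural one. A couple of remarks:

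For the second and third identities your reduction is clean: on an abelian surface $\omega_X$ and $\omega_Y$ are trivial, so Grothendieck--Verdier duality gives $\Phi_{Y\to X}^{\bl{E}^{\vee}[2]}=(\Phi_{X\to Y}^{\bl{E}})^{-1}$ as equivalences, and the relation $\theta(\Phi_2\circ\Phi_1)=\theta(\Phi_1)\theta(\Phi_2)$ (which you correctly read off from the action $A\cdot g={}^t g A g$) then yields the inverse matrices in $G/\{\pm 1\}$. One small caveat: $\theta$ is only defined on $\mathcal{E}$, i.e.\ on transforms whose kernel is a sheaf up to an \emph{even} shift. You should remark why $\Phi_{Y\to X}^{\bl{E}^{\vee}[2]}$ lies in $\mathcal{E}$ (equivalently, why $\bl{E}^{\vee}[2]$ is a sheaf up to even shift), or alternatively argue directly at the level of the induced isometry of $\Sym_2(\Bbb Z,n)$, where no such hypothesis is needed.

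For the first identity, your K\"unneth/GRR computation is the right idea, but you can shortcut most of the bookkeeping. The definition of $\theta$ in the theorem depends only on the two isotropic Mukai vectors $v(\Phi(\mathcal{O}))$ and $v(\Phi(\varrho))$. For $\Phi_{X\to Y}^{\bl{E}}$ these are $p_{Y*}v(\bl{E})$ and $v(\bl{E}_{|\{x\}\times Y})$; for $\Phi_{Y\to X}^{\bl{E}}$ they are $p_{X*}v(\bl{E})$ and $v(\bl{E}_{|X\times\{y\}})$. Writing out the full K\"unneth table of $v(\bl{E})$ once (a $3\times 3$ array indexed by $(i,j)$) gives all four of these vectors simultaneously, and reading off the rows versus the columns immediately shows that the roles of $(p_1,q_2)$ are exchanged while $(q_1,p_2,r_1,r_2)$ stay put, i.e.\ $\theta$ passes to $\pm\begin{pmatrix}d&b\\c&a\end{pmatrix}$. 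This makes the sign and the preservation of $r_1,r_2$ transparent and avoids the case analysis you anticipate at the end.
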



\subsection{The arithmetic group $G$ and numerical solutions 
for the ideal sheaf}\label{subsect:solution}

Let $\ell \in \bb{Z}_{>0}$.
We assume that $\sqrt{\ell n}\notin\bb{Z}$. 
Our next task is to describe the numerical solution 
of the ideal sheaf of 0-dimensional subscheme. 
First we introduce an arithmetic group $S_{n,\ell}$.

\begin{defn}
For $(x,y)\in \bb{R}^2$, set
\begin{align*}
P(x,y):=\begin{pmatrix}y&\ell x\\ x&y\end{pmatrix}.
\end{align*}
We also set
\begin{align*}
S_{n,\ell}:=
\left\{\begin{pmatrix}y&\ell x\\ x&y\end{pmatrix}
\,\bigg|\,
\begin{aligned}
x=a \sqrt{r}, y=b \sqrt{s},\;
a,b,r, s \in\bb{Z}\\
r,s>0,\; rs=n,\; 
y^2-\ell x^2=\pm 1 
\end{aligned}
\right\}.
\end{align*}
\end{defn}

\begin{lem}
\begin{enumerate}
\item[(1)]
$S_{n,\ell}$ is a commutative subgroup of $\GL(2,\bb{R})$.
\item[(2)]
We have a homomorphism
\begin{align*}
\begin{array}{c c c c}
\phi \colon & S_{n,\ell} & \longrightarrow & \bb{R}^\times \\
            & P(x,y)     & \mapsto         & y+x\sqrt{\ell}.
\end{array}
\end{align*}
\item[(3)]
For $\ell>1$, $\phi$ is injective.  
For $\ell=1$, we have
\begin{align*}
\Ker\phi=\left\langle\begin{pmatrix} 0&1 \\ 1&0 \end{pmatrix}\right\rangle.
\end{align*}
\item[(4)]
We set a subgroup $G_{n,\ell}$ of $\widehat{G}$ 
(Definition~\ref{defn:G}) to be 
\begin{align*}
G_{n,\ell}:= 
\left\{
 g \in \widehat{G} 
 \left|\; 
 {}^{t} g \begin{pmatrix} 1 & 0 \\ 0 & -\ell \end{pmatrix}  g
 =\pm\begin{pmatrix} 1 & 0 \\ 0 & -\ell \end{pmatrix}
 \right.
\right\}.
\end{align*}
Then
\begin{align*}
G_{n,\ell}
=S_{n,\ell} \rtimes 
\left \langle
 \begin{pmatrix} 1 & 0 \\ 0 & -1 \end{pmatrix}
\right\rangle.
\end{align*}
\end{enumerate}
\end{lem}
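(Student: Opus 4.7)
The key step is the direct product formula $P(x_1,y_1) P(x_2,y_2) = P(x_1 y_2 + y_1 x_2,\; y_1 y_2 + \ell x_1 x_2)$, obtained by multiplying the two $2\times 2$ matrices. This single formula simultaneously delivers commutativity (the right-hand side is symmetric in the two factors), multiplicativity of $\phi$ (since it gives $\phi(P(X,Y)) = Y + X\sqrt{\ell} = (y_1+x_1\sqrt{\ell})(y_2+x_2\sqrt{\ell})$), and the Pell relation $Y^2 - \ell X^2 = \pm 1$ for the product (via multiplicativity of the norm $u \mapsto u\bar{u}$). Hence the full content of (1) and (2) reduces to checking that $X$ and $Y$ again admit the shape $X = A\sqrt{R}$, $Y = B\sqrt{S}$ with $A,B \in \bb{Z}$ and $RS = n$.

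Verifying this shape is the main technical obstacle. First I would observe that the Pell relation $b_i^2 s_i - \ell a_i^2 r_i = \pm 1$ forces $\gcd(r_i, s_i) = 1$ (outside the trivial cases $\pm I$, and $P(\pm 1,0)$ when $\ell = 1$). Setting $\delta := \gcd(r_1 s_2, s_1 r_2)$ and $\delta' := \gcd(r_1 r_2, s_1 s_2)$, a prime-by-prime valuation case analysis shows that $\delta$ and $\delta'$ are coprime positive divisors of $n$ with $\delta\delta' = n$. Writing $r_1 s_2 = \delta m_1^2$ and $s_1 r_2 = \delta m_2^2$ (forced by $u_1 := r_1 s_2/\delta$ and $v_1 := s_1 r_2/\delta$ being coprime with $u_1 v_1 = (n/\delta)^2$ a square), and similarly for the $Y$-terms, one obtains $X = A\sqrt{\delta}$ and $Y = B\sqrt{\delta'}$ with integer $A, B$, closing the argument.

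For (3), given $y + x\sqrt{\ell} = 1$, I would multiply by the conjugate: $(y+x\sqrt{\ell})(y-x\sqrt{\ell}) = y^2 - \ell x^2 = \pm 1$ together with the first factor being $1$ forces $y - x\sqrt{\ell} = \pm 1$. The sign $+$ gives $(x,y) = (0,1) = I$; the sign $-$ gives $(x,y) = (1/\sqrt{\ell}, 0)$, which is compatible with $x = a\sqrt{r}$, $a,r \in \bb{Z}_{>0}$, precisely when $\ell = 1$, yielding the single extra element $P(1,0) = \begin{pmatrix} 0 & 1 \\ 1 & 0 \end{pmatrix}$; its square is $I$ by the product formula, confirming a cyclic group of order $2$.

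For (4), I would first verify by direct matrix computation that $S_{n,\ell}$ and $J := \begin{pmatrix} 1 & 0 \\ 0 & -1\end{pmatrix}$ both lie in $G_{n,\ell}$, that $J P(x,y) J = P(-x, y)$ (so $J$ normalizes $S_{n,\ell}$), and that $S_{n,\ell} \cap \langle J\rangle = \{I\}$ (since $P(x,y)$ has equal diagonal entries while $J$ does not). The remaining inclusion $G_{n,\ell} \subseteq S_{n,\ell} \cdot \langle J\rangle$ is obtained by unpacking, for $g = \begin{pmatrix} a\sqrt{r} & b\sqrt{s} \\ c\sqrt{s} & d\sqrt{r}\end{pmatrix} \in \widehat{G}$, the defining relation ${}^t g\,\mathrm{diag}(1,-\ell)\,g = \epsilon\,\mathrm{diag}(1,-\ell)$ (with $\epsilon = \pm 1$) into the three scalar equations $ab = \ell cd$, $a^2 r - \ell c^2 s = \epsilon$, $b^2 s - \ell d^2 r = -\epsilon\ell$. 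Multiplying the latter two and using $a^2 b^2 = \ell^2 c^2 d^2$ yields $a^2 r = d^2 r$, hence $d = \pm a$, and then $ab = \ell cd$ forces $b = \pm \ell c$ with matching sign; the two sign cases realize $g = P(c\sqrt{s}, a\sqrt{r}) \in S_{n,\ell}$ and $g = P(c\sqrt{s}, a\sqrt{r})\cdot J$ respectively. The degenerate cases ($a = 0$ or $c = 0$) collapse to $\ell = 1$ and the four matrices $\pm I, \pm J$, all of which lie in $S_{n,\ell}\cdot\langle J\rangle$.
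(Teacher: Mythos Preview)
Your argument is correct. The paper dismisses (1), (2) and (4) as ``straightforward'' and ``direct computations'', so your detailed verification of closure of $S_{n,\ell}$ under multiplication (the $\delta,\delta'$ analysis) and of the decomposition in (4) supplies content the paper omits. A small remark on (4): the phrase ``multiplying the latter two'' does not literally produce $a^2=d^2$; what works is to multiply $a^2r-\ell c^2s=\epsilon$ by $d^2$ (or equation (II) by $a^2$) and substitute $a^2b^2=\ell^2c^2d^2$, or equivalently rewrite ${}^tg\,D\,g=\epsilon D$ as $DgD^{-1}=\epsilon({}^tg)^{-1}$ and compare entries. Your degenerate-case description is also slightly off (the case $a=0$ forces $\ell=1$ and yields the anti-diagonal matrices $\pm P(1,0),\,\pm P(1,0)J$, not $\pm I,\pm J$; the case $c=0$ yields $\pm I,\pm J$ for every $\ell$), but all of these matrices lie in $S_{n,\ell}\cdot\langle J\rangle$ anyway, so the conclusion stands.

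The genuinely different step is (3). The paper uses the standing hypothesis $\sqrt{\ell n}\notin\bb{Z}$: it squares $y+x\sqrt{\ell}=1$, rewrites $2xy\sqrt{\ell}=2(ab)\sqrt{\ell n}$ with $ab\in\bb{Z}$, and separates the rational and irrational parts to force $xy=0$. Your route instead exploits the built-in Pell relation $y^2-\ell x^2=\pm1$ to factor $(y+x\sqrt{\ell})(y-x\sqrt{\ell})=\pm1$, whence $y-x\sqrt{\ell}=\pm1$ and the two solutions drop out immediately. Your argument is shorter and does not invoke the irrationality of $\sqrt{\ell n}$, so it holds without that hypothesis; the paper's argument, on the other hand, makes the role of the ambient assumption visible.
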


\begin{proof}
The proofs of (1) and (2) are straightforward.

For (3), assume that 
$x,y\in \bb{R}$ with $x^2,y^2,x y/\sqrt{n}\in \bb{Q}$ 
satisfy $y+x\sqrt{\ell}=1$. 
Then $(y^2+\ell x^2)+2(x y/\sqrt{n})\sqrt{\ell n}=(y+x\sqrt{\ell})^2=1$. 
Our assumptions yields $y^2+\ell x^2=1$ and $x y=0$. If $x=0$, then $y=\pm 1$. 
If $y=0$, then $\ell=1$ and $x=1$. 
Hence the conclusion holds.

(4) follows from direct computations.
\end{proof}

Then the Dirichlet unit theorem yields the following corollary.

\begin{cor}
If $\ell>1$, then $S_{n,\ell}\cong\bb{Z}\oplus\bb{Z}/2\bb{Z}$.
\end{cor}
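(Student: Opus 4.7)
The plan is to exploit the injectivity of $\phi$ (for $\ell>1$, by the preceding lemma) together with the Dirichlet unit theorem applied to a real quadratic order into which $S_{n,\ell}$ embeds modulo $\pm I$.

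First I would define a homomorphism $\psi\colon S_{n,\ell}\to\bb{R}^\times$ by $\psi(P):=\phi(P)^2$. For $P=P(a\sqrt{r},b\sqrt{s})\in S_{n,\ell}$ with $rs=n$ and $b^2 s-\ell a^2 r=\pm 1$, a direct expansion gives
\[
 \psi(P)=\bigl(b\sqrt{s}+a\sqrt{r\ell}\bigr)^2
 =(b^2 s+a^2 r\ell)+2ab\sqrt{n\ell}\in\bb{Z}[\sqrt{n\ell}],
\]
and the Galois conjugation $\sqrt{n\ell}\mapsto-\sqrt{n\ell}$ produces the norm
\[
 (b^2 s+a^2 r\ell)^2-4a^2 b^2\, n\ell
 =(b^2 s-a^2 r\ell)^2=1,
\]
so $\psi(P)\in\bb{Z}[\sqrt{n\ell}]^\times$. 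Since $\phi$ is a homomorphism, so is $\psi$; and since $\phi$ is injective for $\ell>1$, the kernel of $\psi$ is exactly $\phi^{-1}(\{\pm 1\})=\{\pm I\}$.

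Next, by the standing assumption $\sqrt{n\ell}\notin\bb{Z}$, so $\bb{Q}(\sqrt{n\ell})$ is a real quadratic field. The Dirichlet unit theorem for the (not necessarily maximal) order $\bb{Z}[\sqrt{n\ell}]$ yields $\bb{Z}[\sqrt{n\ell}]^\times\cong\bb{Z}\oplus\bb{Z}/2\bb{Z}$, hence $\psi$ induces an injection $S_{n,\ell}/\{\pm I\}\hookrightarrow\bb{Z}$. To see the image is infinite I would specialize to $r=n$, $s=1$: the defining equation becomes the Pell equation $b^2-n\ell a^2=\pm 1$, which admits infinitely many integer solutions because $n\ell$ is not a square; each solution supplies an element $P(a\sqrt{n},b)\in S_{n,\ell}$. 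Therefore $S_{n,\ell}/\{\pm I\}\cong\bb{Z}$.

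Finally, $S_{n,\ell}$ sits in an abelian extension $1\to\{\pm I\}\to S_{n,\ell}\to\bb{Z}\to 1$, which must split since $\bb{Z}$ is free, giving $S_{n,\ell}\cong\bb{Z}\oplus\bb{Z}/2\bb{Z}$. I do not anticipate a serious obstacle; the only step calling for care is the verification that $\psi(P)$ lands in the single order $\bb{Z}[\sqrt{n\ell}]$ uniformly in the choice of decomposition $n=rs$, which is exactly the identity $\sqrt{s}\cdot\sqrt{r\ell}=\sqrt{n\ell}$ used in the computation above.
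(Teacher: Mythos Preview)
Your proof is correct and shares the same key observation as the paper: squaring $\phi$ lands in the quadratic order $\bb{Z}[\sqrt{n\ell}]$, whose unit group has rank one. The paper first embeds $\phi(S_{n,\ell})$ into the unit group of the ring of integers of the multiquadratic field $\bb{Q}(\sqrt{p_1},\ldots,\sqrt{p_m})$ (where $p_i$ are the prime divisors of $n\ell$) to obtain finite generation, and only then invokes $\phi(A^2)\in\bb{Z}[\sqrt{n\ell}]$ to bound the rank. Your route is more direct: by working with $\psi=\phi^2$ from the outset you bypass the multiquadratic field entirely and land immediately in $\bb{Z}[\sqrt{n\ell}]^\times$, and your observation that $\psi(P)>0$ (being a square of a nonzero real) sharpens the target to the positive units $\cong\bb{Z}$. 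You also make explicit the Pell-equation step showing the image is infinite, which the paper leaves implicit. Both arguments rely on the standing hypothesis $\sqrt{n\ell}\notin\bb{Z}$ in the same way.
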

\begin{proof}
Let $p_1,\ldots,p_m$ be the prime divisors of $\ell n$ and $\frk{o}$ be
the ring of algebraic integers in $\bb{Q}(\sqrt{p_1},\ldots,\sqrt{p_m})$. 
By Dirichlet unit theorem $\frk{o}^\times$ 
is a finitely generated abelian group whose torsion subgroup is $\{\pm 1\}$. 
Hence $\phi(S_{n,\ell})$ is a finitely generated abelian group 
whose torsion subgroup is $\{\pm 1\}$. 
For $A\in S_{n,\ell}$, we have $\phi(A^2)\in\bb{Z}[\sqrt{\ell n}]$. 
Since $\bb{Z}[\sqrt{n\ell}]^\times\cong\bb{Z}\oplus\bb{Z}/2\bb{Z}$, 
we get $S_{n,\ell}\cong\bb{Z}\oplus\bb{Z}/2\bb{Z}$.
\end{proof}

\begin{rem}
If $n=1$ and $\ell>1$, 
then $S_{1,\ell}$ is the group of units of $\bb{Z}[\sqrt{\ell}]$. 
Moreover if $\ell$ is square free and $\ell\equiv 2,3 \pmod{4}$, 
then since $\bb{Z}[\sqrt{\ell}]$ is 
the ring of the integers of $\bb{Q}[\sqrt{\ell}]$, 
a generator of $S_{1,\ell}$ becomes a fundamental unit.
\end{rem}

\begin{lem}\label{lem:numerical-P(p,q)}
For two positive isotropic Mukai vectors $w_0,w_1$ 
on the fixed abelian surface $X$, 
the condition
\begin{align*}
(1,0,-\ell)=\pm (\ell w_0-w_1),\quad \mpr{w_0,w_1}=-1
\end{align*}
is equivalent to
\begin{align*}
w_0=(p^2,-\tfrac{p q}{\sqrt{n}}H,q^2),\ 
w_1=(q^2,-\tfrac{\ell p q}{\sqrt{n}}H,\ell^2 p^2),\quad
P(p,q)\in S_{n,\ell}.
\end{align*}
\end{lem}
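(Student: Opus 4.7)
The plan is to verify the equivalence by working directly in the lattice $\Sym_2(\bb{Z}, n)$ via $\iota_X$, exploiting the rank-one decomposition of isotropic symmetric matrices.

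For the direction $(\Leftarrow)$, I would substitute the given forms of $w_0$ and $w_1$ and check by direct computation that
\begin{equation*}
\ell w_0 - w_1 = (\ell p^2 - q^2)(1,0,-\ell) = \mp (1,0,-\ell),
\qquad
\langle w_0, w_1 \rangle = 2\ell p^2 q^2 - \ell^2 p^4 - q^4 = -(q^2 - \ell p^2)^2 = -1,
\end{equation*}
using the defining relation $q^2 - \ell p^2 = \pm 1$ of $S_{n,\ell}$. Isotropy of $w_0, w_1$ follows from $p^2 q^2 = (pq)^2$, i.e., $\det \iota_X(w_i) = 0$, and positivity from $p^2 = a^2 r \geq 0$ together with the $q^2 - \ell p^2 = \pm 1$ constraint (which rules out degenerate vanishing).

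For $(\Rightarrow)$, write $w_i = (r_i, d_i H, a_i)$. Isotropy gives $r_i a_i = n d_i^2$, while the numerical equation $\pm(1,0,-\ell) = \ell w_0 - w_1$ yields
$r_1 = \ell r_0 \mp 1$, $d_1 = \ell d_0$, $a_1 = \ell a_0 \pm \ell$.
Substituting into $r_1 a_1 = n d_1^2 = \ell^2 r_0 a_0$ and simplifying produces the Pell-type relation $a_0 - \ell r_0 = \mp 1$. In particular $\gcd(r_0, a_0) \mid (a_0 - \ell r_0) = \mp 1$, so $r_0$ and $a_0$ are coprime. Combined with $r_0 a_0 = n d_0^2$, this coprimality allows a decomposition $r_0 = a^2 r$, $a_0 = b^2 s$ with $r, s \in \bb{Z}_{>0}$, $rs = n$, $a, b \in \bb{Z}_{\geq 0}$, and $ab = |d_0|$, constructed prime-by-prime: for each prime $p \mid n$, exactly one of $v_p(r_0), v_p(a_0)$ is positive (by coprimality), so assigning $v_p(r) = v_p(n)$ (resp. $v_p(s) = v_p(n)$) in the respective cases forces the complementary factor to be a perfect square; for $p \nmid n$ the exponents $v_p(r_0), v_p(a_0)$ are automatically even by isotropy and coprimality, and we take $v_p(r) = v_p(s) = 0$.

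Setting $p := a\sqrt{r}$ and choosing the sign of $b$ so that $q := b\sqrt{s}$ satisfies $pq/\sqrt{n} = ab = -d_0$, we obtain $\iota_X(w_0) = \bigl(\begin{smallmatrix} p^2 & -pq \\ -pq & q^2 \end{smallmatrix}\bigr)$, i.e., $w_0 = (p^2, -\tfrac{pq}{\sqrt{n}} H, q^2)$. The identities $r_1 = \ell p^2 \mp 1 = q^2$, $a_1 = \ell^2 p^2$, $d_1 = \ell d_0 = -\ell pq/\sqrt{n}$ then give $w_1 = (q^2, -\tfrac{\ell pq}{\sqrt{n}} H, \ell^2 p^2)$, and $q^2 - \ell p^2 = a_0 - \ell r_0 = \mp 1 = \pm 1$ together with $p = a\sqrt{r}$, $q = b\sqrt{s}$, $rs = n$ shows $P(p,q) \in S_{n,\ell}$. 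The main obstacle is the arithmetic decomposition: without the Pell relation $a_0 - \ell r_0 = \pm 1$ (and the resulting coprimality $\gcd(r_0, a_0) = 1$), the prime-by-prime splitting of $n$ compatible with $r_0 = a^2 r$, $a_0 = b^2 s$ need not exist; exploiting this coprimality is exactly what makes the construction canonical and essentially formal.
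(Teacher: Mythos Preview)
Your proof is correct and follows the same overall route as the paper: read off the components of $w_0,w_1$ from the linear relation $\pm(1,0,-\ell)=\ell w_0-w_1$, use the lattice constraints to obtain the Pell relation $a_0-\ell r_0=\mp 1$, then extract $p=\sqrt{r_0}$, $q=\sqrt{a_0}$. The paper's proof is much terser---it simply writes $w_0=(r,dH,r\ell\pm 1)$, $w_1=(r\ell\mp 1,d\ell H,r\ell^2)$ with $d^2(H^2)=2r(r\ell\mp 1)$, sets $p:=\sqrt{r}$, $q:=\sqrt{r\ell\mp 1}$, and concludes ``Thus $P(p,q)\in S_{n,\ell}$'' without further comment.

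The one genuine addition in your argument is the prime-by-prime verification that $r_0=a^2 r$, $a_0=b^2 s$ with $rs=n$ and $a,b\in\bb{Z}$, exploiting $\gcd(r_0,a_0)=1$ (forced by the Pell relation). The paper does not spell this out; it presumably regards it as routine or as subsumed by the description \eqref{eq:top-inv} of primitive isotropic vectors in $\Sym_2(\bb{Z},n)$. Your argument makes the lemma self-contained. A minor stylistic difference: you derive the Pell relation by combining the isotropy of \emph{both} $w_0$ and $w_1$, whereas one can equivalently get it directly from $\mpr{w_0,w_1}=-1$ together with isotropy of $w_0$ alone (indeed, given $w_0$ isotropic and the linear relation, the conditions $\mpr{w_1^2}=0$ and $\mpr{w_0,w_1}=-1$ are equivalent). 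Either path is fine.
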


\begin{proof}
If there are isotropic Mukai vectors with the first condition, 
then we can write them as $w_0=(r,d H,(r\ell\pm 1))$ and 
$w_1=(r \ell\mp 1,d\ell H, r \ell^2)$, 
where $d^2(H^2)=2 r (r \ell\mp 1)$. 
We set $p:= \sqrt{r}$ and $q:= \sqrt{r \ell \mp 1}$.
Then $w_0=(p^2,-\tfrac{p q}{\sqrt{n}}H,q^2)$,
$w_1=(q^2,-\tfrac{\ell p q}{\sqrt{n}}H,\ell^2 p^2)$
and $\ell p^2-q^2=\pm 1$.
Thus $P(p,q)\in S_{n,\ell}$. 
The converse is obvious.
\end{proof}

\begin{cor}
Recall the action $\cdot$ of $\GL(2,\bb{R})$ given in \eqref{eq:action_cdot}.
By the correspondence 
\begin{align*}
G_{n,\ell} \ni g \mapsto (w_0,w_1),\quad
w_0:=(0,0,1)\cdot g, \ w_1:=(1,0,0)\cdot g, 
\end{align*}
we have a bijective correspondence:
\begin{align*}
\begin{array}{c c c}
G_{n,\ell}
\left/
 \left\langle \pm \begin{pmatrix} 1 & 0\\ 0 & -1 \end{pmatrix} \right \rangle 
\right.
 \cong 
S_{n,\ell}/\{\pm 1\} 
& 
\longleftrightarrow 
&
\left\{ (w_0,w_1) 
 \left|
 \begin{aligned}
 \mpr{w_0,w_1}=-1,\, \mpr{ w_0^2 }=\mpr{w_1^2}=0,\\
 w_0,w_1>0,\, (1,0,-\ell)=\pm(\ell w_0-w_1)
 \end{aligned}
 \right. 
\right\}
\end{array}
\end{align*}
\end{cor}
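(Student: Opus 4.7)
The plan is to reduce everything to the preceding lemma and to Lemma~\ref{lem:numerical-P(p,q)}, with only a short direct calculation of the action $\cdot$ in between. The first isomorphism
\[
G_{n,\ell} \Big/ \mpr{\pm \begin{pmatrix}1 & 0 \\ 0 & -1\end{pmatrix}} \cong S_{n,\ell}/\{\pm 1\}
\]
follows immediately from the semidirect product decomposition $G_{n,\ell} = S_{n,\ell} \rtimes \mpr{\begin{pmatrix}1 & 0 \\ 0 & -1\end{pmatrix}}$ of the preceding lemma, combined with the observation that $-I = P(0,-1)$ lies in $S_{n,\ell}$ (take $b=-1$, $s=1$, $r=n$): this forces $S_{n,\ell} \cap \mpr{\pm \begin{pmatrix}1 & 0 \\ 0 & -1\end{pmatrix}} = \{\pm I\}$ and identifies the quotient with $S_{n,\ell}/\{\pm I\}$.

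For the correspondence itself, I would take $g = P(p,q) \in S_{n,\ell}$ and compute via~\eqref{eq:action_cdot}:
\begin{align*}
P(p,q)^t \begin{pmatrix} 0 & 0 \\ 0 & 1\end{pmatrix} P(p,q) &= \begin{pmatrix} p^2 & pq \\ pq & q^2\end{pmatrix}, \\
P(p,q)^t \begin{pmatrix} 1 & 0 \\ 0 & 0\end{pmatrix} P(p,q) &= \begin{pmatrix} q^2 & \ell pq \\ \ell pq & \ell^2 p^2\end{pmatrix}.
\end{align*}
Through $\iota_X$ this reads $w_0 = (p^2, pq H/\sqrt{n}, q^2)$ and $w_1 = (q^2, \ell pq H/\sqrt{n}, \ell^2 p^2)$. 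Positivity and the isotropy $\mpr{w_i^2} = 0$ are immediate from $B(X,X) = 2 n y^2 - 2 x z$, while
\[
\mpr{w_0, w_1} = 2\ell p^2 q^2 - (\ell^2 p^4 + q^4) = -(q^2 - \ell p^2)^2 = -1
\]
uses precisely the defining relation $q^2 - \ell p^2 = \pm 1$ of $P(p,q) \in S_{n,\ell}$. Finally $\ell w_0 - w_1 = (\ell p^2 - q^2)(1,0,-\ell) = \mp (1,0,-\ell)$, giving the last condition. Well-definedness on $S_{n,\ell}/\{\pm I\}$ is clear from $(-g)^t M(-g) = g^t M g$.

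Bijectivity is then supplied by Lemma~\ref{lem:numerical-P(p,q)}: any pair $(w_0, w_1)$ satisfying the listed conditions arises from some $(p,q)$ with $P(p,q) \in S_{n,\ell}$, and conversely the pair determines $p^2, q^2$ from the ranks (or $\chi$'s) of $w_0, w_1$ and determines the product $pq$ from the $H$-coefficient of $w_0$; this determines $(p, q)$ up to simultaneous sign change, which is precisely the class $[P(p,q)] \in S_{n,\ell}/\{\pm I\}$. The main subtlety, and essentially the only non-routine point, is this sign bookkeeping: the forward map must not collapse distinct classes $[P(p,q)]$ and $[P(p,-q)]$, and this follows because the sign of $pq$ (not merely $(pq)^2$) is recoverable from the $H$-coefficient of $w_0$.
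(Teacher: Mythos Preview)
Your proof is correct and follows the route the paper intends: the corollary is stated there without proof, as an immediate consequence of the semidirect product decomposition of $G_{n,\ell}$ and of Lemma~\ref{lem:numerical-P(p,q)}, and your argument supplies exactly those details (the explicit computation of ${}^t P(p,q)\,M\,P(p,q)$ and the sign bookkeeping for injectivity). One cosmetic remark: Lemma~\ref{lem:numerical-P(p,q)} writes $w_0=(p^2,-\tfrac{pq}{\sqrt{n}}H,q^2)$ with a minus sign while your forward map produces $+\tfrac{pq}{\sqrt{n}}$; this is harmless since replacing $p$ by $-p$ keeps $P(p,q)\in S_{n,\ell}$ and swaps the sign, but it is worth a word when you invoke the lemma for surjectivity.
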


\begin{defn}
\label{defn:ambm}
Assume that $\ell>1$.
Let 
$$
A_\ell:=
\begin{pmatrix}
q & \ell p\\
p & q
\end{pmatrix},\;p,q>0
$$
be the generator of $S_{n,\ell}/\{\pm 1\}$.
\begin{NB}
It corresponds to
$z:= q+\sqrt{\ell}p$ ($p,q>0$). 
\end{NB}
We set $\epsilon:= q^2-\ell p^2\in\{\pm 1\}$.
For $m \in \bb{Z}$, we set
\begin{align*}
\begin{pmatrix}   q & \ell p  \\ p   & q   \end{pmatrix}^m= 
\begin{pmatrix} b_m & \ell a_m\\ a_m & b_m \end{pmatrix}.
\end{align*}
\end{defn}

By the definition we have 
\begin{align*}
(a_0,b_0)=(0,1),\quad
(a_{-m},b_{-m})=\ep^m(-a_m,b_m),\, m\in\bb{Z}_{>0}
\end{align*}
and
\begin{align}
\label{eq:S_{n,l}}
S_{n,\ell}= 
\left\{ \left. \pm
 \begin{pmatrix} b_m & \ell a_m\\ a_m & b_m \end{pmatrix}
 \right| m \in \bb{Z} \right\}.
\end{align}
Next we consider the right action of $\GL(2,\bb{R})$ on $\bb{R}^2$
\begin{align}
\label{eq:quad_action}
(x,y) \mapsto (x,y) X,
\quad X \in \GL(2,\bb{R}).
\end{align}
Then the quadratic map 
\begin{align*}
\begin{array}{c c c}
 \bb{R}^2 & \to     & \Sym_2(\bb{R})\\[3pt]
 (x,y)  & \mapsto &
\begin{pmatrix} x \\ y \end{pmatrix}
\begin{pmatrix} x & y  \end{pmatrix}
\end{array}
\end{align*}
is $\GL(2,\bb{R})$-equivariant.   
Using this action, we have the next descriptions of
the topological invariants of fine moduli spaces
$M_H(v)$ of dimension 2.
\begin{align}
\label{eq:top-inv}
\begin{split}
& 
\left\{ v 
 \left|
  \begin{aligned}
   v \in H^*(X,\bb{Z})_{\alg},\, \mpr{v^2}=0,\, v>0,
   \\
   \mpr{w,v}=-1,\,\exists w \in H^*(X,\bb{Z})_{\alg}
  \end{aligned}
 \right. 
\right\} 
\\ 
\overset{\varphi_1}{\longleftrightarrow} 
&
\left\{ 
 (a\sqrt{r}, b\sqrt{s}) \in \bb{R}^2
 \left|
  \begin{aligned}
  a,b\in\bb{Z},\, r,s\in\bb{Z}_{>0},
  \\ 
  r s=n,\, \gcd(a r,b s)=1 
  \end{aligned}
 \right.
\right\}
/\{\pm 1\}
=
\left\{  (0,1) X \mid  X \in G \right\}/\{\pm 1\}
\\
\overset{\varphi_2}{\longleftrightarrow} 
&
\left\{
 \left. \dfrac{b \sqrt{n}}{a r} \in {\bb{P}}^1(\bb{R}) 
=\bb{R} \cup \{ \infty \} 
 \right| 
 r s=n,\; \gcd(a r,b s)=1
\right\},
\end{split}
\end{align}
where we used the correspondences 
\begin{align*}
v=(a^2 r, a b H,b^2 s) 
\overset{\varphi_1}{\longleftrightarrow}
 \pm(a\sqrt{r},b\sqrt{s})
\overset{\varphi_2}{\longleftrightarrow}
 \dfrac{\mu(v)}{2\sqrt{n}}=\dfrac{b \sqrt{n}}{a r}.
\end{align*}
Here we used the slope for the Mukai vector 
defined by $\mu(v):=(H,c_1(v))/\rk v$.
These correspondences are $\widehat{G}$-equivariant 
under the action \eqref{eq:quad_action}. 
Lemma \ref{lem:numerical-P(p,q)}, \eqref{eq:S_{n,l}} and \eqref{eq:top-inv}
imply the following one to one correspondence:
\begin{align*}
\begin{array}{c c c}
\left\{ 
 \{ v_1,v_2 \} \left| 
 \begin{aligned}
  &\text{ There is a numerical solution}
  \\
  &\text{ $(v_1,v_2,\ell_1,\ell_2)$ of $(1,0,-\ell)$ }
 \end{aligned}
 \right. 
\right\} 
& \longleftrightarrow & 
\left\{ 
 \left. 
  \left\{\frac{b_m}{a_m}, \frac{\ell a_m}{b_m} \right\} 
\subset {\bb{P}}^1(\bb{R}) 
 \right| 
 m \in \bb{Z} 
\right\}
\\
\{ v_1,v_2 \} 
& \longleftrightarrow & 
\left\{\frac{\mu(v_1)}{2\sqrt{n}},\frac{\mu(v_2)}{2\sqrt{n}} \right\},
\end{array}
\end{align*}
where $(\ell_i,\ell_j)=(\ell,1)$ if and only if 
$(\frac{\mu(v_i)}{2\sqrt{n}},\frac{\mu(v_j)}{2\sqrt{n}})=
(\frac{b_m}{a_m},\frac{\ell a_m}{b_m})$.

\begin{defn}
For $m \in {\Bbb Z}$,
we set
\begin{equation}
\begin{split}
u_m:=& a_m^2 e^{\frac{b_m}{a_m \sqrt{n}}H}=
(a_m^2,\tfrac{a_m b_m}{\sqrt{n}}H,b_m^2),\\
u_m':=& b_m^2 e^{\frac{\ell a_m}{b_m \sqrt{n}}H}=
(b_m^2,\tfrac{\ell a_m b_m}{\sqrt{n}}H,\ell^2 a_m^2).
\end{split}
\end{equation}
\end{defn}

\begin{NB}
$u_0=(0,0,1)=\varrho_X$ and $u_0'=(1,0,0)$.
\end{NB}

\subsection{Codimension 0 walls and the action of 
$G_{n,\ell}$.}

\begin{defn}
$C_0$ is the wall associated to
the numerical solution
$(1,\varrho_X,1,\ell)$. 
For $m \ne 0$,
let $C_m$ be the wall associated to the numerical solution
$(u_m,u_m',\ell,1)$. 
\end{defn}

\begin{prop}
\begin{enumerate}
\item[(1)]
$C_0$ is the $t$-axis
and $C_m$ $(m \ne 0)$ is the circle defined by
$$
\left(s-\frac{1}{\sqrt{n}}\frac{b_m}{a_m} \right)
\left(s-\frac{1}{\sqrt{n}}\frac{\ell a_m}{b_m}\right)+t^2=0.
$$ 
\item[(2)]
$\{C_m \mid  m \in {\Bbb Z}\}$ is the set of codimension 0 walls. 
\end{enumerate}
\end{prop}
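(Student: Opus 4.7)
The plan is to prove (1) by direct computation of the wall equations from the numerical data defining each $C_m$, and to prove (2) by combining the classification of codimension 0 walls from Lemma \ref{lem:class-codim0wall} with the enumeration of numerical solutions of $(1,0,-\ell)$ in terms of the arithmetic group $S_{n,\ell}$ established in subsection \ref{subsect:solution}.

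For part (1) with $m\neq 0$, the numerical solution $(u_m,u'_m,\ell,1)$ consists of two Mukai vectors of exponential form $r e^{\lambda H}$, namely $u_m = a_m^2 e^{(b_m/(a_m\sqrt{n}))H}$ and $u'_m = b_m^2 e^{(\ell a_m/(b_m\sqrt{n}))H}$. Corollary \ref{cor:C_v} therefore applies directly with $\beta=0$, $\lambda_1=b_m/(a_m\sqrt{n})$, $\lambda_2=\ell a_m/(b_m\sqrt{n})$, producing the stated circle $t^2+(s-\lambda_1)(s-\lambda_2)=0$. For $C_0$ the solution $(1,\varrho_X,1,\ell)$ involves $\varrho_X$, which has rank $0$ and is not of exponential form, so Corollary \ref{cor:C_v} does not apply; instead I would invoke Lemma \ref{lem:C_v} with $v_2=1=e^0$, $\beta=0$, $\lambda=0$. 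A direct computation from the formula $d_{\beta,H}(v)=\langle v,H+(H,\beta)\varrho_X\rangle/(H^2)$ gives $d_0(v)=0$ for $v=(1,0,-\ell)$, whence $r\lambda-d_\beta=0$ and the degenerate case of Lemma \ref{lem:C_v} produces the line $rs-d_\beta = s = 0$, i.e., the $t$-axis.

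For part (2), by Lemma \ref{lem:class-codim0wall} every codimension 0 wall is defined by some $v_1$ with $v = N v_1+v_2$, $\langle v_1,v_2\rangle=1$, $\langle v_i^2\rangle=0$, and pairing with $v$ forces $N=\langle v^2\rangle/2=\ell$. The Proposition immediately preceding this statement then provides (under $\NS(X)=\mathbb{Z}H$) a bijective correspondence between codimension 0 walls and numerical solutions of $v$. Combining Lemma \ref{lem:numerical-P(p,q)} with the explicit presentation \eqref{eq:S_{n,l}} of $S_{n,\ell}$ parameterizes the numerical solutions of $(1,0,-\ell)$ by $m\in\mathbb{Z}$: for $m\neq 0$ the pair is $(u_m,u'_m,\ell,1)$, while for $m=0$ it is $(\varrho_X,1,\ell,1)$ — the same unordered data as $(1,\varrho_X,1,\ell)$. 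Hence $\{C_m\mid m\in\mathbb{Z}\}$ exhausts the codimension 0 walls. The only mild subtlety is matching conventions at $m=0$, where the formulas $u_m=(a_m^2,\tfrac{a_m b_m}{\sqrt{n}}H,b_m^2)$ and $u'_m=(b_m^2,\tfrac{\ell a_m b_m}{\sqrt{n}}H,\ell^2 a_m^2)$ degenerate at $a_0=0$ (forcing a rank-$0$ member and the separate treatment via Lemma \ref{lem:C_v} already used in part (1)), but no genuine obstacle appears.
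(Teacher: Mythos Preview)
Your proposal is correct and follows the approach that the paper leaves implicit (the paper states this proposition without proof, relying on the preceding results). Part (1) is exactly an application of Corollary \ref{cor:C_v} for $m\neq 0$ and of the degenerate case of Lemma \ref{lem:C_v} for $m=0$, and part (2) is precisely the combination of the bijection between codimension 0 walls and numerical solutions with the enumeration of numerical solutions of $(1,0,-\ell)$ by $S_{n,\ell}/\{\pm 1\}\cong\{A_\ell^m:m\in\mathbb{Z}\}$ from subsection \ref{subsect:solution}.
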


\begin{defn}
\begin{enumerate}
\item[(1)]
For $C_m$ ($m \in {\Bbb Z}$), we define
adjacent chambers $C_m^\pm$ as follows:
\begin{itemize}
\item
$C_m^-$ is surrounded by $C_m^+$ for $m<0$.
\item
$C_0^- \subset \{(s,t) \mid s<0 \}$ and
$C_0^+ \subset \{(s,t) \mid s>0 \}$.
\item
$C_m^+$ is surrounded by $C_m^-$ for $m>0$.
\end{itemize} 
\item[(2)]
Let $M_{C_m^\pm}(v)$ be the moduli of stable objects
$M_{(sH,tH)}(v)$ for $(s,t) \in C_m^\pm$.
\end{enumerate}
\end{defn}
Then
we have
$$
M_{C_m^\pm}(v)
=
\begin{cases}
\frk{M}^{\pm}(u_m,u_m',\ell,1), & \frac{b_m}{a_m}<\frac{\ell a_m}{b_m}\\
\frk{M}^{\pm}(u_m',u_m,1,\ell), & \frac{b_m}{a_m}>\frac{\ell a_m}{b_m}
\end{cases}
$$
for $m<0$
and 
$$
M_{C_m^\pm}(v)
=
\begin{cases}
\frk{M}^{\mp}(u_m,u_m',\ell,1)[-1], & \frac{b_m}{a_m}<\frac{\ell a_m}{b_m}\\
\frk{M}^{\mp}(u_m',u_m,1,\ell)[-1], & \frac{b_m}{a_m}>\frac{\ell a_m}{b_m}
\end{cases}
$$
for $m \geq 0$.
In particular, we have
\begin{equation}
M_{C_0^-}(v)={\frak M}^+(1,\varrho_X,1,\ell)[-1]=
M_H(1,0,-\ell).
\end{equation}

We note that $\Psi$ in Proposition \ref{prop:dual} satisfies
$\Psi^{-1}=\Psi$. By Proposition \ref{prop:dual}, we have the following 
isomorphisms.
\begin{equation*}
\Psi_m:M_{C_m^\pm}(v) \to M_{C_m^\mp}(v).
\end{equation*}
Let $\Phi_{X_1 \to X}^{{\bf E}_m}$ be a Fourier-Mukai
transform such that
$\Phi_{X_1 \to X}^{{\bf E}_m}(\varrho_{X_1})=u_m$
and $\Phi_{X_1 \to X}^{{\bf E}_m}(1)=u_m'$,
where $X_1=M_H(u_m)$.
Then we have 
\begin{equation}
\theta(\Phi_{X_1 \to X}^{{\bf E}_m})=
\begin{pmatrix}
b_m & \ell a_m\\
\epsilon^m a_m & \epsilon^m b_m
\end{pmatrix}
= 
\begin{pmatrix}
1 & 0\\
0 & -1
\end{pmatrix}^{\frac{-1+\epsilon^m}{2}}
\begin{pmatrix}
q & \ell p\\
p & q
\end{pmatrix}^m.
\end{equation}
Thus we get
\begin{equation}
\theta(\Psi_m)=
\begin{pmatrix}
q & \ell p\\
p & q
\end{pmatrix}^{-m}
\begin{pmatrix}
1 & 0\\
0 & -1
\end{pmatrix}
\begin{pmatrix}
q & \ell p\\
p & q
\end{pmatrix}^m.
\end{equation}
We also have
\begin{equation}
\begin{split}
\begin{pmatrix}
q & \ell p\\
p & q
\end{pmatrix}^{m+k}
\theta(\Psi_m)=&
\begin{pmatrix}
q & \ell p\\
p & q
\end{pmatrix}^{m+k}
\begin{pmatrix}
q & \ell p\\
p & q
\end{pmatrix}^{-m}
\begin{pmatrix}
1 & 0\\
0 & -1
\end{pmatrix}
\begin{pmatrix}
q & \ell p\\
p & q
\end{pmatrix}^m\\
=&\begin{pmatrix}
1 & 0\\
0 & -1
\end{pmatrix}
\begin{pmatrix}
q & \ell p\\
p & q
\end{pmatrix}^{m-k}.
\end{split}
\end{equation}
Hence 
$(\Psi_m(u_{m+k}),\Psi(u_{m+k}'))=(u_{m-k},u_{m-k}')$.
Thus we get the following proposition.

\begin{prop}\label{prop:Psi_m}
$\Psi_m(C_{m+k})=C_{m-k}$ and
$\Psi_m(C_{m+k}^\pm)=C_{m-k}^\mp$.
In particular, 
$\Psi_m$ induces an isomorphism
$$
M_{C_{m+k}^\pm}(v) \to M_{C_{m-k}^\mp}(v).
$$
\end{prop}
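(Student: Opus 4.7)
The plan is to read off both claims from the matrix identity
\[
\begin{pmatrix} q & \ell p \\ p & q \end{pmatrix}^{m+k} \theta(\Psi_m)
= \begin{pmatrix} 1 & 0 \\ 0 & -1 \end{pmatrix}
  \begin{pmatrix} q & \ell p \\ p & q \end{pmatrix}^{m-k}
\]
derived immediately above, combined with the right action $\cdot$ of \eqref{eq:action_cdot}. First I would note that by Definition~\ref{defn:ambm} and the formulas for $u_j, u_j'$, one has
\[
u_j = \varrho_X \cdot A_\ell^j, \qquad u_j' = (1,0,0) \cdot A_\ell^j,
\qquad A_\ell := \begin{pmatrix} q & \ell p \\ p & q \end{pmatrix},
\]
where $\varrho_X$ is identified with $(0,0,1)$. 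Hence the wall $C_{m+k}$ is determined by the unordered pair $\{\varrho_X \cdot A_\ell^{m+k},\, (1,0,0)\cdot A_\ell^{m+k}\}$ of isotropic Mukai vectors appearing in the numerical solution defining it.

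Next I apply $\theta(\Psi_m)$ on the right to each of these two vectors using the identity. Because both $(0,0,1)$ and $(1,0,0)$ correspond under $\iota_X$ to diagonal symmetric matrices, the element $\begin{pmatrix} 1 & 0 \\ 0 & -1 \end{pmatrix}$ fixes them under the action $\cdot$ (which is conjugation that preserves the diagonal and flips the sign of the off-diagonal entry). Combining this with the displayed identity and the associativity of the right action yields
\[
\Psi_m(u_{m+k}) = u_{m-k}, \qquad \Psi_m(u_{m+k}') = u_{m-k}'.
\]
Since $C_j$ is determined by $\{u_j, u_j'\}$, this proves $\Psi_m(C_{m+k}) = C_{m-k}$.

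For the chamber statement $\Psi_m(C_{m+k}^\pm) = C_{m-k}^\mp$, I would invoke Proposition~\ref{prop:dual}: by construction, $\Psi$ is built as an isomorphism $\frk{M}^+(v_1,v_2,\ell_1,\ell_2) \simto \frk{M}^-(v_1,v_2,\ell_1,\ell_2)$, so it automatically interchanges the two $\pm$ labels on the moduli spaces. Combined with the explicit identification of $M_{C_j^\pm}(v)$ in terms of the corresponding $\frk{M}^\pm$-space (or its shift by $[-1]$) recorded just above the proposition, and with the equality $\Psi_m(C_{m+k}) = C_{m-k}$ from the previous step, this yields the desired $\Psi_m(C_{m+k}^\pm) = C_{m-k}^\mp$ and hence the induced isomorphism of moduli schemes.

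The main subtlety I anticipate is the bookkeeping of the various sign and shift conventions in the definition of $M_{C_j^\pm}(v)$: the latter is given by a case split depending on the sign of $j$ and on whether $b_j/a_j < \ell a_j/b_j$ or the reverse, and each flip can change both the $\frk{M}^{+}/\frk{M}^{-}$ convention and introduce a shift $[-1]$. I would carry out one representative case (say $k>0$ with $m+k, m-k > 0$) in detail and recover the remaining cases either by the symmetry $k \leftrightarrow -k$, which exchanges the inside and the outside of $C_m$, or by the involutive nature of $\Psi_m$, which forces $\Psi_m(C_{m-k}^\mp) = C_{m+k}^\pm$ automatically once one direction is established.
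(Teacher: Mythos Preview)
Your proposal is correct and follows essentially the same approach as the paper. The paper's entire argument is the displayed matrix identity together with the observation (stated just before the proposition) that it forces $(\Psi_m(u_{m+k}),\Psi_m(u_{m+k}'))=(u_{m-k},u_{m-k}')$; your first part spells this out in the same way, and your plan for the $\pm$ swap via Proposition~\ref{prop:dual} (plus the case-by-case identification of $M_{C_j^\pm}(v)$ with $\frk{M}^\pm$) is a reasonable way to make explicit what the paper leaves implicit in the words ``Thus we get the following proposition.''
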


\begin{rem}\label{rem:2m}
$$
\theta(\Phi_{X_1 \to X}^{{\bf E}_m} \circ 
\Phi_{X \to X_1}^{{\bf E}_m})=
\pm A_\ell^{2m}.
$$
Hence $M_H(u_{2m}) \cong X$.
\end{rem}

\begin{prop}\label{prop:fund}
\begin{enumerate}
\item[(1)]
There are finitely many walls between $C_0$ and $C_{-1}$.
\item[(2)]
By the action of $G_{n,\ell}$, every wall is transformed
to a wall between $C_0$ and $C_{-1}$.
\end{enumerate}
\end{prop}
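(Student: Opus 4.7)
The approach is to exploit the action of $G_{n,\ell}$ on the set of walls. I would prove Part (2) first: it identifies the region between $C_0$ and $C_{-1}$ as a fundamental domain for the action of the infinite cyclic subgroup $\langle A_\ell\rangle \subset G_{n,\ell}$ on walls. Part (1) then reduces to finiteness of $G_{n,\ell}$-orbits of walls.

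For Part (2), I would first verify by direct matrix computation that $A_\ell \in G_{n,\ell}$ sends the codimension-$0$ wall $C_m$ to $C_{m+1}$. This reduces to the identity $u_m \cdot A_\ell = u_{m+1}$ (and $u_m' \cdot A_\ell = u_{m+1}'$), which in turn reduces to $(a_m,b_m)A_\ell = (a_{m+1},b_{m+1})$, immediate from $A_\ell^m \cdot A_\ell = A_\ell^{m+1}$ and Definition~\ref{defn:ambm}. By Proposition~\ref{prop:wall:isometry}, $A_\ell$ permutes the set of walls for $v$. Because walls are mutually disjoint in the open upper half plane $t>0$, by (the argument of) Lemma~\ref{lem:circle} applied to the pencil with (now real) base points $(\pm\sqrt{\ell/n},0)$, they are linearly ordered by nesting. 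Hence every wall $W$ lies strictly between two consecutive codimension-$0$ walls $C_m$ and $C_{m-1}$; applying $A_\ell^{-m}$ shifts $W$ so that it lies between $C_0$ and $C_{-1}$.

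For Part (1), the key observation is that the integers $\langle v_1^2 \rangle$ and $\langle v_1,v-v_1\rangle$ attached to a wall $W_{v_1}$ are $G_{n,\ell}$-invariant, and are bounded: from Definition~\ref{defn:wall} we have $\langle v_1^2 \rangle,\langle (v-v_1)^2\rangle \ge 0$ and $\langle v_1, v-v_1\rangle > 0$, while the identity
\begin{equation*}
2\ell = \langle v^2 \rangle = \langle v_1^2 \rangle + 2\langle v_1, v-v_1 \rangle + \langle (v-v_1)^2 \rangle
\end{equation*}
forces both $\langle v_1^2 \rangle$ and $\langle (v-v_1)^2 \rangle$ into the finite set $[0,2\ell)\cap\mathbb{Z}_{\ge 0}$. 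For each fixed pair of values, the vectors $v_1$ realising them (modulo $\mathbb{Z}v$, which does not affect $W_{v_1}$) form the integer points on a level set of the rank-$2$ indefinite quadratic form induced on $H^*(X,\mathbb{Z})_{\alg}/\mathbb{Z}v$; by the classical reduction theory for indefinite binary quadratic forms, $G_{n,\ell}$ acts on this level set with finitely many orbits. Summing over the finitely many types gives finitely many $G_{n,\ell}$-orbits of walls in total. By Part (2), each orbit contributes at most one wall between $C_0$ and $C_{-1}$, so only finitely many walls lie in this region.

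The main obstacle is the reduction-theory step in Part~(1): one must verify that on each $(G_{n,\ell}\text{-invariant})$ level set of the induced form on $H^*(X,\mathbb{Z})_{\alg}/\mathbb{Z}v$, the arithmetic group $G_{n,\ell}$ has only finitely many orbits. The computations in Part~(2), and the geometric nesting argument drawn from Lemma~\ref{lem:circle}, are essentially bookkeeping given the framework already established in the paper.
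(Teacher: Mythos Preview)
Your argument for (2) has a genuine gap. Proposition~\ref{prop:wall:isometry} only tells you that the isometry $A_\ell$ permutes the \emph{set} of walls for $v$; it says nothing about how this permutation interacts with the nesting order in the $(s,t)$-plane. Knowing that $A_\ell$ sends $C_m$ to $C_{m+1}$ for every $m$ does not by itself force a wall between $C_{m-1}$ and $C_m$ to land between $C_m$ and $C_{m+1}$: an arbitrary bijection of a linearly ordered set that shifts a prescribed cofinal subsequence need not be monotone. The paper avoids this issue by using the reflections $\Psi_m$ instead of $A_\ell$. Each $\Psi_m$ is realised by an explicit contravariant Fourier--Mukai transform, so by Theorem~\ref{thm:B:3-10.3} it acts on stability conditions themselves, and Proposition~\ref{prop:Psi_m} records that it sends the \emph{chambers} $C_{m+k}^\pm$ to $C_{m-k}^\mp$; this is what guarantees that ``between'' is preserved. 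Your argument can be repaired along the same lines, or alternatively by invoking the Appendix: every element of $\widehat{G}$ acts on the upper half plane by a M\"obius or anti-M\"obius map, which is a homeomorphism and therefore respects nesting.

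For (1) the paper's route is entirely different and far more elementary than yours. It never counts $G_{n,\ell}$-orbits. Instead it fixes the vertical line $s=\lambda_0:=-q/(p\sqrt{n})$; since $(\lambda_0,0)$ lies on $C_{-1}$ it lies \emph{inside} every wall surrounding $C_{-1}$, so every wall between $C_0$ and $C_{-1}$ meets this line, and one then uses that only finitely many walls meet any fixed vertical line (the local finiteness already used in Corollary~\ref{cor:finite}). Your reduction-theory approach would eventually work, but the step you flag as the ``main obstacle'' is doing real work that the paper simply does not need, and your final claim that each $G_{n,\ell}$-orbit contributes \emph{at most one} wall between $C_0$ and $C_{-1}$ is not correct as stated (the stabiliser of this region inside $G_{n,\ell}/\{\pm1\}$ is nontrivial, e.g.\ $\Psi_{-1}$ fixes $C_{-1}$), though ``at most two'' suffices for finiteness.
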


\begin{proof}
(1)
We set 
$\lambda_0:=-\frac{q}{p \sqrt{n}}$.
Then for $\beta=\lambda_0 H$, there is finitely many walls.
Since every wall between $C_0$ and $C_{-1}$ intersects with
the line $s=\lambda_0$, the claim holds.

(2)
Let $C$ be a wall between $C_m$ and $C_{m-1}$.
Since $\Psi_0(C)$ is a wall between 
$C_{-m}$ and $C_{-m+1}$, we may assume that $m < 0$. 
By Proposition \ref{prop:Psi_m}, we see that 
$\Psi_m(C)$ is a wall
between $C_{m+1}$ and $C_m$.
Therefore $\Psi_{-1}\circ \cdots \circ \Psi_{m-1}\circ \Psi_m(C)$
is a wall between $C_0$ and $C_{-1}$.
\end{proof}

\begin{rem}
$$
\theta(\Psi_{-1}\circ \cdots \circ \Psi_{m-1}\circ \Psi_m)
=
\begin{cases}
\theta(\Psi_{-1})A_\ell^{-m-1},& 2 \not |m,\\
A_\ell^{-m},& 2  |m.
\end{cases}
$$
\begin{NB}
$\theta(\Psi_{m-1} \circ \Psi_m)=A_\ell^2$.
\end{NB}
\end{rem}

\begin{NB}
\begin{defn}
For the numerical solution $(v_1,v_2,\ell_1,\ell_2)$ 
corresponding to $\{\frac{b_{-m}}{a_{-m}},\frac{\ell a_{-m}}{b_{-m}} \}
=\{-\frac{b_m}{a_m},-\frac{\ell a_m}{b_m} \}$,
$\frk{M}^{\pm}_m$ denotes the moduli space
$\frk{M}^{\pm}(v_1,v_2,\ell_1,\ell_2)$. 
$\frk{M}^{\pm}_m[-1]$ denotes the moduli space
of $V^{\bullet}[-1]$, $V^{\bullet} \in \frk{M}^{\pm}_m$. 
\end{defn}
\end{NB}

In Proposition \ref{prop:Psi_m},
we did not specify the correspondence of
complexes. Since $\phi_{(sH,tH)}(v) \mod 2{\Bbb Z}$
is well-defined,
the correspondence is determined up to shift
$[2k]$ ($k \in {\Bbb Z}$).
We next fix the ambiguity of this shift. 
We note that $\theta(\Psi_m)=\theta([1] \circ {\cal D}_X
\circ \Phi_{X \to X}^{{\bf E}_{2m}^{\vee}[1]})$,
where $[1]$ is the shift functor.
Since 
$Z_{(sH,tH)}(v) \in {\Bbb C} \setminus {\Bbb R}_{<0}$,
we take $\phi_{(sH,tH)}(v) \in(-1,1)$
to consider the moduli space
$M_{(sH,tH)}(v)$ as in Definition \ref{defn:phase(v)}.
Then the isomorphisms in Proposition \ref{prop:Psi_m}
are given by the following proposition.

\begin{prop}\label{prop:Psi_m(phase)}
Assume that $m<0$.
$[1] \circ {\cal D}_X
\circ \Phi_{X \to X}^{{\bf E}_{2m}^{\vee}[1]}$
induces isomorphisms
\begin{equation*}
\begin{matrix}
M_{C_{m+k}^\pm}(v) & \to & M_{C_{m-k}^\mp}(v)\\
E & \mapsto & 
(\Phi_{X \to X}^{{\bf E}_{2m}^{\vee}[1]}(E))^{\vee}[1]. 
\end{matrix}
\end{equation*}
\end{prop}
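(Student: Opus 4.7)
The plan is to combine the cohomological identity
$\theta(\Psi_m)=\theta([1]\circ{\cal D}_X\circ\Phi_{X\to X}^{{\bf E}_{2m}^{\vee}[1]})$,
just established in the text immediately above, with the stability-preservation
Theorem~\ref{thm:B:3-10.3} and the phase formulas \eqref{eq:Phi(phase)} and
Lemma~\ref{lem:dual-phase}. Write $\Phi:=\Phi_{X\to X}^{{\bf E}_{2m}^{\vee}[1]}$ and
$F:=[1]\circ{\cal D}_X\circ\Phi$, where $X_1=M_H(u_{2m})$ is identified with $X$ via
Remark~\ref{rem:2m}. The goal is to show that $F$ restricts to the claimed isomorphism
$M_{C_{m+k}^{\pm}}(v)\to M_{C_{m-k}^{\mp}}(v)$.

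First I would invoke Theorem~\ref{thm:B:3-10.3}(1) to conclude that $\Phi$ preserves
Bridgeland stability, sending $\sigma_{(sH,tH)}$-semi-stable objects to
$\sigma_{(\widetilde{\beta},\widetilde{\omega})}$-semi-stable ones under the parameter
change \eqref{eq:tilde(beta)} and \eqref{eq:s'}. By Theorem~\ref{thm:B:3-10.3}(2), the
contravariant functor ${\cal D}_X$ likewise preserves Bridgeland stability (with the
additional $\beta\mapsto-\beta$), and the shift $[1]$ simply rotates phases by $1$.
Hence $F$ carries semi-stable objects to semi-stable objects.

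Next I would identify the image chamber. Proposition~\ref{prop:Psi_m} already provides
an isomorphism $M_{C_{m+k}^{\pm}}(v)\simto M_{C_{m-k}^{\mp}}(v)$ abstractly from the
cohomological map $\Psi_m$. Since $F$ has the same cohomological action as $\Psi_m$, the
map induced by $F$ must agree with that isomorphism up to an overall shift $[2j]$ for some
$j\in\bb{Z}$. The remaining task---and this will be the main obstacle---is to show that
$j=0$, i.e., that the specific shift $[1]$ appearing in the definition of $F$ is the
unique one landing in the phase convention $\phi_{(sH,tH)}(v)\in(-1,1)$ of
Definition~\ref{defn:phase(v)}.

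To pin down $j$, I would compute phases directly. For $E\in M_{C_{m+k}^{\pm}}(v)$ one has
$\phi_{(sH,tH)}(E)\in(-1,1)$ by definition. Applying \eqref{eq:Phi(phase)} to $\Phi$,
with reference kernel fiber ${\bf E}_{2m,|X\times\{x\}}$ of Mukai vector $u_{2m}$, and
then Lemma~\ref{lem:dual-phase} to $(\Phi(E))^{\vee}[1]$, yields
\begin{equation*}
 \phi_{(-\widetilde{\beta},\widetilde{\omega})}(F(E))
 =1-\phi_{(\beta,\omega)}(E)+\phi_{(\beta,\omega)}({\bf E}_{2m,|X\times\{x\}}).
\end{equation*}
Substituting the explicit phase of the isotropic vector $u_{2m}$ and using the defining
equations of the walls $C_{m+k}$ (Proposition~\ref{prop:wall}, Corollary~\ref{cor:C_v})
should confirm that the right-hand side lies in $(-1,1)$, so $j=0$ and
$F(E)\in M_{C_{m-k}^{\mp}}(v)$ in the chosen convention. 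The bijection $F$ is then
invertible via the analogous construction with $-m$ in place of $m$, completing the proof.
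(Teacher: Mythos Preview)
Your approach is essentially the paper's own: reduce to Proposition~\ref{prop:Psi_m} for the cohomological identification, then fix the shift ambiguity $[2j]$ by a direct phase computation via \eqref{eq:Phi(phase)} and Lemma~\ref{lem:dual-phase}. Your phase formula
\[
\phi_{(-\widetilde{\beta},\widetilde{\omega})}\bigl(F(E)\bigr)
=1-\phi_{(\beta,\omega)}(E)+\phi_{(\beta,\omega)}\bigl({\bf E}_{2m,|X\times\{x\}}\bigr)
\]
is correct and is exactly what the paper computes.

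One point to sharpen: the circle governing this computation is $C_{2m}$, not $C_{m+k}$. The kernel $({\bf E}_{2m})_{|X\times\{x\}}$ has Mukai vector $u_{2m}$, and the locus where its phase agrees with that of $v$ modulo $\bb{Z}$ is precisely $C_{2m}=C_{v,\frac{b_{2m}}{a_{2m}\sqrt{n}}}$. The paper first observes that for $m<0$ one has $\tfrac{b_{2m}}{a_{2m}}<\tfrac{\ell a_{2m}}{b_{2m}}<0$ (since $b_{2m}^2-\ell a_{2m}^2=1$), hence $\phi_{(sH,tH)}(({\bf E}_{2m})_{|X\times\{x\}})\in(-1,0]$ on $C_{2m}$; this fixes the branch of the phase. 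Then Remark~\ref{rem:circle-phase} gives
\[
\phi({\bf E}_{2m}[1])>\phi(E)>\phi({\bf E}_{2m})\ \text{outside }C_{2m},
\qquad
\phi({\bf E}_{2m}[2])>\phi(E)>\phi({\bf E}_{2m}[1])\ \text{inside }C_{2m},
\]
so $\phi_{(\widetilde{\beta},\widetilde{\omega})}(\Phi(E))\in(0,1)$ outside and $\in(1,2)$ inside. Plugging into your formula yields $\phi_{(-\widetilde{\beta},\widetilde{\omega})}(F(E))\in(0,1)$ and $\in(-1,0)$ respectively, so $j=0$ in both cases. Your sketch would go through once you replace ``defining equations of the walls $C_{m+k}$'' by this inside/outside dichotomy relative to $C_{2m}$.
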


\begin{proof}
Since $b_{2m}^2-\ell a_{2m}^2=1$, we have
$\frac{b_{2m}}{a_{2m}}<\frac{\ell a_{2m}}{b_{2m}}<0$
for $m<0$.
For $(s,t) \in C_{2m}$,
$\phi_{(sH,tH)}(({\bf E}_{2m})_{|X \times \{x \}}) \in (-1,0]$.
Hence 
$\phi_{(sH,tH)}(({\bf E}_{2m})_{|X \times \{x \}}[1])=
\phi_{(sH,tH)}(E)$
for $(s,t) \in C_{2m}$ and $E \in M_{(sH,tH)}(v)$.
For $E \in M_{(sH,tH)}(v)$,
we also have 
$$
\begin{cases}
\phi_{(sH,tH)}(({\bf E}_{2m})_{|X \times \{x \}}[1])>
\phi_{(sH,tH)}(E)>\phi_{(sH,tH)}(({\bf E}_{2m})_{|X \times \{x \}}),\;
& 
\text{$(s,t)$ is outside of $C_{2m}$},\\
\phi_{(sH,tH)}(({\bf E}_{2m})_{|X \times \{x \}}[2])>
\phi_{(sH,tH)}(E)>\phi_{(sH,tH)}(({\bf E}_{2m})_{|X \times \{x \}}[1]),\;
& 
\text{$(s,t)$ is inside of $C_{2m}$}.
\end{cases}
$$
If $(s,t)$ is outside of $C_{2m}$, then
since $\phi_{(\widetilde{sH},\widetilde{tH})}
(\Phi_{X \to X}^{{\bf E}_{2m}^{\vee}[1]}(({\bf E}_{2m})_{|X \times \{x \}}))
=0$, we have
$\phi_{(\widetilde{sH},\widetilde{tH})}
(\Phi_{X \to X}^{{\bf E}_{2m}^{\vee}[1]}(E)) \in (0,1)$.
Hence
$$
\phi_{(-\widetilde{sH},\widetilde{tH})}
((\Phi_{X \to X}^{{\bf E}_{2m}^{\vee}[1]}(E))^{\vee}[1]) \in (0,1).
$$
If $(s,t)$ is inside of $C_{2m}$, then
$\phi_{(\widetilde{sH},\widetilde{tH})}
(\Phi_{X \to X}^{{\bf E}_{2m}^{\vee}[1]}(E)) \in (1,2)$.
Hence
$$
\phi_{(-\widetilde{sH},\widetilde{tH})}
((\Phi_{X \to X}^{{\bf E}_{2m}^{\vee}[1]}(E))^{\vee}[1]) \in (-1,0).
$$
Therefore the claim holds.
\end{proof}

\begin{rem}\label{rem:Psi_m(phase)}
Assume that $m>0$. Then
$[-1] \circ {\cal D}_X
\circ \Phi_{X \to X}^{{\bf E}_{2m}^{\vee}[1]}$
induces isomorphisms
\begin{equation*}
\begin{matrix}
M_{C_{m+k}^\pm}(v) & \to & M_{C_{m-k}^\mp}(v)\\
E & \mapsto & 
(\Phi_{X \to X}^{{\bf E}_{2m}^{\vee}[1]}(E))^{\vee}[-1]. 
\end{matrix}
\end{equation*}
\end{rem}

\begin{NB}
\begin{proof}
Since $r b_{2m}/a_{2m}-d_\beta=b_{2m}/a_{2m}>0$,
we have
$$
\begin{cases}
\phi_{(sH,tH)}(({\bf E}_{2m})_{|X \times \{x \}})>
\phi_{(sH,tH)}(E)>\phi_{(sH,tH)}(({\bf E}_{2m})_{|X \times \{x \}}[-1]),\;
& 
\text{$(s,t)$ is outside of $C_{2m}$},\\
\phi_{(sH,tH)}(({\bf E}_{2m})_{|X \times \{x \}}[-1])>
\phi_{(sH,tH)}(E)>\phi_{(sH,tH)}(({\bf E}_{2m})_{|X \times \{x \}}[-2]),\;
& 
\text{$(s,t)$ is inside of $C_{2m}$}.
\end{cases}
$$
Then we see that
$\phi_{(\widetilde{sH},\widetilde{tH})}
(\Phi_{X \to X}^{{\bf E}_{2m}^{\vee}[1]}(E)) \in (-2,0)$.
Hence
$$
\phi_{(-\widetilde{sH},\widetilde{tH})}
((\Phi_{X \to X}^{{\bf E}_{2m}^{\vee}[1]}(E))^{\vee}[-1]) \in (-1,1).
$$
\end{proof}
\end{NB}

\begin{NB}
It is NB for Proposition \ref{prop:Psi_m(phase)}.

We note that $\theta(\Psi_m)=\theta([1] \circ {\cal D}_X
\circ \Phi_{X \to X}^{{\bf E}_{2m}^{\vee}[1]})$.

\begin{lem}
Assume that $m<0$.
We set $s=\frac{1}{\sqrt{n}}\frac{b_{2m}}{a_{2m}}$.
For $m<k<-m$, 
$M_{C_{m+k}^\pm}(v) \subset {\frak A}_{(sH,tH)}$ and
$\Psi_m$ induces isomorphisms
\begin{equation}
M_{C_{m+k}^\pm}(v) \to M_{C_{m-k}^\mp}(v). 
\end{equation}
\end{lem}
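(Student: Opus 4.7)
The plan is a two-step reduction. First, Proposition~\ref{prop:Psi_m} already supplies an abstract isomorphism $\Psi_m\colon M_{C_{m+k}^\pm}(v)\to M_{C_{m-k}^\mp}(v)$. Because $\theta$ sees a functor only up to even shift and $\theta(\Psi_m)=\theta([1]\circ{\cal D}_X\circ\Phi_{X\to X}^{{\bf E}_{2m}^{\vee}[1]})$ by direct matrix computation via Lemma~\ref{fct:matrix}, the contravariant functor in the statement realizes $\Psi_m$ up to some shift $[2k]$. What remains is to verify $k=0$, i.e.\ that the output lies in the canonical phase range $(-1,1]$.

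This verification I would do by direct phase tracking. Set $\Phi:=\Phi_{X\to X}^{{\bf E}_{2m}^{\vee}[1]}$ and fix $E\in M_{C_{m+k}^\pm}(v)$. Because $b_{2m}^{2}-\ell a_{2m}^{2}=1$ with $m<0$ forces $a_{2m}<0$ and $b_{2m}>0$, we obtain $\tfrac{b_{2m}}{a_{2m}}<\tfrac{\ell a_{2m}}{b_{2m}}<0$, so $C_{2m}\subset\{s<0\}$ and one may normalize $\phi_{(sH,tH)}(({\bf E}_{2m})_{|X\times\{x\}})\in(-1,0]$ along $C_{2m}$. On $C_{2m}$ the equality $\phi_{(sH,tH)}(({\bf E}_{2m})_{|X\times\{x\}}[1])=\phi_{(sH,tH)}(v)$ holds by definition of the wall, so by continuity the strict inequality $\phi(({\bf E}_{2m})_{|X\times\{x\}})<\phi(v)<\phi(({\bf E}_{2m})_{|X\times\{x\}}[1])$ holds outside $C_{2m}$ and $\phi(({\bf E}_{2m})_{|X\times\{x\}}[1])<\phi(v)<\phi(({\bf E}_{2m})_{|X\times\{x\}}[2])$ holds inside, in agreement with Remark~\ref{rem:circle-phase}.

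Feeding these into the transport formula \eqref{eq:Phi(phase)} together with $\phi_{(\widetilde{sH},\widetilde{tH})}(\Phi(({\bf E}_{2m})_{|X\times\{x\}}))=0$ gives $\phi_{(\widetilde{sH},\widetilde{tH})}(\Phi(E))\in(0,1)$ outside $C_{2m}$ and $\in(1,2)$ inside. Composing with Lemma~\ref{lem:dual-phase}, which replaces $\phi$ by $1-\phi$ and $\beta$ by $-\beta$, yields $\phi_{(-\widetilde{sH},\widetilde{tH})}((\Phi(E))^{\vee}[1])\in(0,1)$ outside and $\in(-1,0)$ inside. Both intervals lie in $(-1,1]$, so $k=0$ is forced and the explicit contravariant functor coincides with $\Psi_m$.

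The main obstacle I foresee is the careful bookkeeping of phases across the wall $C_{2m}$: one has to choose a consistent lift of $\phi(({\bf E}_{2m})_{|X\times\{x\}})$, distinguish the ``inside'' and ``outside'' of $C_{2m}$ using Remark~\ref{rem:circle-phase} and Proposition~\ref{prop:Phi(C^pm)}, and combine the sign flip of ${\cal D}_X$ with the shift $[1]$ so the result genuinely lands in the stability range $(-1,1]$. The identification of source chambers $C_{m+k}^\pm$ with target chambers $C_{m-k}^\mp$ follows from the cohomological identity $(\Psi_m(u_{m+k}),\Psi_m(u_{m+k}'))=(u_{m-k},u_{m-k}')$ already established in the discussion preceding Proposition~\ref{prop:Psi_m}.
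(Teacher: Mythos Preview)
Your argument is correct for the isomorphism claim, but it is essentially the proof of Proposition~\ref{prop:Psi_m(phase)} transplanted verbatim; you make no use of the specific value $s=\tfrac{1}{\sqrt{n}}\tfrac{b_{2m}}{a_{2m}}$ singled out in the hypothesis, and you do not separately justify the first assertion $M_{C_{m+k}^\pm}(v)\subset{\frak A}_{(sH,tH)}$.  The paper's own proof exploits exactly that choice of $s$: since $v(({\bf E}_{2m})_{|X\times\{x\}})=u_{2m}$ has slope $s$, one computes $v\bigl(\Phi_{X\to X}^{{\bf E}_{2m}^{\vee}[1]}({\frak k}_x)\bigr)=u_{-2m}$, whose slope is $-s$.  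Hence for this particular $s$ the Fourier--Mukai transform $\Phi_{X\to X}^{{\bf E}_{2m}^{\vee}[1]}$ carries the heart ${\frak A}_{(sH,tH)}$ to ${\frak A}_{(-sH,tH)}$ as an equivalence of abelian categories, and composing with ${\cal D}_X[1]$ returns to ${\frak A}_{(sH,tH)}$.  Both claims of the lemma then drop out at once, with no phase bookkeeping across $C_{2m}$.  What your approach buys is generality: it works for arbitrary $(s,t)$ in the relevant chambers, which is why it proves the stronger Proposition~\ref{prop:Psi_m(phase)}.  What the paper's approach buys is brevity and conceptual clarity for the statement at hand: the restriction $m<k<-m$ is exactly what guarantees that the vertical line $s=\tfrac{1}{\sqrt{n}}\tfrac{b_{2m}}{a_{2m}}$ meets the chambers $C_{m+k}^{\pm}$, so the heart equivalence applies.
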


\begin{proof}

We note that $\theta(\Psi_m)=\theta([1] \circ {\cal D}_X
\circ \Phi_{X \to X}^{{\bf E}_{2m}^{\vee}[1]})$.
We have $v(\Phi_{X \to X}^{{\bf E}_{2m}^{\vee}[1]}({\frak k}_x))=
u_{-2m}$.
Hence $\Phi_{X \to X}^{{\bf E}^{\vee}[1]}$ induces an equivalence
${\frak A}_{(sH,tH)} \to {\frak A}_{(-sH,tH)}$.
\end{proof}

We note that $\frac{b_{2m}}{a_{2m}}<\frac{\ell a_{2m}}{b_{2m}}<0$
for $m<0$.
For $(s,t) \in C_{2m}$,
$\phi_{(sH,tH)}(({\bf E}_{2m})_{|X \times \{x \}}) \in (-1,0]$.
Hence 
$\phi_{(sH,tH)}(({\bf E}_{2m})_{|X \times \{x \}}[1])=
\phi_{(sH,tH)}(E)$
for $(s,t) \in C_{2m}$ and $E \in M_{(sH,tH)}(v)$.
For $E \in M_{(sH,tH)}(v)$,
we also have 
$$
\begin{cases}
\phi_{(sH,tH)}(({\bf E}_{2m})_{|X \times \{x \}}[1])>
\phi_{(sH,tH)}(E)>\phi_{(sH,tH)}(({\bf E}_{2m})_{|X \times \{x \}}),\;
& 
\text{$(s,t)$ is outside of $C_{2m}$},\\
\phi_{(sH,tH)}(({\bf E}_{2m})_{|X \times \{x \}}[2])>
\phi_{(sH,tH)}(E)>\phi_{(sH,tH)}(({\bf E}_{2m})_{|X \times \{x \}}[1]),\;
& 
\text{$(s,t)$ is inside of $C_{2m}$}.
\end{cases}
$$

If $(s,t)$ is outside of $C_{2m}$, then
since $\phi_{(\widetilde{sH},\widetilde{tH})}
(\Phi_{X \to X}^{{\bf E}_{2m}^{\vee}[1]}(({\bf E}_{2m})_{|X \times \{x \}}))
=0$, we have
$\phi_{(\widetilde{sH},\widetilde{tH})}
(\Phi_{X \to X}^{{\bf E}_{2m}^{\vee}[1]}(E)) \in (0,1)$.
Therefore 
$$
\phi_{(-\widetilde{sH},\widetilde{tH})}
((\Phi_{X \to X}^{{\bf E}_{2m}^{\vee}[1]}(E))^{\vee}[1]) \in (0,1).
$$

If $(s,t)$ is inside of $C_{2m}$, then
$\phi_{(\widetilde{sH},\widetilde{tH})}
(\Phi_{X \to X}^{{\bf E}_{2m}^{\vee}[1]}(E)) \in (1,2)$.
Therefore 
$$
\phi_{(-\widetilde{sH},\widetilde{tH})}
((\Phi_{X \to X}^{{\bf E}_{2m}^{\vee}[1]}(E))^{\vee}[1]) \in (-1,0).
$$

We note that 
$Z_{(sH,tH)}(v) \in {\Bbb C} \setminus {\Bbb R}_{<0}$.
Hence we take $\phi_{(sH,tH)}(v) \in(-1,1)$.
$\phi_{(sH,tH)}(u_{2m})+1>\phi_{(sH,tH)}(v)>\phi_{(sH,tH)}(u_{2m})$
outside of $C_{2m}$.
Hence 
$\phi_{(\widetilde{sH},\widetilde{tH})}
(\Phi_{X \to X}^{{\bf E}_{2m}^{\vee}[1]}(E)) \in (0,1)$.
Therefore 
$$
\phi_{(-\widetilde{sH},\widetilde{tH})}
((\Phi_{X \to X}^{{\bf E}_{2m}^{\vee}[1]}(E))^{\vee}[1]) \in (0,1).
$$

$\phi_{(sH,tH)}(u_{2m})+2>\phi_{(sH,tH)}(v)>\phi_{(sH,tH)}(u_{2m})+1$
insides of $C_{2m}$.

\end{NB}

\begin{prop}
We set
$$
M_m:=M_{C_{m-1}^+}(v) \cap M_{C_m^-}(v).
$$
\begin{enumerate}
\item[(1)]
$M_m \ne \emptyset$ and $M_m$ is birationally equivalent to
$M_{C_{m-1}^+}(v)$ and $M_{C_m^-}(v)$.
 \item[(2)]
We have a sequence of isomorphisms
$$
\cdots \overset{\Psi_{-3}}{\to} M_{-2}
\overset{\Psi_{-2}}{\to} M_{-1}
\overset{\Psi_{-1}}{\to} M_0
\overset{\Psi_{0}}{\to} M_{1}
\overset{\Psi_{1}}{\to} M_{2} 
\overset{\Psi_{2}}{\to} \cdots.
$$
\end{enumerate}
\end{prop}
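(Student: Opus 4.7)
The plan is to reduce both parts to results already established in the excerpt. Part~(1) will follow from the previously stated proposition (cf.\ \cite[Prop.~4.2.5]{MYY:2011:1}), which asserts that two Bridgeland moduli spaces for $v$ share a common non-empty open subset whenever the corresponding stability conditions are not separated by a codimension~$0$ wall. Part~(2) will be a direct application of Proposition~\ref{prop:Psi_m} together with the concrete identification of $\Psi_m$ on objects given in Proposition~\ref{prop:Psi_m(phase)} and Remark~\ref{rem:Psi_m(phase)}.

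For (1): by the preceding classification, the set of codimension~$0$ walls is exactly $\{C_k\}_{k\in\bb{Z}}$, so the only walls lying strictly between $C_{m-1}$ and $C_m$ have positive codimension. Pick any $(\beta_1,\omega_1)\in C_{m-1}^{+}$ and $(\beta_2,\omega_2)\in C_m^{-}$; these points are joined by a path in $\NS(X)_{\Bbb R}\times\Amp(X)_{\Bbb R}$ which crosses no codimension~$0$ wall. The cited proposition then yields $M_{(\beta_1,\omega_1)}(v)\cap M_{(\beta_2,\omega_2)}(v)\ne\emptyset$, i.e.\ $M_m\ne\emptyset$, and simultaneously gives the birationality of $M_{C_{m-1}^{+}}(v)$ with $M_{C_m^{-}}(v)$. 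Since $M_m$ is a non-empty open subset of each, it realises this birational equivalence.

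For (2): Proposition~\ref{prop:Psi_m} yields $\Psi_m(C_{m+k}^{\pm})=C_{m-k}^{\mp}$ and an induced isomorphism
\[
\Psi_m\colon M_{C_{m+k}^{\pm}}(v)\simto M_{C_{m-k}^{\mp}}(v)
\]
for every $k\in\bb{Z}$. Taking $k=-1$ with the upper sign gives $\Psi_m\colon M_{C_{m-1}^{+}}(v)\simto M_{C_{m+1}^{-}}(v)$, and taking $k=0$ with the lower sign gives $\Psi_m\colon M_{C_m^{-}}(v)\simto M_{C_m^{+}}(v)$. Both are restrictions of the single derived auto-equivalence of $\bl{D}(X)$ identified in Proposition~\ref{prop:Psi_m(phase)} (or Remark~\ref{rem:Psi_m(phase)} when $m\ge 0$), so they coincide on objects lying in both sources. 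Consequently
\[
\Psi_m(M_m)=\Psi_m(M_{C_{m-1}^{+}}(v))\cap\Psi_m(M_{C_m^{-}}(v))=M_{C_{m+1}^{-}}(v)\cap M_{C_m^{+}}(v)=M_{m+1},
\]
and the restriction $\Psi_m\colon M_m\simto M_{m+1}$ is an isomorphism. Iterating over all $m\in\bb{Z}$ produces the announced two-sided sequence.

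The main (essentially bookkeeping) obstacle is verifying that the signs line up: one has to check that with the convention $M_m=M_{C_{m-1}^{+}}(v)\cap M_{C_m^{-}}(v)$ the two pairs of chambers are interchanged in the correct way, namely $C_{m-1}^{+}\mapsto C_{m+1}^{-}$ and $C_m^{-}\mapsto C_m^{+}$, so that the image of $M_m$ is $M_{C_m^{+}}(v)\cap M_{C_{m+1}^{-}}(v)=M_{m+1}$ rather than some other $M_{m'}$. The phase computation in Proposition~\ref{prop:Psi_m(phase)} controls this unambiguously, and beyond it no further geometric input is needed.
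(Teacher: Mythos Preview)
Your proposal is correct and follows essentially the same route as the paper. The only cosmetic difference is that for (1) the paper phrases the birationality via irreducibility of $M_{C_{m-1}^+}(v)$ and $M_{C_m^-}(v)$ (so a non-empty open common subset suffices), whereas you invoke the ``no codimension~0 wall'' proposition directly; and for (2) the paper simply cites Proposition~\ref{prop:Psi_m} (together with Proposition~\ref{prop:Psi_m(phase)} and Remark~\ref{rem:Psi_m(phase)}) to obtain the two isomorphisms $\Psi_m\colon M_{C_{m-1}^+}(v)\to M_{C_{m+1}^-}(v)$ and $\Psi_m\colon M_{C_m^-}(v)\to M_{C_m^+}(v)$ and then intersects, exactly as you do.
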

\begin{proof}
(1)
Since there is no codimension 0 wall between 
$C_{m-1}$ and $C_m$, $M_m \ne \emptyset$.
Since $M_{C_{m-1}^+}(v)$ and $M_{C_m^-}(v)$ are irreducible,
$M_m$ is birationally equivalent to
$M_{C_{m-1}^+}(v)$ and $M_{C_m^-}(v)$.

(2) 
By Proposition \ref{prop:Psi_m}
or Proposition \ref{prop:Psi_m(phase)} 
(and Remark \ref{rem:Psi_m(phase)}), we have isomorphisms
\begin{equation}
\begin{split}
\Psi_m:M_{C_{m-1}^+}(v) & \to M_{C_{m+1}^-}(v),\\
\Psi_m:M_{C_{m}^-}(v) & \to M_{C_m^+}(v).
\end{split}
\end{equation}
Hence we have an isomorphism
$$
\Psi_m:M_{C_{m-1}^+}(v) \cap M_{C_m^-}(v) \to
M_{C_m^+}(v) \cap M_{C_{m+1}^-}(v).
$$
Thus the claim holds.
\end{proof}

We note that
$M_0$ is an open subset of 
$M_{C_0^-}(v)=\{I_Z \otimes L \mid 
I_Z \in \Hilb{\ell}{X}, L \in \Piczero{X}\}$.
Hence  
we have two semi-homogeneous presentations of $I_Z \otimes L
\in M_0$:
\begin{equation}
0 \to I_Z \otimes L \to L \to {\cal O}_Z \to 0 
\end{equation}
and
\begin{equation}
0 \to E_{-1} \to E_0 \to I_Z \otimes L \to 0. 
\end{equation}

Starting from these two semi-homogeneous presentations,
we have a sequence of complexes $F_m^\bullet \in M_m$ 
such that
$F_0^{\bullet}=I_Z \otimes L$ and
$\Psi_m(F_m^\bullet)=F_{m+1}^\bullet$. 
Then we have exact triangles
\begin{equation}
\begin{split}
& V_{m-1}^+ \to F_m^{\bullet} \to V_{m-1}^- \to V_{m-1}^+[1]\\
& W_{m}^- \to F_m^{\bullet} \to W_{m}^+ \to W_{m}^-[1]\\
\end{split}
\end{equation}
such that 
\begin{itemize}
\item
$(W_0^-,W_0^+)=({\cal O}_Z[-1],L)$,
$(V_{-1}^+,V_{-1}^-)=(E_0,E_{-1}[1])$,
\item
$\Psi_m(W_m^\pm)=V_m^\pm$,
$\Psi_m(V_{m-1}^\pm)=W_{m+1}^\pm$,
\item
for $(s,t) \in C_{m-1}$,
$V_{m-1}^\pm$ are $\sigma_{(sH,tH)}$-semi-stable
objects with the same phase
and define the wall $C_{m-1}$,
\item
for $(s,t) \in C_{m}$,
$W_{m}^\pm$ are $\sigma_{(sH,tH)}$-semi-stable
objects with the same phase and 
define wall $C_m$.
\end{itemize}

\begin{NB}
For simplicity, assume that $m<0$.
Then
we also see that
$W_m^-[-1], V_m^-[-1] \in {\cal M}_H(u)$
$V_m$ and $W_m^\mp$ are semi-homogeneous sheaves

Then $W_m^-[-1]$ and $W_m^+$ are semi-homogeneous
sheaves with 
$\{v(W_m^-[-1]),v(W_m^+)\}=\{ \ell u_m,u_m'\}$.
Since $\Psi_m$ is determined by $\{ u_m,u_m'\}$,
\end{NB}

\begin{NB}
Let $E$ be be an object of $M_{(\beta_1,\omega_1)}(v) \cap
 M_{(\beta_2,\omega_2)}(v)$.
Let $\Phi$ be a contravariant Fourier-Mukai transform.
Assume that $d_{\widetilde{\beta_1}}(\Phi(E))>0$
and $d_{\widetilde{\beta_2}}(\Phi(E))>0$.
If $\Phi(E)$ is a 
$\sigma_{(\widetilde{\beta_1},\widetilde{\omega_1})}$-semi-stable
object of 
${\frak A}_{(\widetilde{\beta_1},\widetilde{\omega_1})}$,
then $\Phi(E)$ is also
a $\sigma_{(\widetilde{\beta_2},\widetilde{\omega_2})}$-semi-stable
object of 
${\frak A}_{(\widetilde{\beta_2},\widetilde{\omega_2})}$.

\begin{proof}
Since $d_{\widetilde{\beta_2}}(\Phi(E))>0$,
$\phi_{(\widetilde{\beta_2},\widetilde{\omega_2})}(\Phi(E))
\in (2k,2k+1)$.
By our assumption, we have $H^i(\Phi(E))=0$ for $i \ne -1,0$.
Hence $k=0$ and $\Phi(E) \in 
{\frak A}_{(\widetilde{\beta_2},\widetilde{\omega_2})}$.
\end{proof}

Assume that $E \in M_{C_{m-1}^+}(v) \cap M_{C_m^-}(v)$.
Since $E \in M_{C_{m-1}}(v)$, we have an exact sequence
$$
0 \to A \to E \to B \to 0
$$
such that $A$ and $B$ are $\sigma_{(\beta,\omega)}$-semi-stable
objects with 
$\phi_{(\beta,\omega)}(A)=\phi_{(\beta,\omega)}(B)$.
Then we have an exact sequence
$$
0 \to \Psi_m(B) \to \Psi_m(E) \to \Psi_m(A) \to 0
$$
in ${\frak A}_{(\widetilde{\beta},\widetilde{\omega})}$.

\end{NB}

\subsection{Fourier-Mukai transforms of the families
$F_m^{\bullet}$.}

We first assume $\ep=q^2-\ell p^2=-1$.
In this case, the algebraic integers $a_m,b_m$ 
in Definition~\ref{defn:ambm} satisfy the following relations:
\begin{align*}
 \dfrac{b_{2 k - 1}}{a_{2 k - 1}}<\dfrac{\ell a_{2 k}}{b_{2 k}}
<\dfrac{b_{2 k + 1}}{a_{2 k + 1}}<\sqrt{\ell}
<\dfrac{\ell a_{2 k + 1}}{b_{2 k + 1}}<\dfrac{b_{2 k}}{a_{2 k}}
<\dfrac{\ell a_{2 k - 1}}{b_{2 k - 1}}
\quad
(k\in\bb{Z}_{>0}),
\quad
\lim_{k\to\infty}\dfrac{b_k}{a_k}=\lim_{k\to\infty} \dfrac{\ell a_k}{b_k}=
\sqrt{\ell}.
\end{align*}
Thus $\pm \sqrt{\frac{\ell}{n}}$ are the accumulation points
of $\cup_m C_m$.

\begin{NB}
Set $z_m := b_m + \sqrt{\ell} a_m$ and 
$\overline{z}_m :=b_m - \sqrt{\ell} a_m$.
Then
$z=z_1=q+\sqrt{\ell}p$, 
$\overline{z}_1=q-\sqrt{\ell}p$, and 
\begin{align*}
&z_{m} = z_{m-1} z_1=z_1^m,\quad
 \overline{z}_m = \overline{z}_{m-1} \overline{z}_1 
 =\overline{z}_1^m
\quad
 (m \in \bb{Z}_{\ge0}).
\end{align*}
If $\ep=q^2-\ell p^2=-1$,
then $\overline{z}_1=q-\sqrt{\ell}p<0$.
Thus $\overline{z}_m<0$ if $m$ is odd, 
and $\overline{z}_m>0$ if $m$ is even.
Thus
\begin{align*}
\dfrac{b_{2k-1}}{a_{2k-1}}<\sqrt{\ell}<\dfrac{\ell a_{2k-1}}{b_{2k-1}},
\quad
\dfrac{\ell a_{2k}}{b_{2k}}<\sqrt{\ell}<\dfrac{b_{2k}}{a_{2k}}
\quad
(k \in \bb{Z}_{>0}).
\end{align*}
We also have
\begin{align*}
&\ell a_m a_{m-1}-b_m b_{m-1}
=(q^2-\ell p^2) (\ell a_{m-1}a_{m-2}-b_{m-1}b_{m-2})
\\
&\Longleftrightarrow
\dfrac{\ell a_m}{b_m}-\dfrac{b_{m-1}}{a_{m-1}}
=\dfrac{a_{m-2} b_{m-1}}{a_{m-1} b_m}(q^2-\ell p^2)
 (\dfrac{\ell a_{m-1}}{b_{m-1}}-\dfrac{b_{m-2}}{a_{m-2}})
\\
&\Longleftrightarrow
\dfrac{\ell a_{m}}{b_{m}}-\dfrac{b_{m+1}}{a_{m+1}}
=\dfrac{a_{m} b_{m-1}}{a_{m+1} b_{m}}(q^2-\ell p^2)
 (\dfrac{\ell a_{m-1}}{b_{m-1}}-\dfrac{b_{m}}{a_{m}})
\quad
(m\in \bb{Z}_{\ge2}).
\end{align*}
Note that 
$\tfrac{\ell a_1}{b_1}-\tfrac{b_0}{a_0}
=\tfrac{\ell \cdot p}{q}-\tfrac{0}{1}>0$
and
$\tfrac{\ell a_0}{b_0}-\tfrac{b_1}{a_1}
=\tfrac{\ell \cdot 0}{1}-\tfrac{q}{p}>0$.
Therefore if $\ep=q^2-\ell p^2=-1$, then we have
\begin{align*}
& \dfrac{b_{2 k - 1}}{a_{2 k - 1}}
<\dfrac{\ell a_{2 k}}{b_{2 k}}
<\dfrac{b_{2 k + 1}}{a_{2 k + 1}},
 \quad
 \dfrac{\ell a_{2 k + 1}}{b_{2 k + 1}}
<\dfrac{b_{2 k}}{a_{2 k}}
<\dfrac{\ell a_{2 k - 1}}{b_{2 k - 1}}
\quad
(k \in \bb{Z}_{>0}).
\end{align*}
\end{NB}

We regard $(a_m:b_m)$ and $(b_m:\ell a_m)$ 
as elements of ${\bb{P}}^1({\bb{R}})$. 
Then the inhomogeneous coordinates of these points give a sequence
\begin{multline}
\label{eq:interval:case1:050}
 -\infty=\dfrac{b_0}{a_0}
<-\dfrac{\ell p}{q}=\dfrac{\ell a_{-1}}{b_{-1}}
<\dfrac{b_{-2}}{a_{-2}}<\cdots<-\sqrt{\ell}<\cdots
<\dfrac{\ell a_{-2}}{b_{-2}}<\dfrac{b_{-1}}{a_{-1}}=-\dfrac{q}{p}
\\
<\dfrac{\ell a_0}{b_0}=0<\dfrac{b_1}{a_1}=\dfrac{q}{p}
<\dfrac{\ell a_2}{b_2}<\cdots<\sqrt{\ell}<\cdots<\dfrac{b_2}{a_2}
<\dfrac{\ell a_1}{b_1}=\dfrac{\ell p}{q}<\dfrac{b_0}{a_0}=\infty,
\end{multline}
where we write the inhomogeneous coordinate 
of $(0:1)$ as $\infty$ or $-\infty$.

For a Fourier-Mukai transform
$\Phi_{X \to X'}^{\bl{G}^{\vee}}\colon\bl{D}(X) \to \bl{D}(X')$, 
we write $c_1({\bl{G}}_{x'})/\rk {\bl{G}}_{x'}=(\lambda/\sqrt{n}) H$.
If $-\ell p / q<\lambda<-q/p$, then
$\Phi_{X \to X'}^{{\bl{G}^{\vee}}}(F_0^\bullet)$
is not a sheaf for all $F_0^\bullet$.

\begin{defn}
We set
\begin{align*}
\begin{array}{l l l l l l}
I_1&:=&
 [0,\tfrac{b_1}{a_1}) 
 \cup 
 [\tfrac{\ell a_1}{b_1},\infty),
&
I_0&:=&
 [-\infty,-\tfrac{\ell a_1}{b_1}) 
 \cup 
 [-\tfrac{b_1}{a_1},0),
\\[4pt]
I_{2k}&:=& 
 [\tfrac{b_{2k-1}}{a_{2k-1}},\tfrac{\ell a_{2k}}{b_{2k}})
 \cup 
 [\tfrac{b_{2k}}{a_{2k}},\tfrac{\ell a_{2k-1}}{b_{2k-1}}),
&
I_{-2k}&:=&
 [-\tfrac{b_{2k}}{a_{2k}},-\tfrac{\ell a_{2k+1}}{b_{2k+1}})
 \cup 
 [-\tfrac{b_{2k+1}}{a_{2k+1}},-\tfrac{\ell a_{2k}}{b_{2k}}),
\\[4pt]
I_{2k+1}&:=& 
 [\tfrac{\ell a_{2k}}{b_{2k}},\tfrac{b_{2k+1}}{a_{2k+1}})
 \cup 
 [\tfrac{\ell a_{2k+1}}{b_{2k+1}},\tfrac{b_{2k}}{a_{2k}}),
&
I_{-2k+1}&:=&
 [-\tfrac{\ell a_{2k-1}}{b_{2k-1}},-\tfrac{b_{2k}}{a_{2k}})
 \cup 
 [-\tfrac{\ell a_{2k}}{b_{2k}},-\tfrac{b_{2k-1}}{a_{2k-1}}).
\end{array}
\end{align*}
For $I=\coprod_i [s_i, t_i)$, we denote $I^*:=\coprod_i (s_i,t_i]$. 
\end{defn}

By \eqref{eq:interval:case1:050}, we have decompositions 
$\bb{P}^1({\bb{R}}) \setminus\{\pm\sqrt{\ell}\} =
\coprod_{m \in  {\bb{Z}}}I_m=
\coprod_{m \in {\bb{Z}}}I_m^*$.

\begin{thm}\label{thm:csv:1}
(1)
If $\lambda \in I_m$ $(m \leq 0)$, then
$\Phi_{X \to X'}^{{\bl{G}^{\vee}}}(F_m^\bullet)$ 
is a stable sheaf up to shift.

(2)
If $\lambda \in I_m^*$ $(m \leq 0)$, then
$\cal{D}_{X'}\Phi_{X \to X'}^{{\bl{G}^{\vee}}}(F_m^\bullet)=
\Phi_{X \to X'}^{{\bl{G}}[2]}(F_m^{\bullet \vee})$ 
is a stable sheaf up to shift.
\end{thm}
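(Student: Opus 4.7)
The plan is to reduce both statements to Proposition \ref{prop:unbdd} using the stability-preservation of Fourier-Mukai transforms (Theorem \ref{thm:B:3-10.3}). Recall that $F_m^\bullet$ is $\sigma_{(sH,tH)}$-stable for every $(s,t)$ in the chamber $M_m = M_{C_{m-1}^+}(v) \cap M_{C_m^-}(v)$ bounded by the walls $C_{m-1}$ and $C_m$. Writing $c_1(\bl{G}_{x'})/\rk \bl{G}_{x'} = (\lambda/\sqrt{n}) H$, formula \eqref{eq:s'} exhibits $\Phi := \Phi_{X \to X'}^{\bl{G}^\vee}$ as an inversion of the upper half plane centered at $(\lambda, 0)$, which sends $(\lambda, 0)$ to the point at infinity of the $(\widetilde{s},\widetilde{t})$-plane and which maps walls bijectively to walls of $u := \Phi(v)$.

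The key step is the combinatorial identification, for $m \leq 0$, of the interval $I_m$ with the portion of the $s$-axis lying in the closure of $M_m$. Since $C_{m-1}$ meets the $s$-axis at $b_{m-1}/(a_{m-1}\sqrt{n})$ and $\ell a_{m-1}/(b_{m-1}\sqrt{n})$, and $C_m$ meets it at $b_m/(a_m\sqrt{n})$ and $\ell a_m/(b_m\sqrt{n})$, the chain of inequalities \eqref{eq:interval:case1:050} shows that the two connected components of $\overline{M_m} \cap \{t=0\}$ are precisely the two half-open intervals constituting $I_m$. Hence, when $\lambda \in I_m$, points of $M_m$ approaching $(\lambda, 0)$ are sent by \eqref{eq:s'} to points approaching infinity, and the image $\Phi(M_m)$ is therefore a single \emph{unbounded} chamber for $u$ in the $(\widetilde{s},\widetilde{t})$-plane. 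By Theorem \ref{thm:B:3-10.3}(1), $\Phi(F_m^\bullet)$ is $\sigma$-stable in this unbounded chamber, so Proposition \ref{prop:unbdd} identifies it with a stable sheaf up to shift, proving (1).

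For (2), one applies the same strategy to the contravariant Fourier-Mukai transform ${\cal D}_{X'} \circ \Phi = \Phi_{X \to X'}^{\bl{G}[2]} \circ {\cal D}_X$, where the equality is Grothendieck-Serre duality. This transform preserves Bridgeland stability by Theorem \ref{thm:B:3-10.3}(2) and Lemma \ref{lem:dual-phase}. The extra dualization ${\cal D}_{X'}$ reflects the $(\widetilde{s},\widetilde{t})$-plane across the $\widetilde{t}$-axis, reversing orientation along the $s$-axis and interchanging open and closed endpoints of the $I_m$-intervals; this is precisely the operation $I_m \mapsto I_m^*$. Hence the argument of (1) yields the analogous statement for $I_m^*$. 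The main technical obstacle in both parts is exactly this endpoint bookkeeping: verifying from \eqref{eq:interval:case1:050} that $I_m$ and $I_m^*$ match the $s$-axis boundary of $M_m$ with the correct half-open conventions, so that $(\lambda, 0)$ lies in the appropriate closure. Once this dictionary is in hand, the theorem is an immediate consequence of Theorem \ref{thm:B:3-10.3} and Proposition \ref{prop:unbdd}.
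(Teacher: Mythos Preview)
Your overall strategy---locate $(\lambda,t)$ for small $t>0$ in the annulus between $C_{m-1}$ and $C_m$, observe that the inversion \eqref{eq:s'} sends a neighborhood of $(\lambda,0)$ to infinity so the image chamber is unbounded, and then appeal to Proposition~\ref{prop:unbdd}---is exactly the paper's approach (the paper packages the unboundedness step as Proposition~\ref{prop:Phi(C^pm)} rather than quoting \eqref{eq:s'} directly).

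There is, however, a genuine gap in your first paragraph. You write ``$F_m^\bullet$ is $\sigma_{(sH,tH)}$-stable for every $(s,t)$ in the chamber $M_m=M_{C_{m-1}^+}(v)\cap M_{C_m^-}(v)$ bounded by the walls $C_{m-1}$ and $C_m$.'' But $M_m$ is a moduli space, not a chamber: by definition $F_m^\bullet$ is stable in the two adjacent chambers $C_{m-1}^+$ and $C_m^-$, and the annulus between $C_{m-1}$ and $C_m$ can contain further walls of positive codimension (Proposition~\ref{prop:fund} only says finitely many; see the wall $W_{w_{-1}}$ between $C_{-1}$ and $C_0$ in the $\ell=3$ example of \S\ref{sect:example}). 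For a generic $\lambda$ in the interior of $I_m$, the point $(\lambda,t)$ with $t$ small lies in one of these intermediate chambers, where stability of $F_m^\bullet$ is not given for free. The paper fills exactly this gap by invoking Lemma~\ref{lem:2-stability}: since $F_m^\bullet$ is semi-stable at the innermost and outermost chambers of the annulus, it is semi-stable on the whole vertical segment joining them, in particular at $(\lambda,t)$. You should insert this step; without it the phrase ``$\Phi(M_m)$ is a single unbounded chamber'' is not meaningful and the deduction breaks.

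For part (2) your heuristic about the reflection swapping open and closed endpoints is correct in spirit, though the reflection acts on the source $(s,t)$-plane via $\mathcal D_X$ (sending $\sigma_{(sH,tH)}$ to $\sigma_{(-sH,tH)}$, Lemma~\ref{lem:dual-phase}) rather than on the target. Once (1) is fixed as above, the paper's ``similar'' applies verbatim.
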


\begin{proof}
(1)
For a small number $t>0$,
$(\lambda,t)$ belongs to the interior of
the annulus bounded by $C_{-m-1}$ and $C_{-m}$.
By Lemma \ref{lem:2-stability},
$F_m^{\bullet}$ is $\sigma_{(\lambda H,tH)}$-semi-stable.   
By Proposition \ref{prop:Phi(C^pm)},
$\Phi_{X \to X'}^{{\bf G}^{\vee}[n]}(F_m^{\bullet})$
is a stable sheaf, where
$n=1$ for $\lambda> -\sqrt{\ell}$ and
$n=2$ for $\lambda<-\sqrt{\ell}$.
The proof of (2) is similar.
\end{proof}
By this theorem,
we have semi-homogeneous presentations
for a general member of $M_H(w)$, $w=\Phi_{X \to X'}^{{\bf G}^{\vee}}(v)$.

\begin{NB}
$(F_m^{\bullet})^{\vee}=F_{-m+1}^{\bullet}$.
\end{NB}

\begin{rem}
\label{fct:sheaf-criterion}
The claim also follows from \cite[Lem. 4.4]{YY}.
\end{rem}

We next 
assume that $\ep=q^2-\ell p^2=1$.
Then the algebraic integers $a_m,b_m$ 
in Definition~\ref{defn:ambm} satisfy 
\begin{align*}
0<\dfrac{b_m}{a_m} -\sqrt{\ell}<\dfrac{b_{m-1}}{a_{m-1}} -\sqrt{\ell}
\end{align*} 
for $m\in\bb{Z}_{>0}$.
We also have the following sequence of inequalities:
\begin{multline*}
 -\infty=\dfrac{b_0}{a_0}
<-\dfrac{q}{p}=\dfrac{b_{-1}}{a_{-1}}
<\dfrac{b_{-2}}{a_{-2}}<\cdots<-\sqrt{\ell}<\cdots
<\dfrac{\ell a_{-2}}{b_{-2}}<\dfrac{\ell a_{-1}}{b_{-1}}=-\dfrac{\ell p}{q}
\\
<\dfrac{\ell a_0}{b_1}=0
<\dfrac{\ell a_1}{b_1}=\dfrac{\ell p}{q}
<\dfrac{\ell a_2}{b_2}
<\cdots<\sqrt{\ell}
<\cdots<\dfrac{b_2}{a_2}<\dfrac{b_1}{a_1}=\dfrac{q}{p}
<\dfrac{b_0}{a_0}=\infty.
\end{multline*}

\begin{NB}
Set $z_m := b_m + \sqrt{\ell} a_m$ and 
$\overline{z}_m :=b_m - \sqrt{\ell} a_m$.
Then
$z=z_1=q+\sqrt{\ell}p$, 
$\overline{z}_1=q-\sqrt{\ell}p$, and 
\begin{align*}
&z_{m} = z_{m-1} z_1=z_1^m,\quad
 \overline{z}_m = \overline{z}_{m-1} \overline{z}_1 
 =\overline{z}_1^m
\quad
 (m \in \bb{Z}_{\ge0}).
\end{align*}
If $\ep=q^2-\ell p^2=+1$,
then $\overline{z}_1=q-\sqrt{\ell}p>0$.
Thus $\overline{z}_m>0$ for any $m\in\bb{Z}_{\ge0}$,
so that
\begin{align*}
\sqrt{\ell}<\dfrac{b_{m}}{a_{m}}
\quad
(m \in \bb{Z}_{>0}).
\end{align*}
We also have
\begin{align*}
& a_m b_{m-1}-a_{m-1}b_m 
=(q^2-\ell p^2) (a_{m-1}b_{m-2}-a_{m-2}b_{m-1})
\\
&\Longleftrightarrow
\dfrac{b_{m-1}}{a_{m-1}}-\dfrac{b_m}{a_m}
=\dfrac{a_{m-2} }{a_m}(q^2-\ell p^2)
(\dfrac{b_{m-2}}{a_{m-2}}-\dfrac{b_{m-1}}{a_{m-1}})
\quad
(m\in \bb{Z}_{\ge2}).
\end{align*}
Note also that 
$\tfrac{b_1}{a_1}-\tfrac{b_2}{a_2}
=\tfrac{q}{p}-\tfrac{q^2+\ell p^2}{2pq}
=\tfrac{q^2-\ell p^2}{2pq}>0$.
Thus we ahve
\begin{align*}
\dfrac{b_m}{a_m}<\dfrac{b_{m-1}}{a_{m-1}}
\quad
(m\in\bb{Z}_{\ge2}).
\end{align*}
\end{NB}

\begin{defn}
We set
\begin{align*}
\begin{array}{l l l l l l}
I_1     &:=&[0,\tfrac{\ell a_1}{b_1}) \cup [\tfrac{b_1}{a_1},\infty),
&
I_0  &:=&[-\infty,-\tfrac{b_1}{a_1}) \cup [-\tfrac{\ell a_1}{b_1},0),
\\[4pt]
I_{m+1}   &:=&[\tfrac{\ell a_m}{b_m},\tfrac{\ell a_{m+1}}{b_{m+1}})
                \cup
                [\tfrac{b_{m+1}}{a_{m+1}},\tfrac{b_{m} }{a_{m} }),
&
I_{-m}&:=&[-\tfrac{b_m}{a_m},-\tfrac{b_{m+1}}{a_{m+1}})
                \cup
               [-\tfrac{\ell a_{m+1}}{b_{m+1}},-\tfrac{\ell a_{m}}{b_{m}})\quad
                m \geq 1.
\end{array}
\end{align*}
\end{defn}
Then we have
$\bb{P}^1({\bb{R}}) \setminus \{\pm\sqrt{\ell}\}=
\coprod_{m \in \bb{Z}} I_m =\coprod_{m \in \bb{Z}} I_m^*$.

\begin{thm}\label{thm:csv:2}
For a Fourier-Mukai transform 
$\Phi_{X \to X'}^{{\bl{G}^{\vee}}}\colon\bl{D}(X) \to \bl{D}(X')$,
we write
$c_1({\bl{G}}_{x'})/\rk {\bl{G}}_{x'}=(\lambda/\sqrt{n}) H$.

(1)
If $\lambda \in I_m$ $(m \leq 0)$, then
$\Phi_{X \to X'}^{\bl{G}^{\vee}}(F_m^\bullet)$ is a stable sheaf
up to shift.

(2)
If $\lambda \in I_m^*$ $(m \leq 0)$, then
$\cal{D}_{X'}\, \Phi_{X \to X'}^{\bl{G}^{\vee}}(F_m^\bullet)=
\Phi_{X \to X'}^{{\bl{G}}[2]}\, \cal{D}_{X}(F_m^\bullet)$ 
is a stable sheaf.
\end{thm}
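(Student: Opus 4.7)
The plan is to adapt the proof of Theorem \ref{thm:csv:1} \emph{mutatis mutandis}, with only the geometry of the codimension-$0$ walls changed. In the case $\epsilon = q^2 - \ell p^2 = 1$, the circles $C_m$ with $m \geq 1$ are nested and accumulate at $(\sqrt{\ell/n}, 0)$, and similarly $C_m$ with $m \leq -1$ accumulate at $(-\sqrt{\ell/n}, 0)$. By inspection of the endpoints, the interval $I_m$ (respectively $I_m^*$) for $m \leq 0$ is precisely the $\sqrt{n}$-rescaled projection onto the $s$-axis of the chamber bounded by $C_{m-1}$ and $C_m$ (respectively the dual chamber arising from $\cal{D}_X$).

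For part (1): given $\lambda \in I_m$ with $m \leq 0$, take $t > 0$ so small that $(\lambda/\sqrt{n}, t)$ lies in the chamber $C_{m-1}^+ \cap C_m^-$. Since $F_m^\bullet \in M_{C_{m-1}^+}(v) \cap M_{C_m^-}(v)$ by construction, Lemma \ref{lem:2-stability} together with Proposition \ref{prop:open} implies that $F_m^\bullet$ is $\sigma_{((\lambda/\sqrt{n}) H, tH)}$-semistable. By Proposition \ref{prop:Phi(C^pm)}, the Fourier-Mukai transform $\Phi_{X \to X'}^{\bl{G}^{\vee}}$ sends this chamber to an unbounded chamber of the parameter space for $X'$; in such a chamber Proposition \ref{prop:unbdd} identifies Bridgeland-semistable objects with twisted Gieseker-semistable sheaves, up to a shift $[1]$ or $[2]$ according to the sign of $\lambda + \sqrt{\ell}$. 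Hence $\Phi_{X \to X'}^{\bl{G}^{\vee}}(F_m^\bullet)$ is a stable sheaf up to shift.

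Part (2) proceeds identically using the contravariant Fourier-Mukai transform $\cal{D}_{X'} \circ \Phi_{X \to X'}^{\bl{G}^{\vee}}$, which preserves Bridgeland stability by Theorem \ref{thm:B:3-10.3}(2). The stated identity $\cal{D}_{X'}\, \Phi_{X \to X'}^{\bl{G}^{\vee}} = \Phi_{X \to X'}^{\bl{G}[2]}\, \cal{D}_X$ is Grothendieck--Serre duality, and the starred intervals $I_m^*$ account for the boundary points swapped by the sign reversal $\beta \mapsto -\beta$ used in defining the induced stability condition for a contravariant transform. The only genuine work is combinatorial, namely matching the two decompositions $\coprod_m I_m = \coprod_m I_m^* = \bb{P}^1({\Bbb R}) \setminus \{\pm\sqrt{\ell}\}$ against the sequence of chambers; this is straightforward using the monotonicity of $\{b_m/a_m\}$ and $\{\ell a_m/b_m\}$ toward $\sqrt{\ell}$ that is peculiar to the $\epsilon = 1$ case (and replaces the oscillation observed for $\epsilon = -1$). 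This bookkeeping is where all the differences from Theorem \ref{thm:csv:1} are concentrated; once it is set up, the conclusion is immediate from the same three ingredients (Lemma \ref{lem:2-stability}, Proposition \ref{prop:Phi(C^pm)}, and Proposition \ref{prop:unbdd}).
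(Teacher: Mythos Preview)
Your proposal is correct and follows essentially the same route as the paper, which simply states that the proof ``is based on the calculation in this subsection and is similar to that of Theorem \ref{thm:csv:1}'' and omits the details. You have supplied precisely those details: the identification of $I_m$ (resp.\ $I_m^*$) with the $\sqrt{n}$-rescaled trace on the $s$-axis of the annulus between $C_{m-1}$ and $C_m$, followed by Lemma \ref{lem:2-stability}, Proposition \ref{prop:Phi(C^pm)}, and Proposition \ref{prop:unbdd}, is exactly the argument for Theorem \ref{thm:csv:1} transported to the $\epsilon=1$ combinatorics.
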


The proof is based on the calculation 
in this subsection and is similar to that of Theorem \ref{thm:csv:1}. 
We omit the detail.

\section{Examples}\label{sect:example}

Let $X$ be a principally
polarized abelian surface with $\NS(X)={\Bbb Z}H$. 
We shall study walls for $v=1-\ell \varrho_X$.
We note that $n:=\frac{(H^2)}{2}=1$.
By using Corollary \ref{cor:square},
it is easy to see that $s=0$ is the unique wall for $\ell=1$.
So we assume that $\ell \geq 2$.
We use the notations in section \ref{sect:rho=1}.

(1)
Assume that $\ell=2$.
In this case,
$S_{1,2}/\{\pm 1 \}$ is generated by
\begin{equation*}
A_2:=
\begin{pmatrix}
1 & 2\\
1 & 1
\end{pmatrix}.
\end{equation*}
Hence we have a numerical solution $v=2(1,-H,1)-(1,-2H,4)$.
We have $u_{-1}=(1,-H,1)$ and
\begin{NB}
Then $C_{-1}=W_{u_{-1}}$ is the circle in the $z$-plane 
$$
\left |z+\frac{3}{2} \right|=\frac{1}{2}.
$$
\end{NB}
$C_{-1}=W_{u_{-1}}$ is the circle in the $(s,t)$-plane 
$$
\left(s+\frac{3}{2} \right)^2+t^2=\frac{1}{2^2}.
$$
For $s=-1$, we get $c_1(v e^{-sH})=H$.
Thus there is no wall intersecting with $s=-1$.
Then we see that there is no wall between $C_{-1}$ and $C_0:s=0$
(see Figure 1).
Hence we get the following result.
\begin{lem}
$C_n$ $(n \in {\Bbb Z})$ are all the walls for $v=1-2 \varrho_X$.
\end{lem}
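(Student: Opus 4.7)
The plan is to combine Proposition~\ref{prop:fund} with the final Remark of Section~\ref{sect:restricted-parameter}. By Proposition~\ref{prop:fund}(2), every wall for $v=(1,0,-2)$ is transported by $G_{1,2}$ to a wall lying between $C_0$ and $C_{-1}$, and the $G_{1,2}$-action permutes the codimension-$0$ walls $\{C_m\}_{m\in\bb{Z}}$. Hence the lemma reduces to showing that $C_0$ and $C_{-1}$ are the only walls between themselves.

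By the proof of Proposition~\ref{prop:fund}(1), with $A_2=\begin{pmatrix}1&2\\1&1\end{pmatrix}$ (so $p=q=1$), any additional wall between $C_0$ and $C_{-1}$ would have to intersect the vertical line $s=\lambda_0=-q/(p\sqrt{n})=-1$. So it suffices to exhibit this line as wall-free for $t>0$. For this I invoke the final Remark of Section~\ref{sect:restricted-parameter}. Since $\rk v=1$, $c_1(v)=0$, and $\NS(X)=\bb{Z}H$, the integer
\begin{align*}
D=\min\bigl\{((\rk w)\,c_1(v)-(\rk v)\,c_1(w),H)>0\ \big|\ w\in H^*(X,\bb{Z})_{\alg}\bigr\}
\end{align*}
equals $(H,H)=2=\min\{(C,H)>0\mid C\in \NS(X)\}$, so the Remark's hypothesis holds. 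Choosing $w_0=(1,-H,0)$ yields $\rk v\ge \rk w_0>0$, $d_0(w_0)/\rk w_0=-1$, and $d_0(v)/\rk v=0$. The Remark then gives no wall for $v$ in the range $-1\le s<0$, ruling out any wall meeting the segment $\{s=-1,\,t>0\}$ as well as any wall in the open strip $-1<s<0$.

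Finally, the circle $C_{-1}$ meets the line $s=-1$ only at the boundary point $(-1,0)$ (directly from its equation $(s+3/2)^2+t^2=1/4$), so no wall lies strictly between $C_0$ and $C_{-1}$; together with the reduction in the first paragraph this completes the proof. The key point requiring care is the verification of the Remark's hypothesis on $D$: it is precisely the primitivity of $c_1(v\,e^{H})=H$ in $\NS(X)=\bb{Z}H$ that forces the minimum defining $D$ to attain $(H,H)=2$, and this is the essential numerical input of the lemma.
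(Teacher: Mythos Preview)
Your proof is correct and follows essentially the same route as the paper's: both reduce via Proposition~\ref{prop:fund} to showing there are no walls strictly between $C_0$ and $C_{-1}$, and both rule out such walls by observing that at $s=-1$ the twisted first Chern class $c_1(v e^{H})=H$ is primitive. The paper states this directly (``For $s=-1$, we get $c_1(v e^{-sH})=H$. Thus there is no wall intersecting with $s=-1$''), while you invoke the same principle through the final Remark of Section~\ref{sect:restricted-parameter}; the content is identical.
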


\begin{prop}
Let $v$ be a positive and primitive
Mukai vector with $\langle v^2 \rangle=4$.
Then $M_H(v)$ is isomorphic to
$\Hilb{2}{X} \times X$.
\end{prop}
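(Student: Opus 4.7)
The plan is to reduce to the already-computed case $v_0:=1-2\varrho_X$ via a Fourier-Mukai transform, then read off the answer from the chain of isomorphisms provided by the wall structure established in Section~\ref{sect:rho=1} and the preceding lemma.

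First I would check that any primitive Mukai vector $v$ with $\langle v^2\rangle=4$ lies in the $\widehat{G}$-orbit of $\pm v_0$ under the action \eqref{eq:action_cdot}, and moreover that this orbit is realized by an equivalence in $\cal{E}$, possibly composed with $\cal{D}_X$. Under the identification $\iota_X\colon H^*(X,\bb{Z})_{\alg}\simto\Sym_2(\bb{Z},1)$, such a $v=(r,dH,a)$ corresponds to a primitive integer symmetric matrix of determinant $-2$. Since $n=1$ we have $\widehat{G}=\GL(2,\bb{Z})$, and the uniqueness of the primitive binary quadratic form of discriminant $8$ up to $\GL(2,\bb{Z})$-equivalence (reflecting the narrow class number one of $\bb{Q}(\sqrt{2})$) shows that the $\widehat{G}$-orbit of $\iota_X(v_0)=\mathrm{diag}(1,-2)$ covers all such matrices. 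Diagram~\eqref{diag:groups}, extended to contravariant transforms by post-composition with $\cal{D}_X$, then produces a (possibly contravariant) Fourier-Mukai transform $\Phi\colon\bl{D}(X)\to\bl{D}(X')$ with $\Phi(v)=\pm v_0$; by Remark~\ref{rem:2m} the target $X'$ may be identified with $X$ itself.

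Next I would translate this into an isomorphism of moduli spaces. By Proposition~\ref{prop:unbdd}, the Gieseker moduli $M_H(v)$ coincides with the Bridgeland moduli $M_{(sH,tH)}(v)$ for $(s,t)$ in a suitable unbounded chamber of the restricted parameter space attached to $v$; by Theorem~\ref{thm:B:3-10.3}, $\Phi$ preserves Bridgeland semi-stability and hence yields an isomorphism $M_H(v)\cong M_{(\widetilde\beta,\widetilde\omega)}(\pm v_0)$ for some $(\widetilde\beta,\widetilde\omega)$ lying in a chamber for $\pm v_0$. Finally, the lemma preceding the proposition asserts that the only walls for $v_0$ are the codimension~$0$ walls $C_n$ $(n\in\bb{Z})$, so the chain of isomorphisms $\Psi_m$ from Proposition~\ref{prop:Psi_m} identifies every $M_{C_m^\pm}(v_0)$ with $M_{C_0^-}(v_0)=M_H(1,0,-2)$, whose points are precisely the twisted ideal sheaves $I_Z\otimes L$ with $Z\in\Hilb{2}{X}$ and $L\in\Piczero{X}$; hence $M_{C_0^-}(v_0)\cong\Hilb{2}{X}\times\Piczero{X}\cong\Hilb{2}{X}\times X$ via the principal polarization. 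The case $\Phi(v)=-v_0$ is reduced to the previous one by applying the shift functor $[1]$, which is a trivial autoequivalence.

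The main obstacle is the first step: confirming both that $\widehat{G}$ acts transitively on primitive vectors of square $4$, and that every element of the lattice orbit is realized by a genuine (possibly contravariant) Fourier-Mukai transform rather than by an abstract lattice isometry. The latter amounts to surjectivity of the composite $\cal{E}\cup\cal{E}\circ\cal{D}_X\to\widehat{G}/\{\pm 1\}$, which in the principally polarized case rests on the explicit description of auto-equivalences given in Section~\ref{sect:rho=1} and in \cite{YY}.
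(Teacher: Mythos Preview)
Your proposal is correct and follows essentially the same route as the paper: reduce to $v_0=(1,0,-2)$ via a Fourier--Mukai transform, invoke Theorem~\ref{thm:B:3-10.3} to carry the Gieseker moduli to a Bridgeland moduli for $v_0$, and then use the Lemma (that the $C_n$ are the only walls for $v_0$) together with the $\Psi_m$-chain to identify every chamber with $M_H(1,0,-2)\cong\Hilb{2}{X}\times X$. The paper's proof is the same in outline but more terse: it simply asserts the existence of a covariant $\Phi$ with $\Phi((1,0,-2))=v$, cites Theorem~\ref{thm:B:3-10.3} and Proposition~\ref{prop:fund}, and concludes.

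Two minor remarks. First, your caution about needing a contravariant transform is unnecessary here: for $n=1$ we have $G=\SL_2(\bb{Z})$, and the class number of discriminant~$8$ forms is already~$1$ under $\SL_2(\bb{Z})$-equivalence, so a covariant $\Phi$ suffices (this is what the paper uses without comment). Second, your appeal to Remark~\ref{rem:2m} to identify $X'$ with $X$ is slightly indirect; the cleaner fact is that for $n=1$ every $Y\in\FM(X)$ is isomorphic to $X$, since $G=\SL_2(\bb{Z})$ acts transitively on primitive isotropic vectors.
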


\begin{proof}
There is a Fourier-Mukai transform 
$\Phi_{X \to X}^{{\bf E}^{\vee}}:{\bf D}(X) \to {\bf D}(X)$
such that $\Phi_{X \to X}^{{\bf E}^{\vee}}((1,0,-2))=v$.
Then we have an isomorphism
$M_{(\beta,\omega)}(1,0,-2) \to M_H(v)$,
where $\beta=c_1({\bf E}_{|\{ x\} \times X})/\rk {\bf E}_{|\{ x\} \times X}$
and $(\omega^2) \ll 1$.
By Theorem \ref{thm:B:3-10.3} and
Proposition \ref{prop:fund},
$M_{(\beta,\omega)}(1,0,-2) \cong 
\Hilb{2}{X} \times X$, which implies the claim.
\end{proof}

\begin{figure}[htbp]
\unitlength 0.1in
\begin{picture}( 51.0000, 23.1000)(  6.0000,-32.0000)
%
{\color[named]{Black}{%
\special{pn 8}%
\special{pa 600 2400}%
\special{pa 5600 2400}%
\special{fp}%
\special{sh 1}%
\special{pa 5600 2400}%
\special{pa 5534 2380}%
\special{pa 5548 2400}%
\special{pa 5534 2420}%
\special{pa 5600 2400}%
\special{fp}%
}}%
%
{\color[named]{Black}{%
\special{pn 25}%
\special{ar 2400 2400 800 800  3.1415927  6.2831853}%
}}%
%
{\color[named]{Black}{%
\special{pn 8}%
\special{pa 1738 2849}%
\special{pa 1734 2843}%
\special{pa 1734 2843}%
\special{fp}%
\special{pa 1714 2811}%
\special{pa 1710 2805}%
\special{fp}%
\special{pa 1692 2772}%
\special{pa 1692 2772}%
\special{pa 1690 2769}%
\special{pa 1689 2766}%
\special{pa 1689 2766}%
\special{fp}%
\special{pa 1673 2732}%
\special{pa 1671 2730}%
\special{pa 1670 2727}%
\special{pa 1669 2726}%
\special{fp}%
\special{pa 1655 2692}%
\special{pa 1654 2690}%
\special{pa 1652 2685}%
\special{fp}%
\special{pa 1640 2649}%
\special{pa 1638 2642}%
\special{fp}%
\special{pa 1627 2606}%
\special{pa 1627 2604}%
\special{pa 1625 2599}%
\special{fp}%
\special{pa 1617 2563}%
\special{pa 1617 2562}%
\special{pa 1615 2556}%
\special{pa 1615 2556}%
\special{fp}%
\special{pa 1609 2519}%
\special{pa 1609 2516}%
\special{pa 1608 2513}%
\special{pa 1608 2512}%
\special{fp}%
\special{pa 1604 2475}%
\special{pa 1603 2473}%
\special{pa 1603 2468}%
\special{fp}%
\special{pa 1601 2431}%
\special{pa 1601 2430}%
\special{pa 1600 2427}%
\special{pa 1600 2423}%
\special{fp}%
\special{pa 1600 2386}%
\special{pa 1600 2378}%
\special{fp}%
\special{pa 1602 2341}%
\special{pa 1602 2340}%
\special{pa 1603 2337}%
\special{pa 1603 2334}%
\special{fp}%
\special{pa 1607 2297}%
\special{pa 1607 2297}%
\special{pa 1607 2294}%
\special{pa 1608 2290}%
\special{pa 1608 2290}%
\special{fp}%
\special{pa 1613 2253}%
\special{pa 1615 2248}%
\special{pa 1615 2246}%
\special{fp}%
\special{pa 1623 2210}%
\special{pa 1623 2209}%
\special{pa 1624 2205}%
\special{pa 1625 2202}%
\special{fp}%
\special{pa 1635 2167}%
\special{pa 1636 2164}%
\special{pa 1637 2160}%
\special{pa 1637 2159}%
\special{fp}%
\special{pa 1649 2124}%
\special{pa 1650 2123}%
\special{pa 1651 2119}%
\special{pa 1652 2117}%
\special{fp}%
\special{pa 1666 2083}%
\special{pa 1666 2082}%
\special{pa 1668 2076}%
\special{pa 1668 2076}%
\special{fp}%
\special{pa 1684 2042}%
\special{pa 1686 2040}%
\special{pa 1687 2037}%
\special{pa 1688 2036}%
\special{fp}%
\special{pa 1705 2003}%
\special{pa 1708 1999}%
\special{pa 1709 1996}%
\special{fp}%
\special{pa 1729 1965}%
\special{pa 1729 1965}%
\special{pa 1730 1962}%
\special{pa 1732 1959}%
\special{pa 1733 1958}%
\special{fp}%
\special{pa 1754 1928}%
\special{pa 1755 1927}%
\special{pa 1758 1922}%
\special{fp}%
\special{pa 1781 1893}%
\special{pa 1782 1892}%
\special{pa 1784 1890}%
\special{pa 1786 1887}%
\special{pa 1786 1887}%
\special{fp}%
\special{pa 1811 1859}%
\special{pa 1812 1857}%
\special{pa 1815 1855}%
\special{pa 1816 1854}%
\special{fp}%
\special{pa 1841 1827}%
\special{pa 1843 1826}%
\special{pa 1847 1822}%
\special{fp}%
\special{pa 1874 1797}%
\special{pa 1874 1797}%
\special{pa 1877 1795}%
\special{pa 1880 1792}%
\special{fp}%
\special{pa 1909 1769}%
\special{pa 1911 1767}%
\special{pa 1913 1765}%
\special{pa 1915 1764}%
\special{fp}%
\special{pa 1945 1743}%
\special{pa 1946 1742}%
\special{pa 1948 1740}%
\special{pa 1951 1738}%
\special{fp}%
\special{pa 1982 1718}%
\special{pa 1985 1716}%
\special{pa 1988 1715}%
\special{pa 1989 1714}%
\special{fp}%
\special{pa 2020 1696}%
\special{pa 2022 1695}%
\special{pa 2025 1693}%
\special{pa 2027 1692}%
\special{fp}%
\special{pa 2060 1676}%
\special{pa 2061 1675}%
\special{pa 2067 1673}%
\special{pa 2067 1673}%
\special{fp}%
\special{pa 2101 1658}%
\special{pa 2107 1656}%
\special{pa 2108 1655}%
\special{fp}%
\special{pa 2143 1643}%
\special{pa 2145 1642}%
\special{pa 2150 1640}%
\special{fp}%
\special{pa 2186 1629}%
\special{pa 2192 1627}%
\special{pa 2193 1627}%
\special{fp}%
\special{pa 2229 1619}%
\special{pa 2234 1617}%
\special{pa 2237 1617}%
\special{fp}%
\special{pa 2273 1610}%
\special{pa 2277 1609}%
\special{pa 2280 1609}%
\special{pa 2280 1609}%
\special{fp}%
\special{pa 2317 1604}%
\special{pa 2317 1604}%
\special{pa 2323 1604}%
\special{pa 2324 1604}%
\special{fp}%
\special{pa 2361 1601}%
\special{pa 2369 1601}%
\special{fp}%
\special{pa 2406 1600}%
\special{pa 2414 1600}%
\special{fp}%
\special{pa 2451 1602}%
\special{pa 2458 1602}%
\special{fp}%
\special{pa 2495 1606}%
\special{pa 2496 1606}%
\special{pa 2500 1606}%
\special{pa 2503 1607}%
\special{fp}%
\special{pa 2539 1612}%
\special{pa 2543 1613}%
\special{pa 2546 1613}%
\special{pa 2547 1613}%
\special{fp}%
\special{pa 2583 1621}%
\special{pa 2585 1622}%
\special{pa 2588 1622}%
\special{pa 2590 1623}%
\special{fp}%
\special{pa 2626 1633}%
\special{pa 2633 1635}%
\special{fp}%
\special{pa 2669 1646}%
\special{pa 2674 1648}%
\special{pa 2676 1649}%
\special{fp}%
\special{pa 2710 1663}%
\special{pa 2711 1663}%
\special{pa 2715 1664}%
\special{pa 2717 1665}%
\special{fp}%
\special{pa 2751 1681}%
\special{pa 2751 1681}%
\special{pa 2754 1683}%
\special{pa 2757 1684}%
\special{pa 2757 1684}%
\special{fp}%
\special{pa 2790 1702}%
\special{pa 2792 1703}%
\special{pa 2795 1704}%
\special{pa 2797 1705}%
\special{fp}%
\special{pa 2828 1724}%
\special{pa 2829 1725}%
\special{pa 2835 1729}%
\special{fp}%
\special{pa 2865 1749}%
\special{pa 2871 1753}%
\special{pa 2872 1754}%
\special{fp}%
\special{pa 2901 1776}%
\special{pa 2905 1780}%
\special{pa 2907 1781}%
\special{fp}%
\special{pa 2935 1806}%
\special{pa 2935 1806}%
\special{pa 2938 1808}%
\special{pa 2940 1810}%
\special{pa 2941 1810}%
\special{fp}%
\special{pa 2967 1836}%
\special{pa 2969 1838}%
\special{pa 2972 1840}%
\special{pa 2973 1841}%
\special{fp}%
\special{pa 2998 1868}%
\special{pa 2999 1869}%
\special{pa 3001 1872}%
\special{pa 3003 1874}%
\special{fp}%
\special{pa 3026 1902}%
\special{pa 3027 1903}%
\special{pa 3029 1905}%
\special{pa 3031 1908}%
\special{fp}%
\special{pa 3054 1938}%
\special{pa 3055 1940}%
\special{pa 3056 1943}%
\special{pa 3057 1945}%
\special{fp}%
\special{pa 3078 1975}%
\special{pa 3079 1976}%
\special{pa 3080 1979}%
\special{pa 3082 1982}%
\special{fp}%
\special{pa 3100 2014}%
\special{pa 3100 2014}%
\special{pa 3102 2016}%
\special{pa 3104 2019}%
\special{pa 3104 2020}%
\special{fp}%
\special{pa 3121 2053}%
\special{pa 3122 2055}%
\special{pa 3123 2058}%
\special{pa 3124 2060}%
\special{fp}%
\special{pa 3139 2094}%
\special{pa 3139 2095}%
\special{pa 3141 2098}%
\special{pa 3142 2101}%
\special{fp}%
\special{pa 3155 2135}%
\special{pa 3157 2141}%
\special{pa 3157 2143}%
\special{fp}%
\special{pa 3169 2178}%
\special{pa 3169 2180}%
\special{pa 3171 2185}%
\special{fp}%
\special{pa 3180 2221}%
\special{pa 3180 2222}%
\special{pa 3181 2225}%
\special{pa 3181 2228}%
\special{pa 3181 2229}%
\special{fp}%
\special{pa 3188 2265}%
\special{pa 3189 2267}%
\special{pa 3189 2271}%
\special{pa 3189 2272}%
\special{fp}%
\special{pa 3195 2309}%
\special{pa 3195 2313}%
\special{pa 3196 2316}%
\special{fp}%
\special{pa 3199 2353}%
\special{pa 3199 2361}%
\special{fp}%
\special{pa 3200 2398}%
\special{pa 3200 2406}%
\special{fp}%
\special{pa 3199 2443}%
\special{pa 3199 2447}%
\special{pa 3198 2450}%
\special{pa 3198 2450}%
\special{fp}%
\special{pa 3195 2487}%
\special{pa 3195 2493}%
\special{pa 3194 2495}%
\special{fp}%
\special{pa 3189 2531}%
\special{pa 3189 2533}%
\special{pa 3188 2536}%
\special{pa 3188 2539}%
\special{fp}%
\special{pa 3181 2575}%
\special{pa 3181 2575}%
\special{pa 3180 2578}%
\special{pa 3179 2582}%
\special{pa 3179 2582}%
\special{fp}%
\special{pa 3170 2618}%
\special{pa 3169 2620}%
\special{pa 3168 2624}%
\special{pa 3168 2625}%
\special{fp}%
\special{pa 3156 2661}%
\special{pa 3154 2668}%
\special{fp}%
\special{pa 3140 2703}%
\special{pa 3139 2705}%
\special{pa 3138 2708}%
\special{pa 3138 2710}%
\special{fp}%
\special{pa 3123 2743}%
\special{pa 3122 2745}%
\special{pa 3120 2748}%
\special{pa 3119 2750}%
\special{fp}%
\special{pa 3102 2783}%
\special{pa 3102 2784}%
\special{pa 3100 2786}%
\special{pa 3099 2789}%
\special{pa 3099 2790}%
\special{fp}%
\special{pa 3080 2822}%
\special{pa 3079 2824}%
\special{pa 3077 2827}%
\special{pa 3076 2828}%
\special{fp}%
}}%
\put(49.000,-26.000){\makebox(0,0)[lb]{$O$}}%
\put(57.0000,-25.6000){\makebox(0,0)[lb]{$s$}}%
\put(49.4000,-10.2000){\makebox(0,0)[lb]{$t$}}%
%
{\color[named]{Black}{%
\special{pn 8}%
\special{pa 2400 2500}%
\special{pa 2400 2300}%
\special{fp}%
}}%
\put(23.6000,-27.2000){\makebox(0,0)[lb]{$-3/2$}}%
\put(32.8000,-11.4000){\makebox(0,0)[lb]{$s=-1$}}%
%
{\color[named]{Black}{%
\special{pn 8}%
\special{pa 4000 2500}%
\special{pa 4000 2300}%
\special{fp}%
}}%
%
{\color[named]{Black}{%
\special{pn 8}%
\special{pa 4800 3200}%
\special{pa 4800 1000}%
\special{fp}%
\special{sh 1}%
\special{pa 4800 1000}%
\special{pa 4780 1068}%
\special{pa 4800 1054}%
\special{pa 4820 1068}%
\special{pa 4800 1000}%
\special{fp}%
}}%
%
{\color[named]{Black}{%
\special{pn 8}%
\special{pa 4900 1600}%
\special{pa 4700 1600}%
\special{fp}%
}}%
\put(44.7000,-15.1000){\makebox(0,0)[lb]{$1/2$}}%
%
{\color[named]{Black}{%
\special{pn 25}%
\special{pa 4800 2400}%
\special{pa 4800 1100}%
\special{fp}%
}}%
\put(40.4000,-26.7000){\makebox(0,0)[lb]{$-1/2$}}%
%
{\color[named]{Black}{%
\special{pn 8}%
\special{pa 2400 2400}%
\special{pa 2400 1600}%
\special{dt 0.045}%
}}%
%
{\color[named]{Black}{%
\special{pn 8}%
\special{pa 2400 1600}%
\special{pa 4800 1600}%
\special{dt 0.045}%
}}%
%
{\color[named]{Black}{%
\special{pn 8}%
\special{pa 3200 1100}%
\special{pa 3200 3200}%
\special{dt 0.045}%
}}%
\put(13.2000,-18.2000){\makebox(0,0)[lb]{$W_{u_{-1}}$}}%
\put(48.7000,-21.8000){\makebox(0,0)[lb]{$C_0$}}%
\end{picture}
\caption{Walls for $v=1-2\varrho_X$.}
\end{figure}

(2)
Assume that $\ell=3$.
In this case,
$S_{1,3}/\{\pm 1 \}$ is generated by
\begin{equation*}
A_3:=
\begin{pmatrix}
2 & 3\\
1 & 2
\end{pmatrix}.
\end{equation*}
Hence we have a numerical solution $v=(4,-6H,9)-3(1,-2H,4)$.
We have $u_{-1}=(1,-2H,4)$ and 
\begin{NB}
 $W_{u_{-1}}$ is the circle in $z$-plane defined by
$$
\left|z+\frac{7}{4} \right|=\frac{1}{4}.
$$
\end{NB}
 $W_{u_{-1}}$ is the circle in $(s,t)$-plane defined by
$$
\left(s+\frac{7}{4} \right)^2+t^2=\frac{1}{4^2}.
$$
We set $w_{-1}:=(1,-H,1)$.
Then $v=w_{-1}+(0,H,-4)$ and $w_{-2}$ defines a wall
$W_{w_{-1}}$ while the defining equation is 
\begin{NB}
$$
\left|z+2 \right|=1.
$$
\end{NB}
$$
\left(s+2 \right)^2+t^2=1.
$$

\begin{lem}
$W_{w_{-1}}$ is the unique wall between $C_0$ and $C_{-1}=W_{u_{-1}}$
(see Figure 2).
\end{lem}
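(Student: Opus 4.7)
The plan is to parametrize candidate walls between $C_0$ and $C_{-1}$ by the integral Mukai vector $v_1 = (r_1, d_1 H, a_1)$ defining them, and then to show via a case analysis on $r_1$ that only $v_1 = (0, H, -4)$ (equivalently $v - v_1 = w_{-1}$) arises. First, using the symmetry $W_{v_1} = W_{v - v_1}$ I would normalize $d_1 \geq 1$; the case $d_1 = 0$ produces only $C_0$ itself by Proposition~\ref{prop:wall}(2). Proposition~\ref{prop:wall}(1) then exhibits $W_{v_1}$ as the circle centered at $s_0 := (a_1 + 3 r_1)/(2 d_1)$ with squared radius $s_0^2 - 3$, and because this pencil of circles is nested (Lemma~\ref{lem:circle}, Remark~\ref{rem:base-point}), the condition "between $C_0$ and $C_{-1}$" amounts to the single inequality $s_0 < -7/4$.

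Setting $u := a_1 + 3 r_1$, so that $u < 0$ and in fact $u < -7 d_1 / 2$, the inequality $\langle v_1, v - v_1 \rangle > 0$ becomes
$$u(2 r_1 - 1) + 6 r_1 (1 - r_1) - 2 d_1^2 > 0.$$
A quick sign count rules out $r_1 \geq 2$ (all three summands are non-positive) and $r_1 = 1$ (which would force $u > 2 d_1^2 > 0$), so $r_1 \leq 0$. For $r_1 = 0$ the conditions $\langle (v - v_1)^2 \rangle \geq 0$ and $\langle v_1, v - v_1 \rangle > 0$ become $a_1 \geq -3 - d_1^2$ and $a_1 < -2 d_1^2$; combined with $a_1 < -7 d_1/2$, integrality forces $d_1 = 1$ and then uniquely $a_1 = -4$. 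Hence $v_1 = (0, H, -4)$ and $v - v_1 = w_{-1}$, yielding $W_{v_1} = W_{w_{-1}}$.

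For $r_1 = -k$ with $k \geq 1$, substitution produces four bounds on $u$:
$$u \geq -\tfrac{d_1^2}{k} - 3k, \quad u \geq -\tfrac{d_1^2}{k+1} - 3(k+1), \quad u < -\tfrac{2 d_1^2 + 6 k(k+1)}{2k+1}, \quad u < -\tfrac{7 d_1}{2}.$$
An elementary algebraic manipulation shows that these four are simultaneously satisfiable over the reals only when $k \sqrt{3} < d_1 < (k+1) \sqrt{3}$ and $d_1 > 2 k$. Since the inequality $2 k < (k+1) \sqrt{3}$ fails for $k \geq 7$, the only candidates are $(k, d_1) \in \{(1, 3), (2, 5)\}$; direct substitution shows that in each of these two cases the admissible real interval for $u$ has length less than $1$ and contains no integer, so no further wall arises.

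The main obstacle is this closing case $r_1 \leq -1$: individually each inequality is elementary, but the four competing bounds must be matched carefully to pin down the candidate pairs $(k, d_1)$, and the eventual emptiness of the integer intervals relies on the arithmetic fact that $\sqrt{3}$ lies just above $7/4$.
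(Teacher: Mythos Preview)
Your argument is correct, but it takes a quite different route from the paper's.

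The paper exploits a single geometric observation: the leftmost point of $C_{-1}$ is $(-2,0)$, so every wall between $C_0$ and $C_{-1}$ meets the vertical line $s=-2$. Working at $s=-2$ amounts to twisting by $e^{2H}$, after which $v$ becomes $(1,2H,1)$. Since a wall-defining vector $w$ and its complement $v-w$ must both have positive twisted degree along the ray of $Z(v)$, one is forced to $d_{-2H}(w)=1$. This collapses the problem to two unknowns $(r,a)$, and a few lines of arithmetic (using $ra\le 1$, $r'a'\le 1$, $1-ra'-r'a>0$) pin down $w=(1,-H,1)=w_{-1}$.

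Your approach instead keeps three unknowns $(r_1,d_1,a_1)$ and controls the center $s_0=(a_1+3r_1)/(2d_1)$ directly. This is more laborious, especially in the $r_1\le -1$ regime, but it is systematic and does not rely on spotting the convenient integral point $(-2,0)$ on $C_{-1}$. Two small remarks on your write-up: first, your necessary conditions in the $r_1=-k$ case can be sharpened --- combining bound (2) with bound (4) also gives $d_1<\tfrac{3(k+1)}{2}$, and together with $d_1>2k$ this already forces $k\le 2$, making the appeal to ``$2k<(k+1)\sqrt{3}$ fails for $k\ge 7$'' (and the implicit check of $k=3,4,5,6$) unnecessary. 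Second, for the two surviving pairs $(k,d_1)=(1,3),(2,5)$ the admissible interval for $u$ is in fact empty, not merely short: e.g.\ for $(1,3)$ one finds $u\ge -10.5$ from bound (2) but $u<-10.5$ from bound (4). So the endgame is even cleaner than you indicate.
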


\begin{proof}
Since $W_{u_{-1}}$ passes the point $(s,t)=(-2,0)$,
it is sufficient to classify walls for $s=-2$.
Assume that $w e^{2H}=(r,dH,a)$ defines a wall for $s=-2$.
Since $v e^{2H}=(1,2H,1)$,
 we have $d=1$. We set $w':=(v-w)$ and write
$w' e^{2H}=(r',H,a')$.
By the definition of walls,
we have $\langle w^2 \rangle \geq 0, \langle {w'}^2 \rangle \geq 0$
and $\langle w,w' \rangle>0$.
Thus $ra \leq 1$, $r'a'  \leq 1$ and $1-r a'-r' a>0$.
Since $r+r'=1$, we may assume that $r>0$ and $r' \leq 0$.
Since $a+a'=1$, we have
$0<1-r a'-r' a=(2a-1)r+(1-a)$. 
If $a \leq 0$, then $(2a-1)r+(1-a) \leq (2a-1)+(1-a)=a \leq 0$.
Hence $a \geq 1$.
Since $ra \leq 1$, we have $r=a=1$. 
Therefore $w e^{2H}=(1,H,1)$, which implies that
$w=(1,-H,1)$.
\end{proof}

\begin{NB}
Old orgument:
\begin{proof}
For $w=(r,dH,a)$, $c_1(w e^{-sH})=(d-rs)H$.
If $s=-\frac{3}{2}$, then
$$
\{c_1(w e^{-sH})| w \in H^*(X,{\Bbb Z})_{\alg} \}=\frac{1}{2}{\Bbb Z}H.
$$
If $w=(r,dH,a)$ defines a wall for $s=-3/2$, then
$c_1(w e^{-sH})=\frac{1}{2}H, H$.
Since $c_1((v-w)e^{-sH})=\frac{3}{2}H-c_1(w e^{-sH})$, we may assume that
$c_1(w e^{-sH})=\frac{1}{2}H$.
Thus $w=(1+2\lambda,-(1+3\lambda)H,a)$, where
$\lambda \in {\Bbb Z}$.
We set $w':=v-w=(-2\lambda,(1+3\lambda)H,-3-a)$.
Then
$c_1(w e^{-sH})=-((1+s)+(2s+3)\lambda)H$ and
$c_1(w' e^{-sH})=(1+(2s+3)\lambda)H$.
Since $W_w$ contains $W_{u_1}$,
$-((1+s)+(2s+3)\lambda)>0$ and 
$(1+(2s+3)\lambda)>0$ for $-2 \leq s \leq -3/2$.
For $s=-2$, we have $1-\lambda>0$ and $1+\lambda>0$.
Hence $\lambda=0$.
Then $W_w$ is 
$$
\left(s+\frac{a+3}{2} \right)^2+t^2=\left(\frac{a+3}{2} \right)^2-3.
$$
Since $a \leq 1$, $a=0$ if $a+3 \geq 0$.
If $a+3<0$, then $W_w$ is contained in $s>0$.
Therefore $w=(1,-H,1)$.
\end{proof}
\end{NB}
We have 
$u_0=(0,0,1)$, $u_{-1}=(1,-2H,4)$, $u_{-2}=(4^2,-28H,7^2)$
and so on.
We define $w_n \in H^*(X,{\Bbb Z})_{\alg}$ by
$$
w_n:=(a_n^2,a_n b_n H,b_n^2),\;
(a_n,b_n)=(1,-1)A_3^{n-1}.
$$
Thus 
$w_{-1}=(1,-H,1)$, $w_{-2}=(9,-15H,5^2)$ and so on. 
By Proposition \ref{prop:wall:isometry}
and Proposition \ref{prop:fund}, we get the following.
\begin{lem}
$W_{u_n}$ and $W_{w_n}$ are all the walls for $v$.
\end{lem}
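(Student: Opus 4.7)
The strategy is to reduce the classification to the fundamental strip between $C_{0}$ and $C_{-1}$ via Proposition \ref{prop:fund}, then to use the previous lemma to enumerate the walls in that strip, and finally to spread them out using the $G_{n,\ell}$-action (Proposition \ref{prop:wall:isometry}).

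First, I would invoke Proposition \ref{prop:fund}(2): every wall $W$ for $v=1-3\varrho_X$ is carried by some $g\in G_{n,\ell}$ to a wall lying between $C_{0}$ and $C_{-1}$ (inclusive). By the preceding lemma, the only wall strictly between $C_{0}$ and $C_{-1}$ is $W_{w_{-1}}$. Since $C_{0}=W_{u_{0}}$ and $C_{-1}=W_{u_{-1}}$, it follows that every wall for $v$ lies in the $G_{n,\ell}$-orbit of one of the three walls $W_{u_{0}}$, $W_{u_{-1}}$, or $W_{w_{-1}}$.

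Next I would identify these orbits explicitly. By Proposition \ref{prop:wall:isometry}, the action of $G_{n,\ell}\subset \widehat{G}$ on $H^{*}(X,\Z)_{\alg}$ sends walls to walls. The codimension $0$ walls $C_{n}=W_{u_{n}}$ $(n\in\Z)$ are, by the construction of Subsection \ref{subsect:solution}, exactly the $G_{n,\ell}$-orbit of $C_{0}$: the numerical solutions corresponding to $u_{n},u_{n}'$ are obtained by applying the powers $A_{3}^{m}$ (which generate $S_{n,\ell}/\{\pm 1\}$) to the base solution $(\varrho_{X},(1,0,0))$. Similarly, the Mukai vectors $w_{n}$ are, by their very definition $w_{n}=(a_{n}^{2},a_{n}b_{n}H,b_{n}^{2})$ with $(a_{n},b_{n})=(1,-1)A_{3}^{n-1}$, exactly the $G_{n,\ell}$-orbit of $w_{-1}=(1,-H,1)$ under the quadratic action \eqref{eq:action_cdot}. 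Hence the orbit of $W_{w_{-1}}$ consists precisely of the walls $W_{w_{n}}$ for $n\in\Z$. Combining these two orbit descriptions yields the claim.

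The bookkeeping to verify that $G_{n,\ell}\cdot w_{-1}=\{w_{n}\}_{n\in\Z}$ and that no overlaps occur between the two orbits (i.e.\ no $W_{w_{n}}$ coincides with any $W_{u_{m}}$) is the only non-routine step: the latter follows from the fact that $\langle(v-w_{n})^{2}\rangle\neq 0$ (since $v-w_{-1}=(0,H,-4)$ satisfies $\langle(v-w_{-1})^{2}\rangle=(H^{2})=2$, and this invariant is preserved under $G_{n,\ell}$), whereas each $W_{u_{n}}$ is codimension $0$ by Lemma \ref{lem:class-codim0wall}. No further case work is required.
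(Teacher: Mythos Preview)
Your proposal is correct and follows essentially the same approach as the paper, which simply cites Proposition~\ref{prop:wall:isometry} and Proposition~\ref{prop:fund} without further detail. You supply the explicit orbit identification and the non-overlap check, which the paper leaves implicit.
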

Therefore all moduli spaces are isomorphic
to $\Hilb{3}{X} \times X$.

\begin{NB}
By the action of $A$,
the anulus $W_{u_n} \setminus W_{w_{n+1}}$
and 
$W_{w_{n+1}} \setminus W_{u_{n+1}}$
are transformed to
the anulus
$W_{u_{n+1}} \setminus W_{w_{n+2}}$
and 
$W_{w_{n+2}} \setminus W_{u_{n+2}}$
respectively.
We set
$$
B_n:=A^{-n}
\begin{pmatrix}
1 & 0\\
0 & -1 
\end{pmatrix}
A^n.
$$
Then
$\theta(\Psi_n)=B_n$ and 
by the action of $B_n$,
$W_{w_{n}} \setminus W_{u_{n}}$
are transformed to
$W_{u_{n}} \setminus W_{w_{n+1}}$.
\begin{NB2}
\begin{equation*}
A
\begin{pmatrix}
1 & 0\\
0 & -1 
\end{pmatrix}
=\begin{pmatrix}
1 & 0\\
0 & -1 
\end{pmatrix}
A^{-1}.
\end{equation*}

\begin{equation*}
(0,1)A^{n-k} B_n=(0,1)A^{-k}
\begin{pmatrix}
1 & 0\\
0 & -1 
\end{pmatrix}
A^n
=(0,1)
\begin{pmatrix}
1 & 0\\
0 & -1 
\end{pmatrix}
A^{n+k}
=-(0,1)A^{n+k}.
\end{equation*}

\begin{equation*}
(1,-1)A^{n-k} B_n=(1,-1)A^{-k}
\begin{pmatrix}
1 & 0\\
0 & -1 
\end{pmatrix}
A^n
=(1,-1)
\begin{pmatrix}
1 & 0\\
0 & -1 
\end{pmatrix}
A^{n+k}
=(1,-1)A^{n+k-1}.
\end{equation*}

\end{NB2}
\end{NB}

\begin{prop}
Let $v$ be a positive and primitive
Mukai vector with $\langle v^2 \rangle=6$.
Then $M_H(v)$ is isomorphic to
$\Hilb{3}{X} \times X$.
\end{prop}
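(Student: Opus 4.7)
The plan is to parallel the proof of the $\ell=2$ case just given above. Set $w_0:=(1,0,-3)=1-3\varrho_X$, so $\langle w_0^2\rangle=6=\langle v^2\rangle$, and the case $\ell=3$ analysis carried out immediately above applies to $w_0$.

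First I will produce a (possibly contravariant) Fourier-Mukai transform $\Phi:\bl{D}(X)\to\bl{D}(X)$ with $\Phi(w_0)=v$. Since $\NS(X)=\bb{Z}H$ and $n=(H^2)/2=1$, both $v$ and $w_0$ correspond under $\iota_X$ to primitive elements of $\Sym_2(\bb{Z},1)$ of the same discriminant. The $\widehat{G}$-action on $\Sym_2(\bb{Z},1)$ is transitive on primitive classes of fixed norm, so there is $g\in\widehat{G}$ with $w_0\cdot g=\pm v$; diagram \eqref{diag:groups} together with Lemma~\ref{fct:matrix} realizes this $g$ as the cohomological action of some $\Phi\in\cal{E}\cup(\cal{E}\circ\cal{D}_X)$. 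Choosing $(\beta,\omega)$ so that its image under $\Phi$ lies in the unbounded Gieseker chamber of $v$ described in Proposition~\ref{prop:unbdd}, Theorem~\ref{thm:B:3-10.3} yields $M_H(v)\cong M_{(\beta,\omega)}(w_0)$.

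It then suffices to show $M_{(\beta',\omega')}(w_0)\cong\Hilb{3}{X}\times X$ in every chamber for $w_0$. In the unbounded chamber $C_0^-$ one has
\begin{align*}
M_{C_0^-}(w_0)=M_H(1,0,-3)=\Hilb{3}{X}\times\Piczero{X}\cong\Hilb{3}{X}\times X,
\end{align*}
the last identification using that $X$ is principally polarized. By the preceding lemma the only walls for $w_0$ are the codimension-zero walls $W_{u_n}$ and the walls $W_{w_n}$ (the latter coming from the decomposition $w_0=w_{-1}+(0,H,-4)$ and its $G_{1,3}$-orbit). Across each $W_{u_n}$ the explicit isomorphism $\Psi_m$ of Proposition~\ref{prop:Psi_m} transports the identification with $\Hilb{3}{X}\times X$ from one chamber to the next. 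Across each $W_{w_n}$ I exploit that $w_n$ is primitive isotropic so $Y_n:=M_H(w_n)$ is a Fourier-Mukai partner of $X$ with $Y_n\cong X$ (since $X$ is principally polarized with $\rho=1$); the associated (twisted) Fourier-Mukai transform $\Phi_n$ preserves Bridgeland stability by Theorem~\ref{thm:B:3-10.3}, and tracking its cohomological action via diagram~\eqref{diag:groups} and the $G_{1,3}$-orbit analysis of Proposition~\ref{prop:fund} sends a chamber adjacent to $W_{w_n}$ to a chamber whose moduli has already been identified with $\Hilb{3}{X}\times X$.

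The main obstacle is the last step: to verify that the image chamber of $\Phi_n$ really matches one of the $\Psi$-translates of $C_0^-$. This reduces to a bookkeeping computation in $G_{1,3}\subset\widehat{G}$, confirming that the $W_{w_n}$ lie in the $G_{1,3}$-orbit described in Proposition~\ref{prop:fund} and that $\Phi_n$ identifies the two chambers adjacent to $W_{w_n}$. The subtle point is that wall-crossing at a non-codimension-zero wall only yields a birational equivalence in general, so the Fourier-Mukai construction is essential to upgrade this to an isomorphism of projective moduli schemes; concretely, one must check that along $W_{w_n}$ no properly semistable object with Mukai vector $w_0$ forces a divisorial contraction.
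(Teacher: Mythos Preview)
Your overall strategy matches the paper's: reduce to the wall-and-chamber problem for $w_0=(1,0,-3)$ via a Fourier-Mukai transform, then argue that every chamber for $w_0$ has moduli $\Hilb{3}{X}\times X$. Steps one and two are correct and essentially reproduce the paper's proof. The gap is in your treatment of the non-codimension-$0$ walls $W_{w_n}$.

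The Fourier-Mukai transform $\Phi_n$ built from the universal family on $Y_n=M_H(w_n)$ does \emph{not} preserve $\pm w_0$. For $w_{-1}=(1,-H,1)$, any $g\in G$ with $(1,-1)\,g=\pm(0,1)$ satisfies ${}^t g\,\iota_X(w_0)\,g$ with $(1,1)$-entry $-2$, so $w_0\cdot g$ has rank $\pm 2$; thus $\theta(\Phi_n)\notin G_{1,3}$. Hence $\Phi_n$ carries a chamber for $w_0$ to a chamber for a \emph{different} Mukai vector $\Phi_n(w_0)$, and your appeal to ``a chamber whose moduli has already been identified'' is circular: you would need the proposition for $\Phi_n(w_0)$, which is exactly what you are proving for all primitive $v$ with $\langle v^2\rangle=6$. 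The final paragraph also misdiagnoses the obstacle: the issue is not a potential divisorial contraction along $W_{w_n}$, but that $\Phi_n$ does not compare two moduli spaces for the same Mukai vector at all.

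The paper instead uses the full stabiliser $G_{1,3}=\langle A_3,\Delta\rangle$. Since here $\epsilon=q^2-3p^2=1$, the generator $A_3=\bigl(\begin{smallmatrix}2&3\\1&2\end{smallmatrix}\bigr)$ of $S_{1,3}$ is itself $\theta$ of a covariant autoequivalence of ${\bf D}(X)$: one has $u_1=e^{2H}$, so $M_H(u_1)\cong X$ and $\theta(\Phi_{X_1\to X}^{{\bf E}_1})=A_3$. This autoequivalence fixes $w_0$ and shifts every wall by one, $C_m\mapsto C_{m+1}$ and $W_{w_n}\mapsto W_{w_{n+1}}$. The $\Psi_m$ alone generate only the index-two subgroup $\langle A_3^2,\Delta\rangle\subset G_{1,3}$, under which the chambers fall into two orbits; the element $A_3$ is precisely what merges them, giving a transitive action on chambers. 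Theorem~\ref{thm:B:3-10.3} and Proposition~\ref{prop:fund} then yield the result.
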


\begin{proof}
There is a Fourier-Mukai transform 
$\Phi_{X \to X}^{{\bf E}^{\vee}}:{\bf D}(X) \to {\bf D}(X)$
such that $\Phi_{X \to X}^{{\bf E}^{\vee}}((1,0,-3))=v$.
Then we have an isomorphism
$M_{(\beta,\omega)}(1,0,-3) \to M_H(v)$,
where $\beta=c_1({\bf E}_{|\{ x\} \times X})/\rk {\bf E}_{|\{ x\} \times X}$
and $(\omega^2) \ll 1$.
By Theorem \ref{thm:B:3-10.3} and
Proposition \ref{prop:fund},
$M_{(\beta,\omega)}(1,0,-3) \cong 
\Hilb{3}{X} \times X$, which implies the claim.
\end{proof}

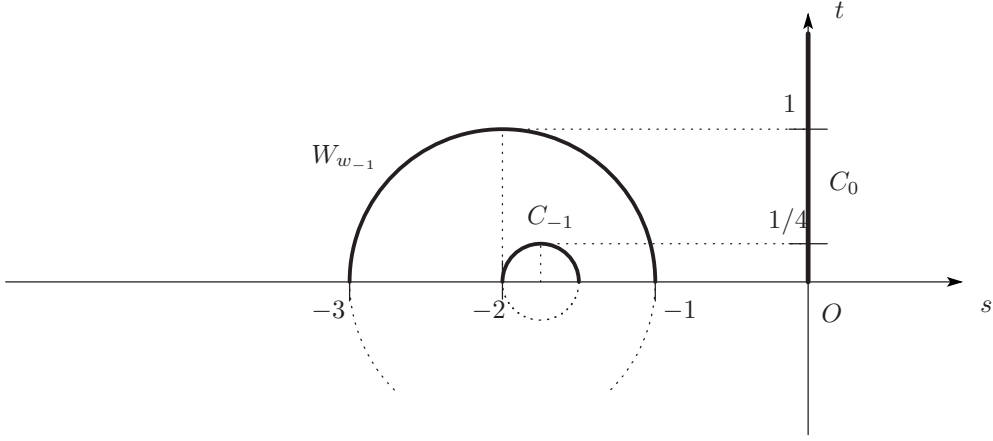
\begin{figure}
\unitlength 0.1in
\begin{picture}( 51.0000, 23.1000)(  6.0000,-32.0000)
%
{\color[named]{Black}{%
\special{pn 8}%
\special{pa 600 2400}%
\special{pa 5600 2400}%
\special{fp}%
\special{sh 1}%
\special{pa 5600 2400}%
\special{pa 5534 2380}%
\special{pa 5548 2400}%
\special{pa 5534 2420}%
\special{pa 5600 2400}%
\special{fp}%
}}%
\put(48.7000,-26.1000){\makebox(0,0)[lb]{$O$}}%
\put(57.0000,-25.6000){\makebox(0,0)[lb]{$s$}}%
\put(49.4000,-10.2000){\makebox(0,0)[lb]{$t$}}%
%
{\color[named]{Black}{%
\special{pn 8}%
\special{pa 3200 2490}%
\special{pa 3200 2290}%
\special{fp}%
}}%
\put(22.000,-26.000){\makebox(0,0)[lb]{$-3$}}%
%
{\color[named]{Black}{%
\special{pn 8}%
\special{pa 4000 2500}%
\special{pa 4000 2300}%
\special{fp}%
}}%
%
{\color[named]{Black}{%
\special{pn 8}%
\special{pa 4800 3200}%
\special{pa 4800 1000}%
\special{fp}%
\special{sh 1}%
\special{pa 4800 1000}%
\special{pa 4780 1068}%
\special{pa 4800 1054}%
\special{pa 4820 1068}%
\special{pa 4800 1000}%
\special{fp}%
}}%
%
{\color[named]{Black}{%
\special{pn 8}%
\special{pa 4900 1600}%
\special{pa 4700 1600}%
\special{fp}%
}}%
\put(46.7000,-15.1000){\makebox(0,0)[lb]{$1$}}%
%
{\color[named]{Black}{%
\special{pn 25}%
\special{pa 4800 2400}%
\special{pa 4800 1100}%
\special{fp}%
}}%
\put(40.4000,-26.000){\makebox(0,0)[lb]{$-1$}}%
\put(22.000,-18.2000){\makebox(0,0)[lb]{$W_{w_{-1}}$}}%
\put(49.1000,-19.4000){\makebox(0,0)[lb]{$C_0$}}%
%
{\color[named]{Black}{%
\special{pn 8}%
\special{pa 2400 2500}%
\special{pa 2400 2300}%
\special{fp}%
}}%
%
{\color[named]{Black}{%
\special{pn 20}%
\special{ar 3200 2400 800 800  3.1415927  6.2831853}%
}}%
%
{\color[named]{Black}{%
\special{pn 8}%
\special{pa 2634 2966}%
\special{pa 2633 2964}%
\special{pa 2629 2960}%
\special{fp}%
\special{pa 2603 2933}%
\special{pa 2603 2933}%
\special{pa 2602 2931}%
\special{pa 2599 2928}%
\special{fp}%
\special{pa 2574 2899}%
\special{pa 2574 2899}%
\special{pa 2573 2897}%
\special{pa 2571 2895}%
\special{pa 2570 2893}%
\special{fp}%
\special{pa 2548 2864}%
\special{pa 2547 2863}%
\special{pa 2546 2861}%
\special{pa 2544 2859}%
\special{pa 2543 2857}%
\special{fp}%
\special{pa 2523 2826}%
\special{pa 2522 2825}%
\special{pa 2520 2821}%
\special{pa 2519 2820}%
\special{fp}%
\special{pa 2501 2788}%
\special{pa 2500 2786}%
\special{pa 2498 2784}%
\special{pa 2497 2781}%
\special{fp}%
\special{pa 2480 2748}%
\special{pa 2479 2746}%
\special{pa 2476 2741}%
\special{fp}%
\special{pa 2462 2707}%
\special{pa 2461 2705}%
\special{pa 2459 2701}%
\special{pa 2459 2700}%
\special{fp}%
\special{pa 2445 2665}%
\special{pa 2445 2663}%
\special{pa 2443 2659}%
\special{pa 2443 2658}%
\special{fp}%
\special{pa 2432 2622}%
\special{pa 2431 2620}%
\special{pa 2430 2618}%
\special{pa 2430 2616}%
\special{pa 2430 2615}%
\special{fp}%
\special{pa 2420 2579}%
\special{pa 2420 2579}%
\special{pa 2420 2577}%
\special{pa 2419 2574}%
\special{pa 2419 2572}%
\special{pa 2419 2572}%
\special{fp}%
\special{pa 2412 2536}%
\special{pa 2412 2535}%
\special{pa 2411 2533}%
\special{pa 2411 2530}%
\special{pa 2410 2528}%
\special{fp}%
\special{pa 2405 2492}%
\special{pa 2405 2486}%
\special{pa 2404 2484}%
\special{fp}%
\special{pa 2401 2447}%
\special{pa 2401 2439}%
\special{fp}%
\special{pa 2400 2402}%
\special{pa 2400 2394}%
\special{fp}%
\special{pa 2401 2357}%
\special{pa 2401 2353}%
\special{pa 2402 2351}%
\special{pa 2402 2349}%
\special{fp}%
\special{pa 2405 2312}%
\special{pa 2405 2309}%
\special{pa 2406 2306}%
\special{pa 2406 2305}%
\special{fp}%
\special{pa 2411 2268}%
\special{pa 2411 2266}%
\special{pa 2412 2264}%
\special{pa 2412 2262}%
\special{pa 2413 2260}%
\special{fp}%
\special{pa 2419 2224}%
\special{pa 2420 2222}%
\special{pa 2421 2220}%
\special{pa 2421 2217}%
\special{pa 2421 2217}%
\special{fp}%
\special{pa 2431 2181}%
\special{pa 2431 2181}%
\special{pa 2431 2179}%
\special{pa 2432 2176}%
\special{pa 2433 2174}%
\special{pa 2433 2174}%
\special{fp}%
\special{pa 2444 2138}%
\special{pa 2444 2138}%
\special{pa 2446 2134}%
\special{pa 2447 2131}%
\special{fp}%
\special{pa 2460 2096}%
\special{pa 2462 2092}%
\special{pa 2463 2089}%
\special{fp}%
\special{pa 2478 2055}%
\special{pa 2480 2051}%
\special{pa 2481 2048}%
\special{pa 2481 2048}%
\special{fp}%
\special{pa 2499 2015}%
\special{pa 2501 2011}%
\special{pa 2502 2008}%
\special{fp}%
\special{pa 2521 1976}%
\special{pa 2521 1976}%
\special{pa 2523 1974}%
\special{pa 2525 1970}%
\special{fp}%
\special{pa 2546 1939}%
\special{pa 2546 1939}%
\special{pa 2548 1937}%
\special{pa 2549 1935}%
\special{pa 2551 1933}%
\special{fp}%
\special{pa 2573 1904}%
\special{pa 2573 1903}%
\special{pa 2575 1901}%
\special{pa 2576 1899}%
\special{pa 2577 1898}%
\special{fp}%
\special{pa 2601 1869}%
\special{pa 2602 1868}%
\special{pa 2604 1866}%
\special{pa 2606 1865}%
\special{pa 2607 1864}%
\special{fp}%
\special{pa 2631 1837}%
\special{pa 2633 1835}%
\special{pa 2635 1834}%
\special{pa 2637 1832}%
\special{pa 2637 1832}%
\special{fp}%
\special{pa 2664 1806}%
\special{pa 2666 1804}%
\special{pa 2668 1803}%
\special{pa 2670 1801}%
\special{fp}%
\special{pa 2698 1777}%
\special{pa 2700 1775}%
\special{pa 2702 1774}%
\special{pa 2704 1772}%
\special{fp}%
\special{pa 2734 1750}%
\special{pa 2734 1750}%
\special{pa 2736 1748}%
\special{pa 2738 1747}%
\special{pa 2740 1745}%
\special{fp}%
\special{pa 2771 1725}%
\special{pa 2771 1725}%
\special{pa 2773 1723}%
\special{pa 2776 1722}%
\special{pa 2777 1721}%
\special{fp}%
\special{pa 2809 1702}%
\special{pa 2810 1702}%
\special{pa 2812 1700}%
\special{pa 2816 1698}%
\special{fp}%
\special{pa 2849 1681}%
\special{pa 2854 1679}%
\special{pa 2856 1678}%
\special{fp}%
\special{pa 2890 1662}%
\special{pa 2895 1660}%
\special{pa 2897 1659}%
\special{fp}%
\special{pa 2932 1646}%
\special{pa 2933 1646}%
\special{pa 2937 1644}%
\special{pa 2939 1643}%
\special{fp}%
\special{pa 2974 1633}%
\special{pa 2976 1632}%
\special{pa 2978 1631}%
\special{pa 2980 1631}%
\special{pa 2982 1630}%
\special{fp}%
\special{pa 3018 1621}%
\special{pa 3019 1621}%
\special{pa 3022 1620}%
\special{pa 3024 1620}%
\special{pa 3025 1620}%
\special{fp}%
\special{pa 3061 1612}%
\special{pa 3063 1612}%
\special{pa 3066 1611}%
\special{pa 3069 1611}%
\special{fp}%
\special{pa 3105 1606}%
\special{pa 3108 1605}%
\special{pa 3113 1605}%
\special{fp}%
\special{pa 3150 1602}%
\special{pa 3153 1601}%
\special{pa 3158 1601}%
\special{fp}%
\special{pa 3195 1600}%
\special{pa 3203 1600}%
\special{fp}%
\special{pa 3240 1601}%
\special{pa 3248 1601}%
\special{fp}%
\special{pa 3285 1605}%
\special{pa 3285 1605}%
\special{pa 3292 1605}%
\special{pa 3293 1605}%
\special{fp}%
\special{pa 3329 1611}%
\special{pa 3334 1611}%
\special{pa 3337 1612}%
\special{fp}%
\special{pa 3373 1619}%
\special{pa 3374 1619}%
\special{pa 3376 1620}%
\special{pa 3379 1620}%
\special{pa 3380 1621}%
\special{fp}%
\special{pa 3416 1629}%
\special{pa 3417 1630}%
\special{pa 3420 1631}%
\special{pa 3422 1631}%
\special{pa 3423 1632}%
\special{fp}%
\special{pa 3458 1643}%
\special{pa 3460 1644}%
\special{pa 3463 1644}%
\special{pa 3466 1645}%
\special{fp}%
\special{pa 3500 1658}%
\special{pa 3502 1659}%
\special{pa 3505 1660}%
\special{pa 3508 1661}%
\special{fp}%
\special{pa 3542 1677}%
\special{pa 3544 1678}%
\special{pa 3549 1680}%
\special{fp}%
\special{pa 3582 1697}%
\special{pa 3584 1698}%
\special{pa 3588 1700}%
\special{pa 3589 1701}%
\special{fp}%
\special{pa 3621 1720}%
\special{pa 3622 1721}%
\special{pa 3625 1722}%
\special{pa 3627 1723}%
\special{pa 3627 1723}%
\special{fp}%
\special{pa 3658 1744}%
\special{pa 3660 1745}%
\special{pa 3662 1747}%
\special{pa 3664 1748}%
\special{pa 3664 1748}%
\special{fp}%
\special{pa 3694 1771}%
\special{pa 3694 1771}%
\special{pa 3696 1772}%
\special{pa 3698 1774}%
\special{pa 3700 1775}%
\special{pa 3700 1775}%
\special{fp}%
\special{pa 3728 1799}%
\special{pa 3729 1800}%
\special{pa 3731 1801}%
\special{pa 3732 1803}%
\special{pa 3734 1805}%
\special{fp}%
\special{pa 3761 1829}%
\special{pa 3762 1830}%
\special{pa 3763 1832}%
\special{pa 3766 1835}%
\special{fp}%
\special{pa 3792 1862}%
\special{pa 3793 1863}%
\special{pa 3794 1865}%
\special{pa 3796 1866}%
\special{pa 3797 1867}%
\special{fp}%
\special{pa 3821 1896}%
\special{pa 3824 1899}%
\special{pa 3825 1901}%
\special{pa 3826 1902}%
\special{fp}%
\special{pa 3848 1931}%
\special{pa 3849 1933}%
\special{pa 3851 1935}%
\special{pa 3852 1937}%
\special{pa 3852 1937}%
\special{fp}%
\special{pa 3873 1968}%
\special{pa 3875 1970}%
\special{pa 3877 1974}%
\special{pa 3877 1974}%
\special{fp}%
\special{pa 3896 2006}%
\special{pa 3898 2009}%
\special{pa 3900 2013}%
\special{fp}%
\special{pa 3917 2046}%
\special{pa 3919 2048}%
\special{pa 3920 2051}%
\special{pa 3921 2053}%
\special{fp}%
\special{pa 3936 2087}%
\special{pa 3937 2089}%
\special{pa 3938 2092}%
\special{pa 3939 2094}%
\special{fp}%
\special{pa 3953 2129}%
\special{pa 3954 2131}%
\special{pa 3954 2134}%
\special{pa 3955 2136}%
\special{fp}%
\special{pa 3967 2171}%
\special{pa 3967 2172}%
\special{pa 3967 2174}%
\special{pa 3968 2176}%
\special{pa 3969 2179}%
\special{fp}%
\special{pa 3978 2214}%
\special{pa 3978 2215}%
\special{pa 3979 2218}%
\special{pa 3979 2220}%
\special{pa 3980 2222}%
\special{fp}%
\special{pa 3987 2258}%
\special{pa 3988 2259}%
\special{pa 3988 2264}%
\special{pa 3988 2265}%
\special{fp}%
\special{pa 3994 2302}%
\special{pa 3994 2306}%
\special{pa 3995 2309}%
\special{pa 3995 2310}%
\special{fp}%
\special{pa 3998 2347}%
\special{pa 3998 2351}%
\special{pa 3999 2353}%
\special{pa 3999 2354}%
\special{fp}%
\special{pa 4000 2391}%
\special{pa 4000 2399}%
\special{fp}%
\special{pa 3999 2437}%
\special{pa 3999 2444}%
\special{fp}%
\special{pa 3996 2481}%
\special{pa 3996 2483}%
\special{pa 3995 2486}%
\special{pa 3995 2489}%
\special{fp}%
\special{pa 3990 2526}%
\special{pa 3990 2528}%
\special{pa 3989 2530}%
\special{pa 3989 2533}%
\special{pa 3989 2533}%
\special{fp}%
\special{pa 3982 2569}%
\special{pa 3982 2570}%
\special{pa 3981 2572}%
\special{pa 3981 2574}%
\special{pa 3980 2577}%
\special{fp}%
\special{pa 3971 2612}%
\special{pa 3971 2613}%
\special{pa 3970 2616}%
\special{pa 3970 2618}%
\special{pa 3969 2620}%
\special{fp}%
\special{pa 3958 2655}%
\special{pa 3958 2656}%
\special{pa 3957 2659}%
\special{pa 3955 2662}%
\special{fp}%
\special{pa 3943 2697}%
\special{pa 3942 2699}%
\special{pa 3939 2704}%
\special{fp}%
\special{pa 3925 2738}%
\special{pa 3924 2740}%
\special{pa 3922 2744}%
\special{pa 3922 2745}%
\special{fp}%
\special{pa 3905 2779}%
\special{pa 3902 2784}%
\special{pa 3901 2785}%
\special{fp}%
\special{pa 3883 2818}%
\special{pa 3882 2819}%
\special{pa 3880 2821}%
\special{pa 3878 2824}%
\special{fp}%
\special{pa 3858 2855}%
\special{pa 3857 2857}%
\special{pa 3855 2859}%
\special{pa 3854 2862}%
\special{fp}%
\special{pa 3831 2892}%
\special{pa 3830 2893}%
\special{pa 3829 2895}%
\special{pa 3827 2897}%
\special{pa 3827 2897}%
\special{fp}%
\special{pa 3803 2926}%
\special{pa 3798 2931}%
\special{pa 3798 2932}%
\special{fp}%
\special{pa 3773 2959}%
\special{pa 3771 2960}%
\special{pa 3767 2964}%
\special{fp}%
}}%
%
{\color[named]{Black}{%
\special{pn 8}%
\special{pa 3200 2400}%
\special{pa 3200 1600}%
\special{dt 0.045}%
}}%
%
{\color[named]{Black}{%
\special{pn 8}%
\special{pa 3200 1600}%
\special{pa 4800 1600}%
\special{dt 0.045}%
}}%
%
{\color[named]{Black}{%
\special{pn 20}%
\special{ar 3400 2400 200 200  3.1415927  6.2831853}%
}}%
%
{\color[named]{Black}{%
\special{pn 8}%
\special{pa 3600 2400}%
\special{pa 3600 2408}%
\special{fp}%
\special{pa 3595 2443}%
\special{pa 3595 2446}%
\special{pa 3594 2447}%
\special{pa 3594 2450}%
\special{pa 3593 2451}%
\special{fp}%
\special{pa 3583 2481}%
\special{pa 3583 2482}%
\special{pa 3582 2482}%
\special{pa 3582 2484}%
\special{pa 3581 2484}%
\special{pa 3581 2486}%
\special{pa 3580 2486}%
\special{fp}%
\special{pa 3565 2513}%
\special{pa 3564 2514}%
\special{pa 3564 2515}%
\special{pa 3563 2515}%
\special{pa 3563 2516}%
\special{pa 3562 2517}%
\special{pa 3562 2518}%
\special{pa 3561 2518}%
\special{pa 3561 2519}%
\special{fp}%
\special{pa 3541 2541}%
\special{pa 3541 2541}%
\special{pa 3541 2542}%
\special{pa 3540 2543}%
\special{pa 3539 2543}%
\special{pa 3539 2544}%
\special{pa 3538 2544}%
\special{pa 3538 2545}%
\special{pa 3537 2546}%
\special{fp}%
\special{pa 3514 2564}%
\special{pa 3513 2565}%
\special{pa 3512 2565}%
\special{pa 3512 2566}%
\special{pa 3511 2566}%
\special{pa 3511 2567}%
\special{pa 3510 2567}%
\special{pa 3509 2568}%
\special{pa 3509 2568}%
\special{fp}%
\special{pa 3480 2583}%
\special{pa 3480 2583}%
\special{pa 3480 2584}%
\special{pa 3477 2584}%
\special{pa 3477 2585}%
\special{pa 3475 2585}%
\special{pa 3475 2586}%
\special{fp}%
\special{pa 3443 2595}%
\special{pa 3442 2596}%
\special{pa 3437 2596}%
\special{pa 3437 2597}%
\special{pa 3436 2597}%
\special{fp}%
\special{pa 3400 2600}%
\special{pa 3392 2600}%
\special{fp}%
\special{pa 3357 2595}%
\special{pa 3354 2595}%
\special{pa 3353 2594}%
\special{pa 3350 2594}%
\special{pa 3349 2593}%
\special{fp}%
\special{pa 3318 2583}%
\special{pa 3318 2583}%
\special{pa 3318 2582}%
\special{pa 3316 2582}%
\special{pa 3316 2581}%
\special{pa 3314 2581}%
\special{pa 3314 2580}%
\special{pa 3314 2580}%
\special{fp}%
\special{pa 3286 2564}%
\special{pa 3286 2564}%
\special{pa 3285 2564}%
\special{pa 3285 2563}%
\special{pa 3284 2563}%
\special{pa 3283 2562}%
\special{pa 3282 2562}%
\special{pa 3282 2561}%
\special{pa 3281 2561}%
\special{pa 3281 2561}%
\special{fp}%
\special{pa 3258 2541}%
\special{pa 3257 2540}%
\special{pa 3257 2539}%
\special{pa 3256 2539}%
\special{pa 3255 2538}%
\special{pa 3255 2537}%
\special{pa 3254 2537}%
\special{pa 3254 2536}%
\special{pa 3254 2536}%
\special{fp}%
\special{pa 3235 2513}%
\special{pa 3234 2512}%
\special{pa 3234 2511}%
\special{pa 3233 2511}%
\special{pa 3233 2510}%
\special{pa 3232 2509}%
\special{pa 3232 2508}%
\special{pa 3231 2508}%
\special{pa 3231 2508}%
\special{fp}%
\special{pa 3217 2480}%
\special{pa 3216 2479}%
\special{pa 3216 2477}%
\special{pa 3215 2477}%
\special{pa 3215 2475}%
\special{pa 3214 2475}%
\special{pa 3214 2474}%
\special{fp}%
\special{pa 3204 2441}%
\special{pa 3204 2438}%
\special{pa 3203 2437}%
\special{pa 3203 2434}%
\special{fp}%
\special{pa 3200 2397}%
\special{pa 3200 2390}%
\special{fp}%
\special{pa 3205 2354}%
\special{pa 3205 2353}%
\special{pa 3206 2353}%
\special{pa 3206 2349}%
\special{pa 3207 2349}%
\special{pa 3207 2348}%
\special{fp}%
\special{pa 3217 2318}%
\special{pa 3218 2318}%
\special{pa 3218 2316}%
\special{pa 3219 2316}%
\special{pa 3219 2314}%
\special{pa 3220 2314}%
\special{pa 3220 2313}%
\special{fp}%
\special{pa 3236 2285}%
\special{pa 3236 2285}%
\special{pa 3237 2285}%
\special{pa 3237 2284}%
\special{pa 3238 2283}%
\special{pa 3238 2282}%
\special{pa 3239 2282}%
\special{pa 3239 2281}%
\special{pa 3240 2280}%
\special{fp}%
\special{pa 3260 2257}%
\special{pa 3260 2257}%
\special{pa 3261 2257}%
\special{pa 3261 2256}%
\special{pa 3262 2255}%
\special{pa 3263 2255}%
\special{pa 3263 2254}%
\special{pa 3264 2254}%
\special{pa 3264 2253}%
\special{fp}%
\special{pa 3287 2235}%
\special{pa 3288 2234}%
\special{pa 3289 2234}%
\special{pa 3289 2233}%
\special{pa 3290 2233}%
\special{pa 3291 2232}%
\special{pa 3292 2232}%
\special{pa 3292 2231}%
\special{pa 3292 2231}%
\special{fp}%
\special{pa 3321 2216}%
\special{pa 3321 2216}%
\special{pa 3323 2216}%
\special{pa 3323 2215}%
\special{pa 3325 2215}%
\special{pa 3325 2214}%
\special{pa 3326 2214}%
\special{fp}%
\special{pa 3358 2204}%
\special{pa 3362 2204}%
\special{pa 3363 2203}%
\special{pa 3366 2203}%
\special{fp}%
\special{pa 3402 2200}%
\special{pa 3410 2200}%
\special{fp}%
\special{pa 3444 2205}%
\special{pa 3447 2205}%
\special{pa 3447 2206}%
\special{pa 3450 2206}%
\special{pa 3451 2207}%
\special{fp}%
\special{pa 3481 2217}%
\special{pa 3482 2217}%
\special{pa 3482 2218}%
\special{pa 3484 2218}%
\special{pa 3484 2219}%
\special{pa 3486 2219}%
\special{pa 3486 2220}%
\special{fp}%
\special{pa 3513 2235}%
\special{pa 3514 2236}%
\special{pa 3515 2236}%
\special{pa 3515 2237}%
\special{pa 3516 2237}%
\special{pa 3517 2238}%
\special{pa 3518 2238}%
\special{pa 3518 2239}%
\special{pa 3518 2239}%
\special{fp}%
\special{pa 3541 2258}%
\special{pa 3543 2260}%
\special{pa 3543 2261}%
\special{pa 3544 2261}%
\special{pa 3545 2262}%
\special{pa 3545 2263}%
\special{pa 3546 2263}%
\special{pa 3546 2263}%
\special{fp}%
\special{pa 3565 2286}%
\special{pa 3565 2287}%
\special{pa 3566 2288}%
\special{pa 3566 2289}%
\special{pa 3567 2289}%
\special{pa 3567 2290}%
\special{pa 3568 2291}%
\special{pa 3568 2292}%
\special{pa 3568 2292}%
\special{fp}%
\special{pa 3583 2319}%
\special{pa 3583 2320}%
\special{pa 3584 2321}%
\special{pa 3584 2323}%
\special{pa 3585 2323}%
\special{pa 3585 2325}%
\special{fp}%
\special{pa 3595 2357}%
\special{pa 3595 2358}%
\special{pa 3596 2358}%
\special{pa 3596 2362}%
\special{pa 3597 2363}%
\special{pa 3597 2364}%
\special{fp}%
\special{pa 3600 2400}%
\special{pa 3600 2400}%
\special{fp}%
}}%
\put(33.3000,-21.3000){\makebox(0,0)[lb]{$C_{-1}$}}%
\put(30.4000,-26.000){\makebox(0,0)[lb]{$-2$}}%
%
{\color[named]{Black}{%
\special{pn 8}%
\special{pa 3400 2400}%
\special{pa 3400 2200}%
\special{dt 0.045}%
}}%
%
{\color[named]{Black}{%
\special{pn 8}%
\special{pa 3400 2200}%
\special{pa 4800 2200}%
\special{dt 0.045}%
}}%
%
{\color[named]{Black}{%
\special{pn 8}%
\special{pa 4900 2200}%
\special{pa 4700 2200}%
\special{fp}%
}}%
\put(45.9000,-21.5000){\makebox(0,0)[lb]{$1/4$}}%
\end{picture}%

\caption{Walls for $v=1-3\varrho_X$.}
\end{figure}

(3)
Assume that $\ell=4$.
By Corollary \ref{cor:square}, 
all walls in $s<0$ intersect with the line $s=-\sqrt{4}=-2$.
Assume that $v_1$ defines a wall for $v$.
Since $v e^{2H}=(1,2H,0)$, $u_1 e^{2H}=(r,H,a)$.
Hence $v_1=(r,(1-2r)H,a-4+4r)$ and
$v-v_1=(1-r,(2r-1)H,-a-4r)$. Replacing $v_1$ by
$v-v_1$ if necessary,
we may assume that $r> 0$.
We have $2n:=\langle v_1^2 \rangle \geq 0$, $\langle (v-v_1)^2 \rangle \geq 0$
and $\langle v_1,v-v_1 \rangle >0$.
Hence $n=1-ra$, $4-2n>a \geq -n$.
Then $n=0,1,2,3$, which implies that $ra=1,0,-1,-2$.
We also have $\left|\frac{a-4+4r+4r}{2(1-2r)}\right| >2$,
which implies that $a(8(2r-1)+a)>0$. In particular, $a \ne 0$.
Then $ra=1,0,-1,-2$ and $r> 0$ implies that
$r=1,2$.
\begin{NB}
We don't need the following argument, since $r>0$:
If $r=0$, then $n=1$ and $2>a \geq -1$. Since $a(a-8)>0$,
$a=-1$. Thus $v_1=(0,H,-5)$.
\end{NB}
If $r=1$, then $a(a+8)>0$ and $ra=1,0,-1,-2$ imply that
$a=1$. Thus $v_1=(1,-H,1)$.
If $r=2$, then $a=-1$ and $a(a+24)>0$, which is impossible.
Therefore $v_1=(1,-H,1)$.
Thus $W_{v_1}$ is the unique wall which is defined by
\begin{NB}
$$
\left|z+\frac{5}{2} \right|=\frac{3}{2}.
$$
\end{NB}
$$
\left(s+\frac{5}{2} \right)^2+t^2=\frac{3^2}{2^2}.
$$

\begin{prop}
Let $v$ be a positive and primitive
Mukai vector with $\langle v^2 \rangle=8$.
Then $M_H(v)$ is isomorphic to
$\Hilb{4}{X} \times X$ or $M_H(0,2H,-1)$.
\end{prop}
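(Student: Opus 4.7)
I follow the strategy used for $\ell=2,3$. First, the classification of integer binary quadratic forms of discriminant $16$ shows that every primitive positive Mukai vector $v$ with $\langle v^2\rangle = 8$ lies in the $\widehat{G}$-orbit of $v_0 := 1-4\varrho_X = (1,0,-4)$ inside $\Sym_2(\bb{Z},1)$, up to the $\pm$ ambiguity which is absorbed by Remark~\ref{rem:phase}. Choose a Fourier-Mukai transform $\Phi \in \cal{E}$ with $\Phi^H(v_0)=v$ (composing with the shift $[1]$ if necessary to correct the sign). By Theorem~\ref{thm:B:3-10.3}, $\Phi$ induces an isomorphism
\begin{equation*}
M_{(\beta,\omega)}(v_0)\simto M_H(v)
\end{equation*}
for some $(\beta,\omega)\in\NS(X)_\bb{R}\times\Amp(X)_\bb{R}$, namely the image of a Gieseker chamber for $v$ under the $\Phi$-action on stability conditions.

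By the wall analysis just established in this subsection for $\ell=4$, the only walls for $v_0$ are the codimension-zero wall $C_0:s=0$ and the additional wall $W_{v_1}$ with $v_1=(1,-H,1)$, realized as the circle $(s+5/2)^2+t^2=9/4$ in the $(s,t)$-plane. Consequently $(\beta,\omega)$ lies either in an unbounded chamber (outside $W_{v_1}$) or in the unique bounded chamber inside $W_{v_1}$. In the unbounded case, Proposition~\ref{prop:unbdd} together with Proposition~\ref{prop:open} immediately yield
\begin{equation*}
M_{(\beta,\omega)}(v_0)\cong M_H(v_0)\cong \Hilb{4}{X}\times X,
\end{equation*}
where the factor of $X$ records the twist by $\Piczero{X}$.

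In the bounded case, the plan is to apply a second Fourier-Mukai transform $\Psi$ with $\theta(\Psi)=\pm\begin{pmatrix}2 & -1\\ 1 & 0\end{pmatrix}\in\widehat{G}$; a direct matrix computation gives $\Psi^H(v_0) = -(0,2H,-1)$. Combining Theorem~\ref{thm:B:3-10.3} with Remark~\ref{rem:phase} then produces an isomorphism
\begin{equation*}
M_{(\beta,\omega)}(v_0)\cong M_{(\widetilde\beta,\widetilde\omega)}(0,2H,-1)
\end{equation*}
for the transformed stability data $(\widetilde\beta,\widetilde\omega)$. The crucial remaining step is to verify that $(\widetilde\beta,\widetilde\omega)$ lies in an unbounded chamber for $(0,2H,-1)$, so that Proposition~\ref{prop:unbdd} identifies this Bridgeland moduli with the Gieseker moduli $M_H(0,2H,-1)$, as desired.

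The main obstacle is precisely this chamber-tracking step. One must compute the image of the bounded disc $W_{v_1}$ under the Möbius-type action of $\Psi$ on the $(s,t)$-parameter space described by the formulas in Section~\ref{sect:FM}, and check that this image falls in an unbounded chamber of the wall-and-chamber structure for $(0,2H,-1)$. If the naive choice of $\Psi$ does not directly achieve this, one composes with the nontrivial element of $\Stab_{\widehat{G}}(0,2H,-1)$, such as the involution $\begin{pmatrix}-1 & 0\\ -4 & 1\end{pmatrix}$ (which one checks fixes $(0,2H,-1)$ up to sign), in order to adjust the target chamber to an unbounded one.
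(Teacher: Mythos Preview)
Your overall strategy matches the paper's: reduce to $v_0=(1,0,-4)$ via the classification of binary quadratic forms, use the wall analysis for $\ell=4$ to see there are exactly two chambers (per half-plane), and identify each chamber's moduli via a Fourier--Mukai transform. The unbounded case is handled identically in both arguments.

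The genuine gap is the one you yourself flag: you never verify that the image of the bounded chamber under your chosen $\Psi$ lands in a Gieseker chamber for $(0,2H,-1)$, and the fallback of ``compose with something in the stabilizer'' is not an argument. The paper avoids this abstract chamber-tracking entirely by working at the concrete point $s=-2$ (where the wall $W_{v_1}$ meets $t=\sqrt{2}$) and applying the explicit functor $E\mapsto \Phi_{X\to X}^{{\bf P}[1]}\bigl(E(2H)\bigr)$ with ${\bf P}$ the Poincar\'e bundle; one then checks directly that this lands in $M_H(0,2H,-1)$ for objects in the inner chamber. This is exactly the missing computation in your plan, carried out with a specific $\Psi$ rather than a generic one. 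A cleaner way for you to close the gap is to note that your $\Psi$ has $\theta(\Psi)$ equal to the composite of multiplication by $e^{2H}$ and the Poincar\'e transform, and then invoke Proposition~\ref{prop:Phi(C^pm)}: since $(-2,0)$ lies strictly inside $W_{v_1}$, the inner chamber is sent to an unbounded chamber for $\Psi(v_0)$. Finally, the paper also proves that $M_H(1,0,-4)\not\cong M_H(0,2H,-1)$ by comparing their birational contractions (Hilbert--Chow versus Uhlenbeck versus a Lagrangian fibration); this is not logically required for the ``or'' statement but shows the dichotomy is nontrivial, and you omit it.
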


\begin{proof}
We first prove that $M_H(1,0,-4) \not \cong
M_H(0,2H,-1)$.
We note that the Hilbert-Chow morphism
of $\Hilb{4}{X}$ induces
a divisorial contraction
of $M_H(1,0,-4)$.
We note that $M_H(3,H,-1)$ has a morphism
to the Uhlenbeck compactification of
the moduli of stable vector bundles,
which contracts a ${\Bbb P}^2$-bundle
over $M_H(3,H,0)$.
Let ${\bf P}$ be the Poincar\'{e} line bundle on
$X \times X$, where we identify
$\Piczero{X}$ with $X$.
Then we have an isomorphism
$M_H(1,0,-4) \cong M_H(1,H,-3) \cong 
M_H(3,H,-1)$ by sending $E$ to
$\Phi_{X \to X}^{{\bf P}[1]}(E(H))$
\cite[Prop. 3.5]{Y:7}.
Hence $M_H(1,0,-4)$ has another contraction.
There is no other contraction by a similar
argument in \cite[Example 7.2]{Y:7}.
On the other hand,
$M_H(0,2H,-1)$ has a Lagrangian fibration.
Therefore $M_H(1,0,-4) \not \cong M_H(0,2H,-1)$.

For $s=-2$ and $v=(1,0,-4)$, we have two moduli spaces
$M_{(-2H,t_1 H)}(1,0,-4)$ and
$M_{(-2H,t_2 H)}(1,0,-4)$, where $t_1 >\sqrt{2}$ and
$t_2<\sqrt{2}$.
For $E \in M_{(-2H,t_2 H)}(1,0,-4)$,
$\Phi_{X \to X}^{{\bf P}[1]}(E(2H))
\in M_H(0,2H,-1)$ and we have an isomorphism
$M_{(-2H,t_2 H)}(1,0,-4) \cong M_H(0,2H,-1)$.

By \cite[sect. 7.3]{YY}, every quadratic form
$r x^2+2d xy+a y^2$ is equivalent to
$x^2-4y^2$.
Since $(1,0,-4)^{\vee}=(1,0,-4)$,
there is an auto-equivalence
$\Phi$ such that $\Phi(v)=(1,0,-4)$.
Then $\Phi(M_H(v)) \cong M_{(sH,tH)}(1,0,-4)$
for a suitable $(s,t)$.
Therefore the claim holds.
\end{proof}

(4)
Assume that $\ell=5$.
In this case,
$S_{1,5}/\{ \pm 1 \}$ is generated by
\begin{equation*}
A_5:=
\begin{pmatrix}
2 & 5\\
1 & 2
\end{pmatrix}.
\end{equation*}
Hence we have a numerical solution
 $v=5(1,-2H,4)-(4,-10H,25)$.
Then we have $u_{-1}=(1,-2H,4)$ and
$W_{u_{-1}}$ is the circle defined by
$$
\left(s+\frac{9}{4} \right)^2+t^2=\frac{1}{4^2}.
$$

(5)
Assume that $\ell=6$.
In this case,
$S_{1,6}/\{ \pm 1 \}$ is generated by
\begin{equation*}
A_6 :=
\begin{pmatrix}
5 & 12\\
2 & 5
\end{pmatrix}.
\end{equation*}
Hence we have a numerical solution
$v=(5^2,-60H,12^2)-6(4,-10H,5^2)$.
Then we have $u_{-1}=(5^2,-10H,5^2)$ and
 $W_{u_{-1}}$ is the circle defined by
$$
\left(s+\frac{49}{20} \right)^2+t^2=\frac{1}{20^2}.
$$

\begin{rem}
Let $X$ be an arbitrary abelian surface and $H$ an ample
divisor on $X$.
Let $v$ be a primitive Mukai vector with 
$\langle v^2 \rangle/2=1$.
Then we have an isomorphism
$X \times \Piczero{X} \to M_H(v)$ by sending
$(x, L) \in X \times \Piczero{X}$ to
$T_x^*(E_0) \otimes L$, where $E_0$ is an element 
of $M_H(v)$ and $T_x$ is the translation by $x$
(\cite[Cor. 4.3]{Y:nagoya}).
\end{rem}

\section{Appendix}

\subsection{The action of Fourier-Mukai transforms on ${\Bbb H}$.}
Assume that $\NS(X)={\Bbb Z}H$.
Then $\beta+\sqrt{-1}\omega=\frac{z}{ \sqrt{n}}H$ with  
$z \in {\Bbb H}$, where
$z:=x+\sqrt{-1}y$ with $x \in {\Bbb R}$ and $y \in {\Bbb R}_{>0}$.
Thus we have an identification of
$\NS(X)_{\Bbb R} \times \Amp(X)_{\Bbb R}$ with 
${\Bbb H}$.
We set $Z_z:=Z_{(\beta,\omega)}$.

We study the action of $\Phi$ with $\gamma \in {\Bbb Q}H$.
We write 
$$
\gamma'+\widehat{\xi}+\sqrt{-1} \widehat{\eta}=
\frac{z'}{\sqrt{n}}H,\; z' \in {\Bbb H}.
$$

Let $\Phi_{X \to X_1}^{\bf E}:{\bf D}(X) \to
{\bf D}(X_1)$
be a Fourier-Mukai transform
such that ${\bf E}$ is a coherent sheaf.
Then there is 
\begin{equation*}
A=\begin{pmatrix}
a & b \\
c & d 
\end{pmatrix}
\in G
\end{equation*}
such that 
$\mu(\Phi_{X \to X_1}^{\bf E}({\frak k}_x))=\frac{a}{c} \sqrt{n}$
and
$\mu(\Phi_{X \to X_1}^{\bf E}({\cal O}_X))=\frac{b}{d}\sqrt{n}$.
$X_1=M_H(c^2 e^{\frac{d}{c}\frac{H}{\sqrt{n}}})$ and 
${\bf E}$ is unique up to the action of $X \times \Piczero{X}$.
Then 
$$
\theta(\Phi_{X \to X_1}^{{\bf E}})=
\begin{pmatrix}
d & b \\
c & a
\end{pmatrix}
\in G.
$$ 

\begin{defn}
For $\Phi_{X \to X_1}^{\bf E}$,
we set 
$$
\varphi(\Phi_{X \to X_1}^{\bf E}):=
\pm \begin{pmatrix}
a & b\\
c & d 
\end{pmatrix} \in G/\{\pm 1\}.
$$
\end{defn}

For $\Phi=\Phi_{X \to X_1}^{{\bf E}}$,
we have
\begin{equation*}
\begin{split}
r_1 e^{\gamma}= &c^2-\frac{cd}{\sqrt{n}}H+d^2 \varrho_X, \\
r_1 e^{\gamma'}= & c^2+\frac{ac}{\sqrt{n}}\widehat{H}+a^2 \varrho_{X_1}.
\end{split}
\end{equation*}
Hence
\begin{equation*}
\frac{z}{\sqrt{n}}+\frac{d}{c \sqrt{n}}=-\lambda+\sqrt{-1} t,\;
\frac{z'}{\sqrt{n}}-\frac{a}{c \sqrt{n}}=
\frac{\lambda+\sqrt{-1} t}{(\lambda^2+t^2)nc^2}.
\end{equation*}
Thus we get
\begin{equation*}
\begin{split}
\frac{z'}{\sqrt{n}}=& \frac{a}{c \sqrt{n}}-\frac{1}{c^2 n 
(\frac{z}{\sqrt{n}}+\frac{d}{c \sqrt{n}})}\\
=& \frac{ac z-(1-ad)}{c(cz+d)\sqrt{n}}
=\frac{a z+b}{\sqrt{n}(c z+d)}.
\end{split}
\end{equation*}
Therefore the action of $A$ on ${\Bbb H}$ is the natural action of
$\SL(2,{\Bbb R})$.

$$
\zeta=-c^2 \frac{(\lambda-t \sqrt{-1})^2 (H^2)}{2}=
-c^2 \frac{(\frac{z}{\sqrt{n}}+\frac{d}{c\sqrt{n}})^2 (H^2)}{2}
=-(cz+d)^2.
$$

\begin{prop}
For $\Phi_{X \to X_1}^{\bf E}$ with
$\varphi(\Phi_{X \to X_1}^{\bf E})=
\begin{pmatrix}
a & b\\
c & d
\end{pmatrix}
 \in G$,
$$
-(cz+d)^2 Z_{\frac{az+b}{cz+d}\frac{\widehat{H}}{\sqrt{n}}}
=Z_{z \frac{H}{\sqrt{n}}} \circ (\Phi_{X \to X_1}^{\bf E})^{-1}.
$$
We also have
$$
\Phi_{X \to X_1}^{\bf E}({\Bbb Q}e^{\lambda \frac{H}{{\sqrt{n}}}})
={\Bbb Q}e^{\frac{a\lambda+b}{c\lambda+d}\frac{\widehat{H}}{\sqrt{n}}}.
$$
\end{prop}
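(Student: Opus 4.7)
Both claims follow directly from the computations carried out in the paragraphs preceding the proposition, so the plan is to package them cleanly rather than introduce new machinery.

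For the first claim, the starting point is the commutative diagram at the beginning of the subsection (coming from \cite[sect. 5.1]{MYY:2011:2}), which in our notation reads $\zeta\cdot Z_{(\widetilde{\beta},\widetilde{\omega})}\circ\Phi = Z_{(\beta,\omega)}$, or equivalently $Z_{(\widetilde{\beta},\widetilde{\omega})}=\zeta^{-1}\, Z_{(\beta,\omega)}\circ\Phi^{-1}$. The paragraph just above the proposition identified the parameter $\widetilde{\beta}+\sqrt{-1}\widetilde{\omega}$ with $\tfrac{az+b}{cz+d}\tfrac{\widehat{H}}{\sqrt{n}}$ and computed $\zeta=-(cz+d)^{2}$. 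Substituting these two expressions into the diagram and multiplying across by $-(cz+d)^{2}$ gives the first formula.

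For the second claim, my plan is to use the $\GL(2,\bb{R})$-action on $\Sym_{2}({\Bbb Z},n)$ in \eqref{eq:action_cdot} together with the fact that $\theta(\Phi_{X\to X_1}^{\bl{E}}) = \begin{pmatrix} d & b \\ c & a \end{pmatrix}$. The key observation is that under the identification $\iota_{X}$, exponential classes correspond to rank one symmetric matrices: namely
\begin{equation*}
\iota_{X}(e^{\lambda H/\sqrt{n}})
=\begin{pmatrix} 1 & \lambda \\ \lambda & \lambda^{2}\end{pmatrix}
=\begin{pmatrix} 1 \\ \lambda \end{pmatrix}\begin{pmatrix} 1 & \lambda\end{pmatrix}.
\end{equation*}
Since the action is by $A\mapsto {}^{t}g A g$, it carries rank one matrices to rank one matrices, and one computes
\begin{equation*}
{}^{t}\theta(\Phi)\begin{pmatrix} 1 \\ \lambda \end{pmatrix}
=\begin{pmatrix} c\lambda+d \\ a\lambda+b \end{pmatrix}
=(c\lambda+d)\begin{pmatrix} 1 \\ \tfrac{a\lambda+b}{c\lambda+d}\end{pmatrix}.
\end{equation*}
Hence $\iota_{X_{1}}\bigl(\Phi(e^{\lambda H/\sqrt{n}})\bigr)
=(c\lambda+d)^{2}\,\iota_{X_{1}}\bigl(e^{\frac{a\lambda+b}{c\lambda+d}\widehat{H}/\sqrt{n}}\bigr)$,
which gives the equality of rays in the Mukai lattice after tensoring with ${\Bbb Q}$.

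There is no serious obstacle: both parts are formal consequences of results already established, and the only routine care needed is to keep track of the sign conventions in $\zeta$ and of which matrix ($\theta(\Phi)$ versus $\varphi(\Phi)$) acts on which side. In particular no new stability-theoretic input is needed for the statement as written, since it is an assertion purely about the linear algebra of the stability function and the cohomological Fourier--Mukai transform.
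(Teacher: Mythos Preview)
Your proof is correct and matches the paper's approach: the proposition is stated immediately after the explicit computations of $z'=\tfrac{az+b}{cz+d}$ and $\zeta=-(cz+d)^{2}$, and the paper gives no separate proof beyond those computations combined with the commutative diagram from \S\ref{sect:FM}. Your treatment of the second claim via the rank-one factorization in $\Sym_{2}(\bb{Z},n)$ is a clean way to make explicit what the paper leaves implicit; it is the natural companion to the first claim (which one could also read as the ``boundary'' case $z\to\lambda\in\bb{R}$ of the M\"obius action).
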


We now extend the action of $G$ to $\widehat{G}$.
We set 
$$
\Delta:=\begin{pmatrix}
1 & 0\\
0 & -1
\end{pmatrix}
$$
We note that
$$
\widehat{G}=G \rtimes  \left\langle
\Delta
\right\rangle
$$
with
$$
\begin{pmatrix}
1 & 0\\
0 & -1
\end{pmatrix}
\begin{pmatrix}
a & b\\
c & d
\end{pmatrix}
\begin{pmatrix}
1 & 0\\
0 & -1
\end{pmatrix}
=
\begin{pmatrix}
a & -b\\
-c & d
\end{pmatrix}.
$$
We define the action of
$\Delta$ on ${\Bbb H}$ as $\Delta(z):=-\overline{z}$.
Then we have
$$
\Delta(A(\Delta(z)))=\frac{az-b}{-cz+d},\;
A=\begin{pmatrix}
a & b\\
c & d
\end{pmatrix}.
$$
Thus we have an action of $\widehat{G}$ on 
${\Bbb H}$.

\begin{prop}
We can extend the action of $G$ to
the action of $\widehat{G}$ by
$$
(g,\Delta^n) \cdot z:=
\begin{cases}
g \cdot z, & 2|n,\\
-\overline{g \cdot z},& 2 \not | n,
\end{cases} 
$$
where $g \in G$.
\end{prop}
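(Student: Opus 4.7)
The plan is to verify that the displayed formula defines a left action of $\widehat{G}$ on $\mathbb{H}$, compatible with the semidirect product decomposition $\widehat{G} = G \rtimes \langle \Delta \rangle$ and restricting to the already-established action of $G$. Since the case $n$ even recovers the $G$-action and the paper's preceding computation shows that $\Delta \colon z \mapsto -\overline{z}$ preserves $\mathbb{H}$, only the mixed compatibility needs to be checked.

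First I would record the key conjugation identity: for $g = \begin{pmatrix} a & b \\ c & d \end{pmatrix} \in G$ and $w \in \mathbb{H}$,
\[
 g \cdot (-\overline{w}) = -\overline{g' \cdot w}, \qquad g' := \Delta g \Delta = \begin{pmatrix} a & -b \\ -c & d \end{pmatrix}.
\]
This is a direct verification, essentially identical to the calculation $\Delta(A(\Delta(z))) = (az-b)/(-cz+d)$ that the excerpt already performs. Under the natural interpretation of the pair $(g, \Delta^n)$ as the product $\Delta^n g$ in $\widehat{G}$, the proposed formula reads $(g, \Delta^n) \cdot z = \Delta^n \cdot (g \cdot z)$, which matches both $g \cdot z$ (if $n$ is even) and $-\overline{g \cdot z}$ (if $n$ is odd).

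Next I would check associativity of the action. Using $\Delta^2 = I$ one has, in $\widehat{G}$,
\[
 (g_1, \Delta^{n_1}) \cdot (g_2, \Delta^{n_2}) = \bigl((\Delta^{-n_2} g_1 \Delta^{n_2})\, g_2,\; \Delta^{n_1+n_2}\bigr).
\]
I would then split into the four parity cases of $(n_1, n_2)$. The cases with $n_2$ even are immediate from associativity of $G$; the cases with $n_2$ odd reduce, via the key conjugation identity above, to the two restatements $g_1 \cdot (-\overline{w}) = -\overline{g_1' \cdot w}$ and $-\overline{g_1 \cdot (-\overline{w})} = g_1' \cdot w$, which are equivalent to that identity. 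This completes the verification.

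The main "obstacle" is purely conventional: one must fix once and for all that in the pair $(g, \Delta^n)$ the factor $g$ acts first (so $(g,\Delta^n)$ corresponds to $\Delta^n g \in \widehat{G}$), for otherwise the stated formula would require the generally false identity $g \cdot (-\overline{z}) = -\overline{g \cdot z}$. Once the convention is fixed, no genuine difficulty arises: every required compatibility collapses to the single conjugation identity, which is a one-line Möbius-transformation computation.
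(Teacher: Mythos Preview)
Your proposal is correct and matches the paper's approach: the paper does not give an explicit proof but simply records the key conjugation identity $\Delta(A(\Delta(z)))=\frac{az-b}{-cz+d}$ just before the proposition and concludes from it that $\widehat{G}$ acts, which is precisely the single computation you reduce everything to. Your observation about the convention $(g,\Delta^n)\leftrightarrow \Delta^n g$ is the right reading of the paper's formula and is a genuine point worth making explicit, since the alternative convention $(g,\Delta^n)\leftrightarrow g\Delta^n$ would indeed require the false identity $g\cdot(-\overline{z})=-\overline{g\cdot z}$.
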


\begin{rem}
In \cite{YY}, we showed that
the cohomological action of
$\Eq_0({\bf D}(X),{\bf D}(X))$
defines a normal subgroup of $G$
which is a conjugate of $\Gamma_0(n)$ in 
$\GL(2,{\Bbb R})$.
More precisely,
we set 
$G_0:=\theta(\Eq_0({\bf D}(X),{\bf D}(X)))$.
Then 
\begin{equation*}
\begin{pmatrix}
\sqrt{n} & 0 \\
0 & 1 
\end{pmatrix}^{-1}
G_0
\begin{pmatrix}
\sqrt{n} & 0 \\
0 & 1 
\end{pmatrix}
=\Gamma_0(n).
\end{equation*}
We set $\beta+\sqrt{-1}\omega=w H$.
Then $\Eq_0({\bf D}(X),{\bf D}(X))$ acts on $w$-plane
as the action of $\Gamma_0(n)$ on $w$-plane.
\begin{NB}
We shall explain known description of Mukai lattices
\cite[sect. 5]{Ma}.

We set
\begin{equation*}
L_n:=
\left\{
\left.
\begin{pmatrix}
np & q \\
nr & -np 
\end{pmatrix}
\in {\frak {sl}}_2({\Bbb R}) \right|
p,q,r \in {\Bbb Z}
\right\}.
\end{equation*}
For $B_1,B_2 \in L_n$, we set
$(B_1,B_2):=\tr(B_1 B_2)/(2n) \in {\Bbb Z}$.
Then we have an isometry
$$
\begin{matrix}
\zeta:&
\Sym_2(n,{\Bbb Z}) & \to & L_n\\
& 
\begin{pmatrix}
x & y \sqrt{n} \\
y\sqrt{n} & z  
\end{pmatrix}
& \mapsto &
\begin{pmatrix}
ny & -x \\
nz & -ny 
\end{pmatrix}.
\end{matrix}
$$
It is easy to see that
the induced action of $G_0$ on $L_n$ is the adjoint action of 
$\Gamma_0(n)$. Thus
$$
g
\begin{pmatrix}
ny & -x \\
nz & -ny 
\end{pmatrix}
g^{-1},\;
g=
\begin{pmatrix}
a & b \\
c & d 
\end{pmatrix}
\in \Gamma_0(n)
$$
corresponds to 
$$
{{}^t g_1}
\begin{pmatrix}
x & y \sqrt{n} \\
y \sqrt{n} & z 
\end{pmatrix}
g_1,\;
g_1=
\begin{pmatrix}
a & \frac{c}{\sqrt{n}} \\
b \sqrt{n} & d 
\end{pmatrix}
\in G_0
$$
via $\zeta^{-1}$.
\end{NB}

\end{rem}

\begin{NB}
We set $w:=\frac{z}{\sqrt{n}}$ and $w':=\frac{z'}{\sqrt{n}}$.
Then 
$$
-(c\sqrt{n} w+d)^2 Z_{\frac{aw+\frac{b}{\sqrt{n}}}{c\sqrt{n}w+d}\widehat{H}}
=Z_{w H} \circ \widehat{\Phi}.
$$

\begin{equation*}
\begin{pmatrix}
a & \frac{b}{\sqrt{n}} \\
c\sqrt{n} & d 
\end{pmatrix}=
\begin{pmatrix}
\sqrt{n} & 0 \\
0 & 1 
\end{pmatrix}^{-1}
\begin{pmatrix}
a & b \\
c & d 
\end{pmatrix}
\begin{pmatrix}
\sqrt{n} & 0 \\
0 & 1 
\end{pmatrix}
\end{equation*}
and 
$$
\Gamma_0(n)=
\begin{pmatrix}
\sqrt{n} & 0 \\
0 & 1 
\end{pmatrix}^{-1}
\ker \phi
\begin{pmatrix}
\sqrt{n} & 0 \\
0 & 1 
\end{pmatrix}
$$
is the group of the auto-equivalences.
\end{NB}

\begin{NB}
For a rational number $\frac{a}{b}$,
there is an isotropic Mukai vector
$(p^2 n,pq H,q^2)$ such that
$(p n,q)=1$ and $\frac{q}{pn}$ is sufficiently close to
$\frac{a}{b}$.

We take $p_1, q_1 \in {\Bbb Z}$ such that
$(p_1 n,q_1)=1$.
We set $(p,q)=(p_1(1+b q_1 m),q_1(1+an p_1 m))$.
Then $(n,q_1(1+an p_1 m))=1$ for all $m$.
We shall prove that 
there are sufficiently large integer $m$ 
such that $(1+b q_1 m,1+an p_1 m)=1$.
We first assume that $|b q_1| < |an p_1|$.
Then $|1+b q_1 m| < |1+an p_1 m|$ for $|m| \gg 0$.
We can choose $m$ such that $1+an p_1 m$ is prime.
Then $(1+b q_1 m, 1+an p_1 m)=1$.
If $|b q_1| > |an p_1|$, then we also get the claim.
Assume that $|b q_1| = |an p_1|$. Then
$a=q_1 x$ and $b= \pm p_1 n x$.
Hence $\frac{a}{b}=\pm \frac{q_1}{n p_1}$.
So if $(p_1,q_1)$ satisfies 
$\frac{a}{b} \ne \pm \frac{q_1}{n p_1}$,
then we can take a suitable $m$.
  
\end{NB}

\begin{NB}

\subsection{Modular groups.}

\begin{equation}
\begin{pmatrix}
x & y \\
y & z 
\end{pmatrix}
\begin{pmatrix}
0 & -1 \\
1 & 0 
\end{pmatrix}
=
\begin{pmatrix}
y & -x \\
z & -y 
\end{pmatrix}
 \in {\frak {sl}}_2({\Bbb R})
\end{equation}
Since
$$
\begin{pmatrix}
d & -c \\
-b & a 
\end{pmatrix}=
\begin{pmatrix}
0 & 1 \\
-1 & 0 
\end{pmatrix}
\begin{pmatrix}
a & b \\
c & d 
\end{pmatrix}\begin{pmatrix}
0 & -1 \\
1 & 0 
\end{pmatrix},
$$

$$
^t P 
\begin{pmatrix}
x & y \\
y & z 
\end{pmatrix}
 P
\begin{pmatrix}
0 & 1 \\
-1 & 0 
\end{pmatrix}
=
(\det P) {^t P}
\begin{pmatrix}
y & -x \\
z & -y 
\end{pmatrix}
{^t P^{-1}} 
$$

We define an anti-homomorphism
\begin{equation}
\begin{matrix}
\varphi: & \GL_2({\Bbb Q})_+ & \to & \SL_2({\Bbb R})\\
& Q & \mapsto & \frac{^t Q}{\sqrt{\det Q}}
\end{matrix}
\end{equation}
Then
$$
^t \varphi(Q) 
\begin{pmatrix}
x & y \\
y & z 
\end{pmatrix}
\varphi(Q)
\begin{pmatrix}
0 & 1 \\
-1 & 0 
\end{pmatrix}
=
Q
\begin{pmatrix}
y & -x \\
z & -y 
\end{pmatrix}
{Q^{-1}}. 
$$

Let $M_\sigma, N_\sigma$ be positive integers
such that $M_\sigma N_\sigma=n$ and 
$\gcd(M_\sigma, N_\sigma)=1$.
We take integers $a_\sigma, b_\sigma$ 
such that $a_\sigma N_\sigma-b_\sigma M_\sigma=1$.
For 
$$
\gamma_\sigma:=
\begin{pmatrix}
a_\sigma N_\sigma & b_\sigma \\
n & N_\sigma
\end{pmatrix},
$$

$$
\begin{pmatrix}
1 & 0\\
0 & \sqrt{n}
\end{pmatrix}
\varphi(\gamma_\sigma)
\begin{pmatrix}
1 & 0\\
0 & \sqrt{n}
\end{pmatrix}^{-1}
=
\frac{1}{\sqrt{N_\sigma}}
^t \begin{pmatrix}
a_\sigma N_\sigma & b_\sigma \sqrt{n} \\
\sqrt{n} & N_\sigma
\end{pmatrix}
=
{^t \begin{pmatrix}
a_\sigma \sqrt{N_\sigma} & b_\sigma \sqrt{M_\sigma} \\
\sqrt{M_\sigma} & \sqrt{N_\sigma}
\end{pmatrix}}.
$$

$$
\begin{pmatrix}
1 & 0\\
0 & \sqrt{n}
\end{pmatrix}
\varphi
\left(
\begin{pmatrix}
a & b\\
nc & d \\
\end{pmatrix}
\right)
\begin{pmatrix}
1 & 0\\
0 & \sqrt{n}
\end{pmatrix}^{-1}
=
^t \begin{pmatrix}
a & c\sqrt{n} \\
b\sqrt{n} & d
\end{pmatrix}.
$$

We have equivalriant morphisms
\begin{equation}
\begin{matrix}
\Sym_2(n,{\Bbb Z}) & \to & \Sym_2({\Bbb R}) & \to & \frak{sl}_2({\Bbb R})  \\
\begin{pmatrix}
x & \sqrt{n} y\\
\sqrt{n} y & z
\end{pmatrix} 
& \mapsto &
\begin{pmatrix}
x & n y\\
n y & nz
\end{pmatrix} 
& \mapsto &
\begin{pmatrix}
ny & -x\\
nz & -ny
\end{pmatrix} 
\end{matrix}
\end{equation}

\begin{equation}
\begin{pmatrix}
1 & 0\\
0 & \sqrt{n} 
\end{pmatrix}
\begin{pmatrix}
x & \sqrt{n} y\\
\sqrt{n} y & z
\end{pmatrix}
\begin{pmatrix}
1 & 0\\
0 & \sqrt{n} 
\end{pmatrix}
=
\begin{pmatrix}
x & n y\\
n y & nz
\end{pmatrix}
\end{equation}

\begin{equation}
\begin{pmatrix}
1 & 0\\
0 & \sqrt{n} 
\end{pmatrix}^{-1}
\begin{pmatrix}
a & b\\
c & d
\end{pmatrix}
\begin{pmatrix}
1 & 0\\
0 & \sqrt{n} 
\end{pmatrix}
=
\begin{pmatrix}
a & b \sqrt{n}\\
\frac{c}{\sqrt{n}}  & d
\end{pmatrix}
\end{equation}

\begin{equation}
\begin{pmatrix}
ny & -x\\
n z & -ny
\end{pmatrix}
\begin{pmatrix}
d & -\frac{c}{\sqrt{n}}\\
-b \sqrt{n}  & a
\end{pmatrix}
=
\begin{pmatrix}
1 & 0\\
0 & \sqrt{n} 
\end{pmatrix}
\begin{pmatrix}
x & \sqrt{n} y\\
\sqrt{n} y & z
\end{pmatrix}
\begin{pmatrix}
a & b\\
c & d
\end{pmatrix}
\begin{pmatrix}
1 & 0\\
0 & \sqrt{n} 
\end{pmatrix}
\begin{pmatrix}
0 & -1\\
1 & 0 
\end{pmatrix}
\end{equation}

\begin{equation}
\begin{split}
&
\begin{pmatrix}
1 & 0 \\
0 & \sqrt{n} 
\end{pmatrix}
{^t \varphi \left(
\begin{pmatrix}
a & b \sqrt{n} \\
c \sqrt{n} & d 
\end{pmatrix}
\right)}
\begin{pmatrix}
x & y \sqrt{n} \\
y \sqrt{n} & z 
\end{pmatrix}
\varphi \left(
\begin{pmatrix}
a & b \sqrt{n} \\
c \sqrt{n} & d 
\end{pmatrix}
\right)
\begin{pmatrix}
1 & 0 \\
0 & \sqrt{n} 
\end{pmatrix}
\begin{pmatrix}
0 & -1 \\
1 & 0 
\end{pmatrix}\\
=& ^t 
\begin{pmatrix}
a & b  \\
cn & d 
\end{pmatrix}
\begin{pmatrix}
ny & -x \\
nz & -ny 
\end{pmatrix}
\begin{pmatrix}
a & b  \\
cn & d 
\end{pmatrix}^{-1}
\end{split}
\end{equation}

It seems that the cusps of $\Gamma_0(n)$ corresponds to
the Fourier-Mukai partner of $X$.

\end{NB}

\end{document}